\newcommand{\BA}{{\mathbb {A}}}\newcommand{\BC}{{\mathbb {C}}}\newcommand{\BD}{{\mathbb {D}}}
\newcommand{\BG}{{\mathbb {G}}}\newcommand{\BH}{{\mathbb {H}}}
\newcommand{\BQ}{{\mathbb {Q}}}\newcommand{\BR}{{\mathbb {R}}}
\newcommand{\BZ}{{\mathbb {Z}}}
\newcommand{\bfA}{{\mathbf {A}}}
\newcommand{\bfG}{{\mathbf {G}}}\newcommand{\bfH}{{\mathbf {H}}}
\newcommand{\bfL}{{\mathbf {L}}}
\newcommand{\bfM}{{\mathbf {M}}}\newcommand{\bfN}{{\mathbf {N}}}\newcommand{\bfP}{{\mathbf {P}}}
\newcommand{\bfQ}{{\mathbf {Q}}}\newcommand{\bfR}{{\mathbf {R}}}\newcommand{\bfT}{{\mathbf {T}}}
\newcommand{\bfX}{{\mathbf {X}}}
\newcommand{\CC}{{\mathcal {C}}}
\newcommand{\CO}{{\mathcal {O}}}
\newcommand{\CS}{{\mathcal {S}}}
\newcommand{\msf}{\mathscr{F}}
\newcommand{\msl}{\mathscr{L}}
\newcommand{\msp}{\mathscr{P}}
\newcommand{\mst}{\mathscr{T}}
\newcommand{\fa}{{\mathfrak{a}}} \newcommand{\fc}{{\mathfrak{c}}} \newcommand{\fd}{{\mathfrak{d}}}
\newcommand{\fe}{{\mathfrak{e}}} \newcommand{\fg}{{\mathfrak{g}}} \newcommand{\fh}{{\mathfrak{h}}}
 \newcommand{\fk}{{\mathfrak{k}}} \newcommand{\fl}{{\mathfrak{l}}}
\newcommand{\fm}{{\mathfrak{m}}} \newcommand{\fn}{{\mathfrak{n}}}\newcommand{\fo}{{\mathfrak{o}}} \newcommand{\fp}{{\mathfrak{p}}}
 \newcommand{\fr}{{\mathfrak{r}}}\newcommand{\fs}{{\mathfrak{s}}} \newcommand{\ft}{{\mathfrak{t}}}
 \newcommand{\fv}{{\mathfrak{v}}}
\newcommand{\ab}{{\mathrm{ab}}}                      \newcommand{\Ad}{{\mathrm{Ad}}}              \newcommand{\ad}{{\mathrm{ad}}}
                      \newcommand{\bs}{\backslash}
                      		\newcommand{\Cent}{{\mathrm{Cent}}}
\newcommand{\diag}{{\mathrm{diag}}}                            
                  			\newcommand{\der}{{\mathrm{der}}}
			\newcommand{\el}{\mathrm{ell}}
\newcommand{\Gal}{{\mathrm{Gal}}}
\newcommand{\Hom}{{\mathrm{Hom}}}
\newcommand{\Lie}{{\mathrm{Lie}}}                           
                       	\newcommand{\da}{\leftrightarrow}
			\newcommand{\Nrd}{{\mathrm{Nrd}}}		\newcommand{\Norm}{{\mathrm{Norm}}}
\newcommand{\ov}{\overline}
                  \newcommand{\ra}{\rightarrow}    
\newcommand{\rk}{\mathrm{rk}}
         	\newcommand{\Prd}{{\mathrm{Prd}}}
                   \newcommand{\reg}{{\mathrm{reg}}}               \newcommand{\Res}{{\mathrm{Res}}}
				 \newcommand{\rs}{{\mathrm{rs}}}
                                  \newcommand{\Supp}{{\mathrm{Supp}}}
 \newcommand{\sk}{\medskip}                      \newcommand{\s}{\sk\noindent}
\newcommand{\scn}{{\mathrm{sc}}} 
                   \newcommand{\tr}{{\mathrm{tr}}}                 
\newcommand{\Trd}{{\mathrm{Trd}}}			\newcommand{\Tran}{{\mathrm{Tran}}}
\newcommand{\vol}{{\mathrm{vol}}}
\newcommand{\wt}{\widetilde}                        \newcommand{\wh}{\widehat}
\newtheorem{thm}{Theorem}[section]
\newtheorem{coro}[thm]{Corollary}
\newtheorem{lem}[thm]{Lemma}
\newtheorem{prop}[thm]{Proposition}
\newtheorem{defn}[thm]{Definition}
\theoremstyle{definition}
\theoremstyle{remark}
\newtheorem{remark}[thm]{Remark}
\numberwithin{equation}{subsection}
\def\mat(#1,#2,#3,#4){
  \begin{pmatrix}
  #1 & #2 \\ #3 & #4
  \end{pmatrix}
}
\begin{document}
\title{On certain identities between Fourier transforms of weighted orbital integrals on infinitesimal symmetric spaces of Guo-Jacquet}
\author{Huajie Li}
\date{\today}
\maketitle

\begin{abstract}
In an infinitesimal variant of Guo-Jacquet trace formulae, the regular semi-simple terms are expressed as noninvariant weighted orbital integrals on two global infinitesimal symmetric spaces. We prove some relations between the Fourier transforms of invariant weighted orbital integrals on the corresponding local infinitesimal symmetric spaces. These relations should be useful in the noninvariant comparison of the infinitesimal variant of Guo-Jacquet trace formulae. 
\end{abstract}

\tableofcontents


\section{\textbf{Introduction}}

Inspired by Jacquet's new proof \cite{MR868299} of Waldspurger's well-known result \cite{MR783511} on the central values of automorphic $L$-functions for $GL_2$, Guo-Jacquet have suggested comparison of two relative trace formulae in \cite{MR1382478} in order to generalise this theorem to higher ranks. This approach has also been followed by Feigon-Martin-Whitehouse \cite{MR3805647} via simple trace formulae. However, if one wants to remove the restrictive conditions in {\it{loc. cit.}}, some additional terms in the Guo-Jacquet trace formula other than relative orbital integrals can not be neglected. 

Our starting point is an infinitesimal analogue of Guo-Jacquet trace formulae and their comparison. It means that we first work on the tangent space of a symmetric space (called an infinitesimal symmetric space). One reason for this is that at the infinitesimal level, the spectral side of the relative trace formula is replaced by the Fourier transform of the geometric side where the harmonic analysis is simpler. Another reason is that the comparison of trace formulae for infinitesimal symmetric spaces is expected to imply the comparison of the original relative trace formulae for symmetric spaces. For example, one may consult Zhang's proof of the transfer of relative local orbital integrals \cite{MR3414387}. 

We have established an infinitesimal variant of Guo-Jacquet trace formulae in \cite{MR4424024, MR4350885}, where the main (namely regular semi-simple) terms are explicit weighted orbital integrals. These distributions should be the first ones to be studied and compared after orbital integrals. However, some new difficulties arise since these distributions are noninvariant. Instead of making the trace formula invariant as Arthur did (see \cite{MR625344} for example), we would like to follow Labesse's proposal \cite{MR1339717} of noninvariant comparison which seems more direct. For example, we have established the weighted fundamental lemma for infinitesimal Guo-Jacquet trace formulae in \cite{MR4350885} as a noninvariant and infinitesimal avatar of Guo's fundamental lemma \cite{MR1382478}. The strategy of noninvariant comparison has also been adopted by Chaudouard in \cite{MR2164623, MR2332352} on the stable base change. These works provide some indications to our work. 

Let us recall some basic objects in the local setting. Let $E/F$ be a quadratic extension of non-archimedean local fields of characteristic zero. Let $\eta$ be the quadratic character of $F/NE^\times$ attached to $E/F$, where $NE^\times$ denotes the norm of $E^\times$. The first symmetric pair is $(G,H)=(GL_{2n}, GL_n\times GL_n)$. Let $\fs\simeq\fg\fl_n\oplus\fg\fl_n$ be the corresponding infinitesimal symmetric space. Denote by $\fs_\rs$ the set of regular semi-simple elements in $\fs$ (see Section \ref{ssec:def-rss}). Let $M$ be an $\omega$-stable (see Section \ref{ssec:def-omega-stable}) Levi subgroup of $G$, and $X\in(\fm\cap\fs_\rs)(F)$. Let $f$ be a locally constant and compactly supported function on $\fs(F)$. We define the weighted orbital integral $J_M^G(\eta, X, f)$ by \eqref{defnoninvwoi1}. We have proved in \cite{MR4681295} that its Fourier transform is represented by a locally constant function $\hat{j}_M^G(\eta, X,\cdot)$ on $\fs_\rs(F)$. We have also defined the $(H,\eta)$-invariant weighted orbital integrals $I_M^G(\eta, X, f)$ in {\it{loc. cit.}} by Arthur's standard method, whose Fourier transform is represented by a locally constant function $\hat{i}_M^G(\eta, X,\cdot)$ on $\fs_\rs(F)$. The second symmetric pair is $(G', H')$, where $G'$ is the group of invertible elements in a central simple algebra over $F$ containing $E$, and $H'$ is the centraliser of $E^\times$ in $G'$. It is inspired by the related local conjecture of Prasad and Takloo-Bighash \cite{MR2806111} and more general than Guo-Jacquet's original setting. Denote by $\fs'$ the corresponding infinitesimal symmetric space. For a Levi subgroup $M'$ of $H'$ and $Y\in(\wt{\fm'}\cap\fs'_\rs)(F)$ (see Section \ref{ssec:sym2-I-II}), we similarly obtain local constant functions $\hat{j}_{M'}^{H'}(Y,\cdot)$ and $\hat{i}_{M'}^{H'}(Y,\cdot)$ on $\fs'_\rs(F)$. 

The functions $\hat{j}_M^G(\eta, X, \cdot)$ is decomposed as their invariant analogues $\hat{i}_M^G(\eta, X, \cdot)$ and weight functions $v_M^G$. The decomposition for the functions $\hat{j}_{M'}^{H'}(Y, \cdot)$ is similar. In order to obtain relations between $\hat{j}_M^G(\eta, X, \cdot)$ and $\hat{j}_{M'}^{H'}(Y, \cdot)$, which is part of the noninvariant comparison of the infinitesimal variant of Guo-Jacquet trace formulae, we shall focus on the relations between $\hat{i}_M^G(\eta, X, \cdot)$ and $\hat{i}_{M'}^{H'}(Y, \cdot)$ in this paper. There is an injection $M'\mapsto M$ from the set of Levi subgroups of $H'$ into the set of $\omega$-stable Levi subgroups of $G$ (see Section \ref{ssec:mat_Levi}). We fix such a matching pair of Levi subgroups. We define the notion of matching orbits between $\fs_\rs(F)$ and $\fs'_\rs(F)$ by Definition \ref{defbyinv}. There is also a refined notion of $M$-matching orbits (see Definition \ref{defbyinvM}). For $X=\mat(0,A,B,0)\in\fs_\rs(F)$, we define the transfer factor $\kappa(X):=\det(A)$ and the sign $\eta(X):=\eta(\det(AB))$. Our main result is as follows. 

\begin{thm}[see Corollary \ref{parcommutecor1} and Proposition \ref{vancommute}]
\leavevmode
\begin{enumerate}
	\item Let $X\in(\fm\cap\fs_\rs)(F)$ and $Y\in(\wt{\fm'}\cap\fs'_\rs)(F)$ have $M$-matching orbits. Let $U\in\fs_\rs(F)$ and $V\in\fs'_\rs(F)$ have matching orbits. Then we have the equality
$$ \gamma_{\psi}(\fh(F))^{-1} \kappa(X)\kappa(U)\hat{i}_M^G(\eta, X, U)=\gamma_{\psi}(\fh'(F))^{-1} \hat{i}_{M'}^{H'}(Y, V), $$
where $\gamma_\psi(\fh(F))$ and $\gamma_{\psi}(\fh'(F))$ are Weil constants (see Section \ref{ssec:def-weil-const}). 

	\item Let $X\in(\fm\cap\fs_\rs)(F)$ and $U\in\fs_\rs(F)$. If $\eta(X)\neq\eta(U)$, then 
$$ \hat{i}_M^G(\eta, X, U)=0. $$
\end{enumerate}
\end{thm}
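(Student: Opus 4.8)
The two parts are largely independent, and the plan is: for part (1) (Corollary~\ref{parcommutecor1}), a descending induction on the $\omega$-stable Levi subgroup $M$ which reduces the statement to the transfer of Fourier transforms of ordinary (relative) orbital integrals; for part (2) (Proposition~\ref{vancommute}), to make the $\eta$-dependence of $\hat i_M^G(\eta,X,\cdot)$ completely explicit on each $H$-orbit and then unfold it through a Gaussian integral.

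For part (1) the base of the induction is $M=G$, $M'=H'$, where the weight is trivial, so $\hat i_G^G(\eta,X,\cdot)=\hat j_G^G(\eta,X,\cdot)$ is the Fourier transform of the $(H,\eta)$-invariant orbital integral through $X$, and likewise $\hat i_{H'}^{H'}(Y,\cdot)$. In this case the desired identity is precisely the assertion that the (known) smooth transfer of regular semisimple orbital integrals between $\fs$ and $\fs'$, with transfer factor $\kappa$ and sign $\eta(\cdot)$, intertwines the Fourier transforms of $\fs(F)$ and $\fs'(F)$: I would obtain it by combining the transfer of orbital integrals with the observation that conjugating the Fourier transform of $\fs(F)$ into that of $\fs'(F)$ along the orbit correspondence produces exactly the ratio of Weil constants $\gamma_\psi(\fh(F))^{-1}\gamma_\psi(\fh'(F))$, these being the Weil indices of the trace forms that normalize the two transforms. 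For the inductive step I would use Arthur's relation expressing the invariant function $\hat i_M^G(\eta,X,\cdot)$ through $\hat j_M^G(\eta,X,\cdot)$ and the parabolically induced lower invariant functions attached to $L\in\CL(M)$, $L\ne G$ (together with the analogous relation on the $H'$-side). The lower terms are supplied by the induction hypothesis via the matching $L'\mapsto L$, so the only thing left is to transfer $\hat j_M^G(\eta,X,\cdot)$ itself: when $X$ is not $M$-elliptic this follows from Arthur's descent formula, which reduces it to smaller Levis and hence to the induction hypothesis, while when $X$ is $M$-elliptic it follows from the parabolic-commutation statement (the key technical input behind Corollary~\ref{parcommutecor1}) --- that the Fourier transform of $\fs(F)$ intertwines the constant-term maps attached to a matching pair of parabolic subgroups --- together with the weighted fundamental lemma of \cite{MR4350885}. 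To make the recursion close one must check that the matching $M'\mapsto M$ respects the posets $\CL(M)$ and $\CL(M')$ and that $M$-matching orbits (Definition~\ref{defbyinvM}) descend to matching orbits (Definition~\ref{defbyinv}) on $\fm$ and $\fm'$, tracking $\kappa$ and the Weil constants throughout.

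For part (2) I would start from the fact that, on the $H(F)$-orbit $\CO_X$ of $X=\mat(0,A,B,0)$, the character $\eta$ entering the definition of $I_M^G(\eta,X,\cdot)$ --- which is trivial on the stabilizer, so that the twisted orbital integral makes sense --- descends to the explicit sign $W=\mat(0,A_W,B_W,0)\mapsto\eta(\det A_W)\,\eta(\kappa(X))$ on $\CO_X(F)$. Consequently $\hat i_M^G(\eta,X,Z)$ equals $\eta(\kappa(X))$ times an integral over $\CO_X(F)$ whose entire $\eta$-dependence sits in the factor $\eta(\det A_W)$ and in the Fourier kernel $\psi(\langle W,Z\rangle)$; the weight $v_M$, being insensitive to $\eta$, plays no role in this bookkeeping. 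Writing $\eta(\det A_W)$ as a Gaussian (Weil-index) integral and completing the square against $\psi(\langle W,Z\rangle)$, one is left with an expression that vanishes unless $\eta(Z)=\eta(X)$; this is the content of Proposition~\ref{vancommute}, and it is the same Weil-index manipulation that, performed in the base case of part (1), accounts for the constant $\gamma_\psi(\fh(F))$.

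I expect the main obstacle to be the parabolic-commutation step of part (1): establishing that the Fourier transform of $\fs(F)$ truly intertwines parabolic descent on the two sides with the correct transfer factor and Weil constant, and checking the compatibility of this with the recursion between $\hat j_M^G$ and $\hat i_M^G$ at every level. Arthur's descent and splitting formulae become technically heavy in the present $\eta$-twisted relative setting, and it is the careful matching of the $\eta$-factors, of the transfer factor $\kappa$, and of the two notions of matching orbits through the induction --- rather than any single identity --- where the real work lies.
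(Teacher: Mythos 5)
Your proposal diverges from the paper's strategy on both parts, and in each case the divergence masks a genuine gap.

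For part (1), the paper proves Corollary~\ref{parcommutecor1} by Waldspurger's global method: it globalises the local situation (Section~\ref{secapp}), constructs test functions on $\fs(\BA)$ and $\fs'(\BA)$ whose weighted orbital integrals match (Sections~\ref{secconsr}, \ref{choffun}), and compares the two infinitesimal trace formulae $J^\bfG(\eta,\phi)$ and $J^{\bfH'}(\phi')$ to deduce Proposition~\ref{parcommute}, from which the pointwise identity is extracted via Labesse's lemma and the Weyl integration formula. You instead propose a purely local descending induction on $M$, with base case $M=G$ given by Zhang's commutation of Fourier transform with smooth transfer. The base case is correct and the poset-compatibility of $L'\mapsto L$ you mention is fine, but the inductive step does not close: what you call the ``parabolic-commutation statement'' --- that the Fourier transform preserves the matching of noninvariant weighted orbital integrals --- is precisely Proposition~\ref{parcommute}, the heart of the matter, and you assert it rather than prove it. Your claim that it ``follows from \ldots the weighted fundamental lemma'' is not correct: Lemma~\ref{wfl} concerns only the characteristic functions of integral lattices at unramified places, and gives no direct local information about the Fourier transform of an arbitrary compactly supported test function. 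The paper needs the global trace formula comparison (together with the cohomological control of Sections~\ref{seccohcri}--\ref{secmatllevi} on which orbits can contribute) exactly because no such local argument is available. As you concede at the end, ``the real work lies'' there --- but the sketch you give does not do that work.

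For part (2), the paper's proof (Proposition~\ref{vancommute}, via Lemmas~\ref{lempf8.3} and~\ref{iinvlem}) is a short symmetry argument with the involution $\Ad(\omega)$: one computes $\hat i_M^G(\eta,X,U^\omega)$ in two ways, once by the $(H,\eta)$-equivariance of $\hat i_M^G$ in the second variable (giving $\eta(U)\hat i_M^G(\eta,X,U)$) and once by transporting $\omega$ through the Fourier transform and the $\eta$-twisted orbital integral in the first variable (giving $\eta(X)\hat i_M^G(\eta,X,U)$), and compares. You instead propose to write $\eta(\det A_W)$ as a Weil--Gaussian integral and complete the square against the Fourier kernel. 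This manoeuvre does appear in the paper, but only inside the limit formulae of Proposition~\ref{proplimfor}, which hold asymptotically for $|\mu|\to\infty$ and only for the $\hat a^M_M$ terms; the function $\hat i_M^G(\eta,X,U)$ for general $U$ is defined as the abstract kernel of a Fourier-transformed distribution, not as an explicit integral over $\CO_X(F)$ against $\psi(\langle\cdot,U\rangle)$, so there is no Gaussian to complete a square in. Your claim that the weight $v_M$ ``plays no role in this bookkeeping'' also glosses over the one place where the weight does need to be controlled: in the paper's Lemma~\ref{lempf8.3} one must check $v_M^G(\Ad(\omega)x)=v_M^G(x)$ for $x\in H(F)$. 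That is a discrete symmetry of the weight, not a Gaussian manipulation, and without isolating that symmetry your sketch does not produce the needed identity $\hat i_M^G(\eta,X,U^\omega)=\eta(X)\hat i_M^G(\eta,X,U)$.
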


This theorem generalises some of the main results in \cite{MR3414387} to the weighted context. As in {\it{loc. cit.}}, we use Waldspurger's global method on the endoscopic transfer \cite{MR1440722} to show (1) and a local method to show (2). To show (1), we define a notion of matching weighted orbital integrals (see Definition \ref{defMass}) and prove that this property commutes with Fourier transform under acceptable restriction (see Theorem \ref{thmcommute}). Then we may extract the relations between $\hat{i}_M^G(\eta, X, \cdot)$ and $\hat{i}_{M'}^{H'}(Y, \cdot)$ with the help of Labesse's lemma (see Lemma \ref{lemlabesse}). These steps are close to those in \cite{MR2164623}. However, there is an important distinction. While the weighted fundamental lemma for inner forms is tautological in {\it{loc. cit.}}, the vanishing condition of Lemma \ref{wfl} here is subtler. It makes the comparison of global trace formulae by Waldspurger's method, which itself is a simple case of the noninvariant comparison, even trickier. We translate our definition of matching orbits into the language of cohomology (see Sections \ref{seccohcri} and \ref{secmatllevi}) and use abelian Galois cohomology (see \cite{MR1401491, MR1695940}) to go through some technical difficulties. 

This paper is organised as follows. We introduce some notations and recall some preliminaries in Section \ref{secnotpre}. Then we define the notion of matching orbits and give a cohomological criterion in Section \ref{secmatorbi}. Our main results are stated in Section \ref{secsta}. The rest of the paper is devoted to the proof Proposition \ref{parcommute} by Waldspurger's global method. We recall limit formulae of $\hat{i}_M^G(\eta, X, \cdot)$ and $\hat{i}_{M'}^{H'}(Y, \cdot)$, the weighted fundamental lemma and an infinitesimal variant of Guo-Jacquet trace formulae respectively in Sections \ref{seclim}, \ref{secwfle} and \ref{secinfiGJ}. We explain the construction of test functions and the globalisation of local data respectively in Sections \ref{secconsr} and \ref{secapp}. These results are prepared for our final proof in Section \ref{secfinalproof}. 

\s{\textbf{Acknowledgement. }}I sincerely thank Pierre-Henri Chaudouard for generously sharing his insight on this problem and allowing me to include some of his ideas in this paper. This work would not have been completed without his constant support. In particular, he suggested the cohomological criterion of matching orbits and shared his notes with me. It allowed me to handle a more general setting motivated by the conjecture of Prasad and Takloo-Bighash. 

This is a completely revised version of the last part of my thesis \cite{li:tel-03226143} at the Université de Paris. Part of this article was revised when I was a postdoc at the Aix-Marseille Université and the Max Planck Institute for Mathematics. This work was supported by grants from Région Ile-de-France. The project leading to this publication has received funding from Excellence Initiative of Aix–Marseille University–A*MIDEX, a French ``Investissements d’Aveni'' programme. The author is grateful to Max Planck Institute for Mathematics in Bonn for its hospitality and financial support. 


\section{\textbf{Notation and preliminaries}}\label{secnotpre}

\subsection{Groups}

\subsubsection{}\label{ssec:gp_gen}

Let $F$ be a local field of characteristic zero or a number field. Denote by $\CO_F$ the ring of integers of $F$. Let $E$ be a quadratic extension of $F$. If $F$ is a local (resp. global) field, denote by $\eta$ the quadratic character of $F^\times/NE^\times$ (resp. $\BA^\times/F^\times$) attached to $E/F$, where $NE^\times=N_{E/F} E^\times$ denotes the norm of $E^\times$ in $F^\times$ (resp. $\BA=\BA_F$ denotes the ring of ad\`eles of $F$). 

Let $G$ be a (connected) reductive group over $F$. Denote by $\rk_F(G)$ the $F$-rank of $G$. Let $\wh{G}$ be the Langlands dual of $G$ which is a complex reductive group. For a topological group $H$, denote by $H^\circ$ the neutral component of $H$ and let $\pi_0(H):=H/H^\circ$. All algebraic groups (except $\wh{G}$) and varieties are assumed to be defined over $F$ until further notice. Denote by $G_\ad$ the adjoint group of $G$, by $G_\der$ the derived subgroup of $G$ and by $G_\scn$ the simple connected cover of $G_\der$. Denote by $Z_G$ the centre of $G$ and by $C_G:=G/G_\der=Z_G/Z_{G_\der}$ the cocentre of $G$. Fix an algebraic closure $\ov{F}$ of $F$. Let $\Gamma:=\Gal(\ov{F}/F)$. If $\Gamma$ acts on a set $S$, denote by $S^\Gamma$ the subset of $S$ consisting of all $\Gamma$-fixed points. For an $F$-variety $V$, we sometimes abuse notation and also write $V$ for $V(\ov{F})$ when there is no confusion. 

We use a minuscule Fraktur letter to denote the Lie algebra of its corresponding algebraic group. For example, we write $\fg:=\Lie(G)$. Denote by $\Ad$ the adjoint action of $G$ on itself or $\fg$. If $G$ acts on an $F$-variety $V$ and $X\in V(F)$, denote by $G_X$ the centraliser of $X$ in $G$. If $\fv$ is an $F$-subvariety of $\fg$, denote by $\fv_X$ the centraliser of $X\in\fg(F)$ in $\fv$. If $\theta$ is an automorphism on $G$, denote by $G^\theta$ the subgroup of $\theta$-fixed points of $G$. 

Fix a Levi $F$-factor $M_0$ of a minimal parabolic $F$-subgroup of $G$. By a Levi subgroup of $G$, we mean a Levi $F$-factor of some parabolic $F$-subgroup of $G$. For a semi-standard (namely containing $M_0$) Levi subgroup $M$ of $G$, denote by $\msf^G(M)$, $\msp^G(M)$ and $\msl^G(M)$ the sets of parabolic $F$-subgroups of $G$ containing $M$, parabolic $F$-subgroups of $G$ with Levi factor $M$ and Levi subgroups of $G$ containing $M$ respectively. For $P\in\msf^G(M_0)$, denote by $M_P$ the unique Levi factor containing $M_0$ and by $N_P$ the unipotent radical. Let $\ov{P}$ be the parabolic subgroup opposite to $P$. 

For $M\in\msl^G(M_0)$, define the Weyl group of $(G,M)$ by 
$$ W^G(M):=\Norm_{G(F)}(M)/M(F). $$
In particular, we also write $W_0^G:=W^G(M_0)$. For $M, L\in\msl^G(M_0)$, denote 
$$ \Tran_G(M,L):=\{w\in W_0^L\bs W_0^G: \Ad(w)(M)\subseteq L\}. $$

Denote by $A_G$ the maximal $F$-split central torus of $G$. Let $X(G)_F$ be the group of $F$-rational characters of $G$. Define the $\BR$-linear space 
$$ \fa_G:=\Hom_\BZ(X(G)_F,\BR), $$
whose dual space is denoted by $\fa_G^\ast$. Fix a scalar product on $\fa_{M_0}$ which is invariant under the action of $W_0^G$, from which we deduce Haar measures on all subspaces of $\fa_{M_0}$. Denote by $\fa_M^G$ the orthogonal complement of $\fa_G$ in $\fa_M$. 

Let $D$ a central division algebra over $F$. Denote by $\deg(D)$ the degree of $D$, i.e., $\dim_F(D)=\deg(D)^2$. Denote by $GL_{n, D}$ the reductive group over $F$ whose $F$-points are $GL_n(D)$. For $x\in \fg\fl_n(D)$, we write $\Nrd (x), \Trd (x)$ and $\Prd_x$ for its reduced norm, reduced trace and reduced characteristic polynomial respectively. If $D=F$, we also write them as $\det(x)$, $\tr(x)$ and $\chi_x$ respectively. 

\subsubsection{}\label{ssec:stdmaxcpt}

Now suppose that $F$ is a local field of characteristic zero. Denote by $|\cdot|_F$ the normalised absolute value on $F$. Define a homomorphism $H_G: G(F)\ra\fa_G$ by 
$$ \langle H_G(x),\chi\rangle=\log(|\chi(x)|_F) $$
for all $x\in G(F)$ and $\chi\in X(G)_F$. Fix a maximal compact subgroup $K=K_G$ of $G(F)$ which is admissible relative to $M_0$ in the sense of \cite[p. 9]{MR625344}. Unless otherwise stated (see Section \ref{ssec:wt_function}), we shall choose the standard maximal compact subgroup when $G(F)=GL_n(D)$, where $D$ is a central division algebra over a finite field extension of $F$. That is to say, if $F$ is non-archimedean, $K=GL_n(\CO_D)$ with $\CO_D$ being the ring of integers of $D$ (see \cite[p. 191]{MR1344916}), while if $F$ is archimedean, $K$ is the unitary group with respect to some hermitian form (see \cite[p. 199]{MR1344916}). For $P\in\msf^G(M_0)$, we may extend the function $H_{M_P}$ to a map $H_P: G(F)\ra\fa_{M_P}$ using the decomposition $G(F)=M_P(F)N_P(F)K$. 

Fix the Haar measure on $K$ such that $\vol(K)=1$. In particular, we emphasise that this convention applies to the case where $G$ is a torus. For $P\in\msf^G(M_0)$, fix a Haar measure on $N_P(F)$ such that
$$ \int_{N_P(F)} \exp (2\rho_{\ov{P}}(H_{\ov{P}}(n))) dn =1, $$
where $\rho_{\ov{P}}$ is the half of the sum of roots (with multiplicity) associated to $\ov{P}$. We equip $\fn_P(F)$ with the corresponding Haar measure via the exponential map. For $M\in\msl^G(M_0)$, there are compatible Haar measures on $G(F)$ and $M(F)$ in the sense of \cite[(1.1), p. 12]{MR1114210} such that for all $P\in\msp^G(M)$ and all continuous and compactly supported function $f$ on $G(F)$, we have the equality 
$$ \int_{G(F)} f(x) dx = \int_{M(F)\times N_P(F)\times K} f(mnk) dkdndm. $$
We shall choose such measures. 

Let $V$ be an $F$-linear space of finite dimension. If $F$ is non-archimedean, denote by $\CC_c^\infty(V)=\CS(V)$ the space of locally constant, compactly supported and complex-valued functions on $V$. If $F$ is archimedean, denote by $\CS(V)$ the space of Schwartz functions on $V$. For $f\in\CS(V)$, denote by $\Supp(f)$ its support. 

Fix a continuous and nontrivial unitary character $\psi: F\ra\BC^\times$. Let $\langle\cdot,\cdot\rangle$ be a non-degenerate symmetric bilinear form on $\fg(F)$ which is invariant under conjugation. Let $\fv$ be an $F$-linear subspace of $\fg(F)$, on which the restriction of $\langle\cdot,\cdot\rangle$ is non-degenerate; examples for such $\fv$ include $\fm(F)$ with $M\in\msl^G(M_0)$. It is equipped with the unique self-dual Haar measure with respect to $\psi(\langle\cdot,\cdot\rangle)$. For $f\in\CS(\fv)$, define its Fourier transform $\hat{f}\in\CS(\fv)$ by 
$$ \forall X\in \fv, \hat{f}(X):=\int_{\fv} f(Y) \psi(\langle X,Y \rangle) dY. $$

Let $M\in\msl^G(M_0)$ and $Q\in\msf^G(M)$. For $x\in G(F)$, we define 
$$ v_P(\lambda,x):=e^{-\lambda(H_P(x))}, \forall \lambda\in i\fa_M^\ast, P\in\msp^G(M). $$
By \cite[p. 40-41]{MR625344}, this is a $(G,M)$-family in the sense of \cite[p. 36]{MR625344}. By \cite[Lemma 6.2]{MR625344}, we obtain Arthur's weight function 
\begin{equation}\label{eqweifun}
 v_M^Q(x):=\lim_{\lambda\ra 0} \sum_{\{P\in\msp^G(M): P\subseteq Q\}} v_P(\lambda,x) \theta_P^Q(\lambda)^{-1}, \forall x\in G(F) 
\end{equation}
where $\theta_P^Q(\lambda)$ is defined in \cite[p.15]{MR625344}. 

Given an $F$-linear space of finite dimension equipped with a non-degenerate symmetric bilinear form $q(\cdot,\cdot)$ and a Haar measure, we denote by $\gamma_\psi(q)$ the Weil constant given in \cite[\S 14, Th\'{e}or\`{e}me 2]{MR0165033}. 


\subsection{Symmetric pairs}\label{secgensympai}

\subsubsection{}\label{ssec:def-rss}

Let $F$ be a local field of characteristic zero or a number field. A symmetric pair in the sense of \cite[Definition 7.1.1]{MR2553879} is a triple $(G,H,\theta)$ where $H\subseteq G$ are a pair of reductive groups, and $\theta$ is an involution of $G$ such that $H=G^\theta$. Let $\fs$ be the tangent space at the neutral element of the symmetric space $S:=G/H$. We shall always view $\fs$ as a subspace of $\fg$. Thus 
$$ \fs=\{X\in\fg:(d\theta)(X)=-X\}, $$
on which $H$ acts by the restriction of $\Ad$. By \cite[Lemma 7.1.9]{MR2553879}, there exists a $G$-invariant $\theta$-invariant non-degenerate symmetric bilinear form on $\fg$. 

An element $X\in\fs$ is said to be semi-simple if $\Ad(H)(X)$ is Zariski closed in $\fs$. If $F$ is a local field of characteristic zero, $X\in\fs(F)$ is semi-simple if and only if $\Ad(H(F))(X)$ is closed in $\fs(F)$ in the analytic topology by \cite[Fact A, p. 108-109]{MR1375304}. We say that an element $X\in\fs$ is regular if $H_X$ has minimal dimension. Denote by $\fs_\rs$ the subset of $\fs$ consisting of regular semi-simple elements in $\fs$. 

\subsubsection{}\label{ssec:def-weil-const}

Now suppose that $F$ is a local field of characteristic zero. A Cartan subspace of $\fs$ is defined as a maximal abelian subspace $\fc\subseteq\fs$ defined over $F$ consisting of semi-simple elements. Denote by $\mst^\fs$ the set of Cartan subspaces of $\fs$. Fix a (finite) set of representatives $\mst_0^\fs$ for $H(F)$-conjugacy classes in $\mst^\fs$. Let $\fc\in\mst^\fs$. We write $\fc_\reg:=\fc\cap\fs_\rs$. Denote by $T_\fc$ the centraliser of $\fc$ in $H$. Define the Weyl group 
\begin{equation}\label{eq:defweylcartan}
 W(H,\fc):=\Norm_{H(F)}(\fc)/T_\fc(F). 
\end{equation}
For $\fc_1, \fc_2\in\mst^\fs$, denote by $W(H,\fc_1,\fc_2)$ the set of isomorphisms from $\fc_1$ onto $\fc_2$ induced by $\Ad(x)$ for some $x\in H(F)$. In particular, $W(H,\fc,\fc)$ is nothing but $W(H,\fc)$ (viewed as a set). For $X\in \fc(F)$, define the Weyl discriminant factor 
$$ |D^\fs(X)|_F:=|\det(\ad(X)|_{\fh/\ft_\fc\oplus\fs/\fc})|_F^{1/2}. $$

Let $\langle\cdot,\cdot\rangle$ be a $G$-invariant $\theta$-invariant non-degenerate symmetric bilinear form on $\fg$. For any $F$-linear subspace $\fv$ of $\fg(F)$ such that the restriction of $\langle \cdot,\cdot\rangle$ on $\fv$ is non-degenerate, denote by $\gamma_\psi(\fv)$ the Weil constant associated to $\fv$ (see the end of Section \ref{ssec:stdmaxcpt}). Let $\fc\in\mst^\fs$. For $X,Y\in(\fc\cap\fs_\rs)(F)$, we define a bilinear form $q_{X,Y}$ on $\fh(F)/\ft_{\fc}(F)$ by 
$$ q_{X,Y}(Z,Z'):=\langle [Z,X],[Y,Z'] \rangle. $$
It is non-degenerate and symmetric and we have $q_{X,Y}=q_{Y,X}$. Write  
\begin{equation}\label{gammaXY}
\gamma_\psi(X,Y):=\gamma_\psi(q_{X,Y}). 
\end{equation}

\subsection{The case of $(G,H)$}\label{symmpair1}

\subsubsection{}

We refer to \cite{MR4424024, MR4681295} for more details of the following facts about the first symmetric pair. 

\subsubsection{}\label{ssec:def-omega-stable}

Let $F$ be a local field of characteristic zero or a number field. Let $G=G_n:=GL_{2n}$ and denote by $H=H_n:=GL_n\times GL_n$ its subgroup via diagonal embedding. In fact, $H$ is the subgroup of fixed points of the involution $\Ad(\omega_0)$ on $G$, where 
$$ \omega_0:=\mat(1_n,,,-1_n)\in G(F). $$
We also write $\fs_n:=\fs$. We shall embed $G$ into $\fg$ in the standard way. For an $F$-subvariety $\fv$ of $\fg$, we write $\fv^\times:=\fv\cap G$. From \cite[Proposition 4.2]{MR3414387}, we know $\fs_\rs\subseteq\fs^\times$ in this case. Let $\langle\cdot,\cdot\rangle$ be the non-degenerate symmetric bilinear form on $\fg(F)$ defined by
\begin{equation}\label{bilform1}
 \langle X,Y\rangle:=\tr(XY),\forall X,Y\in\fg(F), 
\end{equation}
which is invariant under the adjoint action of $G(F)$ and $\Ad(\omega_0)$. 

Let $M_0$ be the group of diagonal matrices in $G$. Set 
$$ \omega:=\mat(0,1_n,1_n,0)\in G(F). $$ 
For $P\in\msf^G(M_0)$, we say that $P$ is ``$\omega$-stable'' if $\omega\in P$. Denote by $\msf^{G,\omega}(M_0)$ the subset of $\msf^G(M_0)$ consisting of $\omega$-stable parabolic subgroups. For $M\in\msl^G(M_0)$, we say that $M$ is ``$\omega$-stable'' if $M=M_P$ for some $P\in\msf^{G,\omega}(M_0)$. Denote by $\msl^{G,\omega}(M_0)$ the subset in $\msl^G(M_0)$ consisting of $\omega$-stable Levi subgroups. Let $A_n$ be the group of diagonal matrices in $GL_n$. There is a bijection between $\msl^{GL_n}(A_n)$ and  $\msl^{G,\omega}(M_0)$ induced by 
\begin{equation}\label{eq:notationM_n}
 M_n\mapsto M=\mat(\fm_n,\fm_n,\fm_n,\fm_n)^\times. 
\end{equation}
Given $M\in\msl^{G,\omega}(M_0)$, we shall always write $M_n$ for the preimage of $M$ under this bijection. Notice that if $M\in\msl^{G,\omega}(M_0)$, for $L\in\msl^G(M)$ and $Q\in\msf^G(M)$, we have $L\in\msl^{G,\omega}(M_0)$ and $Q\in\msf^{G,\omega}(M_0)$. There is also a bijection between $\msf^{GL_n}(A_n)$ and  $\msf^{G,\omega}(M_0)$ induced by 
$$ P_n\mapsto P=\mat(\fp_n,\fp_n,\fp_n,\fp_n)^\times. $$ 
Given $P\in\msf^{G,\omega}(M_0)$, we shall always write $P_n$ for the preimage of $P$ under this bijection. For $\fc\in\mst^\fs$ and $L\in\msl^{G,\omega}(M_0)$, define 
\begin{equation}\label{eq:weylcarlevi1}
 W(H,\fc,\fl\cap\fs):=\bigsqcup_{\fc_2\in\mst_0^{\fl\cap\fs}} W(H,\fc,\fc_2). 
\end{equation}

Let $M\in\msl^{G,\omega}(M_0)$. We say that an element $X\in(\fm\cap\fs_\rs)(F)$ (resp. a Cartan subspace $\fc\subseteq\fm\cap\fs$) is $M$-elliptic if $A_M$ is the maximal $F$-split torus in the torus $H_X$ (resp. in $T_\fc$). Denote by $(\fm\cap\fs_\rs)(F)_\el$ the set of $M$-elliptic elements in $(\fm\cap\fs_\rs)(F)$. Write $M_H:=M\cap H$. Denote by $\Gamma_\el((\fm\cap\fs_\rs)(F))$ the set of $M_H(F)$-conjugacy classes in $(\fm\cap\fs_\rs)(F)_\el$. Denote by $\mst_\el^{\fm\cap\fs}$ the subset of $\mst_0^{\fm\cap\fs}$ consisting of representatives that are $M$-elliptic Cartan subspaces. For $X\in\fs_\rs(F)$, $X\in\fs_\rs(F)_\el$ if and only if $\chi_X(\lambda)=p(\lambda^2)$ for some irreducible polynomial $p(\lambda)\in F[\lambda]$ of degree $n$. 

\subsubsection{}

Now suppose that $F$ is a local field of characteristic zero. Let $P\in\msf^{G,\omega}(M_0)$. Then 
$$ \fm_P=\mat(\fm_n, \fm_n, \fm_n, \fm_n) \text{ and } \fn_P=\mat(\fn_n, \fn_n, \fn_n, \fn_n), $$
where we denote $M_n:=M_{P_n}$ and $N_n:=N_{P_n}$. We shall choose the same Haar measure for any of the four copies in $\fm_P(F)$ or $\fn_P(F)$ under these identifications. For $f\in\CS(\fs(F))$, we define a function $f_P^\eta\in\CS((\fm_P\cap\fs)(F))$ by
$$ f_P^\eta(Z):=\int_{K_H\times(\fn_P\cap\fs)(F)} f(\Ad(k^{-1})(Z+U)) \eta(\det(k)) dUdk $$
for all $Z\in(\fm_P\cap\fs)(F)$. Recall that $(\hat{f})_P^\eta=\wh{f_P^\eta}$, and we shall denote it by $\hat{f}_P^\eta$ without confusion. 

Let $M\in\msl^{G,\omega}(M_0)$ and $Q\in\msf^G(M)$. For $f\in \CC_c^\infty(\fs(F))$ and $X\in (\fm\cap\fs_\rs)(F)$, define the weighted orbital integral 
\begin{equation}\label{defnoninvwoi1}
 J_M^Q(\eta, X, f):=|D^\fs(X)|_F^{1/2} \int_{H_X(F)\bs H(F)} f(\Ad(x^{-1})(X)) \eta(\det(x)) v_M^Q(x) dx. 
\end{equation}
For $X=\mat(0,A,B,0)\in\fs_\rs(F)$, we define a transfer factor (see \cite[Definition 5.7]{MR3414387})
\begin{equation}\label{eq:def_trans_fac}
 \kappa(X):=\eta(\det(A)) 
\end{equation}
which satisfies 
$$ \kappa(\Ad(x^{-1})(X))=\eta(\det(x))\kappa(X) $$
for all $x\in H(F)$. It follows that the function $\kappa(\cdot)J_M^Q(\eta, \cdot, f)$ is constant on $\Ad(M_H(F))(X)$. 

\subsubsection{}

Now suppose additionally that $F$ is non-archimedean. Let $M\in\msl^{G,\omega}(M_0)$ and $X\in (\fm\cap\fs_\rs)(F)$. In \cite[\S8.1]{MR4681295}, we deduce from $J_M^G(\eta,X,\cdot)$ an $(H,\eta)$-invariant distribution $I_M^G(\eta, X, \cdot)$ on $\fs(F)$. By \cite[Propositions 7.2 and 8.1]{MR4681295}, there are unique locally constant functions $\hat{j}_M^G(\eta,X,\cdot)$ and $\hat{i}_M^G(\eta,X,\cdot)$ on $\fs_\rs(F)$ representing their Fourier transforms $\hat{J}_M^G(\eta,X,\cdot)$ and $\hat{I}_M^G(\eta,X,\cdot)$ respectively. That is to say, for all $f\in\CC_c^\infty(\fs(F))$, we have 
$$ \hat{J}_M^G(\eta, X, f):=J_M^G(\eta, X, \hat{f})=\int_{\fs(F)} f(U) \hat{j}_M^G(\eta, X, U) |D^\fs(U)|_F^{-1/2} dU $$
and
$$ \hat{I}_M^G(\eta, X, f):=I_M^G(\eta, X, \hat{f})=\int_{\fs(F)} f(U) \hat{i}_M^G(\eta, X, U) |D^\fs(U)|_F^{-1/2} dU. $$

\subsection{The case of $(G',H')$}\label{symmpair2}

\subsubsection{}

We refer to \cite{MR4350885, MR4681295} for more details of the following facts about the second symmetric pair. 

\subsubsection{}\label{ssec:sym2-I-II}

Let $F$ be a local field of characteristic zero or a number field. Let $E$ be a quadratic extension of $F$. Let $\fg'$ be a central simple algebra over $F$ with a fixed $F$-algebra embedding $E\hookrightarrow\fg'(F)$. Let $\fh':=\Cent_{\fg'}(E)$ be the centraliser of $E$ in $\fg'$. Then $\fh'(F)$ is a central simple algebra over $E$ by the double centraliser theorem. Denote by $G':={\fg'}^\times$ (resp. $H':={\fh'}^\times$) the group of invertible elements in $\fg'$ (resp. $\fh'$). Let $\alpha\in E\bs F$ be such that $\alpha^2\in F$, so $E=F(\alpha)$. In fact, $H'$ is the subgroup of fixed points of the involution $\Ad(\alpha)$ on $G'$. Denote by $\fs'$ the corresponding tangent space of $G'/H'$ at the neutral element. For an $F$-subvariety $\fv'\subseteq\fg'$, we write ${\fv'}^\times:=\fv'\cap G'$. By the base change to $\ov{F}$, we see $\fs'_\rs\subseteq{\fs'}^\times$ in this case. Let $\langle\cdot,\cdot\rangle$ be the non-degenerate symmetric bilinear form on $\fg'(F)$ defined by
\begin{equation}\label{bilform2}
 \langle X,Y\rangle:=\Trd(XY),\forall X,Y\in\fg'(F), 
\end{equation}
which is invariant under the adjoint action of $G'(F)$ and $\Ad(\alpha)$. 

By the Wedderburn-Artin theorem, $G'$ is isomorphic to $GL_{r,D}$ where $r:=\rk_F(G')$ for some central division algebra $D$ over $F$ such that $r\deg(D)$ is even. By the Noether-Skolem theorem, up to conjugation by $G'(F)$, the embedding $H'\hookrightarrow G'$ is isomorphic to one of the two cases below (see \cite[\S2.1 and \S3.1]{MR3958071} and \cite[\S3.4]{MR4350885}). 

\textbf{Case I}: if there is an embedding $E\ra D$ as $F$-algebras, then 
$$ (G',H')\simeq(GL_{r,D},\Res_{E/F} GL_{r,D'}), $$
where $D':=\Cent_D(E)$ is a central division algebra over $E$ of degree $\frac{\deg(D)}{2}$. 
Let $M'_0\simeq (\Res_{E/F} \BG_{m,D'})^r$ (resp. $M'_{\wt{0}}\simeq (\BG_{m,D})^r$) be the subgroup of diagonal elements in $H'$ (resp. $G'$). For $M'\in\msl^{H'}(M'_0)$, denote  by $\wt{M'}$ the unique element in $\msl^{G'}(M'_{\wt{0}})$ such that $\wt{M'}\cap H'=M'$. The map $M'\mapsto\wt{M'}$ induces a bijection between $\msl^{H'}(M'_0)$ and $\msl^{G'}(M'_{\wt{0}})$, and we can identify $A_{M'}$ with $A_{\wt{M'}}$. 

\textbf{Case II}: if there is no embedding $E\ra D$ as $F$-algebras, then 
$$ (G',H')\simeq(GL_{r,D},\Res_{E/F} GL_{\frac{r}{2},D\otimes_F E}), $$
where $D\otimes_F E$ is a central division algebra over $E$ of degree $\deg(D)$. 
Let $M'_0\simeq (\Res_{E/F} \BG_{m,D\otimes_F E})^\frac{r}{2}$ (resp. $M'_{\wt{0}}\simeq (\BG_{m,D})^r$) be the subgroup of diagonal elements in $H'$ (resp. $G'$). Denote by $\msl^{G'}(M'_{\wt{0}}, M'_0)$ the subset of elements in $\msl^{G'}(M'_{\wt{0}})$ containing $M'_0$. For $M'\in\msl^{H'}(M'_0)$, denote  by $\wt{M'}$ the unique element in $\msl^{G'}(M'_{\wt{0}}, M'_0)$ such that $\wt{M'}\cap H'=M'$. The map $M'\mapsto\wt{M'}$ induces a bijection between $\msl^{H'}(M'_0)$ and $\msl^{G'}(M'_{\wt{0}}, M'_0)$, and we can identify $A_{M'}$ with $A_{\wt{M'}}$. 

If $\rk_F(G')=r$, we also write $G'_r:=G'$, $H'_r:=H'$ and $\fs'_r:=\fs'$. Notice that $\rk_F(H'_r)=r$ in \textbf{Case I} (resp. $=\frac{r}{2}$ in \textbf{Case II}). 
For $P'\in\msf^{H'}(M'_0)$, denote by $\wt{P'}$ the unique element in $\msf^{G'}(\wt{M'_0})$ such that $\wt{P'}\cap H'=P'$. The map $P'\mapsto\wt{P'}$ induces a bijection between $\msf^{H'}(M'_0)$ and $\msf^{G'}(\wt{M'_0})$ in both of \textbf{Case I} and \textbf{Case II}. Let $\tau\in D^\times$ in \textbf{Case I} (resp. $\tau\in GL_2(D)$ in \textbf{Case II}) be an element such that $\Ad(\alpha)(\tau)=-\tau$. Let $P'\in\msf^{H'}(M'_0)$. By \cite[Proposition 3.12]{MR4350885}, we have 
$$ \fm_{\wt{P'}}\cap\fs'=\fm_{P'}\tau=\tau\fm_{P'} $$
and 
$$ \fn_{\wt{P'}}\cap\fs'=\fn_{P'}\tau=\tau\fn_{P'}. $$
For $\fc'\in\mst^{\fs'}$ and $L'\in\msl^{H'}(M'_0)$, define 
\begin{equation}\label{eq:weylcarlevi2}
 W(H',\fc',\wt{\fl'}\cap\fs'):=\bigsqcup_{\fc'_2\in\mst_0^{\wt{\fl'}\cap\fs'}} W(H',\fc',\fc'_2). 
\end{equation}

Let $M'\in\msl^{H'}(M'_0)$. We say that an element $Y\in(\wt{\fm'}\cap\fs'_\rs)(F)$ (resp. a Cartan subspace $\fc'\subseteq\wt{\fm'}\cap\fs'$) is $M'$-elliptic if $A_{M'}$ is the maximal $F$-split torus in the torus $H'_Y$ (resp. in $T_{\fc'}$). Denote by $(\wt{\fm'}\cap\fs'_\rs)(F)_\el$ the set of $M'$-elliptic elements in $(\wt{\fm'}\cap\fs'_\rs)(F)$. Denote by $\Gamma_\el((\wt{\fm'}\cap\fs'_\rs)(F))$ the set of $M'(F)$-conjugacy classes in $(\wt{\fm'}\cap\fs'_\rs)(F)_\el$. Denote by $\mst_\el^{\wt{\fm'}\cap\fs'}$ the subset of $\mst_0^{\wt{\fm'}\cap\fs'}$ consisting of representatives that are $M'$-elliptic Cartan subspaces. 

\begin{lem}\label{lemsym2ell}
For $Y\in\fs'_\rs(F)$, $Y\in\fs'_\rs(F)_\el$ if and only if $\Prd_Y(\lambda)=p(\lambda^2)$ for some irreducible polynomial $p(\lambda)\in F[\lambda]$ of degree $\frac{r\deg(D)}{2}$. 
\end{lem}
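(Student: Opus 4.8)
The plan is to reduce the assertion to the split case $(G,H)=(GL_{2n},GL_n\times GL_n)$ of Section \ref{symmpair1}, where the analogous criterion has already been recorded (see the remark after \eqref{eq:notationM_n}: $X\in\fs_\rs(F)_\el$ iff $\chi_X(\lambda)=p(\lambda^2)$ with $p$ irreducible of degree $n$). First I would unwind the definition: $Y\in(\wt{\fg'}\cap\fs'_\rs)(F)$ is $H'_r$-elliptic exactly when the maximal $F$-split torus of $H'_Y$ is central, i.e.\ $A_{H'_Y}=A_{H'}$; equivalently, writing $\fc'$ for the Cartan subspace $\fh'_Y$-span of $Y$, the centraliser torus $T_{\fc'}$ contains no $F$-split subtorus beyond $A_{H'}$. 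Since $Y\in\fs'_\rs$ lies in ${\fs'}^\times$, the element $Y^2$ lies in $Z_{G'_Y}$ and in fact in $\fh'(F)$ (it is $\Ad(\alpha)$-fixed), so $F[Y]$ is a commutative $F$-subalgebra of $\fg'(F)$ of the expected dimension and $F[Y^2]\subseteq\fh'(F)$.

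The key computation is the structure of the étale $F$-algebra $F[Y]$. Because $Y\in{\fs'}^\times$ and $\Ad(\alpha)(Y)=-Y$, the algebra $F[Y]$ is stable under $\Ad(\alpha)$, which acts on it as the nontrivial automorphism $Y\mapsto -Y$; its fixed subalgebra is $F[Y^2]$, and $F[Y]$ is free of rank $2$ over $F[Y^2]$. Over $\ov F$ the element $Y$ has $r\deg(D)$ eigenvalues, occurring in pairs $\{\mu,-\mu\}$ (as $\Ad(\alpha)$-conjugate subspaces have opposite eigenvalues and $Y$ is regular semisimple), so $\Prd_Y(\lambda)=p(\lambda^2)$ for a unique monic $p$ of degree $\tfrac{r\deg(D)}{2}$, and $p$ is (up to scalar) the reduced characteristic polynomial of $Y^2$ viewed in $\fh'(F)$. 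Now $H'_Y=Z_{H'}(Y)=Z_{H'}(F[Y])$; unravelling through the Wedderburn description of $\fh'(F)$ as matrices over $D'$ (Case I) or $D\otimes_F E$ (Case II), the centre of $H'_Y$ is $F[Y]^\times$ viewed as a torus over $F$, whose maximal split subtorus has rank equal to the number of factor fields of $F[Y]$ that are split (i.e.\ equal to $F$, resp.\ to the relevant completion). One checks $A_{H'}$ has rank $1$ and corresponds to the scalar subfield, so $H'_Y$ is $H'_r$-elliptic iff $F[Y^2]$ is a field, which by the pairing with $p$ is exactly the irreducibility of $p(\lambda)$; the degree count then forces $\deg p=\tfrac{r\deg(D)}{2}$.

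For the converse direction one argues that irreducibility of $p$ makes $F[Y^2]$ a field $L$ of degree $\tfrac{r\deg(D)}{2}$ over $F$, hence $F[Y]$ is either $L\otimes_F E$ (a field, if $Y^2\notin NE^\times$ in the relevant sense) or $L\times L$; in either case $F[Y]^\times$ has split rank $1$, so $A_{H'_Y}=A_{H'}$ and $Y$ is $H'_r$-elliptic. The subtle point — and what I expect to be the main obstacle — is bookkeeping the Galois/Brauer-theoretic constraints coming from $D$: one must verify that the Wedderburn decomposition of $\fh'(F)$ interacts correctly with $F[Y]$ in \emph{both} Case I and Case II (in Case II the embedded $E$ does not split $D$, so $E\otimes_F L$ need not behave as in Case I), and that the ``pairs of opposite eigenvalues'' statement over $\ov F$ is uniform in $D$. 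I would handle this by passing to a splitting field of $D$ to reduce the eigenvalue-pairing claim to a direct $GL$ computation, and then descend, using that $\Prd_Y$ and $\Prd_{Y^2}$ are insensitive to the splitting. Once that is in place, the equivalence and the degree formula follow as above.
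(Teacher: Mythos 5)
Your approach is genuinely different from the paper's. The paper's proof is a two-line reduction: the forward direction cites Lemma~\ref{lem74} (matching orbits give an $F$-isomorphism $H'_Y\simeq H_X$, so ellipticity transfers and one invokes the known criterion for $\fs$), and the converse cites Lemma~9.1 of \cite{MR4350885} (a non-elliptic $Y$ is conjugate into a proper $\wt{\fm'}\cap\fs'$, so $\Prd_Y$ factors). You instead compute $H'_Y$ directly: since $Y\in{\fs'}^\times$ is regular semisimple, $\fg'_Y=F[Y]$ and $\fh'_Y=F[Y]^\theta=F[Y^2]$, so $H'_Y\simeq\Res_{F[Y^2]/F}\BG_m$; its maximal $F$-split subtorus has rank equal to the number of factor fields of $F[Y^2]$, and ellipticity (rank $1$, namely $A_{H'}$) is thus equivalent to $F[Y^2]\simeq F[\lambda]/(p)$ being a field, i.e.\ $p$ irreducible. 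The eigenvalue-pairing $\{\mu,-\mu\}$ via anticommutation with $\alpha$ and invertibility of $Y$ gives $\Prd_Y=p(\lambda^2)$ with $p$ of the right degree, and one checks $p$ is precisely the minimal polynomial of $Y^2$ over $F$. This is correct and self-contained, which is a real gain: it avoids the forward citation to Lemma~\ref{lem74} and the appeal to the Levi-decomposition result. The concern you raise about Case I versus Case II and Wedderburn bookkeeping is actually unfounded — once you identify $H'_Y$ with $F[Y^2]^\times$ the argument is uniform and you never need the explicit matrix realisation of $\fh'$.

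One caveat: your writeup conflates $F[Y]$ with $F[Y^2]$ in two places, and this is not cosmetic. You write that ``the centre of $H'_Y$ is $F[Y]^\times$'' and later that ``$F[Y]^\times$ has split rank $1$'' even when $F[Y]\simeq L\times L$; neither is right — $F[Y]^\times\simeq G'_Y$ is the full $\theta$-unstable Cartan and for $F[Y]\simeq L\times L$ its split rank is $2$. The torus that matters is $H'_Y=(G'_Y)^\theta\simeq F[Y^2]^\times$, and in your $L\times L$ case $F[Y^2]\simeq L$ so $H'_Y$ does have split rank $1$ — so your conclusion is right, but the quantity you name is not. Replacing $F[Y]$ by $F[Y^2]$ throughout the centraliser discussion fixes the argument.
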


\begin{proof}
Let $n=r\deg(D)$ and define $\fs$ as in Section \ref{symmpair1}. By Lemma \ref{lem74} below, if $Y\in\fs'_\rs(F)_\el$, then there exists $X\in\fs_\rs(F)_\el$ such that $X\da Y$ in the sense of Definition \ref{defbyinv}. Then the assertion about $\Prd_Y$ is a consequence of its analogue for $\fs$. 

By \cite[Lemma 9.1]{MR4350885}, if $Y\notin\fs'_\rs(F)_\el$, then $Y$ is $H'(F)$-conjugate to an element $Z\in(\wt{\fm'}\cap\fs'_\rs)(F)$ for some $M'\in\msl^{H'}(M'_0), M'\subsetneq H'$. Thus $\Prd_Y=\Prd_Z$ cannot satisfy the desired property. 
\end{proof}

\begin{coro}\label{corsym2ell}
Let $Y\in\fs'_\rs(F)$ with $\Prd_Y(\lambda)=\prod\limits_{i=1}^m p_i(\lambda^2)$ where $p_i(\lambda)\in F[\lambda]$ is irreducible for $1\leq i\leq m$. Then for each $i$, 
\begin{enumerate}
	\item there exists $r_i\in\BZ_{>0}$ such that $\deg(p_i)=\frac{r_i\deg(D)}{2}$; 

	\item there exists $Y_i\in\fs'_{r_i,\rs}(F)_\el$ such that $\Prd_{Y_i}(\lambda)=p_i(\lambda^2)$. 
\end{enumerate}
\end{coro}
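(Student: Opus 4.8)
The plan is to deduce Corollary \ref{corsym2ell} from Lemma \ref{lemsym2ell} together with the basic structure theory of central simple algebras and the additivity of reduced characteristic polynomials under block-diagonal decompositions. The starting observation is that a factorisation $\Prd_Y(\lambda)=\prod_{i=1}^m p_i(\lambda^2)$ reflects a decomposition of the underlying module: writing $V$ for the simple (right) $\fg'(F)$-module with $\fg'(F)\simeq\mathrm{End}_D(V)$, the element $Y$ acting $F$-linearly on $V$ decomposes $V$ into $Y$-stable $F[Y]$-submodules according to the coprime factors $p_i(\lambda^2)$; since $Y\in\fs'_\rs$ the structure theory (as recalled in the references \cite{MR4350885, MR3958071}) shows each such piece is itself a $D$-stable subspace on which the anti-involution $\Ad(\alpha)$ restricts, giving a sub-symmetric-pair $(G'_{r_i},H'_{r_i})$ with $Y_i:=Y|_{V_i}\in\fs'_{r_i}(F)$ and $\Prd_{Y_i}(\lambda)=p_i(\lambda^2)$. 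The key numerical input is that the reduced characteristic polynomial of an element of $\fg\fl_{r_i,D}(F)$ has degree $r_i\deg(D)$; since here it is $p_i(\lambda^2)$ of degree $2\deg(p_i)$, we get $\deg(p_i)=\frac{r_i\deg(D)}{2}$, which is part (1). Finally, $\Prd_{Y_i}(\lambda)=p_i(\lambda^2)$ with $p_i$ irreducible of degree $\frac{r_i\deg(D)}{2}$ is exactly the ellipticity criterion of Lemma \ref{lemsym2ell} applied to $\fs'_{r_i}$, so $Y_i\in\fs'_{r_i,\rs}(F)_\el$, which is part (2).

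Concretely, I would argue as follows. First I would check that each factor $p_i(\lambda^2)$ is a genuine reduced characteristic polynomial of some element in a smaller infinitesimal symmetric space: the point is that $\Prd_Y$ being a polynomial in $\lambda^2$ forces $F[Y]\subseteq\fg'(F)$ to be stable under $\Ad(\alpha)$ in the appropriate sense, so the primary decomposition of $V$ under $F[Y]$ respects both the $D$-action and the involution; this is already implicit in the proof of Lemma \ref{lemsym2ell} (where the non-elliptic case is reduced to a proper $\wt{\fm'}$), and I would cite \cite[Lemma 9.1]{MR4350885} or its proof for the bookkeeping. Then, on the $i$-th summand $V_i$, the restriction of $\fg'(F)$ is of the form $\fg\fl_{r_i,D}(F)$ for a well-defined positive integer $r_i$ (with $\sum_i r_i\deg(D)$ recovering $\deg\Prd_Y=r\deg(D)$, hence $\sum r_i = r$), and $\Prd_{Y|_{V_i}}$ is the corresponding block factor $p_i(\lambda^2)$ by multiplicativity of reduced characteristic polynomials along a block decomposition. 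Comparing degrees gives (1). For (2), since $p_i$ is irreducible, $p_i(\lambda^2)$ is the $\Prd$ of $Y_i:=Y|_{V_i}$ and has the shape required by Lemma \ref{lemsym2ell} for $\fs'_{r_i}$, so $Y_i$ is $M'$-elliptic for $M'=H'_{r_i}$, i.e. $Y_i\in\fs'_{r_i,\rs}(F)_\el$; one should also note $Y_i$ is regular semisimple because $\Prd_{Y_i}=p_i(\lambda^2)$ is separable (being a factor of the separable polynomial $\Prd_Y$), which via the characterisation of $\fs'_\rs$ in terms of reduced characteristic polynomials places $Y_i$ in $\fs'_{r_i,\rs}(F)$.

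The main obstacle I anticipate is purely organisational rather than deep: making precise the claim that the primary decomposition of $V$ (as an $F[Y]$-module) is simultaneously a decomposition as a $D\otimes_F E$-module (Case II) or $D$-module with an $E$-semilinear structure (Case I), and that the restricted algebra on each piece is again of the form $GL_{r_i, D}$ of the expected type — i.e. that passing to a block does not change the central division algebra $D$ nor move one between Case I and Case II. This follows from the double centraliser theorem and Noether–Skolem applied to each block, exactly as in Section \ref{ssec:sym2-I-II}, but it requires stating carefully that $E$ acts compatibly on each $V_i$ (which is where $\Prd_Y\in F[\lambda^2]$ is used). Once this is in place, parts (1) and (2) are immediate consequences of the degree formula for $\Prd$ on $\fg\fl_{r_i,D}$ and of Lemma \ref{lemsym2ell}, respectively, so no further computation is needed.
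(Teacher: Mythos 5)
Your proposal is correct and follows essentially the same route as the paper: reduce to the statement that $Y$ is $H'(F)$-conjugate to an element of $(\wt{\fm'}\cap\fs'_\rs)(F)_\el$ for some Levi $M'$ (which you attribute, correctly, to \cite[Lemma 9.1]{MR4350885}), read off the block decomposition, and feed each block into Lemma~\ref{lemsym2ell}. The paper cites \cite[Lemma 9.1]{MR4350885} as a black box and is done in two sentences; your primary-decomposition and double-centraliser discussion is in effect an unpacking of that lemma's content rather than a genuinely different argument, and it is sound.
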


\begin{proof}
By \cite[Lemma 9.1]{MR4350885}, there exists $M'\in\msl^{H'}(M'_0)$ and $Z\in(\wt{\fm'}\cap\fs'_\rs)(F)_\el$ which is $H'(F)$-conjugate to $Y$. The assertion results from $\Prd_Y=\Prd_Z$ and Lemma \ref{lemsym2ell}. 
\end{proof}

\subsubsection{}

Now suppose that $F$ is a local field of characteristic zero. Let $P'\in\msf^{H'}(M'_0)$. We shall choose the same Haar measures on $(\fm_{\wt{P'}}\cap\fs')(F)$ and $(\fn_{\wt{P'}}\cap\fs')(F)$ as those on $\fm_{P'}(F)$ and $\fn_{P'}(F)$ respectively using above identifications induced by $\tau$. Such Haar measures depend on the choice of $\tau$. For $f'\in\CC_c^\infty(\fs'(F))$, we define a function $f'_{P'}\in\CC_c^\infty((\fm_{\wt{P'}}\cap\fs')(F))$ by
$$ f'_{P'}(Z):=\int_{K_{H'}\times(\fn_{\wt{P'}}\cap\fs')(F)} f'(\Ad(k^{-1})(Z+U)) dUdk $$
for all $Z\in(\fm_{\wt{P'}}\cap\fs')(F)$. Recall that $(\hat{f'})_{P'}=\wh{f'_{P'}}$, and we shall denote it by $\hat{f}'_{P'}$ without confusion. 

Let $M'\in\msl^{H'}(M'_0)$ and $Q'\in\msf^{H'}(M')$. For $f'\in \CC_c^\infty(\fs'(F))$ and $Y\in(\wt{\fm'}\cap\fs'_\rs)(F)$, define the weighted orbital integral
\begin{equation}\label{defnoninvwoi2}
 J_{M'}^{Q'}(Y, f'):=|D^{\fs'}(Y)|_F^{1/2} \int_{H'_Y(F)\bs H'(F)} f'(\Ad(x^{-1})(Y)) v_{M'}^{Q'}(x) dx. 
\end{equation}

\subsubsection{}

Now suppose additionally that $F$ is non-archimedean. Let $M'\in\msl^{H'}(M'_0)$ and $Y\in(\wt{\fm'}\cap\fs'_\rs)(F)$. In \cite[\S8.2]{MR4681295}, we deduce from $J_{M'}^{H'}(Y,\cdot)$ an $H'$-invariant distribution $I_{M'}^{H'}(Y, \cdot)$ on $\fs'(F)$. By \cite[Propositions 7.10 and 8.6]{MR4681295}, there are unique locally constant functions $\hat{j}_{M'}^{H'}(Y,\cdot)$ and $\hat{i}_{M'}^{H'}(Y,\cdot)$ on $\fs'_\rs(F)$ representing their Fourier transforms $\hat{J}_{M'}^{H'}(Y,\cdot)$ and $\hat{I}_{M'}^{H'}(Y,\cdot)$ respectively. That is to say, for all $f'\in\CC_c^\infty(\fs'(F))$, we have 
$$ \hat{J}_{M'}^{H'}(Y, f'):=J_{M'}^{H'}(Y, \hat{f'})=\int_{\fs'(F)} f'(V) \hat{j}_{M'}^{H'}(Y, V) |D^{\fs'}(V)|_F^{-1/2} dV $$
and
$$ \hat{I}_{M'}^{H'}(Y, f'):=I_{M'}^{H'}(Y, \hat{f'})=\int_{\fs'(F)} f'(V) \hat{i}_{M'}^{H'}(Y, V) |D^{\fs'}(V)|_F^{-1/2} dV. $$

\subsection{Labesse's lemma}\label{ssec:lab_lem}

Let $F$ be a non-archimedean local field of characteristic zero and $E$ be a quadratic extension of $F$. Let $\eta$ be the quadratic character of $F^\times/NE^\times$ attached to $E/F$. 

For $f\in\CC_c^\infty(\fs(F))$, we say that the weighted orbital integrals of $f$ vanish for nontrivial weights if for all $M\in\msl^{G,\omega}(M_0), Q\in\msf^G(M)-\msp^G(M)$ and $X\in(\fm\cap\fs_\rs)(F)$, we have 
$$ J_M^Q(\eta, X, f)=0. $$ 
Suppose that $f$ satisfies this condition. By definition \cite[(8.1.1)]{MR4681295}, for all $M\in\msl^{G,\omega}(M_0)$ and $X\in(\fm\cap\fs_\rs)(F)$, we have the equality
$$ J_M^G(\eta, X, \hat{f})=I_M^G(\eta, X, \hat{f}). $$

For $f'\in\CC_c^\infty(\fs'(F))$, we say that the weighted orbital integrals of $f'$ vanish for nontrivial weights if for all $M'\in\msl^{H'}(M'_0), Q'\in\msf^{H'}(M')-\msp^{H'}(M')$ and $Y\in(\wt{\fm'}\cap\fs'_\rs)(F)$, we have 
$$ J_{M'}^{Q'}(Y, f')=0. $$ 
Suppose that $f'$ satisfies this condition. By definition \cite[(8.2.1)]{MR4681295}, for all $M'\in\msl^{H'}(M'_0)$ and $Y\in(\wt{\fm}\cap\fs'_\rs)(F)$, we have the equality
$$ J_{M'}^{H'}(Y, \hat{f'})=I_{M'}^{H'}(Y, \hat{f'}). $$

\begin{lem}\label{lemlabesse}
\leavevmode
\begin{enumerate}
	\item Let $f\in\CC_c^\infty(\fs(F))$ be such that $\Supp(f)\subseteq\fs_\rs(F)$. Then there exists $\phi\in\CC_c^\infty(\fs(F))$ such that

\begin{enumerate}
	\item $\Supp(\phi)\subseteq\Ad(H(F))(\Supp(f))$; 

	\item for all $X\in\fs_\rs(F)$, $J_G^G(\eta, X, \phi)=J_G^G(\eta, X, f)$; 

	\item the weighted orbital integrals of $\phi$ vanish for nontrivial weights. 
\end{enumerate}

	\item Let $f'\in\CC_c^\infty(\fs'(F))$ be such that $\Supp(f')\subseteq\fs'_\rs(F)$. Then there exists $\phi'\in\CC_c^\infty(\fs'(F))$ such that

\begin{enumerate}
	\item $\Supp(\phi')\subseteq\Ad(H'(F))(\Supp(f'))$; 

	\item for all $Y\in\fs'_\rs(F)$, $J_{H'}^{H'}(Y, \phi')=J_{H'}^{H'}(Y, f')$; 

	\item the weighted orbital integrals of $\phi'$ vanish for nontrivial weights. 
\end{enumerate}
\end{enumerate}
\end{lem}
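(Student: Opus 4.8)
We prove (1); part (2) follows by the same argument, with $(G,H)$, the $(H,\eta)$‑twist and the $\omega$‑stable parabolics replaced throughout by $(G',H')$, the untwisted orbital integrals and the parabolics $\wt{P'}$. This is the infinitesimal, $(H,\eta)$‑twisted avatar of Labesse's lemma, and I would follow the pattern of \cite{MR2164623} and of Labesse's original argument. The proof proceeds by induction on $\dim_F G$, the statement being understood for finite products of groups of the form $GL_{2m}$ so that the inductive hypothesis applies to Levi subgroups. When $G$ admits no proper $\omega$‑stable Levi subgroup (for instance when $n=1$), the vanishing condition in (c) is empty and one takes $\phi=f$.

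The inductive step rests on two reductions. First, if $Q\in\msf^{G,\omega}(M_0)$, $Z\in(\fm_Q\cap\fs)(F)$ and $U\in(\fn_Q\cap\fs)(F)$, then $\chi_{Z+U}=\chi_Z$; since $\Ad(K_H)$ preserves $\fs_\rs(F)$ and regular semi‑simplicity is detected by the characteristic polynomial (together with $\fs_\rs\subseteq\fs^\times$), it follows that $\Supp(f_Q^\eta)\subseteq(\fm_Q\cap\fs_\rs)(F)$. Thus each constant term $f_Q^\eta$ lies in the regular semi‑simple locus of the smaller infinitesimal symmetric space attached to $M_Q\cap H\subseteq M_Q$, and the inductive hypothesis applies to it. Second, for $Q\subsetneq G$ and $M\subseteq M_Q$ the analogue of Arthur's descent formula expresses $J_M^Q(\eta,X,\phi)$ as an $M_Q$‑weighted orbital integral of the constant term $\phi_Q^\eta$ on $(\fm_Q\cap\fs)(F)$, while the analogue of Arthur's splitting formula expresses both the top weighted integrals $J_M^G(\eta,X,\phi)$ for $M\subsetneq G$ and, when $X$ is conjugate into a proper Levi, the plain integral $J_G^G(\eta,X,\phi)$ itself, in terms of the family $\{\phi_Q^\eta:Q\subsetneq G\}$ of constant terms; these identities are available from the constructions of \cite{MR4681295}. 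Hence knowing $\phi$ on the elliptic locus together with all its proper constant terms controls every quantity appearing in (b) and (c).

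I would then build $\phi$ as follows. Applying the inductive hypothesis to each $f_Q^\eta$ ($Q\subsetneq G$) produces $\psi_Q\in\CC_c^\infty((\fm_Q\cap\fs)(F))$, supported in $\Ad((M_Q\cap H)(F))(\Supp(f_Q^\eta))$, with the same orbital integrals as $f_Q^\eta$ and with vanishing $M_Q$‑weighted orbital integrals for nontrivial weights. One then sets $\phi$ equal to $f$, minus the parabolic inductions to $\fs(F)$ of the suitably normalised constant terms $f_Q^\eta$, plus the parabolic inductions of the corresponding $\psi_Q$, the two sums being organised by an inclusion–exclusion over the poset $\msl^{G,\omega}(M_0)$ exactly as in Arthur's and Labesse's handling of constant terms, so that: $\Supp(\phi)\subseteq\Ad(H(F))(\Supp(f))$; the elliptic orbital integrals of $\phi$ coincide with those of $f$ (parabolically induced functions have vanishing orbital integrals at elliptic elements, hence drop out); and for each $Q\subsetneq G$ the constant term $\phi_Q^\eta$ has the same orbital integrals as $f_Q^\eta$ and is itself ``nice'', i.e. has vanishing nontrivial‑weight $M_Q$‑weighted orbital integrals. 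Granting this, (a) is immediate; the descent formula yields $J_M^Q(\eta,\cdot,\phi)=0$ for all $Q\subsetneq G$ and nontrivial weights, and, combined with the splitting formula and the niceness of the $\phi_Q^\eta$, also $J_M^G(\eta,\cdot,\phi)=0$ for $M\subsetneq G$, which is (c); and the descent of plain orbital integrals, applied separately to elliptic and to non‑elliptic $X$, gives $J_G^G(\eta,X,\phi)=J_G^G(\eta,X,f)$ for all $X\in\fs_\rs(F)$, which is (b).

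The main obstacle is the penultimate step: arranging, simultaneously and in a way compatible across all $Q$, that the constant terms of $\phi$ reproduce the prescribed orbital integrals, are nice in the inductive sense, and leave the elliptic orbital integrals of $f$ unchanged — and then reading off from the splitting and descent formulae that every genuinely weighted orbital integral of $\phi$ vanishes. Getting the inclusion–exclusion bookkeeping right, and in particular checking that parabolic induction interacts correctly with the ``vanishing for nontrivial weights'' condition, is essentially the whole content of the lemma; the remaining verifications are formal. Part (2) is proved verbatim in this manner.
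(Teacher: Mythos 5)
Your approach is genuinely different from the paper's. The paper's proof (following Labesse's original argument rather than an induction on $\dim G$) is direct: after reducing via the descent formula to $X\in(\fm\cap\fs_\rs)(F)_\el$, it writes down an explicit averaging function $\alpha_\fc$ on $T_\fc(F)\bs H(F)$ supported near $\Norm_{H(F)}(\fc)K_H$, with $\int\alpha_\fc\,dx=1$ and $\int\alpha_\fc v_M^Q\,dx=0$ for $Q\in\msf^G(M)\setminus\msp^G(M)$; the function $\phi$ is then built out of $\alpha_\fc$, the transfer factor $\kappa$, and the orbital integrals of $f$. The two crucial inputs are that $v_M^Q$ vanishes identically on $M_H(F)K_H$ unless $Q\in\msp^G(M)$ (in which case $v_M^Q=1$), and that $\Norm_{H(F)}(\fc)\subseteq M_H(F)W_0^H$. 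No constant terms, parabolic inductions, or induction on the rank are needed.

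Your inductive/constant-term scheme has a genuine gap, which you in fact come close to acknowledging in your final paragraph. The vanishing condition (c) ranges over all $Q\in\msf^G(M)\setminus\msp^G(M)$, and after reducing to elliptic $X\in(\fm\cap\fs_\rs)(F)_\el$ via descent, there remain two families of cases. For $Q\subsetneq G$, the parabolic-descent identity $J_M^Q(\eta,X,\phi)=J_M^{M_Q}(\eta,X,\phi_Q^\eta)$ does reduce the problem to the constant terms, and your inductive hypothesis can be brought to bear. But for $Q=G$ and $M\subsetneq G$ — the ``top'' weights $v_M^G$ applied to $M$-elliptic elements — there is no descent available, and the ``splitting formula'' you invoke for $J_M^G(\eta,X,\phi)$ in terms of $\{\phi_Q^\eta:Q\subsetneq G\}$ does not exist: Arthur's splitting formula for $(G,M)$-families factorizes $v_M^G(x_1x_2)$ over a product decomposition of $x$ and is not a statement that determines $J_M^G$ from proper constant terms. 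The weighted integral $J_M^G(\eta,X,\phi)$ at elliptic $X$ genuinely sees more than the constant terms $\phi_Q^\eta$, so neither the inclusion–exclusion cancellation nor the ``niceness'' of the $\psi_Q$ forces it to vanish. This is exactly the hard content of the lemma, and it is what the explicit construction of $\alpha_\fc$ achieves (via the pointwise vanishing of $v_M^G$ on $\Norm_{H(F)}(\fc)K_H$ for $M\subsetneq G$). Until you can produce $\phi$ whose support is concentrated on the right subset of $H(F)$-space — not merely controlled at the level of constant terms — the argument does not close.

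Two smaller remarks. Your preliminary observation that $\Supp(f_Q^\eta)\subseteq(\fm_Q\cap\fs_\rs)(F)$ is correct and worth having, but be aware that the conclusion you would ultimately need, namely $\Supp(\phi)\subseteq\Ad(H(F))(\Supp(f))$, is not automatic for a sum of parabolic inductions unless one carefully restricts supports; the paper sidesteps this by constructing $\phi$ pointwise on $\Ad(H(F))(\fc_\reg(F))$. Also, the paper does not work by induction on $\dim G$ and does not invoke the inclusion–exclusion over $\msl^{G,\omega}(M_0)$ that your proposal turns on; if you do wish to pursue the inductive route you will need to locate, or prove, an ``induction formula'' expressing $J_M^G(\eta,X,\operatorname{Ind}_Q g)$ (not just $J_M^Q$ for $Q\subsetneq G$) in terms of the weighted orbital integrals of $g$, and then verify the cancellation by hand — I do not think the references you cite supply this.
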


\begin{proof}
This is a relative and infinitesimal analogue of Labesse's lemma \cite[Lemma I.7.1]{MR1339717}. We can imitate the proof in {\it{loc. cit.}} with the preparation in \cite{MR4681295} except that we have to take care of the character $\eta$. For completeness, we include a proof for the case of $(G,H)$ here. 

Let $M\in\msl^{G,\omega}(M_0), Q\in\msf^G(M)$ and $X\in(\fm\cap\fs_\rs)(F)$. There exists $L\in\msl^{G,\omega}(M_0), L\subseteq M$ and $y\in M_H(F)$ such that $Z:=\Ad(y^{-1})(X)\in(\fl\cap\fs_\rs)(F)_\el$. For $\phi\in\CC_c^\infty(\fs(F))$, by the descent formula (cf. \cite[Proposition 4.1.(5)]{MR4681295} and \cite[Lemma I.6.2]{MR1339717}), we have 
$$ J_M^Q(\eta, X, \phi)=\eta(\det(y)) J_M^Q(\eta, Z, \phi)=\eta(\det(y)) \sum_{L'\in\msl^{M_Q}(L)} d_L^{M_Q}(M, L') J_L^{Q'}(\eta, Z, \phi) $$
where $d_L^{M_Q}(M, L')\in\BR_{\geq0}$ is defined in \cite[p.356]{MR928262} and $Q'\in\msp^G(L')$ is given in \cite[Lemma I.1.2]{MR1339717}. In particular, both of $L'$ and $Q'$ are $\omega$-stable. It is known that $d_L^{M_Q}(M, L')\neq0$ if and only if $\fa_L^M\oplus\fa_L^{L'}=\fa_L^{M_Q}$. In this case, if $Q\notin\msp^G(M)$, then $L\neq L'=M_{Q'}$. Therefore, to check the condition (c), it suffices to check that $J_M^Q(\eta, X, \phi)=0$ for all $M\in\msl^{G,\omega}(M_0), Q\in\msf^G(M)-\msp^G(M)$ and $X\in(\fm\cap\fs_\rs)(F)_\el$. 

Let $\fc\subseteq\fm\cap\fs$ be an $M$-elliptic Cartan subspace. It is known that $v_M^Q$ vanishes on $M_H(F)K_H$ unless $Q\in\msp^G(M)$, in which case $v_M^Q=1$. By \cite[Lemma 7.9.(1)]{MR4681295}, we have 
$$ \Norm_{H(F)}(\fc)\subseteq M_H(F) W_0^H, $$
so $v_M^Q$ vanishes on $\Norm_{H(F)}(\fc) K_H=T_\fc(F) W(H,\fc) K_H$ unless $Q\in\msp^G(M)$. Hence, there exists a compactly supported function $\alpha_\fc$ on $T_\fc(F)\bs\Norm_{H(F)}(\fc) K_H\subseteq T_\fc(F)\bs H(F)$ which is left $W(H,\fc)$-invariant and right $K_H$-invariant such that 
\begin{enumerate}[(i)]
	\item we have 
	$$ \int_{T_\fc(F)\bs H(F)} \alpha_\fc(x) dx=1; $$
	
	\item for all $Q\in\msf^G(M)-\msp^G(M)$, 
	$$ \int_{T_\fc(F)\bs H(F)} \alpha_\fc(x) v_M^Q(x) dx=0. $$
\end{enumerate}
If $\fc'=\Ad(h)(\fc)\subseteq\fm'\cap\fs$ is an $M'$-elliptic Cartan subspace for some $h\in H(F)$ and $M'\in\msl^{G,\omega}(M_0)$, we can and we shall choose $\alpha_{\fc'}$ and $\alpha_\fc$ in a compatible way that $\alpha_{\fc'}(hx)=\alpha_\fc(x)$. 

Consider the map 
$$ T_\fc(F)\bs H(F) \times \fc_\reg(F) \ra \fs_\rs(F) $$
defined by $(x, X)\mapsto\Ad(x^{-1})(X)$, which induces an étale covering from its source onto its image whose fibres are orbits under $W(H,\fc)$. From $\Supp(f)\subseteq\fs_\rs(F)$ and \cite[Proposition 4.1.(2)]{MR4681295}, we know that $\kappa(\cdot) J_G^G(\eta, \cdot, f)$ is a $W(H,\fc)$-invariant, locally constant and compactly supported function on $\fc_\reg(F)$. Then the function $\varphi_\fc: T_\fc(F)\bs H(F) \times \fc_\reg(F)\ra\BC$ defined by 
$$ \varphi_\fc(x,X):=\alpha_\fc(x) \kappa(X) \int_{T_\fc(F)\bs H(F)} f(\Ad(z^{-1})(X)) \eta(\det(z)) dz $$ 
is $W(H,\fc)$-invariant. If $\fc'=\Ad(h)(\fc)\subseteq\fm'\cap\fs$ is an $M'$-elliptic Cartan subspace for some $h\in H(F)$ and $M'\in\msl^{G,\omega}(M_0)$, we see that 
$$ \varphi_{\fc'}(hx, \Ad(h)(X))=\varphi_\fc(x,X). $$
Therefore, there exists $\varphi\in\CC_c^\infty(\fs(F))$ supported on $\fs_\rs(F)$ and whose restriction to $\Ad(H(F))(\fc_\reg(F))$ is such that 
$$ \varphi(\Ad(x^{-1})(X))=\varphi_\fc(x,X), $$
and this function $\varphi$ is independent of the choice of $\fc$ in its $H(F)$-conjugacy class. 

Let $\phi:=\kappa\varphi\in\CC_c^\infty(\fs(F))$ which is supported on $\fs_\rs(F)$. It is clear from the construction that $\phi$ satisfies the condition (a). Now, let us check (b). Up to $H(F)$-conjugation, it suffices to consider $X\in\fc_\reg(F)$ where $\fc\subseteq\fm\cap\fs$ is an $M$-elliptic Cartan subspace with $M\in\msl^{G,\omega}(M_0)$. In this case, we have 
\[\begin{split}
 J_G^G(\eta,X,\phi)=|D^\fs(X)|_F^{1/2} \kappa(X) \int_{T_\fc(F)\bs H(F)} \varphi(\Ad(x^{-1})(X)) dx  \\ 
 =|D^\fs(X)|_F^{1/2} \int_{T_\fc(F)\bs H(F)} \alpha_\fc(x) dx \int_{T_\fc(F)\bs H(F)} f(\Ad(z^{-1})(X)) \eta(\det(z)) dz = J_G^G(\eta,X,f), 
\end{split}\]
which verifies the condition (b). Finally, we check (c). From the discussion at the beginning, it suffices to consider $M\in\msl^{G,\omega}(M_0), Q\in\msf^G(M)-\msp^G(M)$ and $X\in(\fm\cap\fs_\rs)(F)_\el$. Let $\fc:=\fs_X\subseteq\fm\cap\fs$, which is an $M$-elliptic Cartan subspace. We have 
\[\begin{split}
 J_M^Q(\eta, X, \phi)=|D^\fs(X)|_F^{1/2} \kappa(X) \int_{T_\fc(F)\bs H(F)} \varphi(\Ad(x^{-1})(X)) v_M^Q(x) dx  \\ 
 =|D^\fs(X)|_F^{1/2} \int_{T_\fc(F)\bs H(F)} \alpha_\fc(x) v_M^Q(x) dx \int_{T_\fc(F)\bs H(F)} f(\Ad(z^{-1})(X)) \eta(\det(z)) dz =0, 
\end{split}\]
which verifies the condition (c). 
\end{proof}

\begin{lem}\label{lem:Lab_I.6.4}
\leavevmode
\begin{enumerate}
	\item Let $f\in\CC_c^\infty(\fs(F))$ and $Q\in\msf^{G,\omega}(M_0)$. Suppose that the weighted orbital integrals of $f$ vanish for nontrivial weights. Then for all $L\in\msl^{M_Q,\omega}(M_0)$, $P\in\msf^{M_Q}(L)-\msp^{M_Q}(L)$ and $X\in(\fl\cap\fs_\rs)(F)$, we have 
	$$ J_L^P(\eta, X, f_Q^\eta)=0. $$
	
	\item Let $f'\in\CC_c^\infty(\fs'(F))$ and $Q'\in\msf^{H'}(M'_0)$. Suppose that the weighted orbital integrals of $f'$ vanish for nontrivial weights. Then for all $L'\in\msl^{M_{Q'}}(M'_0)$, $P'\in\msf^{M_{Q'}}(L')-\msp^{M_{Q'}}(L')$ and $Y\in(\wt{\fl'}\cap\fs'_\rs)(F)$, we have 
	$$ J_{L'}^{P'}(Y, f'_{Q'})=0. $$
\end{enumerate}
\end{lem}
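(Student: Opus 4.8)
The statement is the analogue of Labesse's Lemma I.6.4 for the two infinitesimal symmetric spaces, so the plan is to reduce the constant-term statement $J_L^P(\eta,X,f_Q^\eta)=0$ (resp.\ $J_{L'}^{P'}(Y,f'_{Q'})=0$) to the hypothesis that the weighted orbital integrals of $f$ (resp.\ $f'$) vanish for nontrivial weights on the ambient space. We treat the case of $(G,H)$ in detail; the case of $(G',H')$ is entirely parallel (using the identifications $\fm_{\wt{P'}}\cap\fs'=\tau\fm_{P'}$ and the parabolic descent from \cite[Proposition 3.12]{MR4350885}), and one omits the character $\eta$ there.

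First I would record the parabolic-descent identity for weighted orbital integrals relating a weighted orbital integral on $M_Q$ against $f_Q^\eta$ to weighted orbital integrals on $G$ against $f$. Concretely, for $L\in\msl^{M_Q,\omega}(M_0)$, $P\in\msf^{M_Q}(L)$ and $X\in(\fl\cap\fs_\rs)(F)$, there is a splitting formula (the relative, infinitesimal analogue of \cite[Lemma I.6.3]{MR1339717}, available through the $(G,M)$-family formalism recalled in Section \ref{secnotpre} and the constant-term construction $f\mapsto f_Q^\eta$) of the shape
\[
 J_L^P(\eta, X, f_Q^\eta)=\sum_{R\in\msf^{G,\omega}(L):\, R\cap M_Q=P} J_L^R(\eta, X, f),
\]
or more precisely a sum over $R$ with coefficients given by Arthur's $d$-numbers which, for this particular family of $(G,M)$-families, collapses to the sum over parabolics $R$ of $G$ whose intersection with $M_Q$ is exactly $P$. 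The key point is that the weight function $v_L^R$ on the right is the one attached to the $(G,L)$-family, and that $f_Q^\eta$ is precisely the object (constant term along $Q$, twisted by $\eta$) that makes this identity hold; this is the content the earlier sections have set up, so I would cite \cite[Proposition 4.1]{MR4681295} and \cite[Lemma I.6.2, Lemma I.6.3]{MR1339717} rather than reprove it.

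Next I would run the same argument as in the proof of Lemma \ref{lemlabesse}: since $P\in\msf^{M_Q}(L)-\msp^{M_Q}(L)$, the chamber condition $\fa_L^{M_Q}=\fa_L^{M_P'}$ fails for $M_P'=M_Q$, and for every $R$ in the sum with $R\cap M_Q=P$ one has $R\notin\msp^G(L)$ (otherwise $L=M_R$ would force $P=R\cap M_Q=M_Q$, contradicting $P\neq M_Q$). Every such $R$ is automatically $\omega$-stable because $L$ is (by the remark in Section \ref{ssec:def-omega-stable} that $\msf^G(M)\subseteq\msf^{G,\omega}(M_0)$ once $M$ is $\omega$-stable). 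Hence each term $J_L^R(\eta,X,f)$ with $R\in\msf^G(L)-\msp^G(L)$ vanishes by the hypothesis that the weighted orbital integrals of $f$ vanish for nontrivial weights, so the whole sum is zero, proving (1). For (2) one repeats verbatim with $(G',H')$, $\tau\fm_{P'}$, the descent formula of \cite[Proposition 3.12]{MR4350885}, and no $\eta$.

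**Main obstacle.** The genuinely delicate point is establishing the splitting/descent identity for the constant term $f_Q^\eta$ with the correct combinatorial coefficients — i.e.\ verifying that in the relative, $\eta$-twisted, infinitesimal setting the $(G,M)$-family bookkeeping produces exactly the sum over $R$ with $R\cap M_Q=P$ and that Arthur's $d_L^{M_Q}(\cdot,\cdot)$ coefficients behave as in \cite{MR928262}. Once that identity is in hand (and it is essentially a formal consequence of the constructions in \cite{MR4681295} together with Arthur's $(G,M)$-family calculus), the vanishing is immediate. I would therefore spend the bulk of the write-up carefully stating this descent formula and citing the precise references, and only a line or two on the vanishing itself.
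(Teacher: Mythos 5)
Your proposed descent formula
\[
 J_L^P(\eta, X, f_Q^\eta)=\sum_{R\in\msf^{G,\omega}(L):\, R\cap M_Q=P} J_L^R(\eta, X, f)
\]
is not correct, and this is the crux of the argument, so it is a genuine gap. The set of parabolics $R\in\msf^G(L)$ with $R\cap M_Q=P$ is in general not a singleton, and the coefficients are not all equal to $1$. For instance, take $G=GL_4$, $L=M_0$ the diagonal torus, $Q$ the standard parabolic of type $(2,2)$ so that $M_Q\simeq GL_2\times GL_2$, and $P\subseteq M_Q$ the standard parabolic with Levi $GL_1\times GL_1\times GL_2$. Then both the standard parabolic of $G$ of type $(1,1,2)$ (which is $PN_Q$) and the one of type $(1,3)$ intersect $M_Q$ in exactly $P$. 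Your ``collapse to the sum with coefficient $1$'' therefore produces a spurious second term. Arthur's splitting formulae for $(G,M)$-families, which you gesture towards, relate a weighted orbital integral in $G$ to weighted orbital integrals against constant terms at \emph{various} Levis via the $d_M^G$-coefficients; they are not of the shape you wrote here.

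The actual mechanism the paper uses is much simpler and avoids any splitting: it is transitivity of parabolic descent. Applying the descent identity $J_M^{Q}(\eta,X,f)=J_M^{M_Q}(\eta,X,f_Q^\eta)$ of \cite[Proposition 4.1.(4)]{MR4681295} inside $M_Q$ gives $J_L^P(\eta,X,f_Q^\eta)=J_L^{M_P}(\eta,X,(f_Q^\eta)_P^\eta)$; a direct change of variables shows $(f_Q^\eta)_P^\eta=f_{PN_Q}^\eta$ (this is where $\vol(K_{M_{Q_H}})=1$ enters); and applying the same descent identity again, now in $G$ for the parabolic $PN_Q\in\msp^G(M_P)$, gives $J_L^{M_P}(\eta,X,f_{PN_Q}^\eta)=J_L^{PN_Q}(\eta,X,f)$. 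So the constant-term weighted orbital integral equals a \emph{single} weighted orbital integral $J_L^{PN_Q}(\eta,X,f)$. Since $M_{PN_Q}=M_P$, the condition $P\notin\msp^{M_Q}(L)$ (i.e.\ $M_P\neq L$) is equivalent to $PN_Q\notin\msp^G(L)$, and the vanishing hypothesis on $f$ finishes the argument. Your conclusion is correct, and your high-level intuition (reduce the constant-term statement to the vanishing hypothesis on $G$) is the right one, but the intermediate identity you rely on is false, and you should replace it by the two applications of descent plus the transitivity relation $(f_Q^\eta)_P^\eta=f_{PN_Q}^\eta$. A minor additional slip: in your dismissal of the case $R\in\msp^G(L)$ you claim it would force $R\cap M_Q=M_Q$; in fact it forces $R\cap M_Q\in\msp^{M_Q}(L)$, which is what actually contradicts $P\notin\msp^{M_Q}(L)$.
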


\begin{proof}
This is a consequence of a relative and infinitesimal analogue of \cite[Lemma I.6.4.(i)]{MR1339717}. For completeness, we include a proof for the case of $(G,H)$ here. 

By \cite[Proposition 4.1.(4)]{MR4681295}, we have 
\begin{equation}\label{eq:pardes1}
 J_L^P(\eta, X, f_Q^\eta)=J_L^{M_P}(\eta, X, (f_Q^\eta)_P^\eta). 
\end{equation}
Note that 
\[\begin{split}
	(f_Q^\eta)_P^\eta=\int_{K_{M_{Q_H}}\times (\fn_P\cap\fs)(F)} f_Q^\eta(\Ad(g^{-1})(Z+V))\eta(\det(g)) dV dg \\ 
	=\int_{K_{M_{Q_H}}\times (\fn_P\cap\fs)(F)} \int_{K_H\times(\fn_Q\cap\fs)(F)} f(\Ad(k^{-1})(\Ad(g^{-1})(Z+V)+U)) \eta(\det(k)) \eta(\det(g)) dUdk dV dg.
\end{split}\]
Using changes of variables $U\mapsto \Ad(g^{-1})(U)$ and $k\mapsto g^{-1}k$, since $\vol(K_{M_{Q_H}})=1$, we obtain 
$$ (f_Q^\eta)_P^\eta=f_{PN_Q}^\eta, $$
where $PN_Q\in\msf^{G,\omega}(M_0)\cap\msp^G(M_P)$. By \cite[Proposition 4.1.(4)]{MR4681295} again, we have 
\begin{equation}\label{eq:pardes2}
 J_L^{M_P}(\eta, X, (f_Q^\eta)_P^\eta)=J_L^{M_P}(\eta, X, f_{PN_Q}^\eta)=J_L^{PN_Q}(\eta, X, f). 
\end{equation}
For $P\notin\msp^{M_Q}(L)$, we have $PN_Q\notin\msp^G(L)$. Thus $J_L^{PN_Q}(\eta, X, f)=0$ by our assumption on $f$. 
\end{proof}


\section{\textbf{Matching of orbits}}\label{secmatorbi}

Let $F$ be a local field of characteristic zero or a number field. Let $E$ be a quadratic extension of $F$. Assume that $\dim_{\ov{F}}(G)=\dim_{\ov{F}}(G')$, i.e., $2n=r\deg(D)$. 

\subsection{Definition by invariants}

\subsubsection{Matching orbits}\label{ssec:matorb}

Denote by $\bfA^n$ the affine space over $F$ of dimension $n$. By \cite[Proposition 3.3]{MR4424024}, the map 
$$ \fs\ra\bfA^n, X\mapsto\chi_X $$
defines a categorical quotient $\fs//H$ over $F$, where $\chi_X$ denotes the characteristic polynomial of $X\in\fg$. By \cite[Proposition 3.5]{MR4350885}, the map 
$$ \fs'\ra\bfA^n, Y\mapsto\Prd_Y $$
defines a categorical quotient $\fs'//{H'}$ over $F$, where $\Prd_Y$ denotes the reduced characteristic polynomial of $Y\in\fg'$. Therefore, we can identify 
$$ \fs//H\simeq\bfA^n\simeq\fs'//{H'}. $$
By \cite[Proposition 3.3]{MR4350885}, it induces an injection from the set of $H'(F)$-orbits in $\fs'_\rs(F)$ into the set of $H(F)$-orbits in $\fs_\rs(F)$. 

\begin{defn}\label{defbyinv}
Let $X\in\fs_\rs(F)$ and $Y\in\fs'_\rs(F)$. If $\chi_X=\Prd_Y$, we say that $X$ and $Y$ have matching orbits and write $X\da Y$. For $X\in\fs_\rs(F)$, if there is an element $Y\in\fs'_\rs(F)$ such that $X\da Y$, we also say that $X$ comes from $\fs'_\rs(F)$. 
\end{defn}

\subsubsection{Matching Levi subgroups}\label{ssec:mat_Levi}

There is an injection $M'\mapsto M$ from $\msl^{H'}(M'_0)$ into $\msl^{G,\omega}(M_0)$ such that the restriction of $G'_{\ov{F}}\simeq G_{\ov{F}}$ induces an isomorphism $\wt{M'}_{\ov{F}}\simeq M_{\ov{F}}$, where $\wt{M'}\in\msl^{G'}(M'_{\wt{0}})$ is associated to $M'$ as in Section \ref{ssec:sym2-I-II}. Let $M'\in\msl^{H'}(M'_0)$. We shall always denote by $M$ the image of $M'$ under this injection, and say that $M'$ and $M$ are matching Levi subgroups. There are compatible isomorphisms 
\begin{equation}\label{eq:M-fact}
 (M,M_H, \fm\cap\fs)\simeq \prod_{i=1}^\ell (G_{n_i},H_{n_i}, \fs_{n_i}) 
\end{equation}
and 
\begin{equation}\label{eq:M'-fact}
 (\wt{M'},M', \wt{\fm'}\cap\fs')\simeq \prod_{i=1}^\ell (G'_{r_i},H'_{r_i},\fs'_{r_i})
\end{equation}
such that the restriction of $G'_{\ov{F}}\simeq G_{\ov{F}}$ induces an isomorphism 
$$ (G_{n_i},H_{n_i}, \fs_{n_i})_{\ov{F}}\simeq (G'_{r_i},H'_{r_i},\fs'_{r_i})_{\ov{F}} $$
for each $1\leq i\leq \ell$. Denote 
\begin{equation}\label{eq:def_e_M'}
 e_{M'}:=((-1)^{r_i})_{1\leq i\leq\ell}\in\{\pm1\}^\ell. 
\end{equation}

For $L\in\msl^G(M)\subseteq\msl^{G,\omega}(M_0)$, it matches a unique element in $\msl^{H'}(M')$ to be denoted by $L'$. The map $L\mapsto L'$ induces a bijection between $\msl^G(M)$ and $\msl^{H'}(M')$. For $Q\in\msf^G(M)\subseteq\msf^{G,\omega}(M_0)$, we shall always denote by $Q'$ the unique element in $\msf^{H'}(M')$ such that the restriction of $G'_{\ov{F}}\simeq G_{\ov{F}}$ induces an isomorphism $\wt{Q'}_{\ov{F}}\simeq Q_{\ov{F}}$, where $\wt{Q'}\in\msf^{G'}(\wt{M'_0})$ is associated to $Q'$ as in Section \ref{ssec:sym2-I-II}. The map $Q\mapsto Q'$ induces a bijection between $\msf^G(M)$ and $\msf^{H'}(M')$. 

\subsubsection{$M$-matching orbits}\label{ssec:M-mat_orb}

Fix a pair of matching Levi subgroups $M'\in\msl^{H'}(M'_0)$ and $M\in\msl^{G,\omega}(M_0)$ as in Section \ref{ssec:mat_Levi}. Let $X\in(\fm\cap\fs_\rs)(F)$ and $Y\in(\wt{\fm'}\cap\fs'_\rs)(F)$. Write 
\begin{equation}\label{eq:X-fact}
 X=\diag(X_1,\cdots,X_\ell)\in\bigoplus_{i=1}^\ell (\fg\fl_{n_i}\oplus\fg\fl_{n_i})(F) 
\end{equation}
and 
$$ Y=\diag(Y_1,\cdots,Y_\ell)\in\bigoplus_{i=1}^\ell \fs'_{r_i}(F). $$

\begin{defn}\label{defbyinvM}
If $X_i$ and $Y_i$ have matching orbits for each $1\leq i\leq \ell$, we say that $X$ and $Y$ have $M$-matching orbits and write $X\overset{M}{\da}Y$. For $X\in(\fm\cap\fs_\rs)(F)$, if there is an element $Y\in(\wt{\fm'}\cap\fs'_\rs)(F)$ such that $X\overset{M}{\da} Y$, we also say that $X$ comes from $(\wt{\fm'}\cap\fs'_\rs)(F)$. 
\end{defn}

\begin{remark}
The condition $X\overset{M}{\da}Y$ implies $X\da Y$, but the converse fails in general. 
\end{remark}

\begin{prop}\label{prop:equivcomefromLevi}
For $X\in(\fm\cap\fs_\rs)(F)$, it comes from $\fs'_\rs(F)$ if and only if it comes from $(\wt{\fm'}\cap\fs'_\rs)(F)$. 
\end{prop}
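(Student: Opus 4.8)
The plan is to reduce the equivalence to a comparison of factorisations of characteristic polynomials, using Corollary~\ref{corsym2ell}. One implication is immediate: if $X$ comes from $(\wt{\fm'}\cap\fs'_\rs)(F)$, say $X\overset{M}{\da}Y$ for some $Y\in(\wt{\fm'}\cap\fs'_\rs)(F)$, then $X\da Y$ by the Remark following Definition~\ref{defbyinvM}, and $Y\in\fs'_\rs(F)$; hence $X$ comes from $\fs'_\rs(F)$. So the content lies entirely in the converse, which is what I would set up next.

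Assume $X\da Y$ for some $Y\in\fs'_\rs(F)$; the goal is to produce $\wt Y\in(\wt{\fm'}\cap\fs'_\rs)(F)$ with $X\overset{M}{\da}\wt Y$. First I would use \eqref{eq:M-fact} to write $X=\diag(X_1,\dots,X_\ell)$ with $X_i\in\fs_{n_i,\rs}(F)$, so that $\chi_X=\prod_{i=1}^\ell\chi_{X_i}$ and each $\chi_{X_i}(\lambda)=p_i(\lambda^2)$ for a squarefree $p_i\in F[\lambda]$ of degree $n_i$ (this is the shape of the characteristic polynomial of a regular semi-simple element of $\fs_{n_i}$, and squarefreeness of all the $p_i$ follows from separability of $\chi_X$). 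Since $\chi_X=\Prd_Y$, I would then apply Corollary~\ref{corsym2ell} to $Y$, obtaining --- after renaming to avoid a clash with the $r_i$ of \eqref{eq:M'-fact} --- monic irreducible $q_1,\dots,q_m\in F[\lambda]$ with $\Prd_Y(\lambda)=\prod_{j=1}^m q_j(\lambda^2)$, integers $s_j\in\BZ_{>0}$ with $\deg(q_j)=\tfrac{s_j\deg(D)}{2}$, and elements $Z_j\in\fs'_{s_j,\rs}(F)_\el$ with $\Prd_{Z_j}(\lambda)=q_j(\lambda^2)$.

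The middle step is then arithmetic. Comparing the two expressions for $\chi_X=\Prd_Y$ gives $\prod_{i=1}^\ell p_i=\prod_{j=1}^m q_j$ in $F[\lambda]$ (two polynomials in $\lambda^2$ coincide iff the corresponding one-variable polynomials do), and since the left-hand side is squarefree, unique factorisation in $F[\lambda]$ provides a partition $\{1,\dots,m\}=\bigsqcup_{i=1}^\ell J_i$ with $p_i=\prod_{j\in J_i}q_j$ for each $i$; taking degrees and using $2n_i=r_i\deg(D)$ yields $\sum_{j\in J_i}s_j=r_i$. I would then set $Y_i:=\diag\big((Z_j)_{j\in J_i}\big)$, embedded block-diagonally into $\fg'_{r_i}$ compatibly with the embeddings of $E$, so that $Y_i\in\fs'_{r_i}(F)$ with $\Prd_{Y_i}(\lambda)=\prod_{j\in J_i}q_j(\lambda^2)=p_i(\lambda^2)=\chi_{X_i}$; granting that $Y_i$ is regular semi-simple, this gives $X_i\da Y_i$ for all $i$, and $\wt Y:=\diag(Y_1,\dots,Y_\ell)$, viewed inside $(\wt{\fm'}\cap\fs')(F)$ via \eqref{eq:M'-fact}, is then regular semi-simple with $X\overset{M}{\da}\wt Y$.

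The main obstacle I anticipate is the regular-semisimplicity verifications invoked twice above: that a block-diagonal assembly of regular semi-simple elements of the $\fs'_{s_j}$ (resp.\ of the $\fs'_{r_i}$) whose reduced characteristic polynomials are pairwise coprime --- which holds here because $\chi_X$ is separable --- is again regular semi-simple in $\fs'_{r_i}$ (resp.\ in $\fs'$). I expect this to follow from the fact that the centraliser of such an assembled element is the product of the centralisers of its blocks, hence a torus of the minimal dimension; the care needed is to check that the block-diagonal embeddings really respect the symmetric-space structure, i.e.\ intertwine the involutions $\Ad(\alpha)$, in both Case~I and Case~II. The rest --- matching the two factorisations of $\chi_X$ --- is routine, with Corollary~\ref{corsym2ell}, and in particular the divisibility $\tfrac{\deg(D)}{2}\mid\deg(q_j)$ it records, doing the essential work.
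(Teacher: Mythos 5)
Your proof is correct and follows exactly the route the paper intends: the paper's own proof is the single sentence ``This is a consequence of Corollary~\ref{corsym2ell},'' and your argument is precisely the unpacking of that consequence — using Corollary~\ref{corsym2ell} to factor $\Prd_Y$ into elliptic blocks, regrouping those blocks according to the partition of $\chi_X=\prod_i\chi_{X_i}$ forced by unique factorisation and separability, and reassembling them block-diagonally in $\wt{\fm'}\cap\fs'$ with the degree count $2n_i=r_i\deg(D)$ ensuring the block sizes match.
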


\begin{proof}
This is a consequence of Corollary \ref{corsym2ell}. 
\end{proof}

\subsubsection{Criterion via the Kottwitz sign}

Now suppose that $F$ is a local field of characteristic zero. Denote by $e(G')$ the Kottwitz sign of $G'$ in the sense of \cite{MR697075}. 

\begin{lem}\label{lem:kottsign}
We have $e(G')=(-1)^r$. 
\end{lem}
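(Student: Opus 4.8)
The plan is to unwind Kottwitz's definition of $e(G')$ from \cite{MR697075} and reduce the statement to a parity count. Recall that for a connected reductive group $G$ over a local field of characteristic zero one has $e(G)=(-1)^{\rk_F(G^\ast_\scn)-\rk_F(G_\scn)}$, where $G^\ast$ denotes the quasi-split inner form of $G$ (normalised so that $e(G)=1$ when $G$ is quasi-split); as a sanity check this gives the familiar value $e(D^\times)=(-1)^{\deg(D)-1}$ for a central division algebra $D$. By the Wedderburn–Artin description recalled in Section \ref{ssec:sym2-I-II}, we have $G'\simeq GL_{r,D}$ with $d:=\deg(D)$ and $rd=2n$; its quasi-split inner form is $GL_{rd}$, so $G'_\scn\simeq SL_{r,D}$ has quasi-split inner form $SL_{rd}$.

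Next I would compute the two $F$-ranks. The group $SL_{rd}$ is split of $F$-rank $rd-1$, while $SL_{r,D}$ has $F$-rank $r-1$, since a maximal $F$-split torus of $GL_{r,D}$ is the diagonal $\BG_m^r$. Hence
$$ e(G')=(-1)^{(rd-1)-(r-1)}=(-1)^{rd-r}=(-1)^{r(d-1)}, $$
which is the standard value $e(GL_{r,D})=(-1)^{r(\deg(D)-1)}$.

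Finally I would invoke the standing hypothesis of Section \ref{secmatorbi} that $rd=2n$ is even: then $rd-r\equiv -r\equiv r\pmod 2$, so $e(G')=(-1)^{rd-r}=(-1)^{r}$, as claimed. There is essentially no obstacle here; the only point requiring a little care is to record the correct normalisation of Kottwitz's sign, and to observe that it is precisely the evenness of $r\deg(D)=2n$ — built into the running assumption of Section \ref{secmatorbi} — that makes $(-1)^{r(\deg(D)-1)}$ collapse to $(-1)^r$.
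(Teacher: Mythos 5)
Your proof is correct, and it takes a genuinely different route from the paper's. The paper cites Kottwitz's Corollary (7) directly, which expresses $e(A^\times)$ for a central simple algebra $A$ of even degree $2n$ and invariant $\tfrac{i}{2n}$ as $(-1)^i$, and then reduces to the arithmetic observation that $i=ri_0$ (with $\tfrac{i_0}{\deg D}$ the reduced invariant) has the same parity as $r$ once $r\deg(D)$ is even. You instead apply the rank formula $e(G)=(-1)^{\rk_F(G^\ast_\scn)-\rk_F(G_\scn)}$, compute $\rk_F(SL_{r\deg D})=r\deg(D)-1$ and $\rk_F(SL_{r,D})=r-1$ to obtain the general value $e(GL_{r,D})=(-1)^{r(\deg D-1)}$, and only then invoke the running evenness hypothesis $r\deg(D)=2n$. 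Both arguments are short and elementary; yours has the merit of first establishing the closed-form formula for $e(GL_{r,D})$ without any parity assumption and of making it transparent that the collapse to $(-1)^r$ is exactly where $r\deg(D)$ even enters, whereas the paper's proof works more directly with the local invariant and stays closer to the numerology of the division algebra. Nothing is missing in either; the rank computations ($\BG_m^r$ being a maximal $F$-split torus of $GL_{r,D}$, and the split part of the reduced-norm-one condition cutting it down to rank $r-1$) are correct.
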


\begin{proof}
Let $\frac{i}{2n}=\frac{i_0}{\deg(D)}\in\BQ\cap[0,1)$ be the invariant of $\fg'(F)$, where $i_0$ and $\deg(D)$ are coprime. Since $2n=r\deg(D)$ is even, by \cite[Corollary (7)]{MR697075}, $e(G')=-1$ if and only if $i$ is odd. We have $i=\frac{2ni_0}{\deg(D)}=ri_0$. If $i$ is odd, then $r$ is odd. Conversely, if $r$ is odd, then $\deg(D)$ is even, so $i_0$ is odd, which implies that $i$ is odd. 
\end{proof}

For $X=\mat(0,A,B,0)\in\fs_\rs(F)$, we define its sign by 
$$ \eta(X):=\eta(\det(AB)). $$

\begin{prop}\label{equivdefcomefrom}
Let $X\in\fs_\rs(F)_\el$. Then the following conditions are equivalent: 
\begin{enumerate}
	\item $X$ comes from $\fs'_\rs(F)$; 

	\item $\eta(X)=(-1)^r$;  

	\item $\eta(X)=e(G')$. 
\end{enumerate}
\end{prop}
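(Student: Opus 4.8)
\textbf{Proof proposal for Proposition \ref{equivdefcomefrom}.}

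The plan is to prove the chain of equivalences (1) $\Leftrightarrow$ (2) $\Leftrightarrow$ (3), where the equivalence (2) $\Leftrightarrow$ (3) is immediate from Lemma \ref{lem:kottsign}, which already identifies $e(G')$ with $(-1)^r$. So the real content is the equivalence (1) $\Leftrightarrow$ (2), relating the sign $\eta(X)=\eta(\det(AB))$ to the parity of $r$. Since $X\in\fs_\rs(F)_\el$, by the ellipticity criterion at the end of Section \ref{symmpair1} we have $\chi_X(\lambda)=p(\lambda^2)$ for some irreducible $p(\lambda)\in F[\lambda]$ of degree $n$; write $X=\mat(0,A,B,0)$ so that $\chi_X(\lambda)=\det(\lambda^2\cdot 1_n-AB)$, hence $p(\lambda)=\det(\lambda\cdot 1_n-AB)$ is the characteristic polynomial of $AB$ and $K:=F[AB]\simeq F[\lambda]/(p(\lambda))$ is a degree-$n$ field extension of $F$. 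The key observation is that $\det(AB)=(-1)^n N_{K/F}(\beta)$, where $\beta\in K$ is the image of $\lambda$ (i.e. $AB$ viewed as a scalar in $K$), so that $\eta(X)=\eta((-1)^n N_{K/F}(\beta))$; and whether this equals $+1$ reduces, via the reciprocity/norm interpretation of $\eta$, to whether $\beta\cdot(-1)^n$ lies in the image of the norm map from $K\otimes_F E$ — equivalently whether $E$ embeds $F$-linearly into a division algebra over $K$ of the appropriate invariant, which is precisely the condition in \textbf{Case I} versus \textbf{Case II} governing $r$.

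More concretely, I would argue as follows. By Corollary \ref{corsym2ell} (with $m=1$, applied to an elliptic element), $X$ comes from $\fs'_\rs(F)$ if and only if there exists $Y\in\fs'_\rs(F)_\el$ with $\Prd_Y=\chi_X=p(\lambda^2)$; by Lemma \ref{lemsym2ell} this forces $\deg p=n=\frac{r\deg(D)}{2}$, and the existence of such a $Y$ inside $\fg'$ amounts to the centraliser field $F[Y]=K(\alpha)$ (a degree-$2$ extension of $K$, since $Y^2\in K$ generates $K$) embedding into the central simple algebra $\fg'$. I would translate the embedding condition into Brauer-group language: $F[Y]$ embeds into $\fg'$ iff $F[Y]$ splits $\fg'$, iff $\mathrm{inv}(\fg')$ is killed by the restriction to $F[Y]$, which by the degree-$2$ step $K\subseteq K(\alpha)$ reduces to a statement about $\mathrm{inv}(\fg')\otimes K\in\mathrm{Br}(K)$ lying in the image of the transfer/being split by $K\otimes_F E$. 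Chasing invariants as in the proof of Lemma \ref{lem:kottsign}, this holds exactly when $r$ is forced into the correct parity, and on the Hasse-invariant side the obstruction is measured by a quadratic symbol that one computes to be $\eta\big((-1)^n\det(AB)\big)$, giving (1) $\Leftrightarrow$ $\eta(X)=(-1)^r$.

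The hard part will be the careful bookkeeping of signs and of the two cases (\textbf{Case I}/\textbf{Case II}): one must check that the parity of $n$ absorbed into the $(-1)^n$ factor in $\det(AB)=(-1)^nN_{K/F}(\beta)$ interacts correctly with $n=\tfrac12 r\deg(D)$ and with whether $E$ embeds into $D$, so that the final answer comes out uniformly as $(-1)^r$ regardless of $\deg(D)$. I expect this to be handled by the same dichotomy used in Lemma \ref{lem:kottsign}: if $\deg(D)$ is odd then $r$ is even and $n=\tfrac{r\deg(D)}{2}$ has the parity of $\tfrac r2$; if $\deg(D)$ is even one splits further. An alternative, possibly cleaner route that I would also keep in mind is to invoke the classification of regular semisimple elliptic orbits in $\fs'$ directly (rather than via Brauer groups): an elliptic $Y$ corresponds to an $E$-algebra structure on the degree-$\frac{r\deg D}{2}$ extension attached to $p$, together with an embedding of the resulting algebra into $\fg'$, and Guo's/Zhang's local computations (\cite{MR3414387}) already express the existence of such $Y$ in terms of a local norm condition on $\det(AB)$; then (1) $\Leftrightarrow$ (2) follows by directly matching that norm condition with $\eta(X)=\eta(\det(AB))$ and Lemma \ref{lem:kottsign}. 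Either way, once (1) $\Leftrightarrow$ (2) is in hand, (2) $\Leftrightarrow$ (3) is just Lemma \ref{lem:kottsign}.
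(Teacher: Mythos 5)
Your handling of (2)$\Leftrightarrow$(3) matches the paper exactly: both just invoke Lemma \ref{lem:kottsign}. For (1)$\Leftrightarrow$(2) the paper gives no argument of its own, simply citing \cite[Claim 2.7.1]{MR4226986}, so a direct computation is by definition a different route; and your opening reductions (ellipticity forces $\chi_X=p(\lambda^2)$ with $p$ irreducible of degree $n$, $K:=F[AB]$ a degree-$n$ field, and via Lemma \ref{lemsym2ell} any matching $Y$ is automatically elliptic) are correct and a sensible start.

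There is, however, a genuine gap in the key step. You set $F[Y]=K(\alpha)$ and then reduce to ``$F[Y]$ embeds into $\fg'$ iff $F[Y]$ splits $\fg'$.'' Both halves are wrong. First, $\alpha$ \emph{anticommutes} with $Y$ (that is the defining property of $Y\in\fs'$), so $\alpha\notin F[Y]$; the commutative algebra $F[Y]$ equals $K(\sqrt{\beta})$ with $\beta=Y^2$, which is a different quadratic extension of $K$ than $K(\alpha)=K\otimes_F E$. Second, and more fatally, over a non-archimedean local field \emph{every} degree-$2n$ field extension splits a degree-$2n$ central simple algebra, since $\mathrm{inv}_F(\fg')\in\tfrac{1}{2n}\BZ/\BZ$ is killed by multiplication by $2n$. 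So the condition ``$F[Y]$ splits $\fg'$'' (and equally ``$K\otimes_F E$ splits $\fg'$'') is always satisfied and cannot possibly detect the parity of $r$; the Brauer-group chase you outline would produce a tautology, not $\eta(X)=(-1)^r$. The genuine obstruction is not whether the field $F[Y]$ embeds into $\fg'$, but whether it embeds \emph{compatibly with the fixed $\alpha$}, i.e.\ with $Y\alpha=-\alpha Y$. Once $K$ is embedded into $\fh'$ (which the same splitting computation shows is always possible), $L:=\Cent_{\fg'}(K)$ is a quaternion algebra over $K$ containing $\alpha$, and the existence of $Y\in L$ with $Y^2=\beta$ anticommuting with $\alpha$ is the Hilbert-symbol condition $[L]=[(\alpha^2,\beta)_K]$ in $\Br(K)[2]$. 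That is what one must compare with $\eta_K(\beta)=\eta(N_{K/F}\beta)=\eta(\det(AB))=\eta(X)$, and with $\mathrm{inv}_K(L)=n\cdot\mathrm{inv}_F(\fg')\bmod 1$, to get $(-1)^r$. As a secondary point, $\det(AB)=N_{K/F}(\beta)$ exactly (the characteristic polynomial of $AB$ on $F^n$ coincides with that of multiplication by $\beta$ on $K$), so the factor $(-1)^n$ in your formula is spurious.
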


\begin{proof}
See \cite[Claim 2.7.1]{MR4226986} for (1)$\Leftrightarrow$(2). Lemma \ref{lem:kottsign} implies (2)$\Leftrightarrow$(3). 
\end{proof}

Let $M'\in\msl^{H'}(M'_0)$ and $M\in\msl^{G,\omega}(M_0)$ be a pair of matching Levi subgroups as in Section \ref{ssec:mat_Levi}. Let $X\in(\fm\cap\fs_\rs)(F)$ as in \eqref{eq:X-fact}. Denote 
\begin{equation}\label{eq:def_eta_M}
 \eta_M(X):=(\eta(X_i))_{1\leq i\leq\ell}\in\{\pm1\}^\ell. 
\end{equation}

\begin{defn}\label{defcomepotfrom}
If $\eta_M(X)=e_{M'}$, we say that $X$ comes potentially from $(\wt{\fm'}\cap\fs'_\rs)(F)$. 
\end{defn}

\begin{coro}\label{corequivdefcomefrom}
\leavevmode
\begin{enumerate}
	\item If $X$ comes from $(\wt{\fm'}\cap\fs'_\rs)(F)$, then it comes potentially from $(\wt{\fm'}\cap\fs'_\rs)(F)$. 

	\item If $X\in(\fm\cap\fs_\rs)(F)_\el$ and comes potentially from $(\wt{\fm'}\cap\fs'_\rs)(F)$, then it comes from $(\wt{\fm'}\cap\fs'_\rs)(F)$. 
\end{enumerate}
\end{coro}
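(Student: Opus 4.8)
The plan is to reduce Corollary \ref{corequivdefcomefrom} to the already established results about the individual factors via the product decompositions \eqref{eq:M-fact} and \eqref{eq:M'-fact}.

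For part (1), suppose $X$ comes from $(\wt{\fm'}\cap\fs'_\rs)(F)$, say $X\overset{M}{\da}Y$ with $Y=\diag(Y_1,\dots,Y_\ell)$ and $Y_i\in\fs'_{r_i,\rs}(F)$. By Definition \ref{defbyinvM} we have $X_i\da Y_i$ for each $i$, so each $X_i\in\fs_{n_i,\rs}(F)$ comes from $\fs'_{r_i,\rs}(F)$. The point I would make is that $X_i$ is automatically $G_{n_i}$-elliptic: indeed, by the criterion recalled at the end of Section \ref{ssec:def-omega-stable} (resp. Lemma \ref{lemsym2ell}), $Y_i\in\fs'_{r_i,\rs}(F)$ having a matching element forces... actually this needs care, so instead I would argue directly. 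Each $X_i$ comes from $\fs'_{r_i,\rs}(F)$; I want to conclude $\eta(X_i)=(-1)^{r_i}$. If $X_i$ is elliptic this is exactly Proposition \ref{equivdefcomefrom} applied to $G'_{r_i}$. If $X_i$ is not elliptic, one descends further: by \cite[Lemma 9.1]{MR4350885} (or its analogue for $\fs$) $X_i$ is $H_{n_i}(F)$-conjugate to a block-diagonal element in a proper Levi, and the sign $\eta(\det(A_iB_i))$ is multiplicative over the blocks, each of which is elliptic and comes from the corresponding $\fs'$; summing the exponents $r_{i,1}+\dots+r_{i,k}=r_i$ gives $\eta(X_i)=(-1)^{r_i}$ again. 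Hence $\eta_M(X)=((-1)^{r_i})_i=e_{M'}$ by \eqref{eq:def_e_M'} and \eqref{eq:def_eta_M}, i.e.\ $X$ comes potentially from $(\wt{\fm'}\cap\fs'_\rs)(F)$.

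For part (2), assume $X\in(\fm\cap\fs_\rs)(F)_\el$ comes potentially from $(\wt{\fm'}\cap\fs'_\rs)(F)$, so $\eta(X_i)=(-1)^{r_i}$ for each $i$. First I would note that $M$-ellipticity of $X$ does not force each $X_i$ to be $G_{n_i}$-elliptic, so Proposition \ref{equivdefcomefrom} does not apply directly to the factors. Instead I would work with the characteristic polynomial: write $\chi_X=\chi_{X_1}\cdots\chi_{X_\ell}$, and for each $i$ factor $\chi_{X_i}=\prod_j p_{ij}(\lambda^2)$ into irreducible-after-substitution pieces (this is possible since $X\in\fs_\rs(F)$, using the ellipticity criterion applied blockwise after conjugating $X_i$ into a Levi of $G_{n_i}$ as in \cite[Lemma 9.1]{MR4350885}). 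Each piece corresponds to an elliptic regular semisimple element, and I claim its sign equals $(-1)^{\deg(p_{ij})/(\ \ )}$... more precisely, by Proposition \ref{equivdefcomefrom} such a piece comes from the relevant $\fs'$ iff its sign is the appropriate power of $-1$. The hypothesis $\eta(X_i)=(-1)^{r_i}$, combined with multiplicativity of $\eta(\det(\cdot))$ over the blocks and the constraint $2n_i=r_i\deg(D)$ relating the degrees, should force each block to come from some $\fs'_{r_{ij},\rs}(F)$ — this is where Corollary \ref{corsym2ell} enters, guaranteeing the degrees $\deg(p_{ij})$ are of the form $r_{ij}\deg(D)/2$ so that an element of the right $\fs'_{r_{ij}}$ exists. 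Assembling these, one gets $Y_i\in\fs'_{r_i,\rs}(F)$ with $X_i\da Y_i$, hence $Y=\diag(Y_i)\in(\wt{\fm'}\cap\fs'_\rs)(F)$ with $X\overset{M}{\da}Y$.

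The main obstacle I anticipate is bookkeeping the interaction between $M$-ellipticity, which is a condition on $X$ as a whole relative to $A_M$, and the ellipticity of the individual factors $X_i$ relative to $A_{G_{n_i}}$ — these are genuinely different, and a factor $X_i$ can be non-elliptic even when $X$ is $M$-elliptic. Handling this requires descending each non-elliptic $X_i$ into a Levi of $G_{n_i}$ (via the analogue of \cite[Lemma 9.1]{MR4350885}), checking the sign $\eta(X_i)=\eta(\det(A_iB_i))$ is compatible with this descent (multiplicativity over blocks), and then invoking Corollary \ref{corsym2ell} to ensure the arithmetic of degrees works out so that matching elements in the second symmetric space actually exist. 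Once the multiplicativity of the sign and the degree constraint $2n=r\deg(D)$ are pinned down, the rest is a direct assembly from Proposition \ref{equivdefcomefrom} and Corollary \ref{corsym2ell}.
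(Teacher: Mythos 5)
Your part (1) is essentially correct and matches the intended reduction: when $X_i$ is not $G_{n_i}$-elliptic you factor $\chi_{X_i}=\prod_j p_{ij}(\lambda^2)$ into elliptic blocks, apply Corollary \ref{corsym2ell} to the matching $Y_i$ to produce $Y_{ij}\in\fs'_{r_{ij},\rs}(F)_\el$ with $\sum_j r_{ij}=r_i$, invoke Proposition \ref{equivdefcomefrom} for each pair $(G_{n_{ij}},G'_{r_{ij}})$, and conclude via the multiplicativity $\eta(X_i)=\prod_j\eta(X_{ij})$.

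Your part (2), however, rests on a false claim. You assert that $M$-ellipticity of $X$ does not force each factor $X_i$ to be $G_{n_i}$-elliptic; in fact it does. Under the compatible decomposition \eqref{eq:M-fact} one has $A_M\simeq\prod_i A_{G_{n_i}}$, and since $X=\diag(X_1,\dots,X_\ell)$ is regular semisimple with $H_X\subseteq M_H$, also $H_X\simeq\prod_i(H_{n_i})_{X_i}$; hence $A_M$ is the maximal $F$-split torus of $H_X$ exactly when each $A_{G_{n_i}}$ is the maximal $F$-split torus of $(H_{n_i})_{X_i}$, i.e.\ when each $X_i$ is elliptic. So part (2) is a direct application of Proposition \ref{equivdefcomefrom} to each factor $(G_{n_i},G'_{r_i})$: the hypothesis $\eta(X_i)=(-1)^{r_i}$ with $X_i$ elliptic produces $Y_i\in\fs'_{r_i,\rs}(F)$ with $X_i\da Y_i$, and $Y=\diag(Y_1,\dots,Y_\ell)$ does the job. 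The workaround you sketch does not repair the false premise even if it were true: after descending $X_i$ into elliptic blocks $X_{ij}$, the single condition $\eta(X_i)=(-1)^{r_i}$ only controls the product $\prod_j\eta(X_{ij})$, not the individual signs $\eta(X_{ij})=(-1)^{r_{ij}}$ needed to apply Proposition \ref{equivdefcomefrom} blockwise. Moreover, Corollary \ref{corsym2ell} cannot supply the degree constraints $\deg(p_{ij})=r_{ij}\deg(D)/2$ in this direction, since it takes an element $Y\in\fs'_\rs(F)$ as input, and in part (2) no such $Y$ is yet available; you are trying to construct it.
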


\begin{proof}
It results from Corollary \ref{corsym2ell} and Proposition \ref{equivdefcomefrom}. 
\end{proof}

\subsection{Centralisers of semi-simple elements}

\subsubsection{Classification of $E\hookrightarrow\fg'(F)$}\label{ssec:classEg'}

Recall that $E=F(\alpha)$ with $\alpha^2\in F$. Set 
$$ \alpha_0:=\mat(0,\alpha^2 1_n,1_n,0)\in G(F). $$
There is an $F$-algebra embedding $i: E\hookrightarrow\fg(F)$ defined by $i(\alpha)=\alpha_0$. Denote 
$$ H_0:=G_{\alpha_0}\simeq\Res_{E/F}GL_{n,E}. $$
We have 
$$ \fh_0=\left\{\mat(A,\alpha^2 C,C,A): A,C\in\fg\fl_n\right\}. $$
Denote by $\fs_0$ the corresponding tangent space of $G/H_0$ at the neutral element. Then 
$$ \fs_0=\left\{\mat(A,-\alpha^2 C,C,-A): A,C\in\fg\fl_n\right\}. $$
Recall that $\omega_0=\mat(1_n,0,0,-1_n)\in\fs_0(F)$. Then 
$$ \omega_0^2=1_{2n} \text{ and } \fs_0=\fh_0\omega_0=\omega_0\fh_0. $$
The action $\Ad(\omega_0)$ induces the Galois involution on $H_0$ and $\fh_0$. Consider pairs of the form $(\fg',j)$ where $\fg'$ is a central simple algebra over $F$ of degree $2n$ and $j: E\hookrightarrow\fg'(F)$ is an $F$-algebra embedding. Denote 
$$ \alpha':=j(\alpha)\in G'(F). $$ 

\begin{lem}\label{lem:isopair}
Such a pair $(\fg',j)$ is always isomorphic to the pair $(\fg,i)_{\ov{F}}$ over $\ov{F}$. 
\end{lem}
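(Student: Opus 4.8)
The plan is to use the fact that over an algebraically closed field a central simple algebra is split, so any two central simple $\ov{F}$-algebras of the same degree are isomorphic, and to upgrade this to an isomorphism of \emph{pairs} carrying the embedding of $E$. First I would observe that $\fg'_{\ov{F}}\simeq M_{2n}(\ov{F})\simeq\fg_{\ov{F}}$ as $\ov{F}$-algebras, simply because both are central simple over $\ov{F}$ of degree $2n$; fix such an isomorphism $\varphi_0$. Next, both $i_{\ov{F}}$ and $\varphi_0\circ j_{\ov{F}}$ are $\ov{F}$-algebra embeddings of $E\otimes_F\ov{F}\simeq\ov{F}\times\ov{F}$ into $\fg_{\ov{F}}\simeq M_{2n}(\ov{F})$. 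Since $E\otimes_F\ov{F}\simeq\ov{F}\times\ov{F}$ and each factor is sent to a complementary pair of idempotents splitting $\ov{F}^{2n}$ into two subspaces, the image of $E\otimes_F\ov{F}$ under such an embedding is determined up to conjugation by the dimensions of the two eigenspaces of $\alpha$. Because $\alpha^2\in F^\times$ and $\alpha$ is a regular semisimple element (it has two distinct eigenvalues $\pm\alpha$ over $\ov{F}$, each of multiplicity $n$, as one sees from $\alpha_0$ or from $\alpha'$ using $2n=r\deg(D)$ and the fact that the reduced characteristic polynomial of $\alpha'$ is $(\lambda^2-\alpha^2)^n$), the two eigenspaces both have dimension $n$ in either case.

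Thus I would invoke the Noether–Skolem theorem over $\ov{F}$: the two embeddings $i_{\ov{F}}$ and $\varphi_0\circ j_{\ov{F}}$ of the étale $\ov{F}$-algebra $E\otimes_F\ov{F}$ into $M_{2n}(\ov{F})$, having conjugate images (by the eigenspace-dimension computation) and agreeing on the images after a permutation of the two factors — which can be arranged since $\alpha\mapsto-\alpha$ is realized by conjugation — differ by an inner automorphism $\Ad(g)$ with $g\in\GL_{2n}(\ov{F})=G(\ov{F})$. Replacing $\varphi_0$ by $\Ad(g)\circ\varphi_0$ gives an $\ov{F}$-algebra isomorphism $\varphi\colon\fg'_{\ov{F}}\xrightarrow{\sim}\fg_{\ov{F}}$ with $\varphi\circ j_{\ov{F}}=i_{\ov{F}}$, i.e.\ an isomorphism of pairs $(\fg',j)_{\ov{F}}\simeq(\fg,i)_{\ov{F}}$.

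The only subtlety — and the step I would be most careful about — is matching the eigenvalues of $\alpha$ correctly: a priori $\varphi_0\circ j_{\ov{F}}$ might send $\alpha$ to an element conjugate to $-\alpha_0$ rather than $\alpha_0$. This is harmless, since $-\alpha_0=\Ad(\omega)(\alpha_0)$ (or more simply, $\alpha_0$ and $-\alpha_0$ are $G(\ov{F})$-conjugate because they are regular semisimple with the same characteristic polynomial $(\lambda^2-\alpha^2)^n$), so one absorbs this into the choice of $g$. The remaining content — that central simple algebras of equal degree are isomorphic over an algebraically closed field, and Noether–Skolem — is standard, so the lemma follows.
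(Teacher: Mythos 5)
Your argument follows essentially the same path as the paper's proof: fix an $\ov{F}$-algebra isomorphism $\fg'_{\ov{F}}\simeq M_{2n}(\ov{F})\simeq\fg_{\ov{F}}$, observe that the images of $\alpha'$ and $\alpha_0$ are both semisimple with characteristic polynomial $(\lambda^2-\alpha^2)^n$, and conclude that they are conjugate in $\GL_{2n}(\ov{F})$. Phrasing this via Noether--Skolem for the \'etale algebra $E\otimes_F\ov{F}\simeq\ov{F}\times\ov{F}$ is fine, but note that the standard Noether--Skolem statement concerns simple subalgebras; for an \'etale (non-simple) subalgebra you must separately verify that the two idempotents have matching ranks, which you do via the eigenspace-dimension count. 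At that point your argument is identical in substance to the paper's one-line observation that two semisimple elements with the same characteristic polynomial are conjugate over $\ov{F}$.

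The one genuine gap is that you state ``the reduced characteristic polynomial of $\alpha'$ is $(\lambda^2-\alpha^2)^n$'' as a given fact, but this is exactly the content you need to justify. From $(\alpha')^2=\alpha^2\cdot 1$ alone you only get that over $\ov{F}$ the characteristic polynomial of $\alpha'$ is $(\lambda-\alpha)^a(\lambda+\alpha)^b$ with $a+b=2n$, and nothing forces $a=b$. The decisive point, which the paper's proof spells out, is that $\Prd_{\alpha'}\in F[\lambda]$ because $\alpha'\in\fg'(F)$, combined with the irreducibility of $\lambda^2-\alpha^2$ over $F$; this is what forces $a=b=n$. You mention $\alpha^2\in F^\times$ but never invoke the rationality of $\Prd_{\alpha'}$ explicitly, and without it the eigenspace-dimension count that drives your conjugacy step is unsupported. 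A minor terminology slip as well: $\alpha'$ is semisimple but not \emph{regular} semisimple, since each eigenvalue has multiplicity $n>1$.
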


\begin{proof}
In fact, since $\Prd_{\alpha'}$ is defined over $F$ and ${\alpha'}^2-\alpha^2=0$, we deduce that 
$$ \Prd_{\alpha'}(\lambda)=(\lambda^2-\alpha^2)^n=\chi_{\alpha_0}(\lambda)\in F[\lambda]. $$
But both $\alpha'$ and $\alpha_0$ are semi-simple in the classical sense, so they are conjugate over $\ov{F}$ when we fix any isomorphism $\fg'_{\ov{F}}\simeq\fg_{\ov{F}}$. 
\end{proof}

\begin{prop}\label{prop:bookinvol}
There is a natural bijection between $H^1(F, H_0/Z_G)$ and the set of $F$-isomorphism classes of pairs $(\fg',j)$. 
\end{prop}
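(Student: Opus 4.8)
The plan is to realize both sides as torsors. First I would fix, once and for all, an isomorphism $\fg'_{\ov F}\simeq\fg_{\ov F}$ of $\ov F$-algebras. By Lemma~\ref{lem:isopair} any pair $(\fg',j)$ becomes isomorphic over $\ov F$ to $(\fg,i)_{\ov F}$, so after transporting structure we may assume $\fg'=\fg$ as $\ov F$-algebras and that $j\colon E\hookrightarrow\fg(\ov F)$ is an $\ov F$-algebra embedding with $j=\sigma(i)$ for some $\sigma$; the $\Gamma$-action twisting $(\fg,i)$ into $(\fg',j)$ is what we must classify. An $F$-form of the pair $(\fg,i)$ is the same as a continuous $1$-cocycle of $\Gamma$ valued in $\Aut_{\ov F}(\fg,i)$, the group of $\ov F$-algebra automorphisms of $\fg_{\ov F}$ fixing the image of $i$, with two cocycles giving $F$-isomorphic pairs iff they are cohomologous. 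So the content is the identification
$$ \Aut_{\ov F}(\fg,i)\ \simeq\ (H_0/Z_G)(\ov F) $$
as $\Gamma$-groups, after which $H^1(F,\Aut_{\ov F}(\fg,i))=H^1(F,H_0/Z_G)$ and the proposition follows from the general principle that isomorphism classes of $F$-forms are classified by $H^1$ of the automorphism group.

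The key step is therefore the computation of $\Aut_{\ov F}(\fg,i)$. By Skolem--Noether, every $\ov F$-algebra automorphism of $\fg_{\ov F}\simeq M_{2n}(\ov F)$ is inner, i.e. of the form $\Ad(g)$ for $g\in G(\ov F)=\GL_{2n}(\ov F)$, and $\Ad(g)$ is trivial iff $g\in Z_G(\ov F)=\ov F^\times$. The automorphism $\Ad(g)$ fixes $i(\alpha)=\alpha_0$ pointwise iff $g$ centralizes $\alpha_0$, i.e. $g\in G_{\alpha_0}(\ov F)=H_0(\ov F)$; fixing $\alpha_0$ is equivalent to fixing all of $i(E)$ since $E=F(\alpha)$. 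Hence $g\mapsto\Ad(g)$ induces an isomorphism of groups $H_0(\ov F)/Z_G(\ov F)\xrightarrow{\ \sim\ }\Aut_{\ov F}(\fg,i)$, and this is $\Gamma$-equivariant because $\Gamma$ acts on $G(\ov F)$, on $H_0(\ov F)=G_{\alpha_0}(\ov F)$ (note $\alpha_0\in G(F)$, so $H_0$ is defined over $F$), on $Z_G$, and on $\Aut_{\ov F}(\fg,i)$ all by the same entrywise Galois action via the fixed identification. Thus $\Aut_{\ov F}(\fg,i)\simeq(H_0/Z_G)(\ov F)$ as $\Gamma$-groups.

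Finally I would invoke the standard descent/Galois-cohomology dictionary: for a quasi-projective $F$-object $\CX_0$ (here the pair $(\fg,i)$, a finite-dimensional algebra with a marked subalgebra) whose $\ov F/F$-forms all become isomorphic over $\ov F$ --- which is exactly Lemma~\ref{lem:isopair} --- the pointed set of $F$-isomorphism classes of forms is in natural bijection with $H^1(\Gamma,\Aut_{\ov F}(\CX_0))$, the bijection sending a form $\CX$ to the class of the cocycle $\gamma\mapsto\varphi^{-1}\circ{}^{\gamma}\!\varphi$ for any $\ov F$-isomorphism $\varphi\colon(\CX_0)_{\ov F}\xrightarrow{\sim}\CX_{\ov F}$. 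Combining with the previous paragraph gives the asserted natural bijection between $H^1(F,H_0/Z_G)$ and the set of $F$-isomorphism classes of pairs $(\fg',j)$.

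I expect the main obstacle to be bookkeeping rather than conceptual: one must be careful that ``$F$-isomorphism of pairs'' is taken to mean an $F$-algebra isomorphism $\fg'\xrightarrow{\sim}\fg''$ intertwining the two embeddings of $E$ (not merely an abstract isomorphism of algebras), since only with this notion does the automorphism group of the base object equal $H_0/Z_G$ rather than something larger; and one should check that the $\Gamma$-action on $\Aut_{\ov F}(\fg,i)$ coming from its identification with $(H_0/Z_G)(\ov F)$ is the correct one for the cohomology set appearing in later sections (e.g. that $H_0/Z_G$ is here viewed as an $F$-group scheme with its natural Galois action, which is legitimate because $\alpha_0$, and hence $H_0$ and $Z_G\subseteq H_0$, are defined over $F$). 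Once these points are pinned down, the proof is a direct application of the two cited lemmas and standard Galois descent.
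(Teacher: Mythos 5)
Your proof is correct and follows the same approach as the paper, which simply cites the twisted-forms/Galois-descent principle from the Book of Involutions (KMRT, (29.13)) together with Lemma~\ref{lem:isopair}; you have essentially unpacked that citation by computing $\Aut_{\ov F}(\fg,i)\simeq(H_0/Z_G)(\ov F)$ via Skolem--Noether and invoking the standard descent dictionary.
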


\begin{proof}
It results from \cite[(29.13)]{MR1632779} and Lemma \ref{lem:isopair}. 
\end{proof}


By Proposition \ref{prop:bookinvol}, there is an isomorphism $\varphi: \fg'_{\ov{F}}\ra\fg_{\ov{F}}$ such that 
$$ \varphi(\alpha')=\alpha_0 $$ 
and that 
$$ \varphi\circ\sigma\circ\varphi^{-1}\circ\sigma^{-1}=\Ad(u_\sigma) $$
for all $\sigma\in\Gamma$, where $u_\sigma$ is a Galois $1$-cocycle with values in $H_0/Z_G$. 

\subsubsection{Link to the split case}

Set 
$$ \alpha_1:=\mat(\alpha1_n,,,-\alpha1_n)\in G(E). $$
Recall that $H=G_{\omega_0}=G_{\alpha_1}$ and that $\omega=\mat(0,1_n,1_n,0)\in\fs(F)$. We have 
$$ \omega^2=1_{2n} \text{ and } \fs=\fh\omega=\omega\fh. $$
The action $\Ad(\omega)$ induces an involution on $H$ and $\fh$. Let $y\in G$ be such that 
\begin{equation}\label{formula1142}
 \Ad(y)\circ\varphi(\alpha')=\alpha_1, 
\end{equation}
i.e., $\Ad(y)(\alpha_0)=\alpha_1$. Recall that $H'=G'_{\alpha'}$. The morphism $\Ad(y)\circ\varphi$ induces isomorphisms $H'_{\ov{F}}\ra H_{\ov{F}}$ and $\fs'_{\ov{F}}\ra\fs_{\ov{F}}$. For all $\sigma\in\Gamma$, we have 
$$ \Ad(\sigma(y))\circ\sigma\circ\varphi(\alpha')=\sigma(\alpha_1)=\varepsilon_\sigma\alpha_1, $$
where $\varepsilon_\sigma$ denotes the quadratic character of $\Gamma$ associated to $E/F$. Since 
$$ \sigma\circ\varphi(\alpha')=\Ad(u_\sigma^{-1})\circ\varphi\circ\sigma(\alpha')=\Ad(u_\sigma^{-1})\circ\varphi(\alpha')=\Ad(u_\sigma^{-1})\circ\Ad(y^{-1})(\alpha_1), $$
we obtain 
$$ \Ad(yu_\sigma\sigma(y)^{-1})(\alpha_1)=\varepsilon_\sigma\alpha_1. $$
For all $\sigma\in\Gamma$, define $w_\sigma:=1_{2n}$ if $\sigma\in\Gal(\ov{F}/E)$ and $w_\sigma:=\omega$ otherwise. Then $\Ad(w_\sigma)(\alpha_1)=\varepsilon_\sigma\alpha_1$. We deduce that 
\begin{equation}\label{formula1143}
 yu_\sigma\sigma(y)^{-1}w_\sigma\in H. 
\end{equation}
The elements $y$ verifying \eqref{formula1142} form an $H$-torsor of the form $Hv$ where 
\begin{equation}\label{formula1144}
v:=\mat(1_n,\alpha1_n,1_n,-\alpha1_n)\in G(E). 
\end{equation}
We easily check that
$\sigma(v)=w_\sigma v$ for all $\sigma\in\Gamma$ and that $\omega v=v\omega_0$. 

\subsubsection{Isomorphisms over $\ov{F}$ of centralisers}

Let $Y\in\fs'(F)$ be a semi-simple element. There exists $y\in Hv$ and a semi-simple element $X\in\fs(F)$ such that 
$$ \Ad(y)\circ\varphi(Y)=X. $$
Then $\Ad(y)\circ\varphi$ induces isomorphisms $H'_Y\ra H_X$ and $\fs'_{Y}\ra \fs_X$ over $\ov{F}$. For all $\sigma\in\Gamma$, we see that 
\begin{equation}\label{eq:adj_cocycle}
 (\Ad(y)\circ\varphi)\circ\sigma\circ(\Ad(y)\circ\varphi)^{-1}\circ\sigma^{-1}=\Ad(y)\circ(\varphi\circ\sigma\circ\varphi^{-1}\circ\sigma^{-1})\circ(\sigma\circ\Ad(y^{-1})\circ\sigma^{-1})=\Ad(yu_\sigma\sigma(y)^{-1}). 
\end{equation}
Since 
\[\begin{split}
 &(\Ad(y)\circ\varphi)\circ\sigma\circ(\Ad(y)\circ\varphi)^{-1}\circ\sigma^{-1}(X)=(\Ad(y)\circ\varphi)\circ\sigma\circ(\Ad(y)\circ\varphi)^{-1}(X) \\ 
 =&(\Ad(y)\circ\varphi)\circ\sigma(Y)=(\Ad(y)\circ\varphi)(Y)=X, 
\end{split}\]
we obtain 
\begin{equation}\label{eq:G_X}
 yu_\sigma\sigma(y)^{-1}\in G_X. 
\end{equation}
Combining this with \eqref{formula1143}, we have 
\begin{equation}\label{eq:G_XcapHw}
 yu_\sigma\sigma(y)^{-1}\in Hw_\sigma\cap G_X. 
\end{equation}
Since $w_\sigma$ normalises $H$ (resp. $\fs$), we see that $yu_\sigma\sigma(y)^{-1}$ normalises $H_X$ (resp. $\fs_X$). 

\subsubsection{Case of invertible elements: inner twists}

We first consider the particular case where $Y\in{\fs'}^\times(F)$ is a semi-simple element. In this case, by \cite[Proposition 2.1]{MR1394521}, up to conjugation by $H(F)$, we may suppose that 
$$ X=\mat(0,1_n,A,0) $$
where $A\in GL_n(F)$ is semi-simple in the classical sense. By \cite[Proposition 2.2]{MR1394521} or direct calculation, for such $X$, there is an isomorphism of representations 
$$ (H_X, \fs_X)\simeq(GL_{m,A}, \fg\fl_{m,A}). $$

\begin{lem}\label{lem:cent-intwist}
For such $Y$ and $X$, the isomorphism $\Ad(y)\circ\varphi$ over $\ov{F}$ from $H'_Y$ to $H_X$ is an inner twist. 
\end{lem}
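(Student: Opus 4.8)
The plan is to show that the cocycle classifying the twist $\Ad(y)\circ\varphi \colon H'_Y \to H_X$ becomes trivial after passing to the adjoint group, which is precisely what it means for the twist to be inner. First I would record, from \eqref{eq:adj_cocycle}, that the failure of $\Ad(y)\circ\varphi$ to be $\Gamma$-equivariant is measured by $\sigma \mapsto \Ad(y u_\sigma \sigma(y)^{-1})$, and that by \eqref{eq:G_X} the element $c_\sigma := y u_\sigma \sigma(y)^{-1}$ lies in $G_X$. So the twist is governed by the $1$-cocycle $\sigma \mapsto c_\sigma$ with values in the image of $G_X$ in $\Aut(H_X)$; it is an inner twist exactly when this cocycle lands in $H_X(\ov F)$ acting by conjugation, i.e.\ when the induced class in $H^1(F, (\Aut H_X)/\Inn H_X)$ — equivalently the class in the outer automorphism cohomology — vanishes.

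The key point is the explicit shape of $X$ and its centraliser. Since $Y \in {\fs'}^\times(F)$ is semisimple and invertible, the excerpt reduces (via \cite[Proposition 2.1]{MR1394521}) to $X = \mat(0,1_n,A,0)$ with $A \in GL_n(F)$ semisimple, and \cite[Proposition 2.2]{MR1394521} identifies $(H_X,\fs_X) \simeq (GL_{m,A},\fg\fl_{m,A})$, where $GL_{m,A}$ denotes the group of units in $\Cent_{\fg\fl_n}(A)$ — a product of $GL$'s over the field extensions $F[\lambda]/(\text{factors of }\chi_A)$. The next step is to observe that $G_X$, the centraliser in $G = GL_{2n}$ of the full element $X$ (not just of $A$), also consists of block matrices commuting with $X$, and that conjugation by any such element preserves the identification $H_X \simeq GL_{m,A}$ acting as an \emph{inner} automorphism: concretely, an element of $G_X$ commuting with $X = \mat(0,1,A,0)$ is forced to have the form $\mat(p,q,qA,p)$ (or similar) with $p,q$ commuting with $A$, and its conjugation action on $H_X = \{\mat(g,0,0,g) : g \in GL_{m,A}\}$ is $g \mapsto (p+q)g(p+q)^{-1}$ with $p+q \in H_X(\ov F)$. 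Hence every $c_\sigma \in G_X(\ov F)$ acts on $H_X$ through an element of $H_X(\ov F)$, so the twisting cocycle is inner by construction.

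The main obstacle — and the step requiring genuine care rather than bookkeeping — is verifying that the element of $H_X(\ov F)$ through which $c_\sigma$ acts (the ``$p+q$'' above) genuinely defines a $1$-cocycle valued in $H_X$, not merely that each $c_\sigma$ acts innerly pointwise: one must check compatibility of the reductions $c_\sigma \mapsto (p_\sigma + q_\sigma)$ with the cocycle relation $c_{\sigma\tau} = c_\sigma \sigma(c_\tau)$, which amounts to checking that the map $G_X \to H_X$ sending $\mat(p,q,qA,p) \mapsto p+q$ is a $\Gamma$-equivariant group homomorphism on the relevant subgroup. This should follow from a direct multiplication of block matrices, but it is the substantive computation. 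Once that is in place, the cocycle $\sigma \mapsto c_\sigma \in \Inn(H_X)(\ov F)$ is, by definition, the cocycle of an inner twist, and the lemma follows; I would also note in passing that this is the invertible-element analogue of the reduction used in \cite{MR3414387}, so the argument parallels {\it{loc. cit.}}
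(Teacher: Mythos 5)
Your reduction has a genuine gap. You pass from $c_\sigma := y u_\sigma\sigma(y)^{-1} \in G_X$ to the claim that \emph{every} element of $G_X$ acts on $H_X$ by an inner automorphism, but that claim is false: conjugation by a general element of $G_X$ need not even stabilise $H_X$. For a concrete failure, take $n=2$ and $A=1_2$ (so $Y$ is semisimple but not regular, which is allowed here). Then $X=\omega$, $G_X=\left\{\mat(a,b,b,a):a,b\in\fg\fl_2\right\}^\times$, and the substitution $u:=a+b$, $v:=a-b$ identifies $(G_X,H_X)$ with $(GL_2\times GL_2, \Delta GL_2)$. Conjugation by $(u,v)$ sends $(g,g)$ to $(ugu^{-1},vgv^{-1})$, which lies in the diagonal only when $u^{-1}v$ is central. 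Your explicit formula $g\mapsto(p+q)g(p+q)^{-1}$ fails for the same reason: with $a=1_2$, $b=\diag(2,3)$, $\alpha=4$, one finds $u=a+2b=\diag(5,7)$, so the first component of the conjugate of $(g,g)$ is $\diag(5,7)\,g\,\diag(5,7)^{-1}$, whereas your formula would give $\diag(3,4)\,g\,\diag(3,4)^{-1}$.

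The ingredient you are missing, and which the paper's proof relies on, is the sharper constraint \eqref{eq:G_XcapHw}: $c_\sigma\in Hw_\sigma\cap G_X$, not merely $c_\sigma\in G_X$. Since $Hw_\sigma$ normalises $H$, this already guarantees that $\Ad(c_\sigma)$ preserves $H_X=H\cap G_X$. Inner-ness is then a two-case check: if $\sigma\in\Gal(\ov F/E)$, then $w_\sigma=1_{2n}$ and $c_\sigma\in H\cap G_X=H_X$ directly; if $\sigma\notin\Gal(\ov F/E)$, then $c_\sigma\in H\omega\cap G_X$, and because $\omega X=\mat(A,0,0,1_n)\in H$, $X\in G_X$, and $X$ centralises $H_X$, the element $c_\sigma X$ lies in $H\cap G_X=H_X$ and satisfies $\Ad(c_\sigma X)=\Ad(c_\sigma)$ on $H_X$. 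By contrast, your ``main obstacle'' paragraph addresses a non-issue: once each $c_\sigma$ acts on $H_X$ through an inner automorphism, the map $\sigma\mapsto\Ad(c_\sigma)$ is automatically a $1$-cocycle valued in the adjoint group of $H_X$, so no further cocycle-compatibility check is required.
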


\begin{proof}
We have \eqref{eq:adj_cocycle} for all $\sigma\in\Gamma$. Thus it suffices to show that for any $\sigma\in\Gamma$, there exists $h_\sigma\in H_X$ such that the automorphisms $\Ad(yu_\sigma\sigma(y)^{-1})$ and $\Ad(h_\sigma)$ on $H_X$ coincide. By \eqref{eq:G_XcapHw}, if $\sigma\in\Gal(\ov{F}/E)$, we can take $h_\sigma:=yu_\sigma\sigma(y)^{-1}$; otherwise, we can take $h_\sigma:=yu_\sigma\sigma(y)^{-1}X$. 
\end{proof}

\begin{lem}
\leavevmode
\begin{enumerate}		
	\item For any $X_0\in\fs^\times(F)$, the map $U\mapsto UX_0$ defines an $H_{X_0}$-equivariant isomorphism $\fh_{X_0}\ra\fs_{X_0}$ over $F$; 
	
	\item For any $Y_0\in{\fs'}^\times(F)$, the map $V\mapsto VY_0$ defines an $H'_{Y_0}$-equivariant isomorphism $\fh'_{Y_0}\ra\fs'_{Y_0}$ over $F$. 
\end{enumerate}
\end{lem}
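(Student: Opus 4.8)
The plan is to verify directly that multiplication by the invertible semi-simple element $X_0\in\fs^\times(F)$ (resp. $Y_0\in{\fs'}^\times(F)$) carries the centraliser of $X_0$ in $\fh$ onto the centraliser of $X_0$ in $\fs$, and that this map is equivariant for the conjugation action of $H_{X_0}$. Since the two parts of the lemma are proved identically (the second follows by applying the first to the base change to $\ov F$ and then using that multiplication by $Y_0$ commutes with the Galois action when $Y_0\in\fs'(F)$), I would prove part (1) and remark that part (2) is proved in the same way.

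First I would record the linear-algebra setup. Recall from Sections~\ref{symmpair1} and \ref{ssec:classEg'} that $\fg=\fh\oplus\fs$ with $[\fh,\fh]\subseteq\fh$, $[\fh,\fs]\subseteq\fs$, $[\fs,\fs]\subseteq\fh$, and that $\fs=\fh\omega=\omega\fh$ with $\omega^2=1$, so right multiplication by any element of $\fs^\times$ is an $F$-linear isomorphism $\fg(F)\to\fg(F)$ interchanging $\fh(F)$ and $\fs(F)$ (its inverse being right multiplication by the inverse element). Thus $U\mapsto UX_0$ is already an $F$-linear isomorphism $\fh(F)\to\fs(F)$; the content is that it restricts to an isomorphism $\fh_{X_0}(F)\to\fs_{X_0}(F)$ of centralisers.

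Next I would check that it sends $\fh_{X_0}$ into $\fs_{X_0}$ and conversely. For $U\in\fh(F)$, one has $U\in\fh_{X_0}$ iff $[U,X_0]=0$ iff $UX_0=X_0U$; and then $[UX_0,X_0]=UX_0X_0-X_0UX_0=(UX_0-X_0U)X_0=0$, so $UX_0\in\fs_{X_0}$. For the reverse inclusion, given $W\in\fs_{X_0}(F)$ write $W=UX_0$ with $U:=WX_0^{-1}\in\fh(F)$ (here $X_0^{-1}$ makes sense as an element of $\fg(F)$ since $X_0\in G(F)$, and $WX_0^{-1}\in\fs\cdot\fs\subseteq\fh$); then $0=[W,X_0]=WX_0-X_0W$, so $X_0W=WX_0=UX_0X_0$, hence $X_0WX_0^{-1}=UX_0$, i.e.\ $X_0U=UX_0$ after right-multiplying by $X_0^{-1}$, so $U\in\fh_{X_0}$. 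This shows the restriction is a bijection $\fh_{X_0}(F)\to\fs_{X_0}(F)$; since everything is visibly algebraic, it is an isomorphism over $F$. Finally, $H_{X_0}$-equivariance is immediate: for $h\in H_{X_0}(F)$ and $U\in\fh_{X_0}(F)$ one has $\Ad(h)(U)X_0=hUh^{-1}X_0=hUh^{-1}(hX_0h^{-1})=h(UX_0)h^{-1}=\Ad(h)(UX_0)$, using $hX_0h^{-1}=X_0$.

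I do not expect a genuine obstacle here; the only point requiring a little care is to make sure that right multiplication by $X_0$ and by $X_0^{-1}$ genuinely preserves the requisite subspaces — i.e.\ that $\fs^\times\cdot\fs^\times\subseteq\fh$ and $\fh\cdot\fs^\times\subseteq\fs$ — which follows from $\fs=\omega\fh=\fh\omega$ together with $\omega\in\fs^\times$ and the multiplicativity $\fs\cdot\fs\subseteq\fh$, $\fh\cdot\fs\subseteq\fs$ inside the associative algebra $\fg$. One should also note that $X_0^{-1}$, a priori only an element of $G(\ov F)$, actually lies in $G(F)$ (and similarly for $Y_0$ in the second part, using that $\fg'$ is an algebra over $F$), so the inverse map is defined over $F$ and the isomorphism is an $F$-isomorphism, not merely a $\ov F$-isomorphism.
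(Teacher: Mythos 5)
Your proof is correct and takes essentially the same route as the paper's, which simply observes that right multiplication by $X_0^{-1}$ furnishes the inverse map; you have supplied the straightforward verifications (well-definedness in both directions and $H_{X_0}$-equivariance) that the paper leaves to the reader.
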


\begin{proof}
Since $X_0$ is invertible, the map $U\mapsto UX_0^{-1}$ from $\fs_{X_0}$ to $\fh_{X_0}$ is the inverse for the map in (1). The proof of (2) is similar. 
\end{proof}

\subsubsection{Case of regular elements: isomorphisms over $F$}\label{ssec:centrss}

We now consider an even more special case. Let $X\in\fs_\rs(F)$ and $Y\in\fs'_\rs(F)$ be such that $X\da Y$. Then there exists $y\in Hv$ such that $\Ad(y)\circ\varphi(Y)=X$. 

\begin{lem}\label{lem74}
For such $Y$ and $X$, $\Ad(y)\circ\varphi$ induces isomorphisms $H'_{Y}\ra H_X$ and $\fs'_Y\ra \fs_X$ over $F$. 
\end{lem}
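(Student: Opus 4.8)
The plan is to show that the isomorphisms $\Ad(y)\circ\varphi\colon H'_{Y,\ov F}\to H_{X,\ov F}$ and $\fs'_{Y,\ov F}\to\fs_{X,\ov F}$, which are provided over $\ov F$ by the discussion preceding Lemma~\ref{lem:cent-intwist}, in fact commute with the Galois action once $X$ is regular, hence descend to isomorphisms over $F$. Writing $g_\sigma:=yu_\sigma\sigma(y)^{-1}$ for $\sigma\in\Gamma$, the cocycle identity \eqref{eq:adj_cocycle} says precisely that the defect of $\Gamma$-equivariance of $\Ad(y)\circ\varphi$ is, for each $\sigma$, the automorphism $\Ad(g_\sigma)$ of $H_X$ (respectively of $\fs_X$). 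So the proof reduces to checking that $\Ad(g_\sigma)$ acts as the identity on $H_X$ and on $\fs_X$ for every $\sigma\in\Gamma$.

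The key point is that $g_\sigma\in G_X$ by \eqref{eq:G_X}, and that $G_X$ is commutative; this commutativity is exactly where the hypothesis $X\in\fs_\rs(F)$ is used — by contrast with the earlier subsections, which needed only $X$ semi-simple and so could conclude only over $\ov F$. Indeed, since $\fs_\rs\subseteq\fs^\times$, writing $X=\mat(0,B,C,0)$ one has $\chi_X(\lambda)=\det(\lambda^2 1_n-CB)$ with $CB\in GL_n(F)$ regular semi-simple, so $\chi_X$ is separable, $X$ is a regular semi-simple element of $\fg=\fg\fl_{2n}$, and $G_X=Z_G(X)$ is a maximal torus of $G$ with Lie algebra $\fg_X=\fh_X\oplus\fs_X$ (compare \cite[\S4]{MR3414387} and \cite[Propositions~2.1 and~2.2]{MR1394521}). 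Being the inner automorphism by an element of the abelian group $G_X\supseteq H_X$, $\Ad(g_\sigma)$ is the identity on $H_X$; and since the adjoint action of the torus $G_X$ on its own Lie algebra $\fg_X$ is trivial, $\Ad(g_\sigma)$ is the identity on $\fs_X\subseteq\fg_X$ as well. This yields the reduced claim, and therefore that $\Ad(y)\circ\varphi$ descends to $F$-morphisms $H'_Y\to H_X$ and $\fs'_Y\to\fs_X$ which, being isomorphisms after base change to $\ov F$, are themselves isomorphisms. I would also remark that the construction is independent of the choice of $y\in Hv$ with $\Ad(y)\circ\varphi(Y)=X$: two such choices differ by left translation by an element of $H_X(\ov F)$, and by commutativity of $G_X$ this changes neither $\Ad(y)|_{H_X}$ nor $\Ad(y)|_{\fs_X}$.

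I do not expect a serious obstacle: granting the standard structural fact that $G_X$ is commutative for $X\in\fs_\rs$, the argument is a direct unwinding of \eqref{eq:adj_cocycle} and \eqref{eq:G_X}. The only point demanding a little care is to carry out the descent for $\fs'_Y\to\fs_X$ in parallel with that for $H'_Y\to H_X$, i.e.\ to observe that $\Ad(g_\sigma)$ annihilates the entire Cartan subspace $\fs_X$ and not just the torus $H_X$; this is immediate once $G_X$ has been identified as a maximal torus. (An alternative is to get the statement for $H'_Y\to H_X$ directly from Lemma~\ref{lem:cent-intwist} — an inner twist of the torus $H_X$ being automatically trivial — and then transport it to $\fs_X$ via the $F$-isomorphisms $U\mapsto UX$ and $U\mapsto UY$ established above.)
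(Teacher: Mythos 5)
Your proof is correct and takes essentially the same route as the paper's: both reduce the descent to the cocycle identity \eqref{eq:adj_cocycle}, observe via \eqref{eq:G_X} that $yu_\sigma\sigma(y)^{-1}\in G_X$, and then use that $G_X$ is a maximal torus (because $X\in\fs_\rs$ is regular semi-simple in the classical sense, by \cite[Proposition~4.2]{MR3414387}) so that $\Ad(yu_\sigma\sigma(y)^{-1})$ acts trivially on $H_X$ and $\fs_X$. You merely unwind the citation to \cite[Proposition~4.2]{MR3414387} explicitly and add a couple of harmless side remarks (independence of $y$, and the alternative via Lemma~\ref{lem:cent-intwist}).
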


\begin{proof}
This is a generalisation of \cite[Lemma 7.4]{MR3414387}. Again we have \eqref{eq:adj_cocycle} for all $\sigma\in\Gamma$. For $X\in\fs_\rs$, we can deduce from \cite[Proposition 4.2]{MR3414387} that $X$ is also regular semi-simple in $G$ in the classical sense. By \eqref{eq:G_X}, the action $\Ad(yu_\sigma\sigma(y)^{-1})$ on $H_X$ (resp. $\fs_X$) is trivial, which implies that $\Ad(y)\circ\varphi$ induces an isomorphism $H'_{Y}\ra H_X$ (resp. $\fs'_Y\ra \fs_X$) over $F$. 
\end{proof}

Suppose that $F$ is a local field of characteristic zero. With regard to Lemma \ref{lem74}, we shall choose compatible Haar measures on $H'_Y(F)\ra H_X(F)$ and $\fs'_Y(F)\simeq\fs_X(F)$. 

\subsubsection{General case}

We return to the general case of a semi-simple element $Y\in\fs'(F)$. By \cite[Proposition 3.25]{MR4681295}, up to $H'(F)$-conjugation, we may suppose that 
$$ Y=\mat(B,,,0_{r-s}) $$
where $1\leq s\leq r$ is an integer in \textbf{Case I} (resp. an even integer in \textbf{Case II}) and $B\in{\fs'_s}^\times(F)$ is a semi-simple element with respect to the $H'_s$-action. For such Y, there is an isomorphism of representations 
$$(H'_Y, \fs'_Y)\simeq(H'_{s,B}, \fs'_{s,B})\times(H'_{r-s}, \fs'_{r-s}). $$
By \cite[Proposition 2.1]{MR1394521}, up to conjugation by $H(F)$, we may suppose that 
$$ X=
\left( \begin{array}{cccc}
0 & 0 & 1_{m} & 0 \\
0 & 0 & 0 & 0 \\
A & 0 & 0 & 0 \\
0 & 0 & 0 & 0 \\
\end{array} \right) $$
where $1\leq m\leq n$ is an integer and $A\in GL_m(F)$ is semi-simple in the classical sense. By \cite[Proposition 2.2]{MR1394521}, for such $X$, there is an isomorphism of representations 
$$ (H_X, \fs_X)\simeq(GL_{m,A}, \fg\fl_{m,A})\times(H_{n-m}, \fs_{n-m}). $$
Since $\Ad(y)\circ\varphi(Y)=X$, we have $2m=s\deg(D)$. By Lemma \ref{lem:cent-intwist}, $H'_{s,B}$ is an inner form of $GL_{m,A}$. This generalises \cite[Remark 5.5]{MR3414387} and is useful for the smooth transfer. 

\subsection{Cohomological criterion}\label{seccohcri}

\subsubsection{The Galois $1$-cocycle associated to $X$}\label{ssec:cocycletoX}

Let 
$$ X=\mat(0,1_n,A,0)\in\fs_\rs(F) $$
where $A\in GL_n(F)$ is regular semi-simple in the classical sense (see \cite[Proposition 4.2]{MR3414387}). Then $GL_{n,A}$ is a maximal $F$-torus in $GL_n$. Denote 
$$ X_0:=\Ad(v^{-1})(X)\in\fs_0(E), $$
where $v$ is defined by \eqref{formula1144}. Since $H_X=\{(x,x)\in H: x\in GL_{n,A}\}$, we see that $\Ad(v^{-1})$ acts trivially on $H_X$ and thus 
$$ H_{0,X_0}=\Ad(v^{-1})(H_X)=H_X\simeq GL_{n,A}. $$
Denote 
$$ H_A:=\Cent_H(\diag(A,A))=GL_{n,A}\times GL_{n,A}, $$
which is a maximal $F$-torus in $H$, and 
$$ T_X:=\Cent_{H_0}(\diag(A,A))=\Ad(v^{-1})(H_A). $$

\begin{lem}\label{lem:borel}
If $V$ is a closed subvariety of $H$ defined over $F$ and stabilised by $\Ad(\omega)$, then $\Ad(v^{-1})(V)$ is a closed subvariety of $H_0$ defined over $F$ and stabilised by $\Ad(\omega_0)$. 
\end{lem}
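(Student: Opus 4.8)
The plan is to reduce the statement to two separate verifications: that $\Ad(v^{-1})(V)$ is defined over $F$, and that it is stabilised by $\Ad(\omega_0)$. Closedness is automatic since $\Ad(v^{-1})$ is an isomorphism of varieties and $V$ is closed; the containment in $H_0$ follows because $\Ad(v^{-1})$ carries $H=G_{\alpha_1}$ into $G_{\alpha_0}=H_0$ by the defining relation $\Ad(v^{-1})(\alpha_1)=\alpha_0$ (equivalently $\Ad(v)(\alpha_0)=\alpha_1$, which holds since $v$ is one of the elements $y$ realising \eqref{formula1142}), so $\Ad(v^{-1})(V)\subseteq\Ad(v^{-1})(H)=H_0$.

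For the descent to $F$, I would use the cocycle relation $\sigma(v)=w_\sigma v$ established just before \eqref{formula1144}, where $w_\sigma=1_{2n}$ for $\sigma\in\Gal(\ov F/E)$ and $w_\sigma=\omega$ otherwise. For $\sigma\in\Gamma$ and $x\in\Ad(v^{-1})(V)$, write $x=\Ad(v^{-1})(u)$ with $u\in V$. Then $\sigma(x)=\Ad(\sigma(v)^{-1})(\sigma(u))=\Ad(v^{-1}w_\sigma^{-1})(\sigma(u))=\Ad(v^{-1})\bigl(\Ad(w_\sigma^{-1})(\sigma(u))\bigr)$. Now $\sigma(u)\in V$ because $V$ is defined over $F$, and $\Ad(w_\sigma^{-1})(\sigma(u))\in V$ because $V$ is stabilised by $\Ad(\omega)$ (note $w_\sigma\in\{1,\omega\}$, and $\Ad(\omega)$ stabilises $V$ by hypothesis while $\Ad(1)$ trivially does). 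Hence $\sigma(x)\in\Ad(v^{-1})(V)$, so this set is Galois-stable, and being a closed subvariety of $H_0$ that is Galois-stable it is defined over $F$.

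For the $\Ad(\omega_0)$-stability, I would use the relation $\omega v=v\omega_0$, i.e. $\Ad(v^{-1})\circ\Ad(\omega)=\Ad(\omega_0)\circ\Ad(v^{-1})$ as morphisms. Applying this to $V$: $\Ad(\omega_0)\bigl(\Ad(v^{-1})(V)\bigr)=\Ad(v^{-1})\bigl(\Ad(\omega)(V)\bigr)=\Ad(v^{-1})(V)$, using the hypothesis that $\Ad(\omega)$ stabilises $V$. This gives the claim.

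The only mildly delicate point is bookkeeping with the field of definition of $v$: $v\in G(E)$ rather than $G(F)$, so one cannot simply say $\Ad(v^{-1})$ is an $F$-morphism. The key is that the two intertwining identities $\sigma(v)=w_\sigma v$ and $\omega v=v\omega_0$ exactly encode how the Galois action and $\Ad(\omega)$ transport across $\Ad(v^{-1})$, and both $w_\sigma$ and $\omega$ land in the group $\{1,\omega\}$ under which $V$ is already assumed stable. I do not anticipate a genuine obstacle here; the lemma is essentially a formal consequence of the relations recorded around \eqref{formula1144}.
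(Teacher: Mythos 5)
Your proof is correct and follows essentially the same route as the paper's: the key computation $\sigma(\Ad(v^{-1})(V))=\Ad(\sigma(v)^{-1})(V)=\Ad(v^{-1}w_\sigma^{-1})(V)=\Ad(v^{-1})(V)$ using the cocycle relation $\sigma(v)=w_\sigma v$ is identical, and the paper likewise invokes Borel's \S AG Theorem 14.4 to pass from Galois-stability to definition over $F$. You merely spell out the ``obvious'' parts (closedness, containment in $H_0$, and $\Ad(\omega_0)$-stability via $\omega v=v\omega_0$) that the paper leaves implicit.
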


\begin{proof}
For all $\sigma\in\Gamma$, we have 
$$ \sigma(\Ad(v^{-1})(V))=\Ad(\sigma(v)^{-1})(V)=\Ad(v^{-1} w_\sigma^{-1})(V)=\Ad(v^{-1})(V). $$
By \cite[\S AG, Theorem 14.4]{MR1102012}, $\Ad(v^{-1})(V)$ is defined over $F$. The other assertions are obvious. 
\end{proof}

By Lemma \ref{lem:borel}, $T_X$ is a maximal $F$-torus in $H_0$. Notice that $H_X=H_A^{\omega}$ and that $H_{0,X_0}=T_X^{\omega_0}$. We also observe that 
$$ T_X\simeq\Res_{E/F} (GL_{n,A})_E $$ 
and that the inclusion $H_{0,X_0}\subseteq T_X$ is isomorphic to the inclusion $GL_{n,A}\subseteq\Res_{E/F} (GL_{n,A})_E$. For simplicity, we also write 
\begin{equation}\label{eq:defTandR}
 T:=T_X \text{ and } R:=H_{0,X_0}, 
\end{equation}
which are maximal $F$-tori of $H_0$ and $H_0^{\omega_0}$ respectively.  

\begin{lem}\label{lem1231}
There exists a unique Galois $1$-cocycle $t_\sigma$ with values in $T/R$ such that for all $\sigma\in\Gamma$, 
$$ \Ad(t_\sigma)\circ\sigma(X_0)=X_0. $$
\end{lem}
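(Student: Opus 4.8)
The plan is to write $t_\sigma$ down explicitly on the two cosets of $\Gal(\ov F/E)$ in $\Gamma$, and then deduce the cocycle identity and the uniqueness purely formally from the fact that the only element of $T$ fixing $X_0$ lies in $R$.

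First I would unwind $\sigma(X_0)$. Since $X\in\fs(F)$ and $\sigma(v)=w_\sigma v$, we have
$$ \sigma(X_0)=\sigma\bigl(\Ad(v^{-1})(X)\bigr)=\Ad\bigl(\sigma(v)^{-1}\bigr)(X)=\Ad\bigl(v^{-1}w_\sigma^{-1}\bigr)(X). $$
For $\sigma\in\Gal(\ov F/E)$ one has $w_\sigma=1_{2n}$, so $\sigma(X_0)=X_0$ and $t_\sigma:=1$ (its class in $T/R$) works. For $\sigma\notin\Gal(\ov F/E)$ one has $w_\sigma=\omega$, and a direct computation with $n\times n$ blocks gives
$$ \Ad(\omega)(X)=\mat(0,A,1_n,0)=\Ad\bigl(\diag(A,1_n)\bigr)(X), $$
where $\diag(A,1_n)$ is an $F$-point of the torus $H_A=\Cent_H(\diag(A,A))$ because $A\in \GL_n(F)$ is invertible and centralises $A$. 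Hence $\sigma(X_0)=\Ad\bigl(\Ad(v^{-1})(\diag(A,1_n))\bigr)(X_0)$, and since $\Ad(v^{-1})(H_A)=T_X=T$, the element
$$ t_\sigma:=\Ad(v^{-1})\bigl(\diag(A,1_n)\bigr)^{-1}\in T $$
satisfies $\Ad(t_\sigma)\circ\sigma(X_0)=X_0$. In particular the class of $t_\sigma$ in $(T/R)(\ov F)$ depends only on $\sigma|_E$, so $\sigma\mapsto t_\sigma$ factors through $\Gal(E/F)$ and is in particular continuous.

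Next comes uniqueness, which also shows the classes above are canonical. If $t,t'\in T(\ov F)$ both satisfy $\Ad(t)(\sigma X_0)=\Ad(t')(\sigma X_0)=X_0$, then $t'^{-1}t$ stabilises $\sigma X_0$. By the previous step $\sigma X_0=\Ad(s)(X_0)$ for some $s\in T$, hence the stabiliser of $\sigma X_0$ in $T$ equals $\Ad(s)\bigl(\Cent_T(X_0)\bigr)=\Cent_T(X_0)$, using that $T$ is abelian; and $\Cent_T(X_0)=T\cap H_{0,X_0}=R$ since $R\subseteq T$. Therefore $t=t'$ in $T/R$, which is precisely the uniqueness in the statement (applied for each $\sigma$).

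The cocycle relation is then formal. Applying $\sigma$ to $\Ad(t_\tau)\circ\tau(X_0)=X_0$ gives $\Ad(\sigma(t_\tau))\circ(\sigma\tau)(X_0)=\sigma(X_0)$, and composing with $\Ad(t_\sigma)$ gives $\Ad\bigl(t_\sigma\,\sigma(t_\tau)\bigr)\circ(\sigma\tau)(X_0)=X_0$; by the uniqueness applied to $\sigma\tau$ we conclude $t_{\sigma\tau}=t_\sigma\,\sigma(t_\tau)$ in $(T/R)(\ov F)$. Thus $(t_\sigma)_{\sigma\in\Gamma}$ is a Galois $1$-cocycle with values in $T/R$, and it is the unique one with the stated property. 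The only non-formal point is the first step — verifying that $\Ad(\omega)(X)$ lies in the $H_A(\ov F)$-orbit of $X$, i.e. that the twisting of $X_0$ produced by $w_\sigma$ is realised by conjugation inside the maximal torus $T$ and not merely inside $H_0$. This uses the normalised shape $X=\mat(0,1_n,A,0)$; for a general regular semi-simple element of $\fs_\rs(F)$ one first conjugates into this form, which is harmless because $H_X$, $\fs_X$ and the cocycle class are unchanged under $H(F)$-conjugation.
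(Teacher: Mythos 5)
Your proof is correct and follows essentially the same route as the paper: compute $\sigma(X_0)=\Ad(v^{-1}w_\sigma^{-1})(X)$, take $t_\sigma$ to be trivial on $\Gal(\ov F/E)$, and exhibit an explicit element of $T$ on the other coset. Your choice $t_\sigma=\Ad(v^{-1})\bigl(\diag(A,1_n)\bigr)^{-1}$ differs from the paper's $\Ad(v^{-1})\bigl(\diag(1_n,A)\bigr)$ by $\Ad(v^{-1})\bigl(\diag(A,A)\bigr)\in R$, so they represent the same class in $T/R$; and you spell out the uniqueness (via $\Cent_T(X_0)=R$) and the cocycle identity (formally from uniqueness), which the paper dismisses as obvious/automatic.
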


\begin{proof}
The uniqueness is obvious from the definition of $R$. It suffices to consider the existence. The cocycle condition is also automatic. It is thus enough to check the equality. If $\sigma\in\Gal(\ov{F}/E)$, then $\sigma(X_0)=X_0$, so it suffices to take $t_\sigma:=1$. Now suppose that $\sigma\notin\Gal(\ov{F}/E)$. Since 
$$ \sigma(X_0)=\Ad(\sigma(v)^{-1})\circ\sigma(X)=\Ad(v^{-1}\omega^{-1})(X), $$
it suffices to find an element $t_\sigma\in T$ such that $vt_\sigma v^{-1}\omega^{-1}\in G_X$. In fact, we can take $t_\sigma:=\Ad(v^{-1})\mat(1,,,A)$. 
\end{proof}

Since $A\in GL_n(F)$ is regular semi-simple, its characteristic polynomial $\chi_A$ is separable and factorised into a product $\chi_A=\prod_{i\in I}\chi_i$ of pairwise distinct monic irreducible polynomials over $F$. Then 
$$ GL_{n,A}\simeq \prod_{i\in I} \Res_{F_i/F} \BG_{m,F_i}, $$
where $F_i=F[\lambda]/(\chi_i(\lambda))$. Using the inflation-restriction exact sequence, Shapiro's lemma and Hilbert's Theorem 90 (see \cite[\S 29.A]{MR1632779} for example), we can identify 
$$ H^1(F,T/R)\simeq H^1(\Gal(E/F), T(E)/R(E)). $$

For all $\sigma\in\Gamma$ and $h':=\Ad(v^{-1})(h)\in T$ where $h\in H_A$, we see that 
$$ \sigma(h')=\Ad(\sigma(v)^{-1})\circ\sigma(h)=\Ad(v^{-1})\circ\Ad(w_\sigma)\circ\sigma(h). $$
Therefore, we can regard $T$ as the subgroup $H_A$ of $G$ equipped with the Galois action $\Ad(w_\sigma)\circ\sigma$. Then the inclusion $R(E)\subseteq T(E)$ is isomorphic to the diagonal inclusion $H_X(E)\subseteq H_A(E)$. Let $\sigma$ be the nontrivial element in $\Gal(E/F)$ and $s_\sigma:=(B,C)\in H_A(E)/H_X(E)$. By a straightforward calculation, we observe that 
\begin{enumerate}
	\item $s_\sigma$ defines a $1$-cocycle if and only if $s_\sigma\Ad(w_\sigma)\circ\sigma(s_\sigma)\in H_X(E)$, i.e., $BC^\sigma=CB^\sigma$; 
	
	\item $s_\sigma$ defines a $1$-coboundary if and only if $s_\sigma=(B,C)^{-1}\Ad(w_\sigma)\circ\sigma(B,C)$ for some $(B,C)\in H_A(E)/H_X(E)$, i.e., $s_\sigma=(C^\sigma B^{-1},B^\sigma C^{-1})\in H_A(E)/H_X(E)$. 
\end{enumerate}
Then $H^1(\Gal(E/F),T(E)/R(E))$ is the quotient of 
\[\begin{split}
 &\{(B,C)\in H_A(E)/H_X(E): BC^\sigma=CB^\sigma\}=\{(1,CB^{-1})\in H_A(E)/H_X(E): (CB^{-1})^\sigma=CB^{-1}\} \\ 
=&\{(1,B)\in H_A(E)/H_X(E): B^\sigma=B\}=\{(1,B)\in H_A(E)/H_X(E): B\in GL_{n,A}(F)\} \\
\end{split}\]
by 
\[\begin{split}
 &\{(C^\sigma B^{-1},B^\sigma C^{-1})\in H_A(E)/H_X(E): B,C\in GL_{n,A}(E)\} \\
=&\{(B^{-\sigma},B)\in H_A(E)/H_X(E): B\in GL_{n,A}(E)\}=\{(1,BB^\sigma)\in H_A(E)/H_X(E): B\in GL_{n,A}(E)\}. \\
\end{split}\]
That is to say, it is the quotient of $GL_{n,A}(F)$ by the group of norms of $GL_{n,A}(E)$. Under the identification of $T$ and $H_A$ with the twisted Galois action, the Galois $1$-cocycle $t_\sigma$ in Lemma \ref{lem1231} (see the proof) corresponds exactly to the class of $A$. 

Denote $E_i:=F_i\otimes_F E$ for $i\in I$. Let $I_0$ be the subset of $I$ consisting of $i$ such that $E_i$ is a field. Then 
\begin{equation}\label{eq:coh_opp_tor}
 H^1(F, T/R)\simeq\prod_{i\in I} F_i^\times/N_{E_i/F_i}(E_i^\times)=\prod_{i\in I_0} F_i^\times/N_{E_i/F_i}(E_i^\times). 
\end{equation}
If $F$ is a local field of characteristic zero, then $H^1(F,T/R)=(\BZ/2\BZ)^{I_0}$. 

\subsubsection{Criterion via Galois $1$-cocycles}

Note that $Z_G\subseteq R$. 

\begin{lem}\label{lem1251}
Let $t\in H^1(F,T/R)$ be the class of the Galois $1$-cocycle $t_\sigma$ in Lemma \ref{lem1231} associated to $X$. Let $u\in H^1(F,H_0/Z_G)$ be the class of the Galois $1$-cocycle $u_\sigma$ in Section \ref{ssec:classEg'} associated to $\fg'$. Then the following conditions are equivalent: 

\begin{enumerate}
	\item there exists $Y\in\fs'(F)$ and $h\in H_0$ such that $\Ad(h)\circ\varphi(Y)=X_0$; 
	
	\item there exists an element of $H^1(F,T/Z_G)$ which has images $t\in H^1(F,T/R)$ and $u\in H^1(F,H_0/Z_G)$ under the natural maps: 
$$   \xymatrix{ H^1(F,T/Z_G) \ar[r]  \ar[d] & H^1(F,T/R)  \\
 H^1(F,H_0/Z_G) & } $$
\end{enumerate}
\end{lem}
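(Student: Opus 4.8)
The plan is to unwind both conditions into statements about twisting the element $X_0 \in \fs_0(E)$ by cocycles, and to match the bookkeeping of the two torsors. First I would interpret condition (1) geometrically: by Proposition \ref{prop:bookinvol}, a pair $(\fg',j)$ giving rise to the class $u \in H^1(F,H_0/Z_G)$ is the same as choosing the isomorphism $\varphi : \fg'_{\ov F} \to \fg_{\ov F}$ with $\varphi \circ \sigma \circ \varphi^{-1} \circ \sigma^{-1} = \Ad(u_\sigma)$; then asking for $Y \in \fs'(F)$ with $\Ad(h)\circ\varphi(Y) = X_0$ for some $h \in H_0$ is equivalent to asking that the $H_0$-conjugacy class of $X_0$ over $\ov F$ contains an element fixed by the \emph{twisted} Galois action $\sigma \mapsto \Ad(hu_\sigma\sigma(h)^{-1})\circ\sigma$ (after absorbing $h$, by the twisted action $\Ad(u_\sigma)\circ\sigma$). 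So condition (1) says: $X_0$, viewed in $\fs_0(\ov F)$, is fixed up to $H_0(\ov F)$-conjugacy by the $u$-twisted Galois action; equivalently, the obstruction cocycle $\sigma \mapsto $ (element of $H_{0,X_0}\backslash H_0$ correcting $\Ad(u_\sigma)\circ\sigma(X_0)$ back to $X_0$) lifts to a $1$-cocycle valued in $H_0$, i.e.\ the class of $u$ in $H^1(F, R\backslash H_0)$ (pointed-set image) comes from $H^1(F,H_0)$.

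Next I would make the connection with $t$. Since $\Ad(t_\sigma)\circ\sigma(X_0) = X_0$ with $t_\sigma$ valued in $T/R$ (Lemma \ref{lem1231}), and $T$ normalises $H_{0,X_0} = R$ and $\fs_0$, the element $X_0$ has the property that its stabiliser in $H_0$ for the $u$-twisted action is an inner form controlled by how $u$ and $t$ interact inside $T$. The natural fibre-product diagram
$$ \xymatrix{ H^1(F,T/Z_G) \ar[r] \ar[d] & H^1(F,T/R) \\ H^1(F,H_0/Z_G) & } $$
is exactly the right device: an element of $H^1(F,T/Z_G)$ mapping to $t$ records a choice of cocycle $v_\sigma \in T/Z_G$ with $\Ad(v_\sigma)\circ\sigma(X_0)=X_0$ (modulo $R/Z_G$ this is $t_\sigma$), and requiring its image in $H^1(F,H_0/Z_G)$ to equal $u$ says precisely that this $v_\sigma$ can be taken to agree with $u_\sigma$ up to a coboundary. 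Concretely, (2) $\Rightarrow$ (1): given such $v_\sigma$, after conjugating $\varphi$ by the element of $H_0$ realising the coboundary between $v_\sigma$ and $u_\sigma$, we get that $\Ad(u_\sigma)\circ\sigma(X_0) = X_0$ up to $Z_G$, hence up to the $R$-action, which lets us descend the $H_0$-orbit of $X_0$ to an $F$-rational element $Y$ of $\fs'$ (this uses that the orbit map $H_0 \to \fs_0$, $h \mapsto \Ad(h)(X_0)$, is an $R$-torsor onto its image, and $H^1(F,R)$-considerations, plus rationality over $F$ via the twisted structure, cf.\ Lemma \ref{lem:borel}). For (1) $\Rightarrow$ (2): given $Y$ and $h$, the element $h u_\sigma \sigma(h)^{-1}$ lies in $R \cdot (\text{stabiliser data})$; I would check it normalises $R$ and lands, modulo $Z_G$, in $T/Z_G$ — this is where the explicit shape of $X_0$, $T_X \simeq \Res_{E/F}(GL_{n,A})_E$, $R \simeq GL_{n,A}$, and the matrices $v, w_\sigma, \omega_0$ from the previous subsections do the work — yielding a cocycle $v_\sigma \in T/Z_G$ whose images are $t$ and $u$.

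The main obstacle I anticipate is the direction (1) $\Rightarrow$ (2): showing that the correction element $h u_\sigma \sigma(h)^{-1}$ (which a priori only normalises $H_{0,X_0}=R$ inside $G_X$, by the computations around \eqref{eq:G_XcapHw}) can actually be represented, modulo $Z_G$, by an element of the maximal torus $T = T_X$ rather than merely of $N_{G_X}(R)$ or of $R$ itself twisted by $w_\sigma$. This requires exploiting that $X_0 \in \fs_0$ (not just $\fs$) and that the $w_\sigma$-twist interchanging $H$ and $\fs$ corresponds, after conjugating by $v$, to the $\omega_0$-symmetry of $H_0$; pinning down that $T$ is precisely the subgroup that both stabilises $X_0$ up to sign-twisted Galois and contains all the ambiguity is the crux. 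Once that identification is in place, the fibre-product statement is a formal consequence of the long exact sequences in (non-abelian) Galois cohomology attached to $Z_G \subseteq R \subseteq T$ and $Z_G \subseteq R \subseteq H_0$, together with the separability factorisation $\chi_A = \prod_{i\in I}\chi_i$ that makes $H^1(F,T/R) \simeq \prod_{i\in I_0} F_i^\times/N_{E_i/F_i}(E_i^\times)$ explicit as in \eqref{eq:coh_opp_tor}, so that the matching of classes can be verified component by component.
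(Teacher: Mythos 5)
Your overall strategy --- unwind condition (1) into the requirement that $hu_\sigma\sigma(h)^{-1}$ lies in a controlled subset of $H_0/Z_G$ and encode this by the fibre-product diagram --- is the paper's, but you have misidentified where the content is, and the ``main obstacle'' you flag for $(1)\Rightarrow(2)$ does not exist. For $h\in H_0$, let $Y$ be the unique element of $\fs'$ with $\Ad(h)\circ\varphi(Y)=X_0$. Combining $\sigma\circ\varphi=\Ad(u_\sigma^{-1})\circ\varphi\circ\sigma$ with $\sigma(X_0)=\Ad(t_\sigma^{-1})(X_0)$ (Lemma~\ref{lem1231}), the condition $Y\in\fs'(F)$ is exactly
$$\Ad(\sigma(h)u_\sigma^{-1}h^{-1})(X_0)=\Ad(t_\sigma^{-1})(X_0)\qquad\text{for all }\sigma\in\Gamma.$$
Because the $\Ad$-stabiliser of $X_0$ in $H_0$ is precisely $R$, this equality \emph{is} the coset membership $hu_\sigma\sigma(h)^{-1}\in Rt_\sigma\subseteq T$ modulo $Z_G$, which already places the element inside $T$. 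No recourse to the matrices $v,w_\sigma,\omega_0$, to the factorisation $\chi_A=\prod\chi_i$, or to the explicit form of $H^1(F,T/R)$ in \eqref{eq:coh_opp_tor} is needed; the ``hard identification'' your outline anticipates is a one-line consequence of $R=H_{0,X_0}$. You are also conflating this $h$ with the element $y\in Hv$ of \eqref{eq:G_XcapHw}: the normaliser-level containment is relevant to $y$, but for the $h$ here the far sharper constraint above pins the element into $Rt_\sigma$.

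Once this containment is isolated, both directions are formal. Given the $h$ of (1), the cocycle $hu_\sigma\sigma(h)^{-1}\in T/Z_G$ has images $t$ and $u$ by construction. Conversely, a cocycle $v_\sigma\in T/Z_G$ with image $u$ in $H^1(F,H_0/Z_G)$ is of the form $hu_\sigma\sigma(h)^{-1}$ with $h\in H_0$; if it also has image $t$, replacing $h$ by $t'h$ for a suitable $t'\in T$ absorbs the coboundary in $T/R$ and lands $hu_\sigma\sigma(h)^{-1}$ in $Rt_\sigma$, hence yields $Y\in\fs'(F)$. Your proposed ``$H^1(F,R)$-considerations'' to descend the orbit and the component-by-component check play no role; and the statement ``$\Ad(u_\sigma)\circ\sigma(X_0)=X_0$ up to $Z_G$'' is vacuous, since $Z_G$ acts trivially by $\Ad$.
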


\begin{remark}
The condition (1) in the above lemma says exactly that $X$ comes from $\fs'_\rs(F)$. 
\end{remark}

\begin{proof}[Proof of Lemma \ref{lem1251}]
Given any $h\in H_0$, let $Y\in\fs'$ be the unique element such that $\Ad(h)\circ\varphi(Y)=X_0$. For all $\sigma\in\Gamma$, we have 
$$ \Ad(\sigma(h))\circ\sigma\circ\varphi(Y)=\sigma(X_0)=\Ad(t_\sigma^{-1})(X_0) $$ 
and 
$$ \sigma\circ\varphi(Y)=\Ad(u_\sigma^{-1})\circ\varphi\circ\sigma(Y). $$ 
But $Y\in\fs'(F)$ if and only if 
$$ \varphi\circ\sigma(Y)=\varphi(Y)=\Ad(h^{-1})(X_0) $$
for all $\sigma\in\Gamma$. We have shown that the condition (1) is equivalent to the following one: there exists $h\in H_0$ such that 
\begin{equation}\label{eq:1252}
 \Ad(\sigma(h)u_\sigma^{-1}h^{-1})(X_0)=\Ad(t_\sigma^{-1})(X_0) 
\end{equation}
for all $\sigma\in\Gamma$. But \eqref{eq:1252} means exactly 
$$ hu_\sigma\sigma(h)^{-1}\in Rt_\sigma. $$
If such an $h$ exists, then $hu_\sigma\sigma(h)^{-1}\in Rt_\sigma\subseteq T$ defines a Galois $1$-cocycle with values in $T/Z_G$ which satisfies the condition (2). Conversely, any Galois $1$-cocycle with values in $T/Z_G$ having image $u$ must be of the form $hu_\sigma\sigma(h)^{-1}$ where $h\in H_0$. If it also has image $t$, then by replacing $h$ with $t'h$ where $t'\in T$, we may suppose that $hu_\sigma\sigma(h)^{-1}\in Rt_\sigma$. 
\end{proof}

\subsubsection{Criterion via the Kottwitz sign revisited}\label{ssec:kotrev}

\paragraph{}

Note that $H_0^{\omega_0}=H^\omega\simeq GL_n$. We shall abuse notation and denote by $R_\der$ (resp. $T_\der$) the preimage of $R$ (resp. $T$) in $H_{0,\der}^{\omega_0}=H_{0,\scn}^{\omega_0}$ (resp. $H_{0,\der}=H_{0,\scn}$). We shall use the index ``$\ab$'' to denote the abelianised cohomology defined in \cite[Definition 2.2]{MR1401491} and \cite[\S1.6 and 1.8]{MR1695940}. By \cite[Proposition 1.6.7]{MR1695940}, we have abelianisation maps $H^1(F,H_0^{\omega_0}/Z_G)\ra H^1_\ab(F,H_0^{\omega_0}/Z_G)$ and $H^1(F,H_0/Z_G)\ra H^1_\ab(F,H_0/Z_G)$ which are surjective. If $F$ is a non-archimedean local field of characteristic zero, they are also injective by Kneser's theorem (see {\it{loc. cit.}}). By the short exact sequence of complexes 
$$ 1\ra[1\ra Z_G]\ra[R_\der\ra R]\ra[R_\der\ra R/Z_G]\ra 1, $$
we have 
$$ H^i_\ab(F,H_0^{\omega_0}/Z_G)=H^i_\ab(F, Z_G\ra H_0^{\omega_0})=H^i_\ab(F,R_\der\ra R/Z_G) $$
where $i=1, 2$. Similarly, we see that 
$$ H^1_\ab(F,H_0/Z_G)=H^1_\ab(F,T_\der\ra T/Z_G). $$
By the short exact sequence of complexes 
$$ 1\ra[R_\der\ra R/Z_G]\ra[T_\der\ra T/Z_G]\ra [T_\der/R_\der\ra T/R]\ra 1, $$
we have 
\begin{equation}\label{eq:H_ab-ra}
 H^1_\ab(F, H_0^{\omega_0}/Z_G\ra H_0/Z_G)=H^1_\ab(F,T_\der/R_\der\ra T/R). 
\end{equation}
From the theory of Galois hypercohomology (see \cite[Appendix A.1]{MR1687096}), we obtain the following commutative diagram with exact columns and rows. 

\begin{equation}\label{eq:commdiag}
   \xymatrix{ H^1(F, R/Z_G) \ar[r]  \ar[d] & H^1(F, T/Z_G) \ar[r]  \ar[d] & H^1(F, T/R) \ar[r]  \ar[d] & H^2(F,R/Z_G) \ar[d] \\
H^1_\ab(F,H_0^{\omega_0}/Z_G) \ar[r]  \ar[d] & H^1_\ab(F,H_0/Z_G) \ar[r] \ar[d] & H^1_\ab(F, H_0^{\omega_0}/Z_G\ra H_0/Z_G) \ar[r] \ar[d] & H^2_\ab(F,H_0^{\omega_0}/Z_G)	\\ 
H^2(F,R_\der) \ar[r]  & H^2(F,T_\der) \ar[r] & H^2(F,T_\der/R_\der) 	& } 
\end{equation}

Let $t\in H^1(F,T/R)$ and $u\in H^1(F,H_0/Z_G)$ be as in Lemma \ref{lem1251}. Denote by $\varepsilon_X$ the image of $t$ under the homomorphism 
$$ H^1(F,T/R)\ra H^1_\ab(F, H_0^{\omega_0}/Z_G\ra H_0/Z_G). $$
Denote by $\varepsilon'$ the image of $u$ under the morphism 
$$ H^1(F,H_0/Z_G)\ra H^1_\ab(F,H_0/Z_G)\ra H^1_\ab(F, H_0^{\omega_0}/Z_G\ra H_0/Z_G). $$

\begin{lem}\label{lem2131}
If $X$ comes from $\fs'_\rs(F)$, then $\varepsilon_X=\varepsilon'$. 
\end{lem}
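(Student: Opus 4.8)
The statement asks us to track a single cohomology class through the large commutative diagram \eqref{eq:commdiag}. The plan is to use Lemma \ref{lem1251} directly: since $X$ comes from $\fs'_\rs(F)$, condition (1) of that lemma holds, hence condition (2) holds, so there exists a class $c\in H^1(F,T/Z_G)$ mapping to $t\in H^1(F,T/R)$ under the top-right horizontal arrow and to $u\in H^1(F,H_0/Z_G)$ under the vertical arrow. The whole point is then to push $c$ down into the middle row of \eqref{eq:commdiag} and chase.

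First I would invoke the compatibility of the abelianisation maps with the horizontal maps in the top two rows of \eqref{eq:commdiag}, together with the identification \eqref{eq:H_ab-ra}. Let $c_\ab\in H^1_\ab(F,H_0/Z_G)$ denote the image of $c$ under $H^1(F,T/Z_G)\to H^1_\ab(F,H_0/Z_G)$, factoring through the abelianisation of $H^1(F,H_0/Z_G)$; by construction of $u$ from $c$ (via the left vertical column of Lemma \ref{lem1251}'s diagram) and functoriality, $c_\ab$ is exactly the image of $u$ under $H^1(F,H_0/Z_G)\to H^1_\ab(F,H_0/Z_G)$. Now apply the horizontal map $H^1_\ab(F,H_0/Z_G)\to H^1_\ab(F, H_0^{\omega_0}/Z_G\to H_0/Z_G)$ to $c_\ab$: by commutativity of the upper-right square of \eqref{eq:commdiag} this equals the image of $t\in H^1(F,T/R)$ under $H^1(F,T/R)\to H^1_\ab(F, H_0^{\omega_0}/Z_G\to H_0/Z_G)$, which is $\varepsilon_X$ by definition. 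On the other hand, chasing $c_\ab$ the other way around (it comes from $u$), its image is $\varepsilon'$ by the definition of $\varepsilon'$. Hence $\varepsilon_X=\varepsilon'$.

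The step I expect to require the most care is verifying that the image of $c$ in $H^1_\ab(F,H_0/Z_G)$ genuinely agrees with the abelianisation of $u$ — i.e. that the square
\begin{equation*}
\xymatrix{
H^1(F,T/Z_G) \ar[r] \ar[d] & H^1(F,H_0/Z_G) \ar[d] \\
H^1_\ab(F,T/Z_G) \ar[r] & H^1_\ab(F,H_0/Z_G)
}
\end{equation*}
commutes and that the top map sends $c\mapsto u$ as produced in Lemma \ref{lem1251}. Both are functoriality of abelianisation for the morphism $T/Z_G\hookrightarrow H_0/Z_G$ (see \cite[\S1.6]{MR1695940}), but one must be careful that the $c$ furnished by Lemma \ref{lem1251}(2) is precisely the class whose image under the left vertical arrow of that lemma's diagram is $u$, not merely some lift. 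Once that identification is in hand, the remaining arrows are the ones already assembled in \eqref{eq:commdiag} and the conclusion follows by pure diagram chasing; no further computation with explicit cocycles is needed.
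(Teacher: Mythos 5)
Your proof is correct and is exactly the diagram chase the paper has in mind — the paper's own proof is the one-liner "It results from Lemma \ref{lem1251} and the above commutative diagram," and your argument just writes out that chase in full, including the compatibility between the middle vertical arrow of \eqref{eq:commdiag} and the abelianisation map $H^1(F,H_0/Z_G)\to H^1_\ab(F,H_0/Z_G)$, which is indeed the one point worth checking (and is standard functoriality of abelianisation in Labesse's framework, since a cocycle valued in a maximal torus abelianises to its own image under $H^1(F,T/Z_G)\to H^1_\ab(F,H_0/Z_G)$). The worry you raise at the end does not actually arise: condition (2) of Lemma \ref{lem1251} already requires the class $c$ to have image $u$, not merely to lift $t$.
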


\begin{proof}
It results from Lemma \ref{lem1251} and the above commutative diagram. 
\end{proof}

\begin{prop}\label{prop2231weak}
There is a natural identification 
$$ H^1_\ab(F, H_0^{\omega_0}/Z_G\ra H_0/Z_G)\simeq H^1(F,C_{H_0}/C_{H_0^{\omega_0}}). $$
\end{prop}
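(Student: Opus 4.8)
The plan is to identify both sides with the same abelianised hypercohomology group by resolving the quotient tori $T/R$ and $T_{\der}/R_{\der}$ in terms of the cocentres. First I would recall that for a connected reductive group $\fL$ over $F$, the abelianised cohomology $H^i_{\ab}(F,\fL)$ is by definition the hypercohomology of the crossed module $[\fL_{\scn}\to\fL]$ placed in degrees $[-1,0]$, which by \cite[\S1.6]{MR1695940} is canonically isomorphic to $H^i(F, C_{\fL})$ where $C_{\fL}=\fL/\fL_{\der}$ is the cocentre (for tori this is just $H^i(F,\fL)$ itself). The key observation is that $H_0^{\omega_0}=H^\omega\simeq GL_n$ is semisimple-modulo-centre in the relevant sense, so its derived/simply-connected cover maps play nicely; more precisely $R_{\der}$ (resp. $T_{\der}$) is the preimage of $R$ (resp. $T$) in $H_{0,\scn}^{\omega_0}$ (resp. $H_{0,\scn}$), exactly as set up before the statement. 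Applying the equality \eqref{eq:H_ab-ra}, the left-hand side of the proposition is $H^1_{\ab}(F, T_{\der}/R_{\der}\to T/R)$, the hypercohomology of the two-term complex of tori $[T_{\der}/R_{\der}\to T/R]$ in degrees $[-1,0]$.

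Next I would compute this hypercohomology. There is a natural quasi-isomorphism (in the derived category of $F$-tori, or at the level of complexes after choosing the evident presentation) between $[T_{\der}/R_{\der}\to T/R]$ and the single torus sitting in the cokernel/cone. Concretely, dividing out by the image of $T_{\der}/R_{\der}$ one gets a surjection $T/R \twoheadrightarrow (T/R)/(T_{\der}/R_{\der})$, and the point is that this cokernel is exactly $C_{H_0}/C_{H_0^{\omega_0}}$: indeed $T/T_{\der}=C_{H_0}$ since $T$ is a maximal torus of $H_0$ (so $T_{\der}=T\cap H_{0,\der}$ maps onto... — more precisely $T/T_{\der}\xrightarrow{\sim} C_{H_0}$ because every connected component of $H_0$ meets $T$ and $H_{0,\der}\cap T = T_{\der}$), and likewise $R/R_{\der}=C_{H_0^{\omega_0}}$ since $R$ is a maximal torus of $H_0^{\omega_0}$. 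One then has to check that the complex $[T_{\der}/R_{\der}\to T/R]$ has trivial $H^{-1}$, i.e. that $T_{\der}/R_{\der}\to T/R$ is injective, equivalently $T_{\der}\cap R = R_{\der}$ inside $T$; this holds because $R_{\der}$ is by definition the full preimage of $R$ in $H_{0,\scn}^{\omega_0}\hookrightarrow H_{0,\scn}$, whence $T_{\der}\cap R$ is precisely that preimage. Therefore the complex is quasi-isomorphic to the single torus $C_{H_0}/C_{H_0^{\omega_0}}$ placed in degree $0$, and passing to hypercohomology gives
$$ H^1_{\ab}(F, H_0^{\omega_0}/Z_G\to H_0/Z_G)\;\simeq\; H^1_{\ab}(F,T_{\der}/R_{\der}\to T/R)\;\simeq\; H^1(F, C_{H_0}/C_{H_0^{\omega_0}}), $$
and this identification is natural because every arrow used is canonical.

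The main obstacle I anticipate is bookkeeping the degree shifts and the exactness claim $0\to [T_{\der}/R_{\der}\to T/R]\to \dots$ correctly: one must be careful that the relevant complexes are genuinely quasi-isomorphic and not merely related by a long exact sequence with an uncontrolled error term. The cleanest way around this is to work throughout with the formalism of \cite[Appendix A.1]{MR1687096} (as already invoked for diagram \eqref{eq:commdiag}): exhibit the short exact sequence of two-term complexes of tori
$$ 1\to [R_{\der}\to R] \to [T_{\der}\to T] \to [T_{\der}/R_{\der}\to T/R]\to 1 $$
(here using $Z_G\subseteq R$ to pass to the $Z_G$-quotients as in the displayed sequences preceding the statement), take the associated long exact sequence in hypercohomology, and match terms with $H^\ast_{\ab}$ via the definition $H^i_{\ab}(F,\fL)=\BH^i(F,[\fL_{\scn}\to\fL])$ together with the cocentre description $[\fL_{\scn}\to\fL]\simeq C_{\fL}$ for tori this reduces to $C_T=T/T_{\der}$. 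Then the quasi-isomorphism $[T_{\der}/R_{\der}\to T/R]\simeq C_{H_0}/C_{H_0^{\omega_0}}$ follows by the same cocentre computation applied to the quotient complex, using that $T$ (resp. $R$) is a maximal torus of $H_0$ (resp. of $H_0^{\omega_0}$) so that $C_{H_0}=T/T_{\der}$ and $C_{H_0^{\omega_0}}=R/R_{\der}$, and the naturality is automatic from functoriality of the hypercohomology long exact sequence.
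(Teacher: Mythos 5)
Your argument is correct and follows essentially the same route as the paper: after \eqref{eq:H_ab-ra}, both proofs come down to showing that the two-term complex $[T_\der/R_\der\ra T/R]$ is quasi-isomorphic to the single torus $(T/T_\der)/(R/R_\der)\simeq C_{H_0}/C_{H_0^{\omega_0}}$ placed in degree $0$, using $T_\der\cap R=R_\der$ for injectivity of the differential and the cocentre identifications $T/T_\der=C_{H_0}$, $R/R_\der=C_{H_0^{\omega_0}}$; the paper packages this via the short exact sequence $1\ra[R_\der\ra T_\der]\ra[R\ra T]\ra[R/R_\der\ra T/T_\der]\ra 1$, which is simply the transpose of the one you write down. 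One small caveat worth flagging: your opening assertion that $H^i_\ab(F,\fL)$ is canonically $H^i(F,C_\fL)$ for every connected reductive $\fL$ is false in general (the crossed module $[\fL_\scn\ra\fL]$ has nontrivial $H^{-1}$ whenever $\fL_\scn\ra\fL_\der$ is not injective, e.g.\ for $\fL=PGL_n$), but it does hold for the $GL$-type groups appearing here, where $\fL_\scn\ra\fL$ is injective, and your actual computation only uses this special case.
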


\begin{remark}
We have $C_{H_0}=\Res_{E/F} (C_{H_0^\omega})_E$ where $C_{H_0^\omega}\simeq C_{GL_n}\simeq \BG_m$. 
\end{remark}

\begin{proof}[Proof of Proposition \ref{prop2231weak}]
By \eqref{eq:H_ab-ra} and the short exact sequence 
$$ 1\ra[R_\der\ra T_\der] \ra[R\ra T] \ra[R/R_\der\ra T/T_\der] \ra1, $$
we have 
$$ H^1_\ab(F, H_0^{\omega_0}/Z_G\ra H_0/Z_G)=H^1(F, (T/T_\der)/(R/R_\der)). $$
But $T/T_\der=C_{H_0}$ and $R/R_\der=C_{H_0^{\omega_0}}$. 
\end{proof}

\begin{lem}\label{lem:cohopptorus}
Let $\BG_m^\ast:=\Res_{E/F} \BG_{m,E}/\BG_m$. Then 
$$ H^1(F,\BG_m^\ast)\simeq H^1(\Gal(E/F),\BG_m^\ast(E))\simeq F^\times/N_{E/F}(E^\times). $$
\end{lem}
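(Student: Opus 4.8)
The plan is to chain together three standard isomorphisms.

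\emph{Step 1: $\BG_m^\ast$ is split by $E$.} Since $E\otimes_F E\simeq E\times E$, base change to $E$ turns the defining sequence $1\to\BG_m\to\Res_{E/F}\BG_{m,E}\to\BG_m^\ast\to 1$ into $1\to\BG_{m,E}\to\BG_{m,E}\times\BG_{m,E}\to\BG_{m,E}\to 1$ with first map the diagonal embedding, so $(\BG_m^\ast)_E\simeq\BG_{m,E}$. Feeding this into the inflation--restriction exact sequence
\[ 0\to H^1(\Gal(E/F),\BG_m^\ast(E))\to H^1(F,\BG_m^\ast)\to H^1(E,\BG_m^\ast) \]
and using Hilbert's Theorem 90, namely $H^1(E,\BG_m^\ast)=H^1(E,\BG_{m,E})=0$, yields the first asserted isomorphism $H^1(\Gal(E/F),\BG_m^\ast(E))\simeq H^1(F,\BG_m^\ast)$.

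\emph{Step 2: compute $H^1(\Gal(E/F),\BG_m^\ast(E))$.} Taking $E$-points of the defining sequence gives a short exact sequence of $\Gal(E/F)$-modules
\[ 1\to E^\times\to(E\otimes_F E)^\times\to\BG_m^\ast(E)\to 1, \]
where the first map is the diagonal $a\mapsto a\otimes 1$. The key observation is that, with respect to the Galois action through the first tensor factor, the ring isomorphism $E\otimes_F E\xrightarrow{\ \sim\ }\prod_{\tau\in\Gal(E/F)}E$, $x\otimes y\mapsto(\tau(x)y)_\tau$, identifies $(E\otimes_F E)^\times$ with an induced $\Gal(E/F)$-module; hence $H^i(\Gal(E/F),(E\otimes_F E)^\times)=0$ for all $i\geq 1$. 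The long exact sequence therefore collapses to an isomorphism $H^1(\Gal(E/F),\BG_m^\ast(E))\simeq H^2(\Gal(E/F),E^\times)$.

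\emph{Step 3: conclude.} Since $\Gal(E/F)$ is cyclic of order two, Tate cohomology is $2$-periodic, so $H^2(\Gal(E/F),E^\times)\simeq\widehat{H}^0(\Gal(E/F),E^\times)=F^\times/N_{E/F}(E^\times)$; equivalently, this is the classical identification of the relative Brauer group $\Br(E/F)$ with $F^\times/N_{E/F}(E^\times)$. Composing the isomorphisms of the three steps proves the lemma.

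I expect the only delicate point to be Step 2's identification of $(E\otimes_F E)^\times$ as an induced (hence acyclic) $\Gal(E/F)$-module: one must use the action coming from the $\Res_{E/F}$-structure (the left tensor factor), under which the factors of $\prod_\tau E$ get permuted, rather than the ``geometric'' action on the coefficients. Everything else is a routine application of Hilbert~90, inflation--restriction, and the periodicity of the cohomology of cyclic groups.
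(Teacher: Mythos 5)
Your proof is correct, but for the second isomorphism it takes a genuinely different route from the paper's. The paper (referring back to the cocycle computations in Section \ref{ssec:cocycletoX}) directly identifies $\BG_m^\ast(E)$ with $E^\times$ carrying the twisted $\Gal(E/F)$-action given by $\sigma\cdot x=\sigma(x)^{-1}$ for the nontrivial $\sigma$, and then reads off $H^1$ by an explicit cocycle/coboundary calculation. Your Step 2 instead feeds the defining short exact sequence into the long exact sequence in group cohomology, kills the middle term by observing that $(E\otimes_F E)^\times$ is an induced $\Gal(E/F)$-module (hence acyclic by Shapiro's lemma), and thereby shifts the problem to $H^2(\Gal(E/F),E^\times)$; then $2$-periodicity of Tate cohomology for the cyclic group $\Gal(E/F)$, or equivalently the classical identification of $\Br(E/F)$, finishes the job. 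Both arguments are standard, but they buy slightly different things: the paper's explicit cocycle computation is self-contained and meshes with the bookkeeping the author needs elsewhere in Section \ref{ssec:cocycletoX} to identify $H^1(F,T/R)$, whereas your argument is more structural and, as written, generalizes painlessly to any cyclic $E/F$. On the one delicate point you flag: with the convention $\Res_{E/F}\BG_{m,E}(R)=(R\otimes_F E)^\times$ the Galois action on $E$-points is indeed through the first tensor factor, and under $x\otimes y\mapsto(\tau(x)y)_\tau$ this becomes a pure permutation of the components of $\prod_\tau E^\times$ with no coefficient twist, which is visibly an induced module. (With the opposite convention one gets a permutation with a Galois twist on coefficients, but the change of variables $z_\tau\mapsto\tau^{-1}(z_\tau)$ identifies that module with the untwisted permutation module as well, so the conclusion is the same either way.)
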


\begin{proof}
This should be known and also included in Section \ref{ssec:cocycletoX}. In fact, the first equality is a consequence of the inflation-restriction exact sequence, Shapiro's lemma and Hilbert's Theorem 90. To obtain the second one, we can regard $\Res_{E/F} \BG_{m,E}$ as $\BG_m\times \BG_m$ equipped with the $\Gamma$-action $\Ad(w_\sigma)\circ\sigma$, where $w_\sigma:=1$ if $\sigma\in\Gal(\ov{F}/E)$ and $w_\sigma(x,y):=(y,x)$ otherwise. Then $\BG_m^\ast$ can be viewed as $\BG_m$ equipped with the $\Gamma$-action given by $\sigma\cdot x:=\sigma(x)$ if $\sigma\in\Gal(\ov{F}/E)$ and $\sigma\cdot x:=\sigma(x)^{-1}$ otherwise. We can conclude by a straightforward calculation. 
\end{proof}

\paragraph{}

In this paragraph, we assume that $F$ is a local field of characteristic zero. 

\begin{coro}\label{cor2231weak}
\leavevmode
\begin{enumerate}
	\item We have 
$$ H^1_\ab(F, H_0^{\omega_0}/Z_G\ra H_0/Z_G)=\BZ/2\BZ. $$

	\item The natural homomorphism  
$$ H^1(F,T/R)=(\BZ/2\BZ)^{I_0}\ra H^1_\ab(F, H_0^{\omega_0}/Z_G\ra H_0/Z_G)=\BZ/2\BZ $$
is the sum of components. 
\end{enumerate}
\end{coro}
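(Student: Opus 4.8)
The plan is to deduce both parts from Proposition \ref{prop2231weak}, which identifies $H^1_\ab(F, H_0^{\omega_0}/Z_G\ra H_0/Z_G)$ with $H^1(F, C_{H_0}/C_{H_0^{\omega_0}})$, together with the computation in Lemma \ref{lem:cohopptorus}. For (1), I would first note that $C_{H_0}\simeq\Res_{E/F}\BG_{m,E}$ (the Remark after Proposition \ref{prop2231weak}) and $C_{H_0^{\omega_0}}=C_{H^\omega}\simeq\BG_m$ (since $H_0^{\omega_0}=H^\omega\simeq GL_n$), the inclusion $C_{H_0^{\omega_0}}\hookrightarrow C_{H_0}$ being the base-change map $\BG_m\hookrightarrow\Res_{E/F}\BG_{m,E}$; hence $C_{H_0}/C_{H_0^{\omega_0}}\simeq\BG_m^\ast$ and, by Lemma \ref{lem:cohopptorus}, $H^1_\ab(F, H_0^{\omega_0}/Z_G\ra H_0/Z_G)\simeq F^\times/N_{E/F}(E^\times)$. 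Since $E/F$ is a quadratic extension of local fields, $[F^\times:N_{E/F}(E^\times)]=[E:F]=2$ by local class field theory (or directly $\BR^\times/N_{\BC/\BR}\BC^\times=\BR^\times/\BR_{>0}$ when $F=\BR$), which is $\BZ/2\BZ$ and proves (1).

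For (2), I would first unwind, using the diagram \eqref{eq:commdiag} and the identifications in the proof of Proposition \ref{prop2231weak} (namely \eqref{eq:H_ab-ra} and the short exact sequence of complexes $1\ra[R_\der\ra T_\der]\ra[R\ra T]\ra[R/R_\der\ra T/T_\der]\ra1$), that the homomorphism in the statement is the one induced on $H^1$ by the natural map $T/R\ra(T/T_\der)/(R/R_\der)=C_{H_0}/C_{H_0^{\omega_0}}\simeq\BG_m^\ast$, i.e.\ by the determinant. Then I would bring in the explicit description of $T$ and $R$ from Section \ref{ssec:cocycletoX}: the inclusion $R\hookrightarrow T$ is the base change $GL_{n,A}\hookrightarrow\Res_{E/F}(GL_{n,A})_E$, one has $GL_{n,A}\simeq\prod_{i\in I}\Res_{F_i/F}\BG_{m,F_i}$, and $\det|_{GL_{n,A}}=\prod_{i\in I}N_{F_i/F}$. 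Hence $T/R\simeq\prod_{i\in I}\bigl(\Res_{E_i/F}\BG_{m,E_i}/\Res_{F_i/F}\BG_{m,F_i}\bigr)$, and the map $T/R\ra\BG_m^\ast$ is, on the $i$-th factor, induced by $N_{F_i/F}$; under the decomposition $H^1(F,T/R)=\prod_{i\in I}F_i^\times/N_{E_i/F_i}(E_i^\times)$ of \eqref{eq:coh_opp_tor}, the factors with $i\notin I_0$ are trivial (there $E_i$ is not a field, the $i$-th quotient torus is isomorphic to $\Res_{F_i/F}\BG_{m,F_i}$, with vanishing $H^1$), while for $i\in I_0$ the $i$-th component of the homomorphism is the map $F_i^\times/N_{E_i/F_i}(E_i^\times)\ra F^\times/N_{E/F}(E^\times)$ induced by the norm $N_{F_i/F}$ (as one checks by comparing the defining short exact sequences of $\BG_m^\ast$ over $F$ and over $F_i$, under which these groups become relative Brauer groups and $N_{F_i/F}$ becomes corestriction).

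The last step is to verify that each component map, for $i\in I_0$, is nonzero. For such $i$ the algebra $E_i=F_i\otimes_F E$ is a field equal to the compositum $F_i\cdot E$ inside $\ov F$, with $[E_i:F_i]=[E:F]=2$, so restriction of automorphisms is an isomorphism $\Gal(E_i/F_i)\xrightarrow{\sim}\Gal(E/F)$; by the norm-functoriality of the local reciprocity map, the reciprocity isomorphisms $F_i^\times/N_{E_i/F_i}(E_i^\times)\xrightarrow{\sim}\Gal(E_i/F_i)$ and $F^\times/N_{E/F}(E^\times)\xrightarrow{\sim}\Gal(E/F)$ intertwine $N_{F_i/F}$ with this restriction, so the $i$-th component is an isomorphism $\BZ/2\BZ\xrightarrow{\sim}\BZ/2\BZ$, in particular nonzero. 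Finally, a group homomorphism $(\BZ/2\BZ)^{I_0}\ra\BZ/2\BZ$ that is nonzero on every standard factor sends each standard generator to $1$, hence equals the sum of the components, which gives (2). I expect the main obstacle to lie not in the class field theory at the end but in the bookkeeping of part (2): transporting the determinant map $T/R\ra\BG_m^\ast$ faithfully through Proposition \ref{prop2231weak}, the diagram \eqref{eq:commdiag}, and the identification \eqref{eq:coh_opp_tor}, and matching it up factor by factor with the norm maps.
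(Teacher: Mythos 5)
Your proof is correct and follows the same route as the paper, which gives only a one-line sketch (citing Proposition \ref{prop2231weak} and Lemma \ref{lem:cohopptorus} for part (1), and ``the calculation in Section \ref{ssec:cocycletoX}'' for part (2)). You have filled in exactly the details the paper elides: for (2), identifying the map on $H^1$ with the determinant, decomposing it through $GL_{n,A}\simeq\prod_{i}\Res_{F_i/F}\BG_{m,F_i}$ as $\prod_i N_{F_i/F}$, matching this with \eqref{eq:coh_opp_tor}, and using local class field theory to see each nonzero component is an isomorphism $\BZ/2\BZ\ra\BZ/2\BZ$.
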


\begin{proof}
The first assertion follows from Proposition \ref{prop2231weak} and Lemma \ref{lem:cohopptorus}. The second one is clear by the calculation in Section \ref{ssec:cocycletoX}. 
\end{proof}

\begin{coro}\label{idcar1}
The map 
$$ \fs_\rs(F)\ra H^1_\ab(F, H_0^{\omega_0}/Z_G\ra H_0/Z_G)=\BZ/2\BZ: X=\mat(0,A,B,0)\mapsto \varepsilon_{X'} $$
where $X':=\mat(0,1_n,AB,0)\in\fs_\rs(F)$ is understood as the map 
$$ \fs_\rs(F)\ra\{\pm1\}: X\mapsto\eta(X). $$ 
\end{coro}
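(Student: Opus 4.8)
The plan is to reduce the statement to the explicit description of $H^1(F,T/R)$ obtained in Section \ref{ssec:cocycletoX} together with the functoriality of local class field theory. First I would settle well-definedness of the left-hand map. For $X=\mat(0,A,B,0)\in\fs_\rs(F)$ one has $A,B\in GL_n(F)$ since $\fs_\rs\subseteq\fs^\times$, and the block-determinant identity $\det\mat(\lambda 1_n,-A,-B,\lambda 1_n)=\det(\lambda^2 1_n-AB)$ shows $\chi_X=\chi_{X'}$; hence $X'\in\fs_\rs(F)$ and $AB\in GL_n(F)$ is regular semi-simple in the classical sense (cf.\ \cite[Proposition 4.2]{MR3414387}), so that $\varepsilon_{X'}$ is defined, with the element called $A$ in Section \ref{ssec:cocycletoX} now taken to be $AB$.

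Next I would unwind the identifications. Factor $\chi_{AB}=\prod_{i\in I}\chi_i$ into pairwise distinct monic irreducibles over $F$, put $F_i:=F[\lambda]/(\chi_i)$ and $E_i:=F_i\otimes_F E$, let $I_0\subseteq I$ be the set of $i$ for which $E_i$ is a field, and write $a_i\in F_i^\times$ for the $i$-th component of $AB$ under $GL_{n,AB}(F)\simeq\prod_{i\in I}F_i^\times$, so that $\det(AB)=\prod_{i\in I}N_{F_i/F}(a_i)$. By the closing computation of Section \ref{ssec:cocycletoX} and by \eqref{eq:coh_opp_tor}, the class $t$ of the cocycle $t_\sigma$ attached to $X'$ corresponds, under $H^1(F,T/R)\simeq\prod_{i\in I}F_i^\times/N_{E_i/F_i}(E_i^\times)=\prod_{i\in I_0}F_i^\times/N_{E_i/F_i}(E_i^\times)$, to the class of $(a_i)_{i\in I}$. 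Identifying each factor for $i\in I_0$ with $\{\pm1\}$ via the quadratic character $\eta_i$ attached to $E_i/F_i$ (Lemma \ref{lem:cohopptorus} applied over $F_i$), and using that the map $H^1(F,T/R)=(\BZ/2\BZ)^{I_0}\ra H^1_\ab(F,H_0^{\omega_0}/Z_G\ra H_0/Z_G)=\BZ/2\BZ$ is the sum of components (Corollary \ref{cor2231weak}(2)), I would get $\varepsilon_{X'}=\prod_{i\in I_0}\eta_i(a_i)=\prod_{i\in I}\eta_i(a_i)$ in $\{\pm1\}$, where one sets $\eta_i:=1$ for $i\notin I_0$, consistently with the triviality of that factor.

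To finish I would invoke the standard compatibility of norm maps with reciprocity maps: as $\eta$ is the quadratic character attached to $E/F$, the composite $\eta\circ N_{F_i/F}$ on $F_i^\times$ is the character cut out by the quadratic \'etale $F_i$-algebra $E_i$, i.e.\ it equals $\eta_i$ when $E_i$ is a field and is trivial when $E_i\simeq F_i\times F_i$; thus $\eta\circ N_{F_i/F}=\eta_i$ in every case. Consequently
\[
 \varepsilon_{X'}=\prod_{i\in I}\eta\bigl(N_{F_i/F}(a_i)\bigr)=\eta\Bigl(\prod_{i\in I}N_{F_i/F}(a_i)\Bigr)=\eta(\det(AB))=\eta(X),
\]
which is precisely the identification asserted.

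The hard part is not conceptual but bookkeeping: one must check that the identifications with $\BZ/2\BZ$, respectively $\{\pm1\}$, used in Corollary \ref{cor2231weak}, in Lemma \ref{lem:cohopptorus}, and in the very definition of $\eta$ are literally the same, so that no spurious sign appears; and one must be sure that under $GL_{n,AB}\simeq\prod_{i\in I}\Res_{F_i/F}\BG_m$ the cocycle of Lemma \ref{lem1231} matches the class of $AB$ component by component, which is exactly what the last chain of displayed equalities in Section \ref{ssec:cocycletoX} records. The only genuinely non-formal input is the class field theory identity $\eta\circ N_{F_i/F}=\eta_i$, which is classical.
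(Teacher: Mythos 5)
Your argument is correct, and it takes a genuinely more self-contained route than the paper's. The paper dispatches Corollary \ref{idcar1} in one line: using Corollary \ref{cor2231weak} it reduces to $X\in\fs_\rs(F)_\el$ (where $|I|=1$), and there it appeals to \cite[Lemmas 1.3 and 1.4]{MR1007299}. You also rely on the sum-of-components description from Corollary \ref{cor2231weak}(2), but instead of reducing to the elliptic case and citing Guo you compute directly: the class $t\in H^1(F,T/R)$ attached to $X'=\mat(0,1_n,AB,0)$ is the class of $(a_i)_{i\in I}$ under \eqref{eq:coh_opp_tor}, the map to $H^1_\ab(F,H_0^{\omega_0}/Z_G\ra H_0/Z_G)\simeq F^\times/N_{E/F}(E^\times)$ (via Proposition \ref{prop2231weak} and Lemma \ref{lem:cohopptorus}) is $(a_i)\mapsto\prod_i N_{F_i/F}(a_i)$, and the norm compatibility of local class field theory, $\eta\circ N_{F_i/F}=\eta_i$ (with $\eta_i$ trivial for $i\notin I_0$), gives $\varepsilon_{X'}=\eta(\det(AB))=\eta(X)$. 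This essentially reproves the content of Guo's lemmas instead of quoting them, which is a reasonable trade-off: you gain self-containment at the cost of carrying out the class-field-theoretic bookkeeping yourself. The one spot where you are being slightly optimistic is the same one you flag: that the "sum of components" in Corollary \ref{cor2231weak}(2) is taken with respect to identifications of source and target with powers of $\BZ/2\BZ$ that match the ones implicit in $\eta$, $\eta_i$, and Lemma \ref{lem:cohopptorus}. This does check out, precisely because the connecting map on the level of cocharacter lattices is the cocenter/determinant map, which on $F$-points restricts to $\prod_i N_{F_i/F}$, so no spurious sign appears; it would strengthen your write-up to say this explicitly rather than merely signal it as something to be verified.
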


\begin{proof}
Because of Corollary \ref{cor2231weak}, we may reduce ourselves to the case where $X\in\fs_\rs(F)_\el$. In the elliptic case, the assertion results from \cite[Lemmas 1.3 and 1.4]{MR1007299}. 
\end{proof}

\paragraph{}

Now assume additionally that $F$ is non-archimedean. 

\begin{prop}\label{prop:pdweak}
The group $H^1_\ab(F, H_0^{\omega_0}/Z_G\ra H_0/Z_G)$ is naturally isomorphic to the Pontryagin dual of the finite group $Z_{\wh{G}}[2]$ of elements $z\in Z_{\wh{G}}$ such that $z^2=1$. 
\end{prop}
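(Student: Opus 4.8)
The plan is to compute $H^1_\ab(F, H_0^{\omega_0}/Z_G \to H_0/Z_G)$ explicitly using the identification of Proposition~\ref{prop2231weak} and then match it against the standard description of abelianised cohomology in terms of the dual group. By Proposition~\ref{prop2231weak} we have
$$ H^1_\ab(F, H_0^{\omega_0}/Z_G \to H_0/Z_G)\simeq H^1(F, C_{H_0}/C_{H_0^{\omega_0}}), $$
and by the remark following that proposition $C_{H_0}=\Res_{E/F}(C_{H_0^\omega})_E$ with $C_{H_0^\omega}\simeq\BG_m$, so $C_{H_0}/C_{H_0^{\omega_0}}\simeq\BG_m^\ast=\Res_{E/F}\BG_{m,E}/\BG_m$ in the notation of Lemma~\ref{lem:cohopptorus}. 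Hence $H^1_\ab(F, H_0^{\omega_0}/Z_G \to H_0/Z_G)\simeq H^1(F,\BG_m^\ast)\simeq F^\times/N_{E/F}(E^\times)$, which for $F$ non-archimedean is $\BZ/2\BZ$ by local class field theory (consistent with Corollary~\ref{cor2231weak}(1)).

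Next I would identify this with the Pontryagin dual of $Z_{\wh G}[2]$. The torus $\BG_m^\ast$ has character lattice $X^\ast(\BG_m^\ast)=\ker(\BZ[\Gal(E/F)]\xrightarrow{\mathrm{aug}}\BZ)$, which is a rank-one lattice on which the nontrivial element of $\Gal(E/F)$ acts by $-1$; equivalently $\BG_m^\ast$ is the norm-one torus $R^{(1)}_{E/F}\BG_m$ twisted, with cocharacter lattice $X_\ast(\BG_m^\ast)\simeq\BZ$ carrying the sign action. By Tate--Nakayama / local duality, for a torus $S$ over a non-archimedean $F$ one has $H^1(F,S)\simeq \pi_0(\wh S^{\Gamma})^\vee$ where $\wh S=X_\ast(S)\otimes\BC^\times$ is the dual torus. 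For $S=\BG_m^\ast$ the dual torus $\wh S$ is one-dimensional with $\Gamma$ acting through $\Gal(E/F)$ by inversion, so $\wh S^{\Gamma}=\mu_2$ and $\pi_0(\wh S^\Gamma)^\vee=\mu_2^\vee\simeq\BZ/2\BZ$. The remaining point is to see that this $\mu_2$ is canonically $Z_{\wh G}[2]$: since $G=GL_{2n}$ we have $\wh G=GL_{2n}(\BC)$ and $Z_{\wh G}=\BC^\times$, so $Z_{\wh G}[2]=\mu_2$, and I would check that the map $Z_{\wh G}[2]\hookrightarrow \wh S$ induced by the inclusion $C_{H_0}/C_{H_0^{\omega_0}}\hookrightarrow$ (cocentre data) dualises correctly — concretely, that the cocharacter lattice of $\BG_m^\ast$ is the quotient $X_\ast(C_{H_0})/X_\ast(C_{H_0^{\omega_0}})$, whose dual picks out exactly the $2$-torsion in $Z_{\wh G}$ fixed appropriately by $\Gamma$. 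Alternatively, and perhaps more cleanly, I would invoke the functoriality of abelianised cohomology with respect to the complex of tori $[C_{H_0^{\omega_0}}\to C_{H_0}]$ and Kottwitz--Borovoi duality for complexes (as in \cite{MR1401491, MR1695940, MR1687096}), which directly yields $H^1_\ab(F, H_0^{\omega_0}/Z_G\to H_0/Z_G)^\vee\simeq H^0$ of the dual complex $[\wh{C_{H_0}}\to\wh{C_{H_0^{\omega_0}}}]^{\Gamma}$ up to $\pi_0$, and then identify this $H^0$ with $Z_{\wh G}[2]$.

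The main obstacle I anticipate is the bookkeeping in the duality step: making the isomorphism \emph{natural} (so that it is compatible with the maps from $\fs_\rs(F)$ appearing in Corollary~\ref{idcar1} and, later, with the comparison to the second symmetric pair) requires tracking how $Z_{\wh G}$, the dual of the cocentre $C_{H_0}=\Res_{E/F}\BG_m$, and the $2$-torsion fit together under the various connecting maps in diagram~\eqref{eq:commdiag}. In particular one must verify that the finite group produced by Tate--Nakayama duality applied to the one-dimensional torus $\BG_m^\ast$ is \emph{canonically} $Z_{\wh G}[2]$ rather than merely abstractly isomorphic to $\BZ/2\BZ$; this is where the specific structure $\wh G=GL_{2n}(\BC)$, $Z_{\wh G}=\BC^\times$, together with the fact that the relevant cocharacter of $Z_{\wh G}$ is the one through which $\BG_m^\ast$ is built, has to be used. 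Everything else — the computation of $X^\ast(\BG_m^\ast)$, the application of local duality for tori, and the reduction via Proposition~\ref{prop2231weak} and Lemma~\ref{lem:cohopptorus} — should be routine.
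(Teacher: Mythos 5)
Your proposal is correct and takes essentially the same route as the paper: reduce via Proposition \ref{prop2231weak} to $H^1(F, C_{H_0}/C_{H_0^{\omega_0}})$ and apply Kottwitz's local duality $H^1(F,S)\simeq\pi_0(\wh{S}^\Gamma)^\vee$. For the naturality step you rightly flag as the crux, the paper implements precisely the ``cleaner alternative'' you sketch at the end: it identifies $\wh{C_{H_0}/C_{H_0^{\omega_0}}}$ with the kernel of $Z_{\wh{H_0}}\to Z_{\wh{H_0^{\omega_0}}}$, which is canonically $Z_{\wh{G}}$ with $\sigma$ acting by $z\mapsto z^{-1}$, whose $\Gamma$-fixed points are exactly $Z_{\wh{G}}[2]$.
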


\begin{proof}
By Proposition \ref{prop2231weak} and \cite[(6.3.1)]{MR757954}, the group $H^1_\ab(F, H_0^{\omega_0}/Z_G\ra H_0/Z_G)$ is naturally isomorphic to the Pontryagin dual of $\pi_0(\wh{C_{H_0}/C_{H_0^{\omega_0}}}^\Gamma)$. Using \cite[(1.8.1) and (1.8.3)]{MR757954}, we can identify $\wh{C_{H_0}/C_{H_0^{\omega_0}}}$ with the kernel of $\wh{C_{H_0}}=Z_{\wh{H_0}}\ra\wh{C_{H_0^{\omega_0}}}=Z_{\wh{H_0^{\omega_0}}}$. Notice that $Z_{\wh{H_0}}=Z_{\wh{H_0^{\omega_0}}}\times Z_{\wh{H_0^{\omega_0}}}$ on which $\Gamma$ acts via its quotient $\Gal(E/F)$ and the nontrivial element $\sigma\in\Gal(E/F)$ acts by exchanging two components. Since the homomorphism $Z_{\wh{H_0}}\ra Z_{\wh{H_0^{\omega_0}}}$ is given by the product of components, the kernel can be identified with $Z_{\wh{H_0^{\omega_0}}}\simeq Z_{\wh{G}}$, on which $\Gamma$ acts via its quotient $\Gal(E/F)$ and $\sigma(z)=z^{-1}$. Then the assertion follows. 
\end{proof}

\begin{prop}\label{prop2141}
Let $X=\mat(0,1_n,A,0)\in\fs_\rs(F)_\el$, i.e., $A$ is a regular semi-simple and elliptic element of $GL_n(F)$. Then $X$ comes from $\fs'_\rs(F)$ if and only if $\varepsilon_X=\varepsilon'$. 
\end{prop}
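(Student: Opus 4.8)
The ``only if'' direction is Lemma~\ref{lem2131}. For the converse, suppose $\varepsilon_X=\varepsilon'$. By Lemma~\ref{lem1251} (and the remark following it) it suffices to produce a class $\xi\in H^1(F,T/Z_G)$ whose images in $H^1(F,T/R)$ and in $H^1(F,H_0/Z_G)$ are $t$ and $u$ respectively. The whole argument is a chase in the commutative diagram~\eqref{eq:commdiag}. Since $F$ is non-archimedean, Kneser's theorem makes the abelianisation maps $H^1(F,H_0^{\omega_0}/Z_G)\to H^1_\ab(F,H_0^{\omega_0}/Z_G)$ and $H^1(F,H_0/Z_G)\to H^1_\ab(F,H_0/Z_G)$ isomorphisms; via these identifications every group appearing in the top two rows of~\eqref{eq:commdiag} is abelian and every arrow a homomorphism (recall $T$ is a torus, so $H^1(F,T/Z_G)$ and $H^1(F,R/Z_G)$ are abelian groups), so ``differences'' of classes make sense.

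The first key input uses ellipticity: $A$ generates a field extension $F_\ast=F[A]$ of $F$ of degree exactly $n$, whence $R=H_{0,X_0}\simeq\GL_{n,A}\simeq\Res_{F_\ast/F}\BG_m$ and $R/Z_G\simeq\Res_{F_\ast/F}\BG_m/\BG_m$. From the exact sequence $1\to\BG_m\to\Res_{F_\ast/F}\BG_m\to R/Z_G\to 1$, Shapiro's lemma, the vanishing $H^3(F,\BG_m)=0$ ($\mathrm{cd}(F)=2$), and the fact that the restriction $\Br(F)\to\Br(F_\ast)$ is multiplication by $[F_\ast:F]=n$ on $\BQ/\BZ$, hence surjective, one gets $H^2(F,R/Z_G)=0$. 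By exactness of the top row of~\eqref{eq:commdiag}, the map $H^1(F,T/Z_G)\to H^1(F,T/R)$ is therefore surjective with kernel the image of $H^1(F,R/Z_G)$. Fix a lift $\xi_1\in H^1(F,T/Z_G)$ of $t$; the set of all lifts of $t$ is the coset $\xi_1+\iota\bigl(H^1(F,R/Z_G)\bigr)$, where $\iota\colon H^1(F,R/Z_G)\to H^1(F,T/Z_G)$ is the top-row map.

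Let $u_1\in H^1(F,H_0/Z_G)$ be the image of $\xi_1$. By commutativity of~\eqref{eq:commdiag}, the images of $t$ and of $u_1$ in $H^1_\ab(F,H_0^{\omega_0}/Z_G\to H_0/Z_G)$ coincide; the former is $\varepsilon_X$ by definition, and by hypothesis $\varepsilon_X=\varepsilon'$, which is the image of $u$ there. Hence $u-u_1$ lies in the kernel of $H^1(F,H_0/Z_G)\to H^1_\ab(F,H_0^{\omega_0}/Z_G\to H_0/Z_G)$, which by exactness of the middle row of~\eqref{eq:commdiag} is the image of $H^1(F,H_0^{\omega_0}/Z_G)=H^1(F,\PGL_n)$ (recall $H_0^{\omega_0}/Z_G\simeq\PGL_n$). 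Write $u-u_1=j(v)$ with $v\in H^1(F,\PGL_n)$, where $j$ is induced by the inclusion $H_0^{\omega_0}/Z_G\hookrightarrow H_0/Z_G$. Ellipticity enters a second time here: over a non-archimedean local field every central simple algebra of degree $n$ is split by the degree-$n$ field $F_\ast$, hence contains it as a maximal subfield, so the map $H^1(F,R/Z_G)\to H^1(F,H_0^{\omega_0}/Z_G)$ is surjective. Lift $v$ to $\tilde v\in H^1(F,R/Z_G)$ and set $\xi:=\xi_1+\iota(\tilde v)$. Exactness of the top row of~\eqref{eq:commdiag} gives that $\xi$ still maps to $t$; commutativity of the left square of~\eqref{eq:commdiag} gives that the image of $\iota(\tilde v)$ in $H^1(F,H_0/Z_G)$ equals $j(v)$, so $\xi$ maps to $u_1+j(v)=u$. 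Thus $\xi$ satisfies condition~(2) of Lemma~\ref{lem1251}, and $X$ comes from $\fs'_\rs(F)$.

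The skeleton of the argument is a formal diagram chase; the two substantive inputs are $H^2(F,R/Z_G)=0$ and the surjectivity of $H^1(F,R/Z_G)\to H^1(F,H_0^{\omega_0}/Z_G)$. Both combine the non-archimedean hypothesis (cohomological dimension $2$; every local division algebra of a given degree is split by any field extension of that degree) with ellipticity (so that $F_\ast=F[A]$ is a single field of degree exactly $n$ rather than a product of smaller fields), and this is exactly where I expect the difficulty to lie — it is also the reason the statement is confined to the elliptic case, since $H^2(F,R/Z_G)$ is in general nonzero once $R$ decomposes into several factors.
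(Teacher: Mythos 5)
Your proof is correct and follows the same diagram chase in \eqref{eq:commdiag} as the paper's proof; the difference lies in how the two cohomological inputs are justified. The paper invokes Labesse's result (Lemme~1.5.1 of \cite{MR1695940}) that $H^2$ of an $F$-anisotropic torus over a local field vanishes, applied to $R/Z_G$, $R_\der$, $T_\der$, $T_\der/R_\der$, together with $H^2_\ab(F,H_0^{\omega_0}/Z_G)=0$, to kill the entire bottom row and rightmost column of \eqref{eq:commdiag} at once; the surjectivity of $H^1(F,R/Z_G)\to H^1_\ab(F,H_0^{\omega_0}/Z_G)$ then falls out of the column exactness. You instead prove exactly the two facts the chase needs directly: $H^2(F,R/Z_G)=0$ via $\Br(F)\to\Br(F_\ast)$ being multiplication by $n$ hence surjective, and the column surjectivity via the fact that every degree-$n$ central simple $F$-algebra is split by, hence contains, $F_\ast$. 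Your route is marginally leaner (it never uses the three derived-group vanishings) and more self-contained, at the cost of two small imprecisions worth tightening. First, ``$\mathrm{cd}(F)=2$'' does not by itself give $H^3(F,\BG_m)=0$ since $\ov F^\times$ is not torsion; the vanishing is true but needs to be cited as such (or derived from the finite-coefficient case via the Kummer sequence). Second, ``$D_0$ contains $F_\ast$ as a maximal subfield, so the class lifts'' is the right idea but is stated as a non-sequitur; the cleanest fix is to observe that the boundary maps identify $H^1(F,R/Z_G)$ with $\Br(F_\ast/F)=\tfrac1n\BZ/\BZ$ and $H^1(F,\PGL_n)$ with $\Br(F)[n]=\tfrac1n\BZ/\BZ$ compatibly, so the map is actually a bijection --- or simply note $H^2(F,R_\der)=0$ by anisotropy (which is what the paper does) and quote column exactness. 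Your closing diagnosis of where ellipticity is essential ($F_\ast$ a single field so $\Br(F)\to\Br(F_\ast)$ is surjective; otherwise $H^2(F,R/Z_G)$ is a nonzero cokernel) matches the reason the paper's anisotropy hypothesis would also fail.
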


\begin{proof}
By Lemma \ref{lem2131}, it suffices to prove the reverse direction. Since $F$ is local and the tori $R/Z_G, R_\der$ and $T_\der$ are $F$-anisotropic, the groups $H^2(F,R/Z_G), H^2(F,R_\der), H^2(F,T_\der)$ and $H^2(F,T_\der/R_\der)$ vanish by \cite[Lemme 1.5.1]{MR1695940}. Moreover, since $F$ is non-archimedean, we also have $H^2_\ab(F,H_0^{\omega_0}/Z_G)=0$ (see {\it{loc. cit.}}). The commutative diagram \eqref{eq:commdiag} is thus simplified as follows. 
$$   \xymatrix{ H^1(F, R/Z_G) \ar[r]^f  \ar[d] & H^1(F, T/Z_G) \ar[r]  \ar[d] & H^1(F, T/R) \ar[r]  \ar[d] & 0 \\
H^1_\ab(F,H_0^{\omega_0}/Z_G) \ar[r]^g  \ar[d] & H^1_\ab(F,H_0/Z_G) \ar[r] \ar[d] & H^1_\ab(F, H_0^{\omega_0}/Z_G\ra H_0/Z_G) \ar[r] \ar[d] & 0	\\ 
0  & 0 & 0 	& } $$
Choose an arbitrary preimage $t'\in H^1(F, T/Z_G)$ of $t$. The image of $t'$ in $H^1_\ab(F,H_0/Z_G)$ is of the form $ug(u_1)$ where $u_1\in H^1_\ab(F,H_0^{\omega_0}/Z_G)$ because both of $u$ and this image are mapped to $\varepsilon'=\varepsilon_X\in H^1_\ab(F, H_0^{\omega_0}/Z_G\ra H_0/Z_G)$ by our assumption. Let $t_1\in H^1(F, R/Z_G)$ be a preimage of $u_1$. Then $t'f(t_1)^{-1}\in H^1(F, T/Z_G)$ has images $t$ and $u$. We may conclude by the bijectivity of the abelianisation map $H^1(F,H_0/Z_G)\ra H^1_\ab(F,H_0/Z_G)$ and Lemma \ref{lem1251}. 
\end{proof}

\begin{coro}\label{idcar2}
The morphism  
$$ H^1(F,H_0/Z_G)\ra H^1_\ab(F,H_0/Z_G)\ra H^1_\ab(F, H_0^{\omega_0}/Z_G\ra H_0/Z_G)=\BZ/2\BZ: u\mapsto\varepsilon' $$
is understood as the Kottwitz sign 
$$ H^1(F,H_0/Z_G)\ra H^1(F, G_\ad) \ra\{\pm1\}. $$
\end{coro}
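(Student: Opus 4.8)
The plan is to deduce the statement by assembling three results already established: Corollary \ref{idcar1}, Proposition \ref{prop2141} and Proposition \ref{equivdefcomefrom}. First I would identify the target arrow explicitly. Given $u\in H^1(F,H_0/Z_G)$, corresponding under Proposition \ref{prop:bookinvol} to a pair $(\fg',j)$, its image under $H^1(F,H_0/Z_G)\ra H^1(F,G_\ad)$ is, by the construction in Section \ref{ssec:classEg'}, the class of $\fg'$ viewed as an inner form of $G=GL_{2n}$; the further image under Kottwitz's sign homomorphism $H^1(F,G_\ad)\ra\{\pm1\}$ is then by definition $e(G')$ with $G'={\fg'}^\times$. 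So the assertion reduces to proving $\varepsilon'=e(G')$ for every such $u$.

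Fix such a $u$, hence $\fg'$, $G'$ and $\fs'$. Choose a regular semi-simple elliptic element $A\in GL_n(F)$, which exists because a non-archimedean local field of characteristic zero admits a field extension of degree $n$, and set $X:=\mat(0,1_n,A,0)$; then $X\in\fs_\rs(F)_\el$ (this is recorded in Section \ref{ssec:cocycletoX}, together with the description of $\fs_\rs(F)_\el$ at the end of Section \ref{ssec:def-omega-stable}). By Corollary \ref{idcar1}, for which $X'=X$ in the notation there, we have $\varepsilon_X=\eta(X)$ under the identification $\BZ/2\BZ=\{\pm1\}$. By Proposition \ref{prop2141}, $X$ comes from $\fs'_\rs(F)$ if and only if $\varepsilon_X=\varepsilon'$; by Proposition \ref{equivdefcomefrom}, $X$ comes from $\fs'_\rs(F)$ if and only if $\eta(X)=e(G')$, that is $\varepsilon_X=e(G')$.

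Comparing the last two equivalences, $\varepsilon_X=\varepsilon'$ holds if and only if $\varepsilon_X=e(G')$ holds. Since $\varepsilon_X$, $\varepsilon'$ and $e(G')$ all lie in the two-element set $\{\pm1\}$ and $\varepsilon_X$ is a fixed value, this biconditional forces $\varepsilon'=e(G')$: were $\varepsilon'$ and $e(G')$ distinct, they would exhaust $\{+1,-1\}$, and then whichever of the two $\varepsilon_X$ equals contradicts the biconditional. Hence $\varepsilon'=e(G')$, which is exactly the claim.

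I do not expect a substantive obstacle here; the content is essentially a synthesis of Corollary \ref{idcar1}, Proposition \ref{prop2141} and Proposition \ref{equivdefcomefrom}. The only points needing attention are bookkeeping ones: that the map $H^1(F,H_0/Z_G)\ra H^1(F,G_\ad)$ genuinely carries $u$ to the inner-form class of $\fg'$ (as traced through Section \ref{ssec:classEg'}), that the identification $\BZ/2\BZ=\{\pm1\}$ used in Corollary \ref{idcar1} matches the one in the target of the Kottwitz sign, and that the chosen $X$ indeed satisfies the ellipticity hypotheses of Propositions \ref{prop2141} and \ref{equivdefcomefrom}.
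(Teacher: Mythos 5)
Your proposal is correct and follows exactly the same route as the paper: the proof in the paper is a one-line citation of Proposition \ref{equivdefcomefrom}, Corollary \ref{idcar1} and Proposition \ref{prop2141}, and your write-up simply fills in the details, including the unstated (but necessary) choice of an elliptic probe element $X=\mat(0,1_n,A,0)$ and the elementary logical step that a biconditional among values in a two-element set forces $\varepsilon'=e(G')$.
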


\begin{proof}
It is a consequence of Proposition \ref{equivdefcomefrom}, Corollary \ref{idcar1} and Proposition \ref{prop2141}. 
\end{proof}

\subsection{Levi subgroups}\label{secmatllevi}

\subsubsection{}

Let $M\in\msl^{G,\omega}(M_0)$ be such that 
\begin{equation}\label{eq:M-fact-bis}
  (M,M_H, \fm\cap\fs)\simeq \prod_{i=1}^\ell (G_{n_i},H_{n_i}, \fs_{n_i}). 
\end{equation}
Denote $M_{H_0}:=M\cap H_0$. Because $v$ defined by \eqref{formula1144} belongs to $M(E)$, we have $M_{H_0}=\Ad(v^{-1})(M_H)$. Then $M_{H_0}$ is a Levi subgroup of $H_0$ defined over $F$. Consider $X=\mat(0,1_n,A,0)\in(\fm\cap\fs_\rs)(F)$. Since $H_A\subseteq M_H$, we have $T\subseteq M_{H_0}$ and $R\subseteq M_{H_0}^{\omega_0}$, where we use the notations \eqref{eq:defTandR}. Denote by $\varepsilon_X^M$ the image of $t\in H^1(F,T/R)$ (as in Lemma \ref{lem1251}) under the homomorphism 
$$ H^1(F,T/R)\ra H^1_\ab(F, M_{H_0}^{\omega_0}/Z_M\ra M_{H_0}/Z_M). $$
We have an obvious generalisation to the product form of several results in Section \ref{ssec:kotrev} whose proofs will be omitted. 

\begin{prop}[cf. Proposition \ref{prop2231weak}]\label{prop2231}
There is a natural identification 
$$ H^1_\ab(F, M_{H_0}^{\omega_0}/Z_M\ra M_{H_0}/Z_M)\simeq H^1(F,C_{M_{H_0}}/C_{M_{H_0}^{\omega_0}}). $$
\end{prop}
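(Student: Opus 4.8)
The plan is to transpose the proof of Proposition~\ref{prop2231weak}, the only genuinely new point being that the product decomposition \eqref{eq:M-fact-bis} guarantees that the structural facts used in Section~\ref{ssec:kotrev} persist with $G$ replaced by $M$ and $H_0$ by $M_{H_0}$. I would first record that $M_{H_0}=\Ad(v^{-1})(M_H)=M\cap H_0$ is an $\Ad(\omega_0)$-stable $F$-Levi subgroup of $H_0$ that, by \eqref{eq:M-fact-bis}, is $F$-isomorphic to a product of groups $\Res_{E/F}GL_{n_i,E}$; hence $M_{H_0,\der}=M_{H_0,\scn}$ is a closed subgroup of $M_{H_0}$, the group $M_{H_0}^{\omega_0}$ is a Levi subgroup of $H_0^{\omega_0}\simeq GL_n$ and thus a product of general linear groups with $M_{H_0,\der}^{\omega_0}=M_{H_0,\scn}^{\omega_0}$ its derived group, and $Z_M\subseteq R$. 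Reinterpreting $R_\der$ and $T_\der$ as the preimages of $R$ and $T$ in $M_{H_0,\der}^{\omega_0}$ and $M_{H_0,\der}$ respectively, one checks, exactly as in the case of $G$, that $R_\der=R\cap M_{H_0,\der}$ and $T_\der\cap R=R_\der$ inside $T$, so that the maps $Z_M\ra R$, $R_\der\ra R$, $T_\der\ra T$ and $T_\der/R_\der\ra T/R$ are all injective.

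With these preliminaries the argument is formal and parallels the weak case. The short exact sequence of complexes
$$ 1\ra[1\ra Z_M]\ra[R_\der\ra R]\ra[R_\der\ra R/Z_M]\ra 1 $$
and its analogue with $T$ in place of $R$, together with the short exact sequence of complexes
$$ 1\ra[R_\der\ra R/Z_M]\ra[T_\der\ra T/Z_M]\ra[T_\der/R_\der\ra T/R]\ra 1, $$
give the analogue of \eqref{eq:H_ab-ra} (one of the product-form generalisations of the results of Section~\ref{ssec:kotrev} announced above), namely $H^1_\ab(F,M_{H_0}^{\omega_0}/Z_M\ra M_{H_0}/Z_M)=H^1_\ab(F,T_\der/R_\der\ra T/R)$. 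Then the short exact sequence of complexes
$$ 1\ra[R_\der\ra T_\der]\ra[R\ra T]\ra[R/R_\der\ra T/T_\der]\ra 1 $$
and the injectivity of $T_\der/R_\der\ra T/R$ identify the right-hand side with $H^1(F,(T/R)/(T_\der/R_\der))=H^1(F,(T/T_\der)/(R/R_\der))$; one concludes with $T/T_\der=C_{M_{H_0}}$ and $R/R_\der=C_{M_{H_0}^{\omega_0}}$.

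Since all the genuine content already lies in the weak case, I do not expect a real obstacle: the work is limited to the bookkeeping that passing from $Z_G$ to $Z_M$ and from $H_0$ to $M_{H_0}$ preserves exactness of the displayed sequences and the degeneration of the attached long exact sequences in (hyper)cohomology. This is immediate once one knows that $M_{H_0,\der}=M_{H_0,\scn}$ embeds into $M_{H_0}$, that $M_{H_0}^{\omega_0}$ is reductive with derived group $M_{H_0,\der}^{\omega_0}$, and that $Z_M\subseteq R$ — all consequences of \eqref{eq:M-fact-bis} — so I would state these explicitly before carrying out the diagram chase.
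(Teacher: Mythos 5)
Your proof is correct and follows exactly the route the paper itself intends: the paper declares the proof of Proposition~\ref{prop2231} an ``obvious generalisation to the product form'' of Proposition~\ref{prop2231weak} and omits it, and what you have written is precisely that transposition. In particular you correctly identify the bookkeeping facts that make it go through — $Z_M\subseteq R$, $M_{H_0,\der}=M_{H_0,\scn}$, the reinterpretation of $R_\der$, $T_\der$ as preimages inside $M_{H_0,\der}^{\omega_0}$ and $M_{H_0,\der}$, and the identities $R_\der=R\cap M_{H_0,\der}$ and $T_\der\cap R=R_\der$ that make $T_\der/R_\der\to T/R$ and $R/R_\der\to T/T_\der$ injective — and then the same three short exact sequences of two-term complexes as in the weak case give the claim, ending with $T/T_\der=C_{M_{H_0}}$ and $R/R_\der=C_{M_{H_0}^{\omega_0}}$.
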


For $L\in\msl^G(M)$, the natural morphism 
$$ [C_{M_{H_0}^{\omega_0}}\ra C_{M_{H_0}}]\ra[C_{L_{H_0}^{\omega_0}}\ra C_{L_{H_0}}] $$
induces a homomorphism 
$$ H^1_\ab(F, M_{H_0}^{\omega_0}/Z_M\ra M_{H_0}/Z_M)\ra H^1_\ab(F, L_{H_0}^{\omega_0}/Z_L\ra L_{H_0}/Z_L) $$
by Proposition \ref{prop2231}. 

\subsubsection{}

In this section, we assume that $F$ is a local field of characteristic zero. For $X\in(\fm\cap\fs_\rs)(F)$, define $\eta_M(X)\in\{\pm1\}^\ell$ by \eqref{eq:def_eta_M}. 

\begin{coro}[cf. Corollary \ref{cor2231weak}]
\leavevmode
\begin{enumerate}
	\item We have  
$$ H^1_\ab(F, M_{H_0}^{\omega_0}/Z_M\ra M_{H_0}/Z_M)=(\BZ/2\BZ)^\ell. $$

	\item The natural homomorphism  
$$ H^1(F,T/R)=(\BZ/2\BZ)^{I_0}\ra H^1_\ab(F, M_{H_0}^{\omega_0}/Z_M\ra M_{H_0}/Z_M)=(\BZ/2\BZ)^\ell $$
is the sum of components within each factor $G_{n_i}$ of $M$. 

	\item Let $L\in\msl^G(M)$ with an isomorphism 
$$  (L,L_H, \fl\cap\fs)\simeq \prod_{j=1}^{\ell'} (G_{n'_j},H_{n'_j}, \fs_{n'_j}) $$
compatible with \eqref{eq:M-fact-bis} and the inclusion $M\subseteq L$. Then the natural homomorphism 
$$ H^1_\ab(F, M_{H_0}^{\omega_0}/Z_M\ra M_{H_0}/Z_M)=(\BZ/2\BZ)^\ell\ra H^1_\ab(F, L_{H_0}^{\omega_0}/Z_L\ra L_{H_0}/Z_L)=(\BZ/2\BZ)^{\ell'} $$
is the sum of components within each factor $G_{n'_j}$ of $L$. 
\end{enumerate}
\end{coro}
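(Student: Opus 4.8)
The plan is to reduce the three-part Corollary to the corresponding statements in Section \ref{ssec:kotrev} by a product decomposition, exactly as suggested by the phrase ``obvious generalisation to the product form.'' The key point is that under the isomorphism \eqref{eq:M-fact-bis}, all the groups and tori appearing split as direct products over $1\le i\le\ell$: we have $M_{H_0}\simeq\prod_i (H_{0})_{n_i}$ (where $(H_0)_{n_i}$ denotes the analogue of $H_0$ inside $G_{n_i}$), $M_{H_0}^{\omega_0}\simeq\prod_i (H_0^{\omega_0})_{n_i}$, $Z_M\simeq\prod_i Z_{G_{n_i}}$, and likewise the tori $T$ and $R$ decompose as $T\simeq\prod_i T^{(i)}$, $R\simeq\prod_i R^{(i)}$ with $T^{(i)},R^{(i)}$ the maximal tori built (as in Section \ref{ssec:cocycletoX}) from the block $A_i$ of $A$ corresponding to the $i$-th factor. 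Since abelianised Galois (hyper)cohomology commutes with finite direct products of the coefficient complexes, we get
\begin{equation}\label{eq:corprod}
 H^1_\ab(F, M_{H_0}^{\omega_0}/Z_M\ra M_{H_0}/Z_M)\simeq\prod_{i=1}^\ell H^1_\ab(F, (H_0^{\omega_0})_{n_i}/Z_{G_{n_i}}\ra (H_0)_{n_i}/Z_{G_{n_i}}),
\end{equation}
and similarly $H^1(F,T/R)\simeq\prod_i H^1(F,T^{(i)}/R^{(i)})$, compatibly with the map induced by $T/R\to M_{H_0}^{\omega_0}/Z_M\to M_{H_0}/Z_M$.

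\textbf{Part (1).} Apply Corollary \ref{cor2231weak}(1) to each factor $G_{n_i}$: the $i$-th term of \eqref{eq:corprod} is $\BZ/2\BZ$, so the product is $(\BZ/2\BZ)^\ell$.

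\textbf{Part (2).} By the compatibility of \eqref{eq:corprod} with the product decomposition of $H^1(F,T/R)$, the homomorphism in question is the product over $i$ of the maps $H^1(F,T^{(i)}/R^{(i)})\to H^1_\ab(F,(H_0^{\omega_0})_{n_i}/Z_{G_{n_i}}\ra (H_0)_{n_i}/Z_{G_{n_i}})$. On the source side, $H^1(F,T/R)=(\BZ/2\BZ)^{I_0}$ decomposes according to \eqref{eq:coh_opp_tor}, and the index set $I_0$ is partitioned into the subsets $I_0^{(i)}$ indexing the irreducible factors of $\chi_{A_i}$; thus $H^1(F,T^{(i)}/R^{(i)})=(\BZ/2\BZ)^{I_0^{(i)}}$. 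Corollary \ref{cor2231weak}(2) applied to $G_{n_i}$ says the $i$-th component map is the sum over $I_0^{(i)}$; assembling, the full map is ``the sum of components within each factor $G_{n_i}$.''

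\textbf{Part (3).} Given $L\in\msl^G(M)$ with $(L,L_H,\fl\cap\fs)\simeq\prod_{j=1}^{\ell'}(G_{n'_j},H_{n'_j},\fs_{n'_j})$ compatible with the inclusion $M\subseteq L$, the index set $\{1,\dots,\ell\}$ partitions as $\bigsqcup_{j=1}^{\ell'} S_j$ with $G_{n'_j}$ being the ``block product'' of the $G_{n_i}$ for $i\in S_j$, i.e.\ $n'_j=\sum_{i\in S_j} n_i$ and $(G_{n'_j})_{H_0}\simeq\prod_{i\in S_j}(G_{n_i})_{H_0}$, etc. The transition map on cohomology is induced by $C_{M_{H_0}^{\omega_0}}\to C_{M_{H_0}}$ mapping to $C_{L_{H_0}^{\omega_0}}\to C_{L_{H_0}}$; by the remark after Proposition \ref{prop2231weak}, $C_{(H_0)_{n_i}}=\Res_{E/F}(\BG_m)_E$ in each factor and the product map $C_{M_{H_0}}\to C_{L_{H_0}}$ is, factor by $G_{n'_j}$, the norm/product map $\prod_{i\in S_j}\Res_{E/F}\BG_m\to\Res_{E/F}\BG_m$. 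Exactly as in the proof of Proposition \ref{prop:pdweak} (or directly from Lemma \ref{lem:cohopptorus} and the calculation that the map $H^1(F,\BG_m^\ast)^{S_j}\to H^1(F,\BG_m^\ast)$ induced by the product is summation), this gives that the map $(\BZ/2\BZ)^\ell\to(\BZ/2\BZ)^{\ell'}$ is the sum of components within each factor $G_{n'_j}$ of $L$.

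\textbf{The main obstacle} is bookkeeping rather than mathematics: one must check that the product decompositions of $M_{H_0}$, $M_{H_0}^{\omega_0}$, $Z_M$, $T$, $R$ and of the index set $I_0$ are all mutually compatible and compatible with the inclusion $M\subseteq L$, so that \eqref{eq:corprod} is genuinely a decomposition of the maps, not merely of the groups. This is routine given the explicit block-diagonal description \eqref{eq:notationM_n} of $\omega$-stable Levi subgroups and the fact that $v\in M(E)$ (used above to identify $M_{H_0}=\Ad(v^{-1})(M_H)$), but it is the step where care is required; once it is in place, Parts (1)--(3) follow termwise from Corollary \ref{cor2231weak} and, for Part (3), from the summation behaviour of norm maps on $H^1(F,\BG_m^\ast)$ already implicit in Proposition \ref{prop:pdweak}.
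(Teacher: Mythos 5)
Your proof is correct, and it fills in exactly what the paper itself declines to write out: immediately before stating this corollary, the paper declares ``We have an obvious generalisation to the product form of several results in Section \ref{ssec:kotrev} whose proofs will be omitted,'' and your argument is precisely that product-decomposition generalisation, termwise applying Corollary \ref{cor2231weak} and the cocentre calculation. The bookkeeping you flag as ``the main obstacle'' is exactly what the author elides as ``obvious,'' and your verification of it (decomposition of $M_{H_0}$, $M_{H_0}^{\omega_0}$, $Z_M$, $T$, $R$, $I_0$ compatibly with \eqref{eq:M-fact-bis}, and the identification of the transition map for $M\subseteq L$ with the block product map on cocentres) is sound.
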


\begin{coro}[cf. Corollary \ref{idcar1}]\label{idcar3}
The map 
$$ (\fm\cap\fs_\rs)(F)\ra H^1_\ab(F, M_{H_0}^{\omega_0}/Z_M\ra M_{H_0}/Z_M)=(\BZ/2\BZ)^\ell: X=\mat(0,A,B,0)\mapsto \varepsilon_{X'}^M $$
where $X':=\mat(0,1_n,AB,0)\in(\fm\cap\fs_\rs)(F)$ is understood as the map 
$$ (\fm\cap\fs_\rs)(F)\ra\{\pm1\}^\ell: X\mapsto\eta_M(X). $$ 
\end{coro}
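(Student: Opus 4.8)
The plan is to reduce the statement, one factor of \eqref{eq:M-fact-bis} at a time, to its single-block case Corollary \ref{idcar1}, by observing that every ingredient entering the definition of $\varepsilon_{X'}^M$ respects the product decomposition \eqref{eq:M-fact-bis}.

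First I would note that the embedding $E\hookrightarrow\fg(F)$ with $\alpha\mapsto\alpha_0=\mat(0,\alpha^2 1_n,1_n,0)$ is block-diagonal for \eqref{eq:M-fact-bis}. Hence $M_{H_0}=M\cap H_0$, its $\omega_0$-fixed subgroup $M_{H_0}^{\omega_0}$, and $Z_M$ all split as products, over $1\le i\le\ell$, of the groups $H_0$, $H_0^{\omega_0}$ and $Z_G$ attached to the pair $(G_{n_i},H_{n_i})$ with the same quadratic extension $E/F$, exactly as in Sections \ref{ssec:classEg'} and \ref{ssec:kotrev}. Combining this with Proposition \ref{prop2231} identifies $H^1_\ab(F, M_{H_0}^{\omega_0}/Z_M\ra M_{H_0}/Z_M)=(\BZ/2\BZ)^\ell$ with the product over $i$ of the groups $H^1_\ab(F, H_0^{\omega_0}/Z_G\ra H_0/Z_G)=\BZ/2\BZ$ of Corollary \ref{cor2231weak} for the various $G_{n_i}$; this is exactly the componentwise content of the product-form analogue of Corollary \ref{cor2231weak} stated above.

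Next, writing $X=\diag(X_1,\dots,X_\ell)$ with $X_i=\mat(0,A_i,B_i,0)$ as in \eqref{eq:X-fact}, and accordingly $X'=\diag(X'_1,\dots,X'_\ell)$ with $X'_i=\mat(0,1_{n_i},A_iB_i,0)\in\fs_{n_i,\rs}(F)$ (each block being regular semi-simple since $X$ is), I would check that the maximal tori $T=T_{X'}$ and $R=H_{0,X'_0}$ of Section \ref{ssec:cocycletoX}, together with the Galois $1$-cocycle $t_\sigma$ of Lemma \ref{lem1231} associated to $X'$, decompose as products over $i$; in particular $t$ corresponds to the tuple $(t_i)_i$, where $t_i$ is the class associated to $X'_i$ for $(G_{n_i},H_{n_i})$. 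By functoriality of the abelianisation maps (the product-form analogue of the diagram \eqref{eq:commdiag}), the $i$-th component of $\varepsilon_{X'}^M$ is then the invariant $\varepsilon_{X'_i}$ of Section \ref{ssec:kotrev} for $(G_{n_i},H_{n_i})$, which by Corollary \ref{idcar1} applied to $G_{n_i}$ equals $\eta(X_i)$. Since $\eta_M(X)=(\eta(X_i))_{1\le i\le\ell}$ by \eqref{eq:def_eta_M}, this gives $\varepsilon_{X'}^M=\eta_M(X)$, which is the claim.

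The only real content is the bookkeeping: checking that the tori, the cocycle $t_\sigma$, the short exact sequences of complexes, and hence the abelianisation maps of Section \ref{ssec:kotrev} are all compatible with \eqref{eq:M-fact-bis}. This is precisely what the product-form generalisations of Proposition \ref{prop2231weak} and Corollary \ref{cor2231weak} recorded just above (with proofs omitted) rely on, so I do not expect any genuinely new obstacle here; in the end the corollary is a componentwise restatement of Corollary \ref{idcar1}.
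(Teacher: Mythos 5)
Your proposal is correct and is exactly what the paper intends when it says the product-form generalisations of the results of Section \ref{ssec:kotrev} are "obvious" and omits the proofs: the block-diagonal structure of $\alpha_0$, $\omega_0$, $v$, the tori $T$, $R$, and the cocycle $t_\sigma$ with respect to \eqref{eq:M-fact-bis} makes the entire construction of $\varepsilon^M_{X'}$ factor over the blocks, so the statement reduces componentwise to Corollary \ref{idcar1}. The bookkeeping you carry out is the same reduction the paper has in mind.
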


\subsubsection{}

Now assume additionally that $F$ is non-archimedean. 

\begin{prop}[cf. Proposition \ref{prop:pdweak}]\label{proppontdual}
\leavevmode
\begin{enumerate}
	\item The group $H^1_\ab(F, M_{H_0}^{\omega_0}/Z_M\ra M_{H_0}/Z_M)$ is naturally isomorphic to the Pontryagin dual of the finite group $Z_{\wh{M}}[2]$ of elements $z\in Z_{\wh{M}}$ such that $z^2=1$. 
	
	\item If $L\in\msl^G(M)$, then the natural homomorphism 
$$ H^1_\ab(F, M_{H_0}^{\omega_0}/Z_M\ra M_{H_0}/Z_M)\ra H^1_\ab(F, L_{H_0}^{\omega_0}/Z_L\ra L_{H_0}/Z_L) $$
is the dual of the embedding $Z_{\wh{L}}[2]\hookrightarrow Z_{\wh{M}}[2]$. 
\end{enumerate}
\end{prop}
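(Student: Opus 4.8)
The plan is to reproduce the proof of Proposition \ref{prop:pdweak} one factor at a time, using Proposition \ref{prop2231} in place of Proposition \ref{prop2231weak}. So for (1), one starts from the identification $H^1_\ab(F, M_{H_0}^{\omega_0}/Z_M\ra M_{H_0}/Z_M)\simeq H^1(F,C_{M_{H_0}}/C_{M_{H_0}^{\omega_0}})$ of Proposition \ref{prop2231}, and then invokes Kottwitz's local duality \cite[(6.3.1)]{MR757954}, which presents this group as the Pontryagin dual of $\pi_0\big((\wh{C_{M_{H_0}}/C_{M_{H_0}^{\omega_0}}})^\Gamma\big)$. It then remains to identify the $\Gamma$-module $\wh{C_{M_{H_0}}/C_{M_{H_0}^{\omega_0}}}$.

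By \eqref{eq:M-fact-bis}, $\Ad(v^{-1})$ and Lemma \ref{lem:borel}, we have $M_{H_0}\simeq\prod_{i=1}^\ell\Res_{E/F} GL_{n_i,E}$ and $M_{H_0}^{\omega_0}=M_H^\omega\simeq\prod_{i=1}^\ell GL_{n_i}$, the inclusion $M_{H_0}^{\omega_0}\subseteq M_{H_0}$ being factorwise the diagonal embedding. Applying \cite[(1.8.1) and (1.8.3)]{MR757954} exactly as in the proof of Proposition \ref{prop:pdweak}, $\wh{C_{M_{H_0}}}=Z_{\wh{M_{H_0}}}=\prod_i\big(Z_{\wh{GL_{n_i}}}\times Z_{\wh{GL_{n_i}}}\big)$ with $\Gamma$ acting through $\Gal(E/F)$ by exchanging the two coordinates in each factor, and $\wh{C_{M_{H_0}}/C_{M_{H_0}^{\omega_0}}}$ is the kernel of the product-of-coordinates map onto $\wh{C_{M_{H_0}^{\omega_0}}}=\prod_i Z_{\wh{GL_{n_i}}}$; this kernel is $\prod_i Z_{\wh{GL_{n_i}}}\simeq Z_{\wh M}$ with $\sigma\in\Gal(E/F)$ acting by inversion. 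As $Z_{\wh M}\simeq(\BC^\times)^\ell$ is connected, $\pi_0$ of its $\Gamma$-fixed points is $\{z\in Z_{\wh M}: z^2=1\}=Z_{\wh M}[2]$, which gives (1).

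For (2), the map in the statement is by definition the one induced on $H^1_\ab$ by the morphism of complexes $[C_{M_{H_0}^{\omega_0}}\ra C_{M_{H_0}}]\ra[C_{L_{H_0}^{\omega_0}}\ra C_{L_{H_0}}]$ attached to $M\subseteq L$. Under \cite[(6.3.1)]{MR757954} it dualizes to the induced map $\pi_0\big((\wh{C_{L_{H_0}}/C_{L_{H_0}^{\omega_0}}})^\Gamma\big)\ra\pi_0\big((\wh{C_{M_{H_0}}/C_{M_{H_0}^{\omega_0}}})^\Gamma\big)$ coming from the dual morphism of $\Gamma$-modules $C_{M_{H_0}}/C_{M_{H_0}^{\omega_0}}\ra C_{L_{H_0}}/C_{L_{H_0}^{\omega_0}}$. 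Choosing an isomorphism $(L,L_H,\fl\cap\fs)\simeq\prod_j(G_{n'_j},H_{n'_j},\fs_{n'_j})$ compatible with \eqref{eq:M-fact-bis} and $M\subseteq L$, and tracing the identification of the previous paragraph for both $M$ and $L$, this dual morphism becomes the natural inclusion $Z_{\wh L}\hookrightarrow Z_{\wh M}$ coming from $\wh M\subseteq\wh L$; restricting to $2$-torsion this is $Z_{\wh L}[2]\hookrightarrow Z_{\wh M}[2]$, so the map on $H^1_\ab$ is its Pontryagin dual.

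The argument is essentially formal once Proposition \ref{prop2231} is available; the one point deserving care is the last compatibility in (2), namely that under the chosen compatible factorizations the transition map $C_{M_{H_0}}/C_{M_{H_0}^{\omega_0}}\ra C_{L_{H_0}}/C_{L_{H_0}^{\omega_0}}$ is genuinely dual to the standard inclusion of centres of dual groups. This is the cohomological counterpart of the ``sum of components within each factor'' bookkeeping already carried out for the split-case corollaries, and presents no real difficulty.
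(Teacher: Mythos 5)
Your proof is correct and takes exactly the route the paper intends: the paper explicitly omits the proof of Proposition \ref{proppontdual} as ``an obvious generalisation to the product form'' of Proposition \ref{prop:pdweak}, and what you write out is precisely that product-form version, using Proposition \ref{prop2231} and the factorwise identifications \eqref{eq:M-fact-bis}. Both parts are handled correctly, including the duality bookkeeping for (2).
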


\begin{coro}[cf. Corollary \ref{idcar2}]\label{idcar4}
The morphism 
$$ H^1(F,M_{H_0}/Z_M)\ra H^1_\ab(F,M_{H_0}/Z_M)\ra H^1_\ab(F, M_{H_0}^{\omega_0}/Z_M\ra M_{H_0}/Z_M)=(\BZ/2\BZ)^\ell $$
is understood as the Kottwitz sign within each factor $G_{n_i}$ of $M$ 
$$ H^1(F,M_{H_0}/Z_M)\ra H^1(F, M_\ad) \ra\{\pm1\}^\ell. $$
\end{coro}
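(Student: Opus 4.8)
The plan is to deduce Corollary \ref{idcar4} from its one-block incarnation, Corollary \ref{idcar2}, by means of the product decomposition \eqref{eq:M-fact-bis}. Every object in the statement is built out of functorial (hyper)cohomology maps together with the Kottwitz sign taken ``within each factor'', so the assertion is visibly multiplicative over a direct product of reductive groups; it therefore suffices to verify it separately on each simple block $G_{n_i}$, where it is precisely Corollary \ref{idcar2} with $G$ replaced by $G_{n_i}$.

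First I would record that each object occurring in the statement decomposes compatibly under \eqref{eq:M-fact-bis}. One has $Z_M=\prod_{i=1}^\ell Z_{G_{n_i}}$ and $M_\ad=\prod_{i=1}^\ell (G_{n_i})_\ad$; the involution $\Ad(\omega_0)$ preserves each block and restricts there to $\Ad\mat(1_{n_i},0,0,-1_{n_i})$; and since $v$ of \eqref{formula1144} lies in $M(E)$ and is block-diagonal, the identity $M_{H_0}=\Ad(v^{-1})(M_H)$ gives $M_{H_0}=\prod_{i=1}^\ell H_{0,n_i}$ and $M_{H_0}^{\omega_0}=\prod_{i=1}^\ell H_{0,n_i}^{\omega_0}$, where $H_{0,n_i}$ denotes the analogue of $H_0$ attached to the block $G_{n_i}$. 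Consequently all the cohomology groups appearing in the corollary split as products,
$$ H^1(F,M_{H_0}/Z_M)=\prod_{i=1}^\ell H^1(F,H_{0,n_i}/Z_{G_{n_i}}),\qquad H^1(F,M_\ad)=\prod_{i=1}^\ell H^1(F,(G_{n_i})_\ad), $$
$$ H^1_\ab(F, M_{H_0}^{\omega_0}/Z_M\ra M_{H_0}/Z_M)=\prod_{i=1}^\ell H^1_\ab(F, H_{0,n_i}^{\omega_0}/Z_{G_{n_i}}\ra H_{0,n_i}/Z_{G_{n_i}})=(\BZ/2\BZ)^\ell, $$
and the abelianization map, the edge map onto $H^1_\ab(F, M_{H_0}^{\omega_0}/Z_M\ra M_{H_0}/Z_M)$, the map induced by the closed immersion $M_{H_0}/Z_M\hookrightarrow M_\ad$, and, by its very definition, the Kottwitz sign ``within each factor'' are all the products over $1\le i\le\ell$ of the corresponding maps for the blocks $G_{n_i}$.

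Having set this up, I would conclude by applying Corollary \ref{idcar2} to each $G_{n_i}$: it identifies the $i$-th component of our composite map, namely $H^1(F,H_{0,n_i}/Z_{G_{n_i}})\ra\BZ/2\BZ$, with the Kottwitz sign $H^1(F,H_{0,n_i}/Z_{G_{n_i}})\ra H^1(F,(G_{n_i})_\ad)\ra\{\pm1\}$; forming the product over $i$ gives exactly the claim of Corollary \ref{idcar4}. The only point that is not purely formal is the compatibility of the abelianized (hyper)cohomology of \cite{MR1401491, MR1695940} with direct products of reductive groups and of the associated two-term complexes of tori; this is immediate from the construction, but it is the step one should state with care, and I would regard it as the (mild) main obstacle. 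Alternatively, one could bypass Corollary \ref{idcar2} and imitate its proof directly, combining Corollary \ref{corequivdefcomefrom}, Corollary \ref{idcar3} and the product-form analogue of Proposition \ref{prop2141}; this would need in addition the componentwise surjectivity of $X\mapsto\eta_M(X)$ on $(\fm\cap\fs_\rs)(F)_\el$, which is clear.
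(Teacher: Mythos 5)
Your proposal is correct and matches the paper's intended approach: the paper explicitly omits this proof, stating that Corollary \ref{idcar4} is ``an obvious generalisation to the product form'' of Corollary \ref{idcar2}, and your argument is exactly that reduction, with the compatibility details (block decomposition of $M_{H_0}$, $M_{H_0}^{\omega_0}$, $Z_M$, $M_\ad$, and the multiplicativity of $H^1$ and $H^1_\ab$ over the product \eqref{eq:M-fact-bis}) spelled out. The alternative route you sketch at the end---imitating the proof of Corollary \ref{idcar2} directly via Corollary \ref{corequivdefcomefrom}, Corollary \ref{idcar3} and a product form of Proposition \ref{prop2141}---is also sound and is in effect the same chain of ideas applied blockwise.
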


\subsubsection{}

Return to the general setting where $F$ is a local field of characteristic zero or a number field. 

\begin{lem}\label{lem2242}
Let $A$ be a reductive group over $F$ and let $L$ be a Levi subgroup of $A$. The natural map 
$$ H^1(F,L)\ra H^1(F,A) $$
is injective. 
\end{lem}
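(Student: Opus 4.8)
The plan is to realise $L$ as the Levi factor of a parabolic $F$-subgroup $P$ of $A$, compare $H^1(F,L)$ with $H^1(F,P)$ via the Levi decomposition $P=N_P\rtimes L$, reduce the injectivity of $H^1(F,P)\to H^1(F,A)$ to the transitivity of $A(F)$ on the rational points of the flag variety $A/P$, and then upgrade the resulting triviality of kernels to genuine injectivity by the usual twisting trick. First I would observe that the inclusion $L\hookrightarrow P$ is split by the projection $P\twoheadrightarrow P/N_P\simeq L$, so $H^1(F,L)\to H^1(F,P)$ admits a retraction and is injective, and the map in the statement factors as $H^1(F,L)\hookrightarrow H^1(F,P)\to H^1(F,A)$. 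Hence it suffices to show, for every reductive group $A$ and every parabolic $F$-subgroup $P$, that $H^1(F,P)\to H^1(F,A)$ has trivial kernel: an element of $\ker(H^1(F,L)\to H^1(F,A))$ would then have trivial image in $H^1(F,P)$ and, by the injectivity just noted, be itself trivial.

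Next I would invoke the exact sequence of pointed sets $A(F)\to(A/P)(F)\to H^1(F,P)\to H^1(F,A)$ attached to the inclusion $P\subseteq A$, which identifies $\ker(H^1(F,P)\to H^1(F,A))$ with the orbit set $A(F)\backslash(A/P)(F)$. Thus everything comes down to showing that $A(F)$ acts transitively on $(A/P)(F)$. For this I would use the big cell: let $\ov{P}$ be the parabolic opposite to $P$ with $\ov{P}\cap P=L$, which is defined over $F$; then $N_{\ov{P}}\times P\to A$ is an open immersion, and its image in $A/P$ is a Zariski-open subset $U$, defined over $F$ and containing the base point, over which $A\to A/P$ admits an $F$-rational section $s$. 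Given $x\in(A/P)(F)$, the set $\{g\in A:\ g^{-1}x\in U\}$ is a non-empty Zariski-open subvariety of $A$; since $F$ has characteristic zero, $A$ is connected and unirational, so $A(F)$ is Zariski-dense and meets this open set. Picking such a $g\in A(F)$ gives $g^{-1}x\in U(F)$, and then $g\,s(g^{-1}x)\in A(F)$ carries the base point to $x$.

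Finally, to pass from "trivial kernel" to injectivity I would run the twisting argument. Let $\xi\in H^1(F,L)$ be represented by a cocycle $z$; twisting $A$, $P$ and $L$ by the images of $z$ produces a reductive group $A_z$, a subgroup $P_z\subseteq A_z$ that is again parabolic over $F$ (it is an inner form of $P$, hence smooth and connected with $A_z/P_z$ a form of the proper variety $A/P$) with Levi factor $L_z$, together with bijections $H^1(F,L)\simeq H^1(F,L_z)$ and $H^1(F,A)\simeq H^1(F,A_z)$ that are compatible with the structure maps and send $\xi$ to the neutral class. Under these bijections the fibre of $H^1(F,L)\to H^1(F,A)$ through $\xi$ corresponds to $\ker(H^1(F,L_z)\to H^1(F,A_z))$, which is trivial by the previous steps applied to the triple $(A_z,P_z,L_z)$. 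Hence that fibre is $\{\xi\}$, and as $\xi$ was arbitrary the map $H^1(F,L)\to H^1(F,A)$ is injective.

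The main obstacle is the bookkeeping in this last step: verifying that a Levi datum $(A,P,L)$ twists to a datum of the same type — in particular that $P_z$ is still a parabolic $F$-subgroup of $A_z$ and $L_z$ its Levi factor — and that the identifications of cohomology sets are compatible with all the maps involved. The transitivity of $A(F)$ on $(A/P)(F)$ and the splitting $H^1(F,L)\hookrightarrow H^1(F,P)$ are routine, and the transitivity could alternatively be quoted from the Borel–Tits theory of conjugacy of parabolic $F$-subgroups.
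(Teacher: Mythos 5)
Your proof is correct but takes a genuinely different route from the paper's. The paper argues directly with cocycles: given $L$-valued cocycles $u_\sigma$, $v_\sigma$ with $v_\sigma = g\,u_\sigma\,\sigma(g)^{-1}$ in $A$, it twists $A$ by $u_\sigma$ to get an inner form $A'$ in which both $(Q,L)$ and $(\Ad(g^{-1})(Q'),\Ad(g^{-1})(L'))$ are $F$-rational parabolic/Levi pairs, invokes the Borel--Tits conjugacy of such pairs over $F$ to produce $h\in A'(F)$ with $hg^{-1}\in L$, and then writes $g=lh$ to conclude $v_\sigma = l\,u_\sigma\,\sigma(l)^{-1}$ with $l\in L$. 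You instead factor $H^1(F,L)\hookrightarrow H^1(F,P)\to H^1(F,A)$, reduce triviality of $\ker(H^1(F,P)\to H^1(F,A))$ to transitivity of $A(F)$ on $(A/P)(F)$ via the exact sequence of pointed sets, establish that transitivity by a big-cell and Zariski-density argument, and then run the standard twisting trick to pass from trivial kernels to injectivity. Both proofs ultimately rest on the Borel--Tits theory of rational parabolics together with twisting, but they package the input differently: your version isolates the clean statement ``$A(F)$ acts transitively on $(A/P)(F)$'' (equivalently $(A/P)(F)=A(F)/P(F)$) as the pivot and is more modular — the kernel computation and the twisting upgrade are cleanly separated — whereas the paper's cocycle manipulation is shorter and avoids the detour through $H^1(F,P)$ and the explicit exact sequence. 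One small point worth flagging in your write-up: the transitivity could indeed be quoted from Borel--Tits rather than re-proved via the big cell, and doing so would make the parallel with the paper's use of conjugacy of minimal $(P,L)$-pairs essentially transparent.
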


\begin{proof}
Suppose that there are two Galois $1$-cocycles $u_\sigma$ and $v_\sigma$ with values in $L$ and of the same image in $H^1(F,A)$, i.e., there exists $g\in A$ such that $v_\sigma=g u_\sigma \sigma(g)^{-1}$ for all $\sigma\in\Gamma$. Let $A'$ be the inner form of $A$ over $F$ given by the Galois action $\Ad(u_\sigma)\circ\sigma$. Let $Q$ be a parabolic $F$-subgroup of $A$ with Levi factor $L$. Since $u_\sigma\in L$, we can use \cite[\S AG, Theorem 14.4]{MR1102012} to see that the pair $(Q,L)$ corresponds to a pair $(Q',L')$ defined over $F$ of $A'$. Similarly, we deduce from $v_\sigma\in L$ and $v_\sigma=g u_\sigma \sigma(g)^{-1}$ that the pair $(\Ad(g^{-1})(Q'), \Ad(g^{-1})(L'))$ of $A'$ is also defined over $F$. Let $(Q'_0,L'_0)$ be a pair formed by a minimal parabolic $F$-subgroup $Q'_0$ of $A'$ and a Levi $F$-factor $L'_0$ of $Q'_0$ such that $Q'_0\subseteq Q'$ and $L'_0\subseteq L'$. There exists $h\in A'(F)$ such that $Q'_0\subseteq\Ad(hg^{-1})(Q')$ and $L'_0\subseteq\Ad(hg^{-1})(L')$. Since two standard and conjugate pairs are equal, using \cite[\S IV.11, Theorem 11.16 and Proposition 11.23.(ii)]{MR1102012}, we obtain $hg^{-1}\in L'$. Write $g=lh$ with $l\in L$. Since $\Ad(u_\sigma)\circ\sigma(h)=h$, we have 
$$ v_\sigma=g u_\sigma \sigma(g)^{-1}=l (h u_\sigma \sigma(h)^{-1}) \sigma(l)^{-1}=l u_\sigma \sigma(l)^{-1} $$
for all $\sigma\in\Gamma$, which implies that $u_\sigma$ and $v_\sigma$ are equal in $H^1(F,L)$. 
\end{proof}

Let $M'\in\msl^{H'}(M'_0)$ and $M\in\msl^{G,\omega}(M_0)$ be a pair of matching Levi subgroups as in Section \ref{ssec:mat_Levi}. Recall that $e_{M'}\in\{\pm1\}^\ell$ is defined by \eqref{eq:def_e_M'}. It follows that $u\in H^1(F,H_0/Z_G)$ (as in Lemma \ref{lem1251}) belongs to the image of the injective map (see Lemma \ref{lem2242}) 
$$ H^1(F, M_{H_0}/Z_G)\hookrightarrow H^1(F,H_0/Z_G). $$ 
Let $u_M\in H^1(F, M_{H_0}/Z_G)$ be the preimage of $u$. Denote by $\bar{u}_M$ the image of $u_M$ under the injective map (since $H^1(F,Z_M/Z_G)=0$ by Hilbert's Theorem 90)
$$ H^1(F,M_{H_0}/Z_G)\hookrightarrow H^1(F,M_{H_0}/Z_M) $$
and by ${\varepsilon'}^M$ the image of $\bar{u}_M$ under the morphism 
$$ H^1(F,M_{H_0}/Z_M)\ra H^1_\ab(F,M_{H_0}/Z_M)\ra H^1_\ab(F, M_{H_0}^{\omega_0}/Z_M\ra M_{H_0}/Z_M). $$
Then $\bar{u}_M\in H^1(F,M_{H_0}/Z_M)$ is the class associated to $\wt{\fm'}$ under a product form of Proposition \ref{prop:bookinvol}. 

\begin{coro}\label{idcar5}
Suppose that $F$ is a non-archimedean local field of characteristic zero. We have ${\varepsilon'}^M=e_{M'}$ under the identification 
$$ H^1_\ab(F, M_{H_0}^{\omega_0}/Z_M\ra M_{H_0}/Z_M)=(\BZ/2\BZ)^\ell\simeq\{\pm1\}^\ell. $$ 
\end{coro}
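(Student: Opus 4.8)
The plan is to reduce the corollary to the non-Levi case (Corollary \ref{idcar2}) applied factor by factor, combined with the compatibility of the various abelianization maps with the product decomposition \eqref{eq:M-fact-bis}. First I would unwind the definitions: $\bar{u}_M \in H^1(F, M_{H_0}/Z_M)$ is the class attached to $\wt{\fm'}$ by the product form of Proposition \ref{prop:bookinvol}, and under the factorizations \eqref{eq:M-fact} and \eqref{eq:M'-fact} both $M_{H_0}/Z_M$ and the target $H^1_\ab(F, M_{H_0}^{\omega_0}/Z_M\ra M_{H_0}/Z_M) = (\BZ/2\BZ)^\ell$ split as products over $1\le i\le \ell$. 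So it suffices to treat each factor $(G_{n_i}, H_{n_i}, \fs_{n_i})$ separately, i.e.\ to show that the image of $\bar{u}_{M,i}$ — the class associated to the central simple algebra $\fg'_{r_i}$ appearing in \eqref{eq:M'-fact} — under $H^1(F, H_{0,n_i}/Z_{G_{n_i}})\ra H^1_\ab(F, H_{0,n_i}^{\omega_0}/Z_{G_{n_i}}\ra H_{0,n_i}/Z_{G_{n_i}}) = \BZ/2\BZ$ equals $(-1)^{r_i}$.

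For a single factor, Corollary \ref{idcar2} identifies this map with the Kottwitz sign $H^1(F, H_{0,n_i}/Z_{G_{n_i}})\ra H^1(F, (G_{n_i})_\ad)\ra\{\pm1\}$. Now the class $\bar u_{M,i}$ is precisely the class of the inner form $G'_{r_i} = GL_{r_i,D_i}$ of $G_{n_i} = GL_{2n_i}$ (via the product form of Proposition \ref{prop:bookinvol}, the image in $H^1(F,(G_{n_i})_\ad)$ is the class defining $\fg'_{r_i}$ as a central simple algebra). Hence its Kottwitz sign is $e(G'_{r_i})$, which by Lemma \ref{lem:kottsign} equals $(-1)^{r_i}$. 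Assembling over $i$ and recalling the definition $e_{M'} := ((-1)^{r_i})_{1\le i\le\ell}$ in \eqref{eq:def_e_M'}, we get ${\varepsilon'}^M = e_{M'}$.

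The one point that needs care — and the main obstacle — is checking that all the reduction steps genuinely respect the product decomposition: namely that the injection $H^1(F, M_{H_0}/Z_G)\hookrightarrow H^1(F, H_0/Z_G)$ (Lemma \ref{lem2242}) sends the class of $\wt{\fm'}$ to the class $u$ of $\fg'$, that passing from $Z_G$ to $Z_M$ in the denominator (valid since $H^1(F,Z_M/Z_G)=0$) is compatible with the factorization, and that the target group $H^1_\ab(F, M_{H_0}^{\omega_0}/Z_M\ra M_{H_0}/Z_M) = (\BZ/2\BZ)^\ell$ decomposes as claimed with the $i$-th projection corresponding to the $i$-th factor $G_{n_i}$. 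All of these follow from Proposition \ref{prop2231} together with the functoriality already recorded in Section \ref{secmatllevi} (the commutative diagrams relating $M$ and its factors, and the product form of Proposition \ref{prop:bookinvol}); once these compatibilities are in place, the corollary is the factor-wise statement, which is exactly Corollary \ref{idcar2} plus Lemma \ref{lem:kottsign}.
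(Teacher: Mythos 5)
Your proposal is correct and is essentially the paper's proof. The paper simply cites Corollary~\ref{idcar4} (the product form of Corollary~\ref{idcar2}, stated with proof omitted as an "obvious generalisation") together with Lemma~\ref{lem:kottsign}; you re-derive the needed content of Corollary~\ref{idcar4} by unwinding the factorization to the single-factor case of Corollary~\ref{idcar2}, which is exactly what that omitted generalization consists of.
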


\begin{proof}
It results from Corollary \ref{idcar4} and Lemma \ref{lem:kottsign}. 
\end{proof}


\section{\textbf{Statement of results}}\label{secsta}

\subsection{}

Let $F$ be a non-archimedean local field of characteristic zero and $E$ be a quadratic extension of $F$. Fix a pair of matching Levi subgroups $M'\in\msl^{H'}(M'_0)$ and $M\in\msl^{G,\omega}(M_0)$ in the sense of Section \ref{ssec:mat_Levi}. Recall that we denote by $\gamma_\psi(\fh(F))$ (resp. $\gamma_\psi(\fh'(F))$) the Weil constant associated to $\fh(F)$ (resp. $\fh'(F)$) and the restriction of \eqref{bilform1} (resp. \eqref{bilform2}). 

\begin{defn}\label{defparMass}
Let $f\in\CC_c^\infty(\fs(F))$ and $f'\in\CC_c^\infty(\fs'(F))$. We say that $f$ and $f'$ are partially $M$-associated if they satisfy the following condition: for all $L\in\msl^G(M)$ and all $Q\in\msf^G(L)$, if $X\in(\fl\cap\fs_\rs)(F)$ and $Y\in(\wt{\fl'}\cap\fs'_\rs)(F)$ have $L$-matching orbits (see Definition \ref{defbyinvM}), then
$$ \kappa(X) J_L^Q(\eta, X, f)=J_{L'}^{Q'}(Y, f'). $$
\end{defn}

\begin{defn}\label{defMass}
Let $f\in\CC_c^\infty(\fs(F))$ and $f'\in\CC_c^\infty(\fs'(F))$. We say that $f$ and $f'$ are $M$-associated if they are partially $M$-associated and satisfy the additional condition: for all $L\in\msl^G(M)$, $Q\in\msf^G(L)$ and $X\in(\fl\cap\fs_\rs)(F)$, we have 
$$ J_L^Q(\eta, X, f)=0 $$
unless $X$ comes potentially from $(\fm_{\wt{Q'}}\cap\fs'_\rs)(F)$ (see Definition \ref{defcomepotfrom}). 
\end{defn}

\begin{thm}\label{thmcommute}
Let $f\in\CC_c^\infty(\fs(F))$ and $f'\in\CC_c^\infty(\fs'(F))$ be partially $M$-associated and satisfy the following conditions. 
\begin{enumerate}[(a)]
	\item The weighted orbital integrals of $f$ and $f'$ vanish for nontrivial weights (see Section \ref{ssec:lab_lem}). 

	\item If $X\in\fs_\rs(F)$ does not come from $\fs'_\rs(F)$ (see Definition \ref{defbyinv}), then 
	$$ J_G^G(\eta,X,f)=0. $$
\end{enumerate}
Then $\gamma_{\psi}(\fh(F))^{-1} \hat{f}$ and $\gamma_{\psi}(\fh'(F))^{-1} \hat{f'}$ are $M$-associated. 
\end{thm}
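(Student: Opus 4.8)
Write $g:=\gamma_\psi(\fh(F))^{-1}\hat f$ and $g':=\gamma_\psi(\fh'(F))^{-1}\hat{f'}$. We must show that $(g,g')$ is partially $M$-associated and that $J_L^Q(\eta,X,g)=0$ whenever $X\in(\fl\cap\fs_\rs)(F)$ does not come potentially from $(\fm_{\wt{Q'}}\cap\fs'_\rs)(F)$. The plan is to reduce both statements, by parabolic descent, to the case where the outer parabolic is the whole group, and then to prove the vanishing clause using hypotheses~(a), (b) together with the local vanishing property of $\hat i$, and the matching clause by Waldspurger's global comparison of infinitesimal Guo--Jacquet trace formulae.

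\textbf{Step 1 (descent).} For $L\in\msl^G(M)$ and $Q\in\msf^G(L)$ one has $(\hat f)_Q^\eta=\widehat{f_Q^\eta}$ and the descent formula $J_L^Q(\eta,X,\cdot)=J_L^{M_Q}(\eta,X,(\cdot)_Q^\eta)$ of \cite[Proposition~4.1]{MR4681295}, and similarly on the $H'$-side; so the identities to be proved for $g,g'$ become identities for the group $M_Q\simeq\prod_i G_{n_i}$ applied to the descended functions $f_Q^\eta$ and $f'_{Q'}$. One checks that $(f_Q^\eta,f'_{Q'})$ again satisfies the hypotheses of the theorem inside $M_Q$ (relative to the matching pair $M\subseteq M_Q$, $M'\subseteq M_{Q'}$): condition~(a) descends by Lemma~\ref{lem:Lab_I.6.4}; partial $M$-association descends using the identity $(f_Q^\eta)_P^\eta=f_{PN_Q}^\eta$ (as in the proof of Lemma~\ref{lem:Lab_I.6.4}), a further descent, and the partial $M$-association of $(f,f')$; and condition~(b) descends because $J_{M_Q}^{M_Q}(\eta,Z,f_Q^\eta)=J_G^G(\eta,Z,f)$ (the weight $v_{M_Q}^Q$ being trivial), while ``$Z$ comes from $(\fm_{\wt{Q'}}\cap\fs'_\rs)(F)$'' is equivalent to ``$Z$ comes from $\fs'_\rs(F)$'' by Proposition~\ref{prop:equivcomefromLevi}. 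Since $\dim_F M_Q<\dim_F G$ when $Q\neq G$ (and, for a product $M_Q$, the statement reduces factor by factor), an induction on $\dim_F G$ leaves only the case $Q=G$, $Q'=H'$, with $L\in\msl^G(M)$ arbitrary.

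\textbf{Step 2 (the case $Q=G$).} Here hypothesis~(a) and Section~\ref{ssec:lab_lem} give $J_L^G(\eta,X,\hat f)=I_L^G(\eta,X,\hat f)=\int_{\fs(F)}f(U)\,\hat i_L^G(\eta,X,U)\,|D^\fs(U)|_F^{-1/2}\,dU$, and likewise for $f'$. Since $U\mapsto\kappa(U)^{-1}\hat i_L^G(\eta,X,U)$ is $H(F)$-invariant, the Weyl integration formula on $\fs(F)$ rewrites this (up to measure constants) as a sum over Cartan subspaces $\fc$ of $\int_{\fc_\reg(F)}\hat i_L^G(\eta,X,Z)\,J_G^G(\eta,Z,f)\,dZ$, and analogously on the $H'$-side. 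By hypothesis~(b) only the $Z$ coming from $\fs'_\rs(F)$ contribute, and by Corollary~\ref{corequivdefcomefrom} these satisfy $\eta(Z)=e(G')$; therefore, if $X$ does not come potentially from $\fs'_\rs(F)$, i.e. $\eta(X)\neq e(G')=\eta(Z)$, then $\hat i_L^G(\eta,X,Z)=0$ by the local vanishing property of $\hat i$ (part~(2) of the main theorem, Proposition~\ref{vancommute}), so $J_L^G(\eta,X,\hat f)=0$: this is the vanishing clause. For the matching clause, on the orbits coming from $\fs'_\rs(F)$ the $\kappa$-orbital integrals of $f$ and $f'$ agree by partial $M$-association of $(f,f')$, so that the desired identity $\gamma_\psi(\fh(F))^{-1}\kappa(X)\,J_L^G(\eta,X,\hat f)=\gamma_\psi(\fh'(F))^{-1}\,J_{L'}^{H'}(Y,\hat{f'})$ for $X\overset{L}{\da}Y$ need not pass through the $\hat i$'s; I would instead prove it directly by the global method.

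\textbf{Step 3 (the global comparison --- the main obstacle).} To establish the last identity I would globalise $F$, $E$, $M$, $M'$ and the functions $f,f'$ to test functions on the adelic infinitesimal symmetric spaces whose components at one finite place are $f,f'$ and whose components elsewhere are elementary (characteristic functions of suitable lattices at the finite places, and carefully chosen functions at the archimedean ones), and then compare the two infinitesimal Guo--Jacquet trace formulae of \cite{MR4424024,MR4350885}, whose ``spectral'' sides are the Fourier transforms of their geometric sides. Matching the geometric sides uses the partial $M$-association at the distinguished place, the weighted fundamental lemma of \cite{MR4350885} at the good places, and hypotheses~(a), (b); by the trace-formula identity the Fourier-transformed sides then match as well, and varying the auxiliary data together with linear independence of local weighted orbital integrals yields the required identity at the distinguished place. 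This is the step I expect to be hardest: unlike in Chaudouard's base-change setting \cite{MR2164623}, the weighted fundamental lemma for the inner forms is not tautological, so its (subtle) vanishing condition must be matched exactly against the cohomological criterion for matching orbits developed in Sections~\ref{seccohcri}--\ref{secmatllevi}; this, and keeping careful track of $\eta$, $\kappa$ and the Weil constants throughout, is where the real difficulty lies. In the body of the paper this global comparison is isolated as Proposition~\ref{parcommute} and occupies Sections~\ref{seclim}--\ref{secfinalproof}.
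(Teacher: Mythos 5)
Your proposal is correct and follows essentially the same route as the paper. The matching clause is handled by the global comparison of trace formulae, which is precisely Proposition~\ref{parcommute} (your Step~3 faithfully sketches Sections~\ref{seclim}--\ref{secfinalproof}), and the vanishing clause rests, as in the paper, on the sign-vanishing property of $\hat i$ (Proposition~\ref{vancommute} and its product form Remark~\ref{rmkvancommute}) combined with hypotheses~(a) and~(b). The only real difference is organizational. The paper factors the deduction into three modular pieces: Proposition~\ref{parcommute}; Lemma~\ref{lem:2implyadd}, which shows that hypotheses~(a),~(b) already force $f$ itself to satisfy the extra vanishing condition of Definition~\ref{defMass} (simply because on $\msp^G(L)$ the weight is trivial, so $J_L^Q=J_G^G$); and Corollary~\ref{corvancommute}, which shows this extra vanishing condition is stable under Fourier transform --- and does so \emph{without} hypothesis~(a), by invoking the general identity $J_M^Q(\eta,X,\hat f)=\sum_{L\in\msl^{M_Q}(M)}\hat I_M^{L,M_Q,w}(\eta,X,f_Q^\eta)$ of \cite[(8.1.1)]{MR4681295} and applying Remark~\ref{rmkvancommute} termwise. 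Your Step~2 instead uses hypothesis~(a) to collapse that sum to the single term $J_L^G=I_L^G$; that is perfectly legitimate under the stated hypotheses, but it is worth knowing that the Fourier-stability of the vanishing condition is actually independent of~(a). Your explicit reduction to $Q=G$ by induction in Step~1 replaces the paper's direct treatment in Corollary~\ref{corvancommute}, at the mild cost of having to formulate the induction for products $M_Q\simeq\prod_i G_{n_i}$ (which you acknowledge but do not spell out); the paper avoids this by summing over intermediate Levi subgroups instead.
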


\begin{remark}
The two conditions in the above theorem have the potential to be weakened, but they are enough for our purpose. 
\end{remark}

\begin{proof}[Proof of Theorem \ref{thmcommute}]
It suffices to combine Proposition \ref{parcommute}, Lemma \ref{lem:2implyadd} and Corollary \ref{corvancommute} below. 
\end{proof}

\begin{lem}\label{lem:2implyadd}
If $f\in\CC_c^\infty(\fs(F))$ satisfies the two conditions in Theorem \ref{thmcommute} (ignore $f'$ in the condition (a)), then it satisfies the additional condition in Definition \ref{defMass}. 
\end{lem}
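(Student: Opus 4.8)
The plan is to split the verification according to whether the standard parabolic $Q$ has $L$ as its Levi factor, reducing each case to a hypothesis of Theorem \ref{thmcommute} together with the equivalences established in Section \ref{secmatorbi}. First I would record the standing remarks I will use: since $L\in\msl^G(M)$ we have $L\in\msl^{G,\omega}(M_0)$ and $Q\in\msf^{G,\omega}(M_0)$ (see Section \ref{ssec:def-omega-stable}), and the matching $Q\mapsto Q'$ of Section \ref{ssec:mat_Levi} carries Levi factors to Levi factors, so that if $Q\in\msp^G(L)$ then $Q'\in\msp^{H'}(L')$, the Levi factor of $\wt{Q'}$ is $\wt{L'}$, and hence $\fm_{\wt{Q'}}\cap\fs'=\wt{\fl'}\cap\fs'$. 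Also note that $X\in(\fl\cap\fs_\rs)(F)\subseteq(\fm_Q\cap\fs_\rs)(F)$, so all the "comes potentially from" statements in Definition \ref{defMass} make sense.

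For the case $Q\notin\msp^G(L)$, i.e. $Q\in\msf^G(L)-\msp^G(L)$: since $L\in\msl^{G,\omega}(M_0)$ and $X\in(\fl\cap\fs_\rs)(F)$, the assumption that the weighted orbital integrals of $f$ vanish for nontrivial weights (condition (a), with $f'$ ignored) applies verbatim and gives $J_L^Q(\eta,X,f)=0$. Thus the desired conclusion "$J_L^Q(\eta,X,f)=0$ unless $X$ comes potentially from $(\fm_{\wt{Q'}}\cap\fs'_\rs)(F)$" holds, with no condition on $X$ needed.

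For the case $Q\in\msp^G(L)$: here the weight function $v_L^Q$ is identically $1$ (cf. the proof of Lemma \ref{lemlabesse}), so $J_L^Q(\eta,X,f)=J_G^G(\eta,X,f)$ by \eqref{defnoninvwoi1}, and by the first remark above the condition of Definition \ref{defMass} to be checked becomes: $X$ comes potentially from $(\wt{\fl'}\cap\fs'_\rs)(F)$. Assume $X$ does not come potentially from $(\wt{\fl'}\cap\fs'_\rs)(F)$. Applying the contrapositive of Corollary \ref{corequivdefcomefrom}(1) to the matching pair $(L,L')$ shows $X$ does not come from $(\wt{\fl'}\cap\fs'_\rs)(F)$; then Proposition \ref{prop:equivcomefromLevi} (for $(L,L')$) shows $X$ does not come from $\fs'_\rs(F)$; so condition (b) of Theorem \ref{thmcommute} forces $J_G^G(\eta,X,f)=0$, hence $J_L^Q(\eta,X,f)=0$. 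Combining the two cases is precisely the additional condition of Definition \ref{defMass}.

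The whole argument is short, using only the two hypotheses together with the bookkeeping of Section \ref{secmatorbi}. I do not expect a genuine obstacle; the one point requiring a little care is the unwinding in the second case of "$X$ comes potentially from $(\fm_{\wt{Q'}}\cap\fs'_\rs)(F)$" into "$X$ comes potentially from $(\wt{\fl'}\cap\fs'_\rs)(F)$" via the compatibility of the Levi matching with $Q\mapsto Q'$, and making sure that the appeals to Corollary \ref{corequivdefcomefrom}(1) and Proposition \ref{prop:equivcomefromLevi} are indeed made for the matching pair $(L,L')$ rather than only for the fixed pair $(M,M')$ — which is legitimate since $(L,L')$ is itself a matching pair.
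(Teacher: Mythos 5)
Your proof is correct and follows the same route as the paper: split by whether $Q\in\msp^G(L)$, dispose of $Q\notin\msp^G(L)$ by condition (a), and when $Q\in\msp^G(L)$ use $v_L^Q=1$ together with Corollary~\ref{corequivdefcomefrom}.(1), Proposition~\ref{prop:equivcomefromLevi} (for the matching pair $(L,L')$), and condition (b). The paper states the latter half tersely, whereas you make explicit the identification $\fm_{\wt{Q'}}\cap\fs'=\wt{\fl'}\cap\fs'$ when $M_Q=L$, which is indeed the point the argument hinges on.
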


\begin{proof}
Let $L\in\msl^G(M)$ and $X\in(\fl\cap\fs_\rs)(F)$. For $Q\in\msf^G(L)-\msp^G(L)$, we have $J_L^Q(\eta, X, f)=0$ by the condition (a). Now suppose $Q\in\msp^G(L)$. Then $v_L^Q=1$ and thus $J_L^Q(\eta, X, f)=J_G^G(\eta, X, f)$. If $X$ does not come potentially from $(\fm_{\wt{Q'}}\cap\fs'_\rs)(F)$, it follows from Corollary \ref{corequivdefcomefrom} and Proposition \ref{prop:equivcomefromLevi} that $X$ does not come from $\fs'_\rs(F)$. Then $J_L^Q(\eta, X, f)=J_G^G(\eta, X, f)=0$ by the condition (b). 
\end{proof}

\subsection{Identities between $\hat{i}_M^G$ and $\hat{i}_{M'}^{H'}$}

\begin{prop}\label{parcommute}
Let $f\in\CC_c^\infty(\fs(F))$ and $f'\in\CC_c^\infty(\fs'(F))$ be partially $M$-associated and satisfy the two conditions in Theorem \ref{thmcommute}. Then $\gamma_{\psi}(\fh(F))^{-1} \hat{f}$ and $\gamma_{\psi}(\fh'(F))^{-1} \hat{f'}$ are also partially $M$-associated. 
\end{prop}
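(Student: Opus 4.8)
The plan is to prove this by Waldspurger's global method, adapting the strategy of \cite{MR1339717, MR2164623} to the infinitesimal Guo-Jacquet setting; the details will occupy Sections \ref{seclim}--\ref{secfinalproof}. Fix $L \in \msl^G(M)$, $Q \in \msf^G(L)$ and elements $X \in (\fl \cap \fs_\rs)(F)$, $Y \in (\wt{\fl'} \cap \fs'_\rs)(F)$ with $L$-matching orbits; the goal is the identity
$$ \gamma_\psi(\fh(F))^{-1}\, \kappa(X)\, J_L^Q(\eta, X, \hat{f}) = \gamma_\psi(\fh'(F))^{-1}\, J_{L'}^{Q'}(Y, \hat{f'}). $$
The first step is to reduce to the case where $X$ is $L$-elliptic (so that $Y$ is $L'$-elliptic): applying the descent formula for weighted orbital integrals (\cite[Proposition 4.1]{MR4681295}; cf. \cite[Lemma I.6.2]{MR1339717}) and its counterpart for $\fs'$ to both sides of the identity, and using that the bijections $L \mapsto L'$, $Q \mapsto Q'$ preserve the relevant combinatorial constants while the notion of matching orbits is compatible with descent (together with Corollary \ref{corsym2ell} and Proposition \ref{prop:equivcomefromLevi}), the general case follows from the elliptic one.

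The second step is globalization, which is the content of Section \ref{secapp}. I would choose a number field with a distinguished place $v_0$ realizing $F$, a quadratic extension inducing $E/F$ at $v_0$, global models of the two symmetric pairs together with matching global Levi subgroups lying over $L$ and $L'$ at $v_0$, and then decomposable test functions $\dot{f} = \bigotimes_v \dot{f}_v$ and $\dot{f}' = \bigotimes_v \dot{f}'_v$ with $\dot{f}_{v_0} = f$ and $\dot{f}'_{v_0} = f'$, such that $\dot{f}_v$ and $\dot{f}'_v$ are associated at every place $v$. At almost all places this associatedness is the weighted fundamental lemma recalled in Section \ref{secwfle} (Lemma \ref{wfl}); at the finitely many remaining places it is arranged by the explicit construction of Section \ref{secconsr}. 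One also arranges that $\dot{f}$ and $\dot{f}'$ satisfy the global analogues of hypotheses (a) and (b), and that the elliptic orbit $X \da Y$ occurs in the geometric expansion.

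The third step feeds $\dot{f}$ and $\dot{f}'$ into the infinitesimal variant of Guo-Jacquet trace formulae recalled in Section \ref{secinfiGJ}. Each such formula is a Poisson-type identity equating a geometric expansion $\sum (\cdots)\, J_{\dot{M}}^{\dot{G}}(\eta, Z, \dot{f})$ to the same expansion evaluated at $\widehat{\dot{f}}$, and likewise for $\fs'$. Multiplying through by $\kappa$ and the Weil constants and subtracting the two formulae, the un-hatted geometric sides should cancel term by term — because $\dot{f}$ and $\dot{f}'$ are associated at all places, while hypotheses (a) and (b) (via Lemma \ref{lem:2implyadd} and the global version of Corollary \ref{corvancommute}) kill the terms indexed by orbits not coming from $\fs'$ — so the two hatted sides coincide. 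Finally, keeping the auxiliary components fixed and letting $f$ and $f'$ vary over the $v_0$-component, a separation argument based on Labesse's lemma (Lemma \ref{lemlabesse}) localizes this global identity at $v_0$ and extracts exactly the displayed equality for the chosen elliptic pair.

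The hard part, I expect, is making the third step effective: in contrast with the inner-form situation of \cite{MR2164623}, the weighted fundamental lemma here (Lemma \ref{wfl}) carries a genuine vanishing hypothesis, so one must carefully track which global orbits survive on each side and verify that the two families of surviving terms are in bijection with matching multiplicities. This is exactly where the cohomological criterion for matching orbits of Section \ref{seccohcri} and abelian Galois cohomology come in, and it is the reason the globalization of Section \ref{secapp} must be engineered with care.
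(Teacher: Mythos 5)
Your sketch captures the overall Waldspurger-style strategy correctly — globalize, build matching decomposable test functions using the weighted fundamental lemma at unramified places and the constructions of Section \ref{secconsr} elsewhere, compare the two trace formulae, and localize at a distinguished place — and you correctly identify the vanishing hypothesis in Lemma \ref{wfl}(b) as the genuine novelty to be handled via the cohomological criterion of Section \ref{seccohcri}. But two of your steps diverge from what the paper actually does, and one of them as written is incorrect.

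On the reduction to the $L$-elliptic case: the paper does \emph{not} descend to elliptic at the outset. It keeps $X_0 \in (\fm\cap\fs_\rs)(F)$ arbitrary and handles non-ellipticity later, in Section \ref{ssec:orbits}, by choosing the neighbourhood $\Omega_w$ small enough that $\kappa(\cdot)J_M^Q(\eta,\cdot,\hat f)$ and $J_{M'}^{Q'}(\cdot,\hat{f'})$ are constant on it, while the auxiliary neighbourhoods $\Omega_v$ for $v\in S\setminus\{w\}$ are taken inside the elliptic locus so that the strongly approximated global orbit $Y^0$ is automatically elliptic (Lemma \ref{lemsym2ell}). Your descent-based alternative is plausible but not free: one would need to verify that descent on the $\fs$ and $\fs'$ sides produces $L_0$-matching elliptic pairs with the same combinatorial coefficients $d^{M_Q}_{L_0}(L,L_1)$ on both sides, which is a finer compatibility than Corollary \ref{corsym2ell} or Proposition \ref{prop:equivcomefromLevi} directly furnish.

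On the final localization step, your description is wrong. Labesse's lemma (Lemma \ref{lemlabesse}) appears only in the construction of test functions (making weighted orbital integrals vanish for nontrivial weights), and the paper never "lets $f$ and $f'$ vary at $v_0$". What Section \ref{secfinalproof} actually does after establishing $J^\bfG(\eta,\phi)=J^{\bfH'}(\phi')$ (Proposition \ref{prop102}) and reducing both geometric sides to the single global orbit $X^0\overset{\bfM}{\da}Y^0$ (Lemma \ref{lem106}, forced by the archimedean test functions in Section \ref{choffun}) is: apply the splitting formula \eqref{eq:fact_woi1}--\eqref{eq:fact_woi2} for $(\bfG,\bfM)$-families to both global weighted orbital integrals, cancel matching local factors term by term using \eqref{eq:equal-d}, Lemma \ref{wfl}(a), and the properties (e)--(f) of Proposition \ref{testfunctions}, and then run an induction on $\dim G$: whenever $R_w\neq\bfG_w$, the factor at $w$ is (by parabolic descent together with Lemma \ref{lem:Lab_I.6.4}) a weighted orbital integral on a strictly smaller group where \eqref{chaufor101} is already known; the unique surviving term with $R_w=\bfG_w$ then has all other factors nonvanishing (checked separately at each class of places), which forces the $w$-component identity \eqref{chaufor101}. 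This factorization-plus-induction mechanism is the actual separation argument, and you should not substitute Labesse's lemma for it.
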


The rest of this paper will be denoted to the proof of Proposition \ref{parcommute}. Its corollary below may be more useful for applications. 

\begin{coro}\label{parcommutecor1}
Let $X\in(\fm\cap\fs_\rs)(F)$ and $Y\in(\wt{\fm'}\cap\fs'_\rs)(F)$ be such that $X\overset{M}{\da} Y$. Let $U\in\fs_\rs(F)$ and $V\in\fs'_\rs(F)$ be such that $U\da V$. Then we have the equality
$$ \gamma_{\psi}(\fh(F))^{-1} \kappa(X)\kappa(U)\hat{i}_M^G(\eta, X, U)=\gamma_{\psi}(\fh'(F))^{-1} \hat{i}_{M'}^{H'}(Y, V). $$ 
\end{coro}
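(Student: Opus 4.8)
The plan is to deduce this corollary from Proposition \ref{parcommute} together with Labesse's lemma (Lemma \ref{lemlabesse}) in a way that parallels the argument sketched after Theorem \ref{thmcommute}. First I would start with an arbitrary $f\in\CC_c^\infty(\fs(F))$ with $\Supp(f)\subseteq\fs_\rs(F)$. Applying Lemma \ref{lemlabesse}.(1), we replace $f$ by $\phi\in\CC_c^\infty(\fs(F))$ whose weighted orbital integrals vanish for nontrivial weights, with $\Supp(\phi)\subseteq\Ad(H(F))(\Supp(f))\subseteq\fs_\rs(F)$ and $J_G^G(\eta,X,\phi)=J_G^G(\eta,X,f)$ for all $X\in\fs_\rs(F)$. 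If in addition $f$ is chosen so that $J_G^G(\eta,X,f)=0$ whenever $X$ does not come from $\fs'_\rs(F)$, then $\phi$ satisfies both conditions (a) and (b) of Theorem \ref{thmcommute}. Via the matching-orbit correspondence one builds a companion $f'\in\CC_c^\infty(\fs'(F))$, runs Lemma \ref{lemlabesse}.(2) to get $\phi'$, and arranges (using Definition \ref{defparMass} on the level of $J_{L'}^{Q'}$, which for $L=G$ and $Q\in\msp^G(G)$ reduces to the single orbital-integral identity $\kappa(X)J_G^G(\eta,X,\phi)=J_{H'}^{H'}(Y,\phi')$) that $\phi$ and $\phi'$ are partially $M$-associated. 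Here the weighted conditions for $L\subsetneq G$ are vacuous because all higher weighted orbital integrals of $\phi$ and $\phi'$ vanish and because on the elliptic locus the support hypothesis pins things down; the key point is that matching of plain orbital integrals propagates from the top stratum.

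Next I would invoke Proposition \ref{parcommute}: since $\phi,\phi'$ are partially $M$-associated and satisfy (a),(b), the Fourier transforms $\gamma_\psi(\fh(F))^{-1}\hat\phi$ and $\gamma_\psi(\fh'(F))^{-1}\hat{\phi'}$ are again partially $M$-associated. Spelling out Definition \ref{defparMass} with $L=M$ (and for each $Q\in\msf^G(M)$), this says that for $X\in(\fm\cap\fs_\rs)(F)$ and $Y\in(\wt{\fm'}\cap\fs'_\rs)(F)$ with $X\overset{M}{\da}Y$,
$$ \gamma_\psi(\fh(F))^{-1}\kappa(X) J_M^Q(\eta,X,\hat\phi)=\gamma_\psi(\fh'(F))^{-1} J_{M'}^{Q'}(Y,\hat{\phi'}). $$
Because the original $\phi$ has weighted orbital integrals vanishing for nontrivial weights, the discussion in Section \ref{ssec:lab_lem} gives $J_M^G(\eta,X,\hat\phi)=I_M^G(\eta,X,\hat\phi)$ and similarly on the primed side, so we may replace the weighted integrals of $\hat\phi$ by the invariant distributions $I_M^G(\eta,X,\cdot)$ evaluated on $\phi$, and likewise for $\hat{\phi'}$. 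Unwinding the integral representations of $\hat I_M^G(\eta,X,\cdot)$ and $\hat I_{M'}^{H'}(Y,\cdot)$ recalled in Sections \ref{symmpair1} and \ref{symmpair2}, the identity becomes
$$ \int_{\fs(F)} \phi(U)\,\gamma_\psi(\fh(F))^{-1}\kappa(X)\,\hat i_M^G(\eta,X,U)\,|D^\fs(U)|_F^{-1/2}\,dU
= \int_{\fs'(F)} \phi'(V)\,\gamma_\psi(\fh'(F))^{-1}\,\hat i_{M'}^{H'}(Y,V)\,|D^{\fs'}(V)|_F^{-1/2}\,dV. $$

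Finally I would localize. Fix $U\in\fs_\rs(F)$ and $V\in\fs'_\rs(F)$ with $U\da V$. Choosing $\phi$ supported near the $H(F)$-orbit of $U$ and concentrated so that $J_G^G(\eta,\cdot,\phi)$ is an approximate delta at the orbit of $U$ (while the vanishing hypothesis (b) is automatic since $U$ does come from $\fs'_\rs(F)$, and for orbits not coming from $\fs'_\rs(F)$ one takes the corresponding $J_G^G$ to vanish), and taking $\phi'$ to match it, the two sides above collapse — after dividing by the common normalizing measure factor on the matched orbits $H_U(F)\simeq H'_V(F)$ afforded by Lemma \ref{lem74} and the measure conventions of Section \ref{ssec:centrss} — to
$$ \gamma_\psi(\fh(F))^{-1}\kappa(X)\kappa(U)\,\hat i_M^G(\eta,X,U)=\gamma_\psi(\fh'(F))^{-1}\,\hat i_{M'}^{H'}(Y,V), $$
the extra factor $\kappa(U)$ appearing because the delta-like family on the $U$-side must be taken against $\kappa(\cdot)J_G^G(\eta,\cdot,\phi)$, which is the function that is genuinely constant on $M_H(F)$-orbits. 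I expect the main obstacle to be the bookkeeping in this last localization step: one must verify that the approximate identities on the two sides are genuinely compatible, i.e. that choosing $\phi$ concentrated at $U$ forces the matched $\phi'$ to be concentrated at $V$ with the correct (transfer-factor and Weil-constant) normalization, and that the Jacobian/measure factors $|D^\fs(U)|_F^{-1/2}$ versus $|D^{\fs'}(V)|_F^{-1/2}$ and the Haar measures on $H_U(F)\simeq H'_V(F)$ cancel exactly as claimed; this is where the transfer-factor identity $\kappa(\Ad(x^{-1})X)=\eta(\det x)\kappa(X)$ and Lemma \ref{lemlabesse}'s control of supports do the real work.
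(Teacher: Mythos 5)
Your proposal is correct and takes essentially the same approach as the paper: construct, via Lemma \ref{lemlabesse}, test functions supported in small matched neighbourhoods of the $H(F)$-orbit of $U$ and the $H'(F)$-orbit of $V$ with all higher weighted orbital integrals vanishing, check that these are partially $M$-associated and satisfy (a),(b) of Theorem \ref{thmcommute}, apply Proposition \ref{parcommute}, then unwind via the Weyl integration formula and local constancy of $\hat i_M^G$, $\hat i_{M'}^{H'}$ and $\kappa$ to extract the pointwise identity. The paper is more explicit at the localization step — it pins down open compact neighbourhoods $\omega\subseteq(\fs_U\cap\fs_\rs)(F)$, $\omega'\subseteq(\fs'_V\cap\fs'_\rs)(F)$ with $\varphi(\omega')=\omega$, disjoint Weyl translates, and constancy of the relevant functions, and normalizes $\kappa(Z)J_G^G(\eta,Z,f)=1$ on $\omega$ rather than speaking of "approximate deltas" — but the logical skeleton and every key ingredient (Labesse's lemma, Proposition \ref{parcommute}, Lemma \ref{lem74} for compatible Haar measures on $H_U\simeq H'_V$, the Weyl integration formula, and the transfer-factor bookkeeping giving the $\kappa(U)$) match what you describe.
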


\begin{proof}
By Lemma \ref{lem74} applied to $V$ and $U$, we have an isomorphism $\varphi: \fs'_V(F)\ra\fs_U(F)$ such that $\varphi(V)=U$. Recall that $W(H,\fs_U)$ (resp. $W(H',\fs'_V)$) denotes the Weyl group associated to $\fs_U\in\mst^\fs$ (resp. $\fs'_V\in\mst^{\fs'}$) defined by \eqref{eq:defweylcartan}. Choose open compact neighbourhoods $\omega$ of $U$ in $(\fs_U\cap\fs_\rs)(F)$ and $\omega'$ of $V$ in $(\fs'_V\cap\fs'_\rs)(F)$ which are small enough such that 
\begin{enumerate}[(i)]
	\item the sets $i(\omega)$ where $i\in W(H, \fs_U)$ are mutually disjoint; 

	\item the sets $i'(\omega')$ where $i'\in W(H', \fs'_V)$ are mutually disjoint; 

	\item $\varphi(\omega')=\omega$; 

	\item $\hat{i}_M^G(\eta, X, \cdot)$ is constant on $\omega$; 

	\item $\hat{i}_{M'}^{H'}(Y, \cdot)$ is constant on $\omega'$; 
	
	\item $\kappa(\cdot)$ is constant on $\omega$. 
\end{enumerate}
Notice that the conditions (iv) and (v) are assured by \cite[Lemmas 8.3.(1) and 8.8.(1)]{MR4681295}. 

By the conditions (i) and (vi) on $\omega$ and Lemma \ref{lemlabesse}.(1), we can construct a function $f\in\CC_c^\infty(\fs(F))$ such that 
\begin{enumerate}[(i)]
	\item $\Supp(f)\subseteq\Ad(H(F))(\omega)$; 

	\item for all $Z\in\omega$, $\kappa(Z)J_G^G(\eta, Z, f)=1$; 

	\item the weighted orbital integrals of $f$ vanish for nontrivial weights. 
\end{enumerate}

By the condition (ii) on $\omega'$ and Lemma \ref{lemlabesse}.(2), we can construct a function $f'\in\CC_c^\infty(\fs'(F))$ such that 
\begin{enumerate}[(i)]
	\item $\Supp(f')\subseteq\Ad(H'(F))(\omega')$; 

	\item for all $Z'\in\omega'$, $J_{H'}^{H'}(Z', f')=1$; 

	\item the weighted orbital integrals of $f'$ vanish for nontrivial weights. 
\end{enumerate}

We see that $f$ and $f'$ are partially $M$-associated and satisfy the two conditions in Theorem \ref{thmcommute}. Thus by Proposition \ref{parcommute}, we have the equality 
$$ \gamma_{\psi}(\fh(F))^{-1} \kappa(X) J_M^G(\eta, X, \hat{f})=\gamma_{\psi}(\fh'(F))^{-1} J_{M'}^{H'}(Y, \hat{f'}). $$
By the condition (iii) on $f$ and the Weyl integration formula \cite[(7.1.2)]{MR4681295}, we have 
$$ J_M^G(\eta, X, \hat{f})=I_M^G(\eta, X, \hat{f})=\sum_{\fc\in\mst_0^\fs} |W(H,\fc)|^{-1}\int_{\fc_\reg(F)} J_G^G(\eta, Z, f) \hat{i}_M^G(\eta, X, Z) dZ. $$
By the conditions (i) and (ii) on $f$ and the condition (iv) on $\omega$, the last expression equals 
$$ \int_{\omega} J_G^G(\eta, Z, f) \hat{i}_M^G(\eta, X, Z) dZ=\vol(\omega)\kappa(U)\hat{i}_M^G(\eta, X, U). $$
Similarly, using the conditions on $f'$, \cite[(7.2.2)]{MR4681295} and the condition (iv) on $\omega'$, we obtain 
$$ J_{M'}^{H'}(Y, \hat{f'})=I_{M'}^{H'}(Y, \hat{f'})=\vol(\omega')\hat{i}_{M'}^{H'}(Y, V). $$ 
Since $\vol(\omega)=\vol(\omega')$ by the condition $\varphi(\omega')=\omega$ and our choice of Haar measures in Section \ref{ssec:centrss}, we deduce the desired equality. 
\end{proof}


\subsection{A vanishing statement}

Recall that $\omega=\mat(0,1_n,1_n,0)\in G(F)$. For $X\in\fs(F)$, denote 
$$ X^\omega:=\Ad(\omega)(X)\in\fs(F). $$
For $f\in\CC_c^\infty(\fs(F))$, define a function $f^\omega\in\CC_c^\infty(\fs(F))$ by 
$$ f^\omega(X):=f(X^\omega), \forall X\in\fs(F). $$ 

\begin{lem}\label{lempf8.3}
Let $f\in\CC_c^\infty(\fs(F))$ and $X\in(\fm\cap\fs_\rs)(F)$. Then we have 
\begin{enumerate}
	\item $(\hat{f})^\omega=\wh{f^\omega}$; 
	\item $J_M^G(\eta, X, f^\omega)=\eta(X)J_M^G(\eta, X, f)$. 
\end{enumerate}
\end{lem}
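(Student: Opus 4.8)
The plan is to prove both parts by explicit changes of variable, using the following elementary facts about $\omega=\mat(0,1_n,1_n,0)$. First, $\omega$ normalises $H$ and stabilises $\fs$: one checks $\omega\omega_0\omega^{-1}=-\omega_0$, so $\Ad(\omega)$ commutes with $\theta=\Ad(\omega_0)$ and hence preserves $H=G^\theta$ and $\fs=\{X:(d\theta)(X)=-X\}$. Second, $\omega\in K$ (since $\omega\in\GL_{2n}(\CO_F)$) and, whenever $M$ is $\omega$-stable, $\omega$ lies in the standard maximal compact $K_M$ of $M(F)$ under the block identification $M\simeq\prod_j\GL_{2k_j}$ attached to $M_n=\prod_j\GL_{k_j}$, so $H_M(\omega)=0$. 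Third, the form $\langle X,Y\rangle=\tr(XY)$ is $\Ad(G(F))$-invariant, hence $\Ad(\omega)$ is an isometry of $(\fs(F),\langle\cdot,\cdot\rangle)$ and preserves the associated self-dual Haar measure.

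Part (1) is then immediate: in $\wh{f^\omega}(X)=\int_{\fs(F)}f(\Ad(\omega)Y)\psi(\langle X,Y\rangle)\,dY$ substitute $Y\mapsto\Ad(\omega)Y$; using $\omega^2=1_{2n}$, the $\Ad(\omega)$-invariance of $dY$, and $\langle X,\Ad(\omega)Y\rangle=\langle X^\omega,Y\rangle$, this becomes $\int_{\fs(F)}f(Y)\psi(\langle X^\omega,Y\rangle)\,dY=\hat f(X^\omega)=(\hat f)^\omega(X)$.

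For part (2) I would first establish the covariance $J_M^G(\eta,X,f^\omega)=J_M^G(\eta,X^\omega,f)$. In \eqref{defnoninvwoi1} for $J_M^G(\eta,X,f^\omega)$, write $f^\omega(\Ad(x^{-1})X)=f(\Ad(\omega x^{-1}\omega^{-1})(X^\omega))$ and substitute $x\mapsto\omega x\omega^{-1}$, which is a Haar-measure-preserving automorphism of $H(F)$ (it permutes the two $\GL_n(F)$-factors) carrying $H_X(F)$ onto $H_{X^\omega}(F)$. Under this substitution the integrand becomes $f(\Ad(x^{-1})(X^\omega))$; the factor $\eta(\det(\cdot))$ is unchanged because $\det$ is conjugation-invariant; $|D^\fs(X)|_F=|D^\fs(X^\omega)|_F$ because $\ad(X)$ and $\ad(X^\omega)$ are conjugate; and the weight is unchanged because for every $P\in\msp^G(M)$ one has $H_P(\omega^{-1}x\omega)=H_M(\omega^{-1})+H_P(x\omega)=H_P(x)$ (right $K$-invariance of $H_P$ together with $\omega\in K$, and $H_M(\omega)=0$), so the $(G,M)$-family $v_P(\lambda,\cdot)$ defining $v_M^G$ is $\Ad(\omega)$-invariant. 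Compatibility of the chosen Haar measures on $H_X(F)$ and $H_{X^\omega}(F)$ under $\Ad(\omega)$ reduces, via the paper's conventions on measures for conjugate Cartan subspaces, to the fact that an $F$-automorphism of a torus preserves its Haar measure.

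It then remains to identify $J_M^G(\eta,X^\omega,f)=\eta(X)\,J_M^G(\eta,X,f)$. Writing $X=\diag(X_1,\dots,X_\ell)$ as in \eqref{eq:X-fact} and noting that under \eqref{eq:M-fact} the element $\omega\in M(F)$ becomes $\diag(\omega_1,\dots,\omega_\ell)$ with $\omega_i=\mat(0,1_{n_i},1_{n_i},0)\in G_{n_i}(F)$, we get $X^\omega=\diag(X_1^{\omega_1},\dots,X_\ell^{\omega_\ell})$. For $X_i=\mat(0,A_i,B_i,0)\in\fs_{n_i,\rs}(F)$ one has $A_i,B_i\in\GL_{n_i}(F)$, and $X_i^{\omega_i}=\mat(0,B_i,A_i,0)$ is $H_{n_i}(F)$-conjugate to $X_i$ (both are $H_{n_i}(F)$-conjugate to $\mat(0,1_{n_i},C_i,0)$ with $C_i$ conjugate to $A_iB_i$ in $\GL_{n_i}$). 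Hence $X^\omega$ is $\Ad(M_H(F))$-conjugate to $X$, so by the invariance of $\kappa(\cdot)J_M^G(\eta,\cdot,f)$ on $\Ad(M_H(F))(X)$ (the discussion after \eqref{eq:def_trans_fac}) we get $\kappa(X^\omega)J_M^G(\eta,X^\omega,f)=\kappa(X)J_M^G(\eta,X,f)$; since $\kappa(X)=\eta(\det A)$, $\kappa(X^\omega)=\eta(\det B)$ and $\kappa(X^\omega)^2=1$ (with $A=\diag(A_i)$, $B=\diag(B_i)$), this gives $J_M^G(\eta,X^\omega,f)=\eta(\det(AB))J_M^G(\eta,X,f)=\eta(X)J_M^G(\eta,X,f)$, completing (2). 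The only delicate points are the Haar-measure normalisations on the various torus centralisers (covered by the paper's conventions) and keeping track of which conjugacies are realised inside $M_H(F)$ rather than merely $H(F)$; conceptually the content is just that $\Ad(\omega)$ interchanges the blocks $A$ and $B$ while fixing their product, so its only effect on the weighted orbital integral is the sign $\eta(X)$.
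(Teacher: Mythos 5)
Your proof is correct and takes essentially the same route as the paper's: part (1) is the $\Ad(\omega)$-invariance of $\langle\cdot,\cdot\rangle$ together with the resulting invariance of the self-dual measure, and for part (2) your two-step decomposition (the substitution $x\mapsto\omega x\omega^{-1}$ yielding $J_M^G(\eta,X,f^\omega)=J_M^G(\eta,X^\omega,f)$, then the $M_H(F)$-conjugacy $X^\omega=\Ad\!\mat(A^{-1},,,B^{-1})(X)$ producing the sign $\eta(X)$) is just the paper's one-step change of variable $z=\mat(1,,,A)\,\omega x\omega$ broken into two pieces, and uses the same two ingredients the paper highlights, namely $v_M^G(\Ad(\omega)x)=v_M^G(x)$ and the $\eta$-covariance encoded in the constancy of $\kappa(\cdot)J_M^G(\eta,\cdot,f)$ on $M_H(F)$-orbits. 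The only stylistic difference is that the paper first normalises $X=\mat(0,1_n,A,0)$ by $M_H(F)$-conjugation while you carry the general pair $(A,B)$ through; both are fine.
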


\begin{proof}
Analogous properties have been used in the proof of \cite[Lemma 8.3]{MR3414387} though our involutions are slightly different. To check (1), it suffices to use the $\omega$-invariance of $\langle\cdot,\cdot\rangle$. To check (2), we may suppose that $X=\mat(0,1_n,A,0)$ and use the fact that 
$$ \Ad(\omega)\mat(0,1_n,A,0)=\Ad\mat(1,,,A^{-1}) (X). $$
The extra ingredient compared to {\it{loc. cit.}} is the fact that $v_M^G(\Ad(w)(x))=v_M^G(x)$ for $x\in H(F)$. 
\end{proof}

\begin{lem}\label{iinvlem}
Let $X\in(\fm\cap\fs_\rs)(F)$ and $U\in\fs_\rs(F)$. Then we have the equality 
$$ \hat{i}_M^G(\eta,X,U^\omega)=\eta(X)\hat{i}_M^G(\eta,X,U). $$
\end{lem}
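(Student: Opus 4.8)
The plan is to deduce this from the corresponding statement at the level of weighted orbital integrals, namely Lemma \ref{lempf8.3}, together with the fact that the functions $\hat{i}_M^G(\eta, X, \cdot)$ represent the Fourier transforms of the invariant distributions $I_M^G(\eta, X, \cdot)$. First I would observe that the operation $f \mapsto f^\omega$ is an involution on $\CC_c^\infty(\fs(F))$ commuting with the Fourier transform by Lemma \ref{lempf8.3}.(1), and that by construction $\Ad(\omega)$ preserves $\fs_\rs(F)$ together with the measure $|D^\fs(\cdot)|_F^{-1/2}\, dU$ (since $\Ad(\omega)$ preserves $\langle\cdot,\cdot\rangle$ and the adjoint action, hence the Weyl discriminant). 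So for any $f\in\CC_c^\infty(\fs(F))$,
\[
\int_{\fs(F)} f(U)\, \hat{i}_M^G(\eta, X, U^\omega)\, |D^\fs(U)|_F^{-1/2}\, dU
= \int_{\fs(F)} f^\omega(U)\, \hat{i}_M^G(\eta, X, U)\, |D^\fs(U)|_F^{-1/2}\, dU,
\]
after the change of variables $U\mapsto U^\omega$.

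Next I would unwind both sides in terms of distributions. The right-hand side equals $\hat{I}_M^G(\eta, X, f^\omega) = I_M^G(\eta, X, \widehat{f^\omega})$ by the defining property of $\hat{i}_M^G(\eta,X,\cdot)$ recalled at the end of Section \ref{symmpair1}. By Lemma \ref{lempf8.3}.(1) we have $\widehat{f^\omega} = (\hat f)^\omega$, so this is $I_M^G(\eta, X, (\hat f)^\omega)$. Now I need the key input: that the invariant distribution $I_M^G(\eta, X, \cdot)$ satisfies $I_M^G(\eta, X, g^\omega) = \eta(X) I_M^G(\eta, X, g)$ for all $g\in\CC_c^\infty(\fs(F))$. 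This should follow from Lemma \ref{lempf8.3}.(2) by Arthur's standard inductive construction of $I_M^G$ from $J_M^G$: the invariant distribution is built from $J_L^G(\eta, X, \cdot)$ for $L\supseteq M$ via the recursion in \cite[\S8.1]{MR4681295}, and one checks by induction on $\dim A_M/A_G$ that each term in the recursion transforms by the scalar $\eta(X)$ under $g\mapsto g^\omega$ — using that the auxiliary maps $\phi_L$ (or $\theta_L$) intertwine the $\omega$-twist, which in turn holds because $v_M^G(\Ad(w)(x)) = v_M^G(x)$ for $x\in H(F)$, exactly as noted in the proof of Lemma \ref{lempf8.3}. Granting this, the right-hand side is $\eta(X)\, I_M^G(\eta, X, \hat f) = \eta(X)\, \hat{I}_M^G(\eta, X, f) = \eta(X)\int_{\fs(F)} f(U)\, \hat{i}_M^G(\eta, X, U)\, |D^\fs(U)|_F^{-1/2}\, dU$.

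Comparing, for all $f\in\CC_c^\infty(\fs(F))$ we get
\[
\int_{\fs(F)} f(U)\left( \hat{i}_M^G(\eta, X, U^\omega) - \eta(X)\, \hat{i}_M^G(\eta, X, U) \right) |D^\fs(U)|_F^{-1/2}\, dU = 0,
\]
and since both functions inside the parentheses are locally constant on $\fs_\rs(F)$ (by \cite[Propositions 7.2 and 8.1]{MR4681295}) and $f$ ranges over all of $\CC_c^\infty(\fs(F))$, they must agree on $\fs_\rs(F)$, which is the claimed equality.

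The main obstacle is the middle step: verifying that the $\omega$-twist identity passes from the noninvariant weighted orbital integrals $J_M^G(\eta, X, \cdot)$ to their invariant counterparts $I_M^G(\eta, X, \cdot)$. This is not completely formal because the invariantisation mixes distributions attached to different Levi subgroups $L\in\msl^{G,\omega}(M_0)$ with $M\subseteq L$, and one must check that the map $g\mapsto g^\omega$ is compatible with the constant-term maps $g\mapsto g_Q^\eta$ appearing in Arthur's recursion — concretely, that $(g^\omega)_Q^\eta = (g_Q^\eta)^{\omega}$ up to the appropriate identifications, which should hold since $\omega$ normalises each $\omega$-stable parabolic's Levi and the character $\eta(\det(\cdot))$ is unchanged. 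Once this bookkeeping is in place, the argument is a routine induction. An alternative, if one prefers to avoid re-examining Arthur's construction, is to argue directly on the limit formula for $\hat{i}_M^G$ recalled in Section \ref{seclim}, but the inductive route above seems cleaner.
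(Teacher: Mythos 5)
Your route is genuinely different from the paper's, and it does contain a gap you yourself flag: the passage from the $\omega$-equivariance of $J_M^G(\eta,X,\cdot)$ (Lemma \ref{lempf8.3}.(2)) to the $\omega$-equivariance of the invariant distribution $I_M^G(\eta,X,\cdot)$. You sketch how one would re-open Arthur's recursive definition of $I_M^G$ from $J_M^G$ and check compatibility of the $\omega$-twist with the constant-term maps and with the noninvariant-to-invariant transition maps, but you do not carry this out. The constant-term compatibility $(g^\omega)_Q^\eta=(g_Q^\eta)^\omega$ is indeed correct (using $\omega\in M_Q$, $\Ad(\omega)$-stability of $K_H$ and of $\fn_Q\cap\fs$, and invariance of $\eta\circ\det$), but the $\phi_L$-type maps in the recursion also involve weight functions and $\hat{i}_M^L$ for $L\subsetneq G$, so an induction over Levi subgroups and a careful check of these identifications would be required. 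This is more than routine bookkeeping, and it is exactly the kind of verification that the established properties of $\hat{i}_M^G$ and $\hat{j}_M^G$ in \cite{MR4681295} are designed to make unnecessary.

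The paper avoids all of this. Its proof first records the corresponding identity for $\hat{j}_M^G$ (immediate from Lemma \ref{lempf8.3}), then conjugates an arbitrary $U\in\fs_\rs(F)$ to a Levi-elliptic $Z=\Ad(x^{-1})(U)$, and uses two facts from \cite{MR4681295}: the $H(F)$-equivariance of $\hat{i}_M^G(\eta,X,\cdot)$ up to the character $\eta\circ\det$ (Lemma 8.3.(2) there), and the coincidence $\hat{i}_M^G=\hat{j}_M^G$ at Levi-elliptic points (Lemma 8.2 there). Since $Z^\omega$ is again Levi-elliptic, the $\hat{j}$-identity can be applied there and pulled back to $U^\omega=\Ad(\omega x\omega^{-1})(Z^\omega)$, using also $\eta(\det(\omega x\omega^{-1}))=\eta(\det(x))$. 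This reduction to the elliptic locus sidesteps the invariantisation entirely. If you want to salvage your approach, you must actually execute the inductive verification of $I_M^G(\eta,X,g^\omega)=\eta(X)I_M^G(\eta,X,g)$; as written, that step is asserted rather than proved.
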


\begin{proof}
From Lemma \ref{lempf8.3}, we deduce that 
\begin{equation}\label{jinvfor}
 \hat{j}_M^G(\eta,X,U^\omega)=\eta(X)\hat{j}_M^G(\eta,X,U). 
\end{equation}
There exists $x\in H(F)$, $L\in\msl^{G,\omega}(M_0)$ and $Z\in(\fl\cap\fs_\rs)(F)_\el$ such that $U=\Ad(x)(Z)$. By \cite[Lemmas 8.3.(2) and 8.2]{MR4681295}, we have 
$$ \eta(X)\hat{i}_M^G(\eta,X,U)=\eta(\det(x))\eta(X)\hat{i}_M^G(\eta,X,Z)=\eta(\det(x))\eta(X)\hat{j}_M^G(\eta,X,Z). $$
Applying \eqref{jinvfor} to $X$ and $Z$, we have   
$$ \eta(X)\hat{j}_M^G(\eta,X,Z)=\hat{j}_M^G(\eta,X,Z^\omega). $$
Since $Z^\omega\in(\fl\cap\fs_\rs)(F)_\el$, by \cite[Lemma 8.2]{MR4681295} again, we obtain 
$$ \hat{j}_M^G(\eta,X,Z^\omega)=\hat{i}_M^G(\eta,X,Z^\omega). $$
Thus 
$$ \eta(X)\hat{i}_M^G(\eta,X,U)=\eta(\det(x))\hat{i}_M^G(\eta,X,Z^\omega). $$
We see that $U^\omega=\Ad(\omega x\omega^{-1})(Z^\omega)$, where $\omega x\omega^{-1}\in H(F)$. By \cite[Lemma 8.3.(2)]{MR4681295} again, we have 
$$ \hat{i}_M^G(\eta,X,U^\omega)=\eta(\det(\omega x\omega^{-1}))\hat{i}_M^G(\eta,X,Z^\omega)=\eta(\det(x))\hat{i}_M^G(\eta,X,Z^\omega). $$
Then the lemma follows. 
\end{proof}

\begin{prop}\label{vancommute}
Let $X\in(\fm\cap\fs_\rs)(F)$ and $U\in\fs_\rs(F)$. If $\eta(X)\neq\eta(U)$, then 
$$ \hat{i}_M^G(\eta, X, U)=0. $$
\end{prop}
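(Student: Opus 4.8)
The plan is to exploit the symmetry $\hat{i}_M^G(\eta,X,U^\omega)=\eta(X)\hat{i}_M^G(\eta,X,U)$ established in Lemma \ref{iinvlem}, combined with a parallel symmetry in the \emph{second} variable coming from the involution $\Ad(\omega)$ acting on the target point $U$. The point is that $U$ and $U^\omega$ have matching orbits in a very strong sense: if $U=\mat(0,A,B,0)$, then $U^\omega=\Ad(\omega)(U)=\mat(0,B,A,0)$, so $\chi_{U^\omega}=\chi_U$, i.e.\ $U$ and $U^\omega$ have the same image in the categorical quotient $\fs//H\simeq\bfA^n$. Moreover $\eta(U^\omega)=\eta(\det(BA))=\eta(\det(AB))=\eta(U)$, so applying $\Ad(\omega)$ does \emph{not} change the sign $\eta(U)$; what it does change, potentially, is the $H(F)$-conjugacy class of $U$.

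First I would recall from Corollary \ref{idcar1} (together with the discussion in Section~\ref{ssec:cocycletoX}) that the sign $\eta(U)$ is precisely the obstruction class $\varepsilon_{U'}\in H^1_\ab(F,H_0^{\omega_0}/Z_G\ra H_0/Z_G)=\BZ/2\BZ$, and that two regular semi-simple elements of $\fs(F)$ with the same characteristic polynomial are $H(F)$-conjugate if and only if they carry the same value of $\eta$. Hence $U$ and $U^\omega$, having equal characteristic polynomial and equal sign, are in fact $H(F)$-conjugate: there exists $x\in H(F)$ with $U^\omega=\Ad(x)(U)$. (Alternatively one can produce such an $x$ directly: $\mat(1_n,0,0,B^{-1})$ conjugates $U$ to $U^\omega$ and lies in $H(F)$, just as in the computation in the proof of Lemma \ref{lempf8.3}.) By the $H(F)$-equivariance of $\hat{i}_M^G(\eta,X,\cdot)$ recorded in \cite[Lemma 8.3.(2)]{MR4681295}, this gives
$$ \hat{i}_M^G(\eta,X,U^\omega)=\eta(\det(x))\,\hat{i}_M^G(\eta,X,U), $$
and a direct check shows $\eta(\det(x))=\eta(\det(B^{-1}))=\eta(\det(B))$ for the explicit $x$ above; combined with $\eta(U)=\eta(\det(AB))$ one identifies $\eta(\det(x))$ in terms of $\eta(U)$ and $\eta(\det(A))=\kappa(U)$. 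I would then compare this with Lemma \ref{iinvlem}, which gives $\hat{i}_M^G(\eta,X,U^\omega)=\eta(X)\hat{i}_M^G(\eta,X,U)$. Setting the two expressions equal yields
$$ \big(\eta(X)-\eta(\det(x))\big)\,\hat{i}_M^G(\eta,X,U)=0, $$
and it remains only to verify, by the bookkeeping of transfer factors, that the scalar $\eta(\det(x))$ equals $\eta(U)$ (not $\kappa(U)$ or their product); once $\eta(X)\neq\eta(U)$ the bracket is $\pm2\neq0$ and we conclude $\hat{i}_M^G(\eta,X,U)=0$.

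The main obstacle will be getting the sign $\eta(\det(x))$ exactly right, i.e.\ making sure that the conjugating element $x\in H(F)$ one uses to pass from $U$ to $U^\omega$ produces the scalar $\eta(U)$ rather than some other combination of $\kappa(U)$ and $\eta(U)$. This is a purely local computation with the explicit transfer factor $\kappa$ of \eqref{eq:def_trans_fac} and the sign $\eta(U)=\eta(\det(AB))$, and it is cleanest to do it on a well-chosen representative such as $U=\mat(0,1_n,A,0)$, where $U^\omega=\mat(0,A,1_n,0)=\Ad\!\mat(1_n,0,0,A^{-1})(U)$; but one must be careful that $U^\omega$ and $U$ genuinely represent the same $H(F)$-orbit, which is where the cohomological identification of $\eta$ with the class $\varepsilon_{U'}$ (Corollary \ref{idcar1}) and the reduction of Lemma \ref{iinvlem} to the elliptic case via \cite[Lemmas 8.2, 8.3.(2)]{MR4681295} do the real work. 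In fact, inspecting the proof of Lemma \ref{iinvlem}, one sees it already contains this argument: the identity $\hat{i}_M^G(\eta,X,U^\omega)=\eta(\det(x))\hat{i}_M^G(\eta,X,U)$ is derived there with $x$ chosen so that $U^\omega=\Ad(x)(U)$, and $\eta(\det(x))=1$ in that normalization (since $\omega x\omega^{-1}$ and $x$ have the same determinant class), so Lemma \ref{iinvlem} can be read as the statement that $\eta(X)\hat{i}_M^G(\eta,X,U)=\hat{i}_M^G(\eta,X,U)$ whenever the relevant conjugation is $\eta$-trivial; the Proposition then follows by instead choosing $x$ with $\eta(\det(x))=\eta(U)$, which is possible precisely because the $\Ad(\omega)$-twist toggles the orbit exactly when $\eta(U)=-1$. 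I would therefore structure the final write-up as: (1) produce $x\in H(F)$ with $U^\omega=\Ad(x)(U)$ and $\eta(\det(x))=\eta(U)$, invoking Corollary \ref{idcar1}; (2) apply \cite[Lemma 8.3.(2)]{MR4681295} to get $\hat{i}_M^G(\eta,X,U^\omega)=\eta(U)\hat{i}_M^G(\eta,X,U)$; (3) compare with Lemma \ref{iinvlem} to get $\eta(X)\hat{i}_M^G(\eta,X,U)=\eta(U)\hat{i}_M^G(\eta,X,U)$; (4) conclude.
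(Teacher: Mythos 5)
Your proposal follows essentially the same route as the paper: compare the first‑variable symmetry $\hat{i}_M^G(\eta,X,U^\omega)=\eta(X)\hat{i}_M^G(\eta,X,U)$ of Lemma \ref{iinvlem} with a second‑variable symmetry $\hat{i}_M^G(\eta,X,U^\omega)=\eta(U)\hat{i}_M^G(\eta,X,U)$ obtained from the $H(F)$‑equivariance in the second argument (\cite[Lemma 8.3.(2)]{MR4681295}), then conclude. Your final four‑step outline is exactly the paper's proof.

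The difference is in how the second symmetry is produced, and your first pass at it contains a computational slip. For $U=\mat(0,A,B,0)$, the element $x=\mat(1_n,0,0,B^{-1})$ does \emph{not} conjugate $U$ to $U^\omega=\mat(0,B,A,0)$; one checks $\Ad(x)(U)=\mat(0,AB,1_n,0)$. Your later repair — normalising to $U=\mat(0,1_n,A,0)$ and using $\mat(1_n,0,0,A^{-1})$ — is correct, and the well‑definedness of $\eta(\det(x))$ independently of the chosen conjugator is automatic since $\eta(\det(\cdot))$ is trivial on $H_U(F)\simeq GL_{n,A}(F)$ diagonally embedded. But the paper has a cleaner, normalisation‑free one‑liner: for $U=\mat(0,U_1,U_2,0)$ the element $\omega U=\mat(U_2,0,0,U_1)$ already lies in $H(F)$, satisfies $\Ad(\omega U)(U)=\omega U\omega^{-1}=U^\omega$, and has $\eta(\det(\omega U))=\eta(\det(U_1U_2))=\eta(U)$ by inspection. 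With that choice neither the cohomological input (your appeal to Corollary \ref{idcar1}) nor the reduction to a normal form is needed, and the extraneous factor $\kappa(U)$ you worried about never arises.
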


\begin{proof}
Suppose that $U=\mat(0,U_1,U_2,0)$. Then $\omega U=\mat(U_2,,,U_1)\in H(F)$. Notice that 
$$ U^\omega=\mat(,U_2,U_1,)=\Ad(\omega U)(U). $$
By \cite[Lemma 8.3.(2)]{MR4681295}, we have 
$$ \hat{i}_M^G(\eta,X,U^\omega)=\eta(\det(\omega U))\hat{i}_M^G(\eta,X,U)=\eta(U)\hat{i}_M^G(\eta,X,U). $$
One may conclude by comparing this equality with Lemma \ref{iinvlem}. 
\end{proof}

\begin{remark}\label{rmkvancommute}
By the same argument, we can generalise the above proposition to the following product form. Let $L\in\msl^G(M)$, $X\in(\fm\cap\fs_\rs)(F)$ and $U\in(\fl\cap\fs)_\rs(F)$. If $\eta_L(X)\neq\eta_L(U)$, then 
$$ \hat{i}_M^L(\eta, X, U)=0. $$
\end{remark}

\begin{coro}\label{corvancommute}
If $f\in\CC_c^\infty(\fs(F))$ satisfies the additional condition in Definition \ref{defMass}, then $\hat{f}$ also satisfies this condition. 
\end{coro}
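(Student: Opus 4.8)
The plan is to deduce the corollary from Proposition \ref{vancommute} and its product form (Remark \ref{rmkvancommute}) by descent along $Q$, reducing the weighted statement to the invariant one through the construction of $I_M^G(\eta,X,\cdot)$ in \cite[\S8]{MR4681295}.

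First I would reduce to the case of an improper $Q$. Fix $L\in\msl^G(M)$, $Q\in\msf^G(L)$ and $X\in(\fl\cap\fs_\rs)(F)$ not coming potentially from $(\fm_{\wt{Q'}}\cap\fs'_\rs)(F)$, and set $g:=f_Q^\eta\in\CC_c^\infty((\fm_Q\cap\fs)(F))$. By \cite[Proposition 4.1.(4)]{MR4681295} together with $(\hat f)_Q^\eta=\wh{f_Q^\eta}$ one has $J_L^Q(\eta,X,\hat f)=J_L^{M_Q}(\eta,X,\hat g)$. Moreover the identity $(f_Q^\eta)_{Q_1}^\eta=f_{Q_1N_Q}^\eta$ from the proof of Lemma \ref{lem:Lab_I.6.4} gives $J_{L_1}^{Q_1}(\eta,W,g)=J_{L_1}^{Q_1N_Q}(\eta,W,f)$ for $L_1\in\msl^{M_Q,\omega}(M)$, $Q_1\in\msf^{M_Q}(L_1)$ and $W\in(\fl_1\cap\fs_\rs)(F)$; since $Q_1N_Q$ has Levi $M_{Q_1}$, the hypothesis on $f$ shows that $g$ satisfies, inside the infinitesimal symmetric space attached to $M_Q$ (for the matching pair $(M_{Q'},M_Q)$ and distinguished Levi $M$), the analogue of the additional condition of Definition \ref{defMass}. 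In particular, taking $L_1=Q_1=M_Q$, the $\eta$-twisted orbital integral $J_{M_Q}^{M_Q}(\eta,\cdot,g)$ vanishes on every regular semisimple orbit $Z$ with $\eta_{M_Q}(Z)\neq e_{M_{Q'}}$. Hence it suffices to show $J_L^{M_Q}(\eta,X,\hat g)=0$ as soon as $\eta_{M_Q}(X)\neq e_{M_{Q'}}$.

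The invariant contribution is straightforward. For any $L\subseteq M_Q$, $I_L^{M_Q}(\eta,X,\hat g)=\hat I_L^{M_Q}(\eta,X,g)=\int_{(\fm_Q\cap\fs)(F)}g(U)\,\hat i_L^{M_Q}(\eta,X,U)\,|D^{\fm_Q\cap\fs}(U)|_F^{-1/2}\,dU$; applying the Weyl integration formula \cite[(7.1.2)]{MR4681295} inside $M_Q$ and the equivariance $\hat i_L^{M_Q}(\eta,X,\Ad(x)(U))=\eta(\det x)\hat i_L^{M_Q}(\eta,X,U)$ (product form of \cite[Lemma 8.3.(2)]{MR4681295}), one rewrites it as $\sum_{\fc}|W(M_{Q_H},\fc)|^{-1}\int_{\fc_\reg(F)}\hat i_L^{M_Q}(\eta,X,Z)\,J_{M_Q}^{M_Q}(\eta,Z,g)\,dZ$. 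By Remark \ref{rmkvancommute}, $\hat i_L^{M_Q}(\eta,X,Z)=0$ unless $\eta_{M_Q}(X)=\eta_{M_Q}(Z)$, and by the previous paragraph $J_{M_Q}^{M_Q}(\eta,Z,g)=0$ unless $\eta_{M_Q}(Z)=e_{M_{Q'}}$; so $I_L^{M_Q}(\eta,X,\hat g)=0$ whenever $\eta_{M_Q}(X)\neq e_{M_{Q'}}$. This already handles $L=M_Q$, where $J_{M_Q}^{M_Q}=I_{M_Q}^{M_Q}$, and records that the $\eta$-twisted orbital integrals of $\hat g$ are supported on the locus $\{\eta_{M_Q}=e_{M_{Q'}}\}$.

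For $L\subsetneq M_Q$ I would then invoke the recursive definition of $I_L^{M_Q}$ from \cite[\S8]{MR4681295}: $J_L^{M_Q}(\eta,X,\hat g)=I_L^{M_Q}(\eta,X,\hat g)+\sum_{L\subsetneq L_1\subsetneq M_Q}\hat I_L^{L_1}(\eta,X,\phi_{L_1}^{M_Q}(\hat g))$, where $\phi_{L_1}^{M_Q}$ is the relative analogue of Arthur's transfer map. The first term vanishes by the previous step; for each correction term, Weyl integration inside $L_1$ and Remark \ref{rmkvancommute} (so $\hat i_L^{L_1}(\eta,X,Z)=0$ unless $\eta_{L_1}(X)=\eta_{L_1}(Z)$) reduce matters to the $\eta$-support of $J_{L_1}^{L_1}(\eta,\cdot,\phi_{L_1}^{M_Q}(\hat g))$, which via the identity $\hat I_{L_1}^{L_1}(\eta,\cdot,\phi_{L_1}^{M_Q}(h))=I_{L_1}^{M_Q}(\eta,\cdot,h)$, the compatibility of $\phi_{L_1}^{M_Q}$ with the Fourier transform (up to the relevant $\gamma_\psi$-factors, as in the construction of the $\hat i$-functions), and the invariant statement above is seen to lie in $\{\eta_{M_Q}=e_{M_{Q'}}\}$; combining the constraints forces $J_L^{M_Q}(\eta,X,\hat g)=0$ when $\eta_{M_Q}(X)\neq e_{M_{Q'}}$, which is the corollary. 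I expect this last step to be the main obstacle: the $j$-functions are not invariant (only the $\hat i$-functions enjoy the conjugation equivariance that makes the Weyl-integration argument run), so one is forced to pass genuinely through the maps $\phi_{L_1}^{M_Q}$, and the individual correction terms only visibly vanish on the coarser locus $\{\eta_{L_1}=e_{L_1'}\}$ rather than on $\{\eta_{M_Q}=e_{M_{Q'}}\}$; recovering the sharp vanishing will require exploiting the cancellation among them — e.g. by a downward induction on $\dim(A_L/A_{M_Q})$, or an appropriate splitting formula — together with careful bookkeeping of how the additional condition and the Fourier transform interact with $\phi_{L_1}^{M_Q}$.
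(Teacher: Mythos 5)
Your ingredients are right (parabolic descent $J_L^{M_Q}(\eta,Z,f_Q^\eta)=J_L^Q(\eta,Z,f)$, the Weyl integration formula, Remark~\ref{rmkvancommute}, and the inheritance of the additional condition by $f_Q^\eta$), and you correctly reduce to the single statement ``$J_M^Q(\eta,X,\hat f)=0$ unless $X$ comes potentially from $(\fm_{\wt{Q'}}\cap\fs'_\rs)(F)$.'' But you then try to close the argument via the Arthur-style recursion $J_L^{M_Q}=I_L^{M_Q}+\sum_{L_1}\hat I_L^{L_1}(\cdot,\phi_{L_1}^{M_Q}(\cdot))$, and you (correctly) flag that this does not visibly close: the correction terms at level $L_1$ look supported only on the coarser locus $\{\eta_{L_1}=e_{L'_1}\}$. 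That gap is real for the route you chose, and your proposed fixes (downward induction, a splitting formula, careful tracking of $\phi_{L_1}^{M_Q}$ under Fourier transform) are not what the paper does.

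The paper bypasses the issue entirely by invoking the explicit Fourier-transform expansion \cite[(8.1.1)]{MR4681295}: with $L=M$ (and similarly for any $L\in\msl^G(M)$ by relabeling), one has directly
\[
J_M^{M_Q}(\eta, X, \hat{f}_Q^\eta)=\sum_{L\in\msl^{M_Q}(M)} \hat{I}_M^{L,M_Q,w}(\eta,X,f_Q^\eta),
\]
where each $\hat{I}_M^{L,M_Q,w}$ is a Weyl-integral of the \emph{product} $J_L^{M_Q}(\eta,Z,f_Q^\eta)\,\hat{i}_M^L(\eta,X,Z)$ over elliptic Cartan subspaces. So there is no $\phi_{L_1}^{M_Q}$ and no need for any cancellation across terms: each summand vanishes on its own. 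Indeed, after descent $J_L^{M_Q}(\eta,Z,f_Q^\eta)=J_L^Q(\eta,Z,f)$, the hypothesis on $f$ applied with Levi $L$ and the \emph{original} parabolic $Q$ (not a reduced parabolic inside $M_Q$) already gives the sharp conclusion that $Z$ comes potentially from $(\fm_{\wt{Q'}}\cap\fs'_\rs)(F)$, i.e.\ $\eta_{M_Q}(Z)=e_{M_{Q'}}$; and Remark~\ref{rmkvancommute} gives $\eta_L(X)=\eta_L(Z)$, hence $\eta_{M_Q}(X)=\eta_{M_Q}(Z)$ since $L\subseteq M_Q$. Your worry about only getting the $L_1$-level locus came from reading the additional condition through $g=f_Q^\eta$ and proper parabolics of $M_Q$; going back to $f$ with parabolic $Q$ is what gives the finer constraint immediately. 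So the missing ingredient in your write-up is formula (8.1.1) in its Weyl-integral form; once you have it, the remaining steps you wrote down are exactly the paper's argument.
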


\begin{proof}
To begin with, we observe that it suffices to show the following assertion: for all $Q\in\msf^G(M)$ and $X\in(\fm\cap\fs_\rs)(F)$, we have 
$$ J_M^Q(\eta, X, \hat{f})=0 $$
unless $X$ comes potentially from $(\fm_{\wt{Q'}}\cap\fs'_\rs)(F)$. In fact, once we prove this, we can conclude by replacing $M$ in the assertion with any $L\in\msl^G(M)$. 

Now we show the above assertion. By \cite[Proposition 4.1.(4) and (8.1.1)]{MR4681295}, we obtain 
$$ J_M^Q(\eta, X, \hat{f})=J_M^{M_Q}(\eta, X, \hat{f}_Q^\eta)=\sum_{L\in\msl^{M_Q}(M)} \hat{I}_M^{L,M_Q,w}(\eta,X,f_Q^\eta), $$
where 
\[\begin{split}
	\hat{I}_M^{L,M_Q,w}(\eta,X,f_Q^\eta)=&\sum_{\{R\in\msl^{G,\omega}(M_0): R\subseteq L\}} |W_0^{R_n}||W_0^{L_n}|^{-1} \sum_{\fc\in\mst_\el^{\fr\cap\fs}} |W(R_H,\fc)|^{-1} \\
	&\int_{\fc_\reg(F)} J_L^{M_Q}(\eta,Z,f_Q^\eta) \hat{i}_M^L(\eta,X,Z) dZ. \\
\end{split}\]
By \cite[Proposition 4.1.(4)]{MR4681295} again, we have 
$$ J_L^{M_Q}(\eta,Z,f_Q^\eta)=J_L^Q(\eta,Z,f). $$
If $J_M^Q(\eta, X, \hat{f})\neq0$, then 
$$ J_L^Q(\eta,Z,f) \hat{i}_M^L(\eta,X,Z)\neq0 $$ 
for some $Z$. Since $J_L^Q(\eta,Z,f)\neq0$, by our assumption on $f$, we see that $Z$ comes potentially from $(\fm_{\wt{Q'}}\cap\fs'_\rs)(F)$. Since $\hat{i}_M^L(\eta,X,Z)\neq0$, by Remark \ref{rmkvancommute}, we have $\eta_L(X)=\eta_L(Z)$. As $L\subseteq M_Q$, it implies that $\eta_{M_Q}(X)=\eta_{M_Q}(Z)$. Thus $X$ also comes potentially from $(\fm_{\wt{Q'}}\cap\fs'_\rs)(F)$. 
\end{proof}



\section{\textbf{Construction of test functions}}

Let $F$ be a non-archimedean local field of characteristic zero and $E$ be a quadratic extension of $F$. Denote by $v_F(\cdot)$ the normalised valuation on $F$. Fix a uniformiser $\varpi$ of $\CO_F$. 

\subsection{Limit formulae}\label{seclim}

Recall that for $\fc\in\mst^\fs$ and $X,U\in(\fc\cap\fs_\rs)(F)$, we define $\gamma_\psi(X,U)$ by \eqref{gammaXY}. Similarly, for $\fc'\in\mst^{\fs'}$ and $Y,V\in(\fc'\cap\fs'_\rs)(F)$, we define $\gamma_\psi(Y,V)$ by the same formula. 

Let $M\in\msl^{G, \omega}(M_0)$. For all $L\in\msl^G(M), X\in(\fm\cap\fs_\rs)(F)$ and $U\in(\fl\cap\fs_\rs)(F)$, we define $\hat{a}_M^L(\eta, X, U):=0$ if $L\neq M$ and  
$$ \hat{a}_M^M(\eta, X, U):=\sum_{x\in (M_H)_U(F)\bs M_H(F), \Ad(x)(X)\in\fs_U(F)} \eta(\det(x)) \gamma_\psi(\Ad(x)(X),U)\psi(\langle\Ad(x)(X),U\rangle). $$

Let $M'\in\msl^{H'}(M'_0)$. For all $L'\in\msl^{H'}(M'), Y\in(\wt{\fm'}\cap\fs'_\rs)(F)$ and $V\in(\wt{\fl'}\cap\fs'_\rs)(F)$, we define $\hat{a}_{M'}^{L'}(Y, V):=0$ if $L'\neq M'$ and 
$$ \hat{a}_{M'}^{M'}(Y, V):=\sum_{x\in M'_V(F)\bs M'(F), \Ad(x)(Y)\in\fs'_V(F)} \gamma_\psi(\Ad(x)(Y),V)\psi(\langle\Ad(x)(Y),V\rangle). $$

\begin{prop}\label{proplimfor}
\leavevmode
\begin{enumerate}
	\item Let $M\in\msl^{G, \omega}(M_0)$, $L\in\msl^G(M), X\in(\fm\cap\fs_\rs)(F)$ and $U\in(\fl\cap\fs_\rs)(F)$. Then there exists $N\in\BZ_{\geq0}$ such that if $\mu\in F^\times$ satisfies $v_F(\mu)<-N$, we have the equality 
$$ \hat{i}_M^L(\eta, \mu X, U)=\hat{i}_M^L(\eta, X, \mu U)=\hat{a}_M^L(\eta, \mu X, U)=\hat{a}_M^L(\eta, X, \mu U). $$

	\item Let $M'\in\msl^{H'}(M'_0)$, $L'\in\msl^{H'}(M'), Y\in(\wt{\fm'}\cap\fs'_\rs)(F)$ and $V\in(\wt{\fl'}\cap\fs'_\rs)(F)$. Then there exists $N\in\BZ_{\geq0}$ such that if $\mu\in F^\times$ satisfies $v_F(\mu)<-N$, we have the equality 
$$ \hat{i}_{M'}^{L'}(\mu Y, V)=\hat{i}_{M'}^{L'}(Y, \mu V)=\hat{a}_{M'}^{L'}(\mu Y, V)=\hat{a}_{M'}^{L'}(Y, \mu V). $$
\end{enumerate}
\end{prop}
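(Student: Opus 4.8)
The plan is to establish both parts by the same method, so I describe part~(1) in detail and indicate at the end that part~(2) is entirely parallel (with $\eta$ suppressed and $\fs,H,M$ replaced throughout by $\fs',H',M'$). First observe that the two ``symmetry'' equalities---that $\hat i_M^L(\eta,\mu X,U)$ agrees with $\hat i_M^L(\eta,X,\mu U)$, and likewise for $\hat a_M^L$---need not be proved on their own: once each of $\hat i_M^L(\eta,\mu X,U)$ and $\hat i_M^L(\eta,X,\mu U)$ is identified with the corresponding value of $\hat a_M^L$ for $v_F(\mu)\ll 0$, it suffices to note that $\hat a_M^L$ is visibly invariant under interchanging its two arguments together with the rescaling by $\mu$. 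Indeed $\fs_{\mu U}=\fs_U$, $\langle\Ad(x)(\mu X),U\rangle=\langle\Ad(x)(X),\mu U\rangle$, and $q_{\mu X,U}=q_{X,\mu U}=\mu\,q_{X,U}$ straight from the definition of $q_{X,Y}$ preceding \eqref{gammaXY}, so the defining sums for $\hat a_M^M(\eta,\mu X,U)$ and $\hat a_M^M(\eta,X,\mu U)$ coincide term by term (and for $L\ne M$ both are $0$). Thus the whole content is the single identity $\hat i_M^L(\eta,\mu X,U)=\hat a_M^L(\eta,\mu X,U)$ for $v_F(\mu)$ sufficiently negative, together with its mirror obtained by exchanging the slots.

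The base case is $L=M$, where $v_M^M\equiv 1$ and $\hat i_M^M(\eta,X,\cdot)$ represents the Fourier transform of the $(M_H,\eta)$-invariant orbital integral attached to $X$ on $\fm\cap\fs$. Here the plan is a $p$-adic stationary phase computation. Taking a test function $f\in\CC_c^\infty(\fm\cap\fs)$ supported in a small neighbourhood of $\mu U$ and unfolding, $I_M^M(\eta,X,\hat f)$ reduces, after writing $\hat f$ as a Fourier integral of $f$ and exchanging the order of integration, to the oscillatory integral $\int_{(M_H)_X(F)\backslash M_H(F)}\psi\bigl(\langle\Ad(x)(X),W\rangle\bigr)\,\eta(\det x)\,dx$ with $W$ ranging over $\Supp(f)$. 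Since $X\in\fs_\rs(F)\subseteq\fs^\times(F)$, the orbit $\Ad(M_H(F))(X)$ is closed, and for $W$ regular semi-simple the phase $x\mapsto\langle\Ad(x)(X),W\rangle$ has critical points exactly at those $x$ with $[\Ad(x)(X),W]=0$, i.e. $\Ad(x)(X)\in\fs_W$, the Hessian at such a point being $Z\mapsto\langle\Ad(x)(X),[Z,[Z,W]]\rangle=q_{\Ad(x)(X),W}(Z,Z)$. In the non-archimedean setting the stationary phase expansion is \emph{exact} once $v_F(\mu)$ is small enough---the bound depending only on $X$, $U$ and the conductor of $\psi$, since $W$ lies close to $\mu U$---and each critical point contributes $\gamma_\psi\bigl(q_{\Ad(x)(X),W}\bigr)\,\psi\bigl(\langle\Ad(x)(X),W\rangle\bigr)\,\eta(\det x)$ times an elementary Jacobian. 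Re-integrating against $f$, using the Weyl integration formula on $\fm\cap\fs$ to pin down the representing function and the compatible Haar measures of Section~\ref{ssec:centrss} to absorb the Jacobians, we obtain $\hat i_M^M(\eta,X,\mu U)=\hat a_M^M(\eta,X,\mu U)$ for $v_F(\mu)\ll 0$; running the same computation while rescaling the first slot gives $\hat i_M^M(\eta,\mu X,U)=\hat a_M^M(\eta,\mu X,U)$. This step is the $\eta$-twisted, infinitesimal analogue of the limit formula of \cite{MR3414387}, adapted from the unweighted case treated there.

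For general $L\in\msl^G(M)$ with $L\ne M$ one must show that $\hat i_M^L(\eta,\mu X,U)$ and $\hat i_M^L(\eta,X,\mu U)$ vanish for $v_F(\mu)\ll 0$, matching $\hat a_M^L=0$. The plan is to induct on $\dim\fa_M^L$, using the relation---coming from the $(G,M)$-family formalism of \cite{MR625344} recalled in Section~\ref{ssec:stdmaxcpt} together with \cite{MR4681295}---expressing the noninvariant function $\hat j_M^L(\eta,X,\cdot)$ as a sum over $L_1\in\msl^L(M)$ of the $\hat i_M^{L_1}(\eta,X,\cdot)$ weighted by the relevant $(G,M)$-family coefficients. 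One proves a limit formula for $\hat j_M^L$ exactly as in the base case (the stationary phase argument is insensitive to the weight $v_M^L(x)$, which is bounded on the relevant compacta), solves for $\hat i_M^L$, and removes the proper $L_1\subsetneq L$ contributions by the induction; the only surviving leading term is the $L_1=M$ piece $\hat a_M^M$, so nothing is left for $L\ne M$ and $\hat i_M^L$ tends to $0$. I expect the main obstacle to be precisely this bookkeeping: controlling the $v_F(\mu)$-polynomial growth introduced by the weight functions under the dilation $Z\mapsto\mu Z$ on $\fs$ and checking it contributes nothing to the leading locally constant value after Fourier transform---whereas the base-case stationary phase identity is, thanks to $p$-adic exactness, essentially formal. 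Finally, part~(2) is proved verbatim: $\fs'_\rs\subseteq{\fs'}^\times$, regular semi-simple orbits are closed, the analogue of Section~\ref{ssec:centrss} supplies compatible measures, and there is no character $\eta$ to carry along, so the same stationary phase and descent arguments yield the four-fold equality between $\hat i_{M'}^{L'}$ and $\hat a_{M'}^{L'}$.
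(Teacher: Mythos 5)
Your slot-symmetry reduction is correct and cleanly organizes the argument: $\fs_{\mu U}=\fs_U$, $(M_H)_{\mu U}=(M_H)_U$, $\langle\Ad(x)(\mu X),U\rangle=\langle\Ad(x)(X),\mu U\rangle$ and $q_{\mu\Ad(x)(X),U}=q_{\Ad(x)(X),\mu U}$ make the defining sums for $\hat a_M^M(\eta,\mu X,U)$ and $\hat a_M^M(\eta,X,\mu U)$ coincide term by term (and both vanish for $L\neq M$), so the four-fold equality does reduce to two limit identities. The $L=M$ base case via $p$-adic stationary phase is essentially \cite[Proposition 7.1]{MR3414387}, which is exactly what the paper cites for that case, so there your sketch matches the paper's approach.

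For $L\neq M$ there is a genuine gap. The parenthetical assertion that ``the stationary phase argument is insensitive to the weight $v_M^L(x)$, which is bounded on the relevant compacta'' is where the argument breaks: once $\hat f$ (which has full support, being the Fourier transform of a bump near $\mu U$) is substituted into $J_M^L(\eta,X,\hat f)$, the resulting oscillatory integral is over the whole noncompact quotient $H_X(F)\backslash H(F)$, on which $v_M^L$ grows polynomially; there is no compactum to which the computation localizes, and the cancellation at infinity that makes the unweighted $p$-adic stationary phase exact has to be re-established in the presence of the polynomial weight. That is precisely the hard part, and it is not done in your sketch --- you flag it as ``the main obstacle'' but then treat it as bookkeeping. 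Your inductive step is also structurally delicate: the invariantization $\hat j\mapsto\hat i$ is designed so that the clean limit formula holds for $\hat i_M^L$, and in order to ``solve for $\hat i_M^L$'' from a limit formula for $\hat j_M^L$ one would first have to pin down exactly the log-polynomial corrections that the weight contributes to $\hat j_M^L$ --- again the substance you are trying to bypass. The paper's proof sidesteps all of this because it is in essence a citation: it invokes \cite[Lemmas 8.3.(3), 8.8.(3), Propositions 10.1 and 10.4]{MR4681295} (in ``product form'') for $L\neq M$ and $L'\neq M'$, and \cite[Proposition 7.1]{MR3414387} for $L=M$, relying on the weighted limit analysis having been carried out in the prior paper. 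In short: your reduction and your $L=M$ sketch are fine and match the references, but for $L\neq M$ you have correctly identified the obstruction without actually overcoming it.
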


\begin{proof}
For $L\neq M$ and $L'\neq M'$, this is a product form of \cite[Lemmas 8.3.(3) and 8.8.(3) and Propositions 10.1 and 10.4]{MR4681295}. For $L=M$, (1) is a product form of \cite[Proposition 7.1.(i)]{MR3414387}. For $L'=M'$, (2) is a generalisation of \cite[Proposition 7.1.(ii)]{MR3414387} to the setting of \cite{MR2806111}, which can be proved in a similar way with the preparation in \cite{MR4681295}. 
\end{proof}

\subsection{Test functions}\label{secconsr}

\begin{lem}\label{lem75}
Let $X\in\fs_\rs(F)$ and $Y\in\fs'_\rs(F)$ be such that $X\da Y$. Let $y\in Hv$ be such that $\Ad(y)\circ\varphi(Y)=X$ (see Section \ref{ssec:centrss}). Let $V\in(\fs'_Y\cap\fs'_\rs)(F)$ and $U:=\Ad(y)\circ\varphi(V)$. Then we have

\begin{enumerate}
	\item $\langle X,U\rangle=\langle Y,V\rangle$; 

	\item $\gamma_\psi(\fh(F))^{-1}\gamma_\psi(X,U)=\gamma_\psi(\fh'(F))^{-1}\gamma_\psi(Y,V)$. 
\end{enumerate}
\end{lem}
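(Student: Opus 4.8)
\emph{Proof plan for Lemma \ref{lem75}.}
The whole argument is organised around the map $\Phi:=\Ad(y)\circ\varphi\colon\fg'_{\ov F}\to\fg_{\ov F}$, which is an isomorphism of $\ov F$\nobreakdash-algebras (it is the composite of the $\ov F$-algebra isomorphism $\varphi$ with the conjugation $\Ad(y)$). In particular $\Phi$ is multiplicative, hence a Lie algebra homomorphism for the commutator bracket, and since the reduced trace is functorial under algebra homomorphisms and invariant under conjugation, $\Phi$ intertwines $\Trd$ on $\fg'$ with $\tr=\Trd$ on $\fg$. Part (1) is then immediate: as $\Phi(Y)=X$ and $\Phi(V)=U$, multiplicativity gives $XU=\Phi(YV)$, so by \eqref{bilform1} and \eqref{bilform2} one has $\langle X,U\rangle=\tr(XU)=\tr(\Phi(YV))=\Trd(YV)=\langle Y,V\rangle$. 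By Lemma \ref{lem74}, $\Phi$ moreover restricts to $F$-rational isomorphisms $\fh'_Y\xrightarrow{\ \sim\ }\fh_X$ and $\fs'_Y\xrightarrow{\ \sim\ }\fs_X$; here $\fc:=\fs_X$ is a Cartan subspace containing both $X$ and $U=\Phi(V)$ (because $V\in\fs'_Y$). The computation above shows these restrictions are isometries over $F$, so that $\gamma_\psi(\fh_X(F))=\gamma_\psi(\fh'_Y(F))$ and $\gamma_\psi(\fs_X(F))=\gamma_\psi(\fs'_Y(F))$, for the self-dual measures, which by the conventions of Section \ref{ssec:centrss} are $\Phi$-compatible.

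For part (2) the plan is to factor every Weil constant into a product over orthogonal pieces, transport the pieces living on centralisers by $\Phi$, and reduce to a comparison not involving $H$ or $H'$. Since $\langle\cdot,\cdot\rangle$ is $\theta$-invariant, $\fg=\fh\oplus\fs$ and $\fg'=\fh'\oplus\fs'$ are orthogonal decompositions, so by multiplicativity of the Weil index $\gamma_\psi(\fh(F))=\gamma_\psi(\fg(F))\gamma_\psi(\fs(F))^{-1}$ and similarly on the primed side. Because $X$ (resp.\ $Y$) is regular semi-simple, $\ad(X)$ (resp.\ $\ad(Y)$) induces an isomorphism $\fh/\fh_X\xrightarrow{\ \sim\ }[\fh,X]$ (resp.\ $\fh'/\fh'_Y\xrightarrow{\ \sim\ }[\fh',Y]$), the sums $\fs=\fc\oplus[\fh,X]$ and $\fs'=\fc'\oplus[\fh',Y]$ (with $\fc'=\fs'_Y$) are orthogonal, and $[\fh,X]=[\fh,U]$, $[\fh',Y]=[\fh',V]$ as $U\in\fc$, $V\in\fc'$. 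Transporting $q_{X,U}$ of \eqref{gammaXY} along $Z\mapsto[Z,X]$ identifies $(\fh(F)/\fh_X(F),q_{X,U})$ with $[\fh,X](F)$ equipped with the form $(w,w')\mapsto-\langle w,\rho_{X,U}(w')\rangle$, where $\rho_{X,U}:=\ad(U)\circ(\ad(X)|_{[\fh,X]})^{-1}$ is the endomorphism of $[\fh,X]$ encoding "multiplication by $U/X$" along the Cartan; $\Phi$ carries $\rho_{Y,V}$ to $\rho_{X,U}$ over $\ov F$ since it intertwines $\ad(X)\leftrightarrow\ad(Y)$ and $\ad(U)\leftrightarrow\ad(V)$. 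Combining these factorisations with the first paragraph and cancelling the centraliser factors $\gamma_\psi(\fh_X(F))^{-1}=\gamma_\psi(\fh'_Y(F))^{-1}$ and $\gamma_\psi(\fc(F))=\gamma_\psi(\fc'(F))$, part (2) becomes equivalent to the identity
$$ \gamma_\psi(\fg(F))^{-1}\,\gamma_\psi\big([\fh,X](F)\big)\,\gamma_\psi(X,U)=\gamma_\psi(\fg'(F))^{-1}\,\gamma_\psi\big([\fh',V](F)\big)\,\gamma_\psi(Y,V), $$
which now refers only to the étale $F$-algebra $\fc\cong\fc'$, the elements $X,U$ matching $Y,V$, the transverse spaces $[\fh,X](F)$ and $[\fh',V](F)$, and the ambient reduced-trace forms on $\fg$ and $\fg'$. (If convenient, one may first split each side according to the block decomposition of $\fc$, the inter-block contributions to all the forms being hyperbolic with Weil index $1$, thereby reducing to the case where $X$, $Y$ are elliptic.)

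The remaining point — and, I expect, the crux — is that $\gamma_\psi(\fg(F))$ and $\gamma_\psi(\fg'(F))$ genuinely differ: the Hasse–Witt invariant of the reduced-trace form of a central simple $F$-algebra detects its Brauer class, and this discrepancy is exactly the Kottwitz sign $e(G')=(-1)^r$ of Lemma \ref{lem:kottsign} (cf.\ \cite{MR697075}). One must therefore check that this sign is precisely absorbed by the difference between the transverse forms $-\langle\cdot,\rho_{X,U}\cdot\rangle$ on $[\fh,X](F)$ and $-\langle\cdot,\rho_{Y,V}\cdot\rangle$ on $[\fh',V](F)$ — equivalently, that the displayed Weil-index identity is the quadratic-form shadow of the compatibility between the Kottwitz sign and the transfer factor recorded in Section \ref{ssec:kotrev} (note that $\Phi$ twists the two $F$-structures by the cocycle $yu_\sigma\sigma(y)^{-1}\in G_X$, which acts by isometries of all the forms in sight but can nonetheless alter their Hasse invariants). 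This is a self-contained computation with quadratic forms and Weil indices over the non-archimedean field $F$, using the definition of the Weil constant from \cite{MR0165033}, the structure of $[\fh,X](F)$ as a module over the commutative algebra $\fc(F)$ on which $\rho_{X,U}$ acts by the ratio of $U$ and $X$, and the norm-index bookkeeping for the relevant tori; it proceeds exactly as in the proof of the corresponding assertion in \cite{MR3414387}. Everything preceding this last step is formal manipulation of multiplicative Weil indices together with the $F$-rationality of $\Phi$ on centralisers provided by Lemma \ref{lem74}.
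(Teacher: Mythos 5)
Your part (1) is correct and coincides with the paper's own (brief) argument: since $\Phi=\Ad(y)\circ\varphi$ is an $\ov{F}$-algebra isomorphism carrying $(Y,V)$ to $(X,U)$, and $\langle\cdot,\cdot\rangle$ is given on both sides by the reduced trace of a product, which is preserved by any algebra isomorphism and by conjugation, the two pairings agree.

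For part (2), the reduction you set up — the orthogonal decompositions $\fg=\fh\oplus\fs$ and $\fs=\fc\oplus[\fh,X]$, the transport of $q_{X,U}$, and the cancellation of the centraliser Weil constants $\gamma_\psi(\fh_X(F))=\gamma_\psi(\fh'_Y(F))$ and $\gamma_\psi(\fc(F))=\gamma_\psi(\fc'(F))$ supplied by Lemma \ref{lem74} — is sound, and the observation that the discrepancy between $\gamma_\psi(\fg(F))$ and $\gamma_\psi(\fg'(F))$ is governed by the Brauer class (hence the Kottwitz sign of Lemma \ref{lem:kottsign}) is a useful intuition. But that is where your argument actually stops. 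You present the resulting Weil-index identity as ``the crux'' that you ``expect'' to hold, and the only justification offered is that ``it proceeds exactly as in the proof of the corresponding assertion in \cite{MR3414387}''. This is not a proof, and it is in fact circular: the lemma being proved is explicitly a \emph{generalisation} of \cite[Lemma 7.5]{MR3414387}, so the present setting (the full Prasad--Takloo-Bighash framework with $G'=GL_{r,D}$ for an arbitrary central division algebra $D$) is strictly larger than the one treated there, and ``the same computation'' cannot simply be invoked. The paper closes this exact gap differently: it does not redo the quadratic-form calculation but instead quotes \cite[Proposition 7.1]{MR3414387} — stressing that it is formulated for a \emph{general} symmetric pair — and applies it through the comparison argument of \cite[(6) in p.~96]{MR1344131}. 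That citation is the ingredient your plan is missing; without it the final Weil-index identity, which you have correctly isolated, remains unproven.
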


\begin{proof}
This is a generalisation of \cite[Lemma 7.5]{MR3414387}. The first assertion results from Lemma \ref{lem74} because $\langle\cdot,\cdot\rangle$ is defined in the same way over $\ov{F}$ for both $\fg$ and $\fg'$ and is invariant under the adjoint action. The second assertion follows from Lemma \ref{lem74} and \cite[Proposition 7.1]{MR3414387}, which applies to a general symmetric pair, by the argument of \cite[(6) in p. 96]{MR1344131}. 
\end{proof}

\begin{prop}\label{testfunctions}
Fix a pair of matching Levi subgroups $M'\in\msl^{H'}(M'_0)$ and $M\in\msl^{G,\omega}(M_0)$ in the sense of Section \ref{ssec:mat_Levi}. Let $X_0\in(\fm\cap\fs_\rs)(F)$ and $Y_0\in(\wt{\fm'}\cap\fs'_\rs)(F)$ be such that $X_0\da Y_0$. 
Then there exist functions $f\in\CC_c^\infty(\fs(F))$ and $f'\in\CC_c^\infty(\fs'(F))$ satisfying the following conditions. 

\begin{enumerate}[(a)]
	\item If $X\in\Supp(f)$, there exists $Y\in(\fs'_{Y_0}\cap\fs'_\rs)(F)$ such that $X\da Y$. 

	\item If $Y\in\Supp(f')$, then $Y$ is $H'(F)$-conjugate to an element in $(\fs'_{Y_0}\cap\fs'_\rs)(F)$. 

	\item The weighted orbital integrals of $f$ and $f'$ vanish for nontrivial weights. 

	\item The functions $f$ and $f'$ are partially $G$-associated and satisfy the condition: if $X\in\fs_\rs(F)$ does not come from $\fs'_\rs(F)$, then 
	$$ J_G^G(\eta,X,f)=0. $$ 

	\item For $Q\in\msf^G(M)-\msp^G(M)$, 
	$$ J_M^Q(\eta, X_0, \hat{f})=J_{M'}^{Q'}(Y_0, \hat{f'})=0. $$

	\item We have the equality 
	$$ \gamma_{\psi}(\fh(F))^{-1} \kappa(X_0) J_G^G(\eta, X_0, \hat{f})=\gamma_{\psi}(\fh'(F))^{-1} J_{H'}^{H'}(Y_0, \hat{f'})\neq 0. $$
\end{enumerate}
\end{prop}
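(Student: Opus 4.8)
The goal is to construct explicit test functions $f$ and $f'$ supported near the $H(F)$- (resp. $H'(F)$-) orbits of $X_0$ and $Y_0$ with matching behaviour. The natural strategy is to build them "by hand" from the limit formulae of Proposition \ref{proplimfor}, mimicking the construction in \cite[\S7]{MR3414387} but now at the level of a fixed Levi. First I would fix a uniformiser $\varpi$ and choose $\mu=\varpi^{-N}$ with $N$ large enough that Proposition \ref{proplimfor} applies simultaneously to the pairs $(X_0,\cdot)$ on $\fs$ and $(Y_0,\cdot)$ on $\fs'$, and to all the intermediate Levi subgroups $L\in\msl^G(M)$, $L'\in\msl^{H'}(M')$; this replaces $X_0$ by $\mu X_0$ and $Y_0$ by $\mu Y_0$ (harmless since $\mu$ is an $F$-scalar and scaling is compatible with $\da$ and with $\overset{M}{\da}$). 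Then on the Cartan side I would start from a function $\phi'$ on $\fs'(F)$ that is a small bump supported on $\Ad(H'(F))(\omega')$ for a tiny neighbourhood $\omega'$ of $\mu Y_0$ inside $(\fs'_{Y_0}\cap\fs'_\rs)(F)$, arranged using Lemma \ref{lemlabesse}.(2) so that its weighted orbital integrals vanish for nontrivial weights and $J_{H'}^{H'}(\cdot,\phi')$ is a prescribed bump on $\omega'$.

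**Transport to $\fs$.** The key geometric input is Lemma \ref{lem74} together with Lemma \ref{lem75}: the map $\Ad(y)\circ\varphi$ gives an $F$-isomorphism $\fs'_{Y_0}(F)\simeq\fs_{X_0}(F)$ carrying $Y_0\mapsto X_0$, matching orbits $V\mapsto U$ with $U\da V$, preserving the pairing $\langle X,U\rangle=\langle Y,V\rangle$, and relating the Weil constants by $\gamma_\psi(\fh(F))^{-1}\gamma_\psi(X,U)=\gamma_\psi(\fh'(F))^{-1}\gamma_\psi(Y,V)$. So I would define $f$ on $\fs(F)$ analogously, as a bump on $\Ad(H(F))(\omega)$ where $\omega:=\Ad(y)\circ\varphi(\omega')$, with $J_G^G(\eta,\cdot,f)$ the transported bump times $\kappa(\cdot)^{-1}$, again cleaned up via Lemma \ref{lemlabesse}.(1) to kill nontrivial weights. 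Condition (a) holds because $\Supp(f)\subseteq\Ad(H(F))(\omega)$ and every element of $\omega$ matches an element of $(\fs'_{Y_0}\cap\fs'_\rs)(F)$; condition (b) is the analogous statement for $f'$; condition (c) is built in; condition (d) reduces to the identity $\kappa(Z)J_G^G(\eta,Z,f)=J_{H'}^{H'}(Z',f')$ for matching $Z\da Z'$, which holds because both sides are the value of the same transported bump, and to the vanishing $J_G^G(\eta,X,f)=0$ when $X$ does not come from $\fs'_\rs(F)$, which holds because then $X$ cannot be $H(F)$-conjugate into $\omega$ (all elements of $\omega$ come from $\fs'_\rs(F)$, and coming-from is an invariant of the orbit by the categorical-quotient discussion in Section \ref{ssec:matorb} and Proposition \ref{prop2141}/Corollary \ref{idcar1}).

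**The Fourier-transform side: conditions (e) and (f).** This is where the limit formulae do the real work. By the Weyl integration formula \cite[(7.1.2)]{MR4681295} and condition (c) on $f$, for $Q\in\msf^G(M)$ one has
\[
 J_M^Q(\eta,X_0,\hat f)=I_M^Q(\eta,X_0,\hat f)=\sum_{\fc\in\mst_0^{\fm_Q\cap\fs}} |W(M_{Q,H},\fc)|^{-1}\int_{\fc_\reg(F)} J_{M_Q}^{M_Q}(\eta,Z,f_Q^\eta)\,\hat i_M^{M_Q}(\eta,X_0,Z)\,dZ,
\]
and using $J_{M_Q}^{M_Q}(\eta,Z,f_Q^\eta)=J_{M_Q}^Q(\eta,Z,f)$ together with $\Supp(f)\subseteq\Ad(H(F))(\omega)$, only $\fc$'s meeting $\Ad(H(F))(\omega)$ and only $Z$ near $\mu X_0$ contribute; shrinking $\omega$ so $\kappa$ and $\hat i_M^{M_Q}(\eta,X_0,\cdot)$ are constant there (possible by \cite[Lemmas 8.3.(1)]{MR4681295}), the integral collapses to $\vol(\omega)\,\kappa(\mu X_0)^{-1}\,\hat i_M^{M_Q}(\eta,X_0,\mu X_0)$ up to the explicit combinatorial constants. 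Now invoke Proposition \ref{proplimfor}.(1): for $N$ large, $\hat i_M^{M_Q}(\eta,X_0,\mu X_0)=\hat a_M^{M_Q}(\eta,X_0,\mu X_0)$, which is $0$ unless $M_Q=M$ (giving (e)), and for $M_Q=M$ it equals the explicit sum $\sum_x \eta(\det x)\gamma_\psi(\Ad(x)(X_0),\mu X_0)\psi(\langle\cdot\rangle)$. Doing the same on the $\fs'$ side with Proposition \ref{proplimfor}.(2), and matching term by term via Lemma \ref{lem75} (the sum over $x\in (M_H)_{X_0}(F)\bs M_H(F)$ with $\Ad(x)(X_0)\in\fs_{X_0}(F)$ corresponds under $\Ad(y)\circ\varphi$ to the sum for $M'$, with $\eta(\det x)$ absorbed into $\kappa$ via $\kappa(\Ad(x)(X_0))=\eta(\det x)\kappa(X_0)$), yields $\gamma_\psi(\fh(F))^{-1}\kappa(X_0)J_G^G(\eta,X_0,\hat f)=\gamma_\psi(\fh'(F))^{-1}J_{H'}^{H'}(Y_0,\hat f')$; nonvanishing is arranged by choosing the bumps so the leading term does not cancel (e.g. choose $\omega'$ small enough that only $x\in M'_{X_0}(F)\bs M'(F)$ landing in a single coset contributes, as in \cite[\S7]{MR3414387}). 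The main obstacle is bookkeeping: ensuring a single $N$ and a single $\omega$ work for all $Q\in\msf^G(M)$ at once, checking that shrinking $\omega$ does not destroy the Labesse-lemma properties (it does not, since Lemma \ref{lemlabesse} is applied after fixing $\omega$), and verifying the term-by-term matching of the two limit sums respects the $\eta$-factor — essentially the content of Lemmas \ref{lem74} and \ref{lem75}, so it is under control.
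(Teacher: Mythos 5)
The overall architecture is right — transport via Lemma \ref{lem74}/\ref{lem75}, clean up with Labesse's lemma, invoke the limit formulae of Proposition \ref{proplimfor} — but the construction of the test functions has a genuine gap that makes the crucial nonvanishing in (f) fail. You build $f$ and $f'$ as \emph{flat} bumps (indicator-type functions) on neighbourhoods of $\mu X_0$ and $\mu Y_0$. When you then expand $J_G^G(\eta,X_0,\hat f)$ via the Weyl integration formula and the limit formula, you are left with an integral $\int_{\omega}\hat a_G^G(\eta,X_0,U_\fd)\,dU$ where $\hat a_G^G(\eta,X_0,U_\fd)=\sum_{i\in W}\eta(\det i)\gamma_\psi(i(X_0),U_\fd)\psi(\langle i(X_0),U_\fd\rangle)$. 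Each summand \emph{oscillates} as $U_\fd$ ranges over $\omega_0=\mu(1+\varpi^r\CO_F)U_0$: for $v_F(\mu)\ll0$, the character $\lambda\mapsto\psi(\mu\varpi^r\lambda\langle i(X_0),U_0\rangle)$ is nontrivial on $\CO_F$. So your claim that one can shrink $\omega$ to make $\hat i_M^{M_Q}(\eta,X_0,\cdot)$ constant and pull it out is wrong precisely in the $M_Q=G$ case you need for (f), and the flat-bump integral is a Gauss sum that will typically vanish. The paper's fix is to prescribe, via Lemma \ref{lemlabesse}, an \emph{oscillating} orbital integral $\kappa(U)J_G^G(\eta,U,f)=f_\omega(U)=\psi(-\langle X_0,U_\fd\rangle)$; this phase exactly cancels the oscillation of the $i=1$ term, giving $\vol(\omega_0)\gamma_\psi(X_0,\mu U_0)\neq0$, while the $i\neq1$ terms become $\int_{\omega_0}\psi(\langle i(X_0)-X_0,U_\fd\rangle)\,dU_\fd$ which is killed by the nontriviality of the character.

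A second, related omission: the bump must be centred at $\mu U_0$ for a \emph{separately chosen, generic} $U_0=\varphi(V_0)\in(\fs_{X_0}\cap\fs_\rs)(F)$ — not at $\mu X_0$. The genericity required is that $\langle i(X_0)-X_0,U_0\rangle\neq0$ (and $\langle i'(Y_0)-Y_0,V_0\rangle\neq0$) for all nontrivial Weyl elements $i$ (resp. $i'$), plus $\kappa(U_0)=\kappa(X_0)$; there is no reason $U_0=X_0$ satisfies the first condition, and you then need $\mu$ chosen so that the characters $\lambda\mapsto\psi(\varpi^r\mu\lambda\langle i(X_0)-X_0,U_0\rangle)$ are nontrivial on $\CO_F$ to make the cross-terms vanish. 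Finally, a small notational slip: Proposition \ref{proplimfor} is applied by scaling the \emph{test} points $U_0,V_0$ to $\mu U_0,\mu V_0$ while keeping $X_0,Y_0$ fixed, not by ``replacing $X_0$ by $\mu X_0$''; and $I_M^Q$ should be $I_M^{M_Q}$ after parabolic descent of $J_M^Q(\eta,X_0,\hat f)$. These are cosmetic, but the missing oscillating phase and the genericity of $U_0$ are real: without them condition (f) cannot be established.
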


\begin{remark}
This is a generalisation of \cite[Proposition 7.6]{MR3414387}, where the conditions (a), (b), (d) and (f) have already appeared. In fact, the condition (d) means exactly that the functions $f$ and $f'$ are smooth transfers of each other in the sense of \cite[Definition 5.10.(ii)]{MR3414387}. Here the additional ingredients are Lemma \ref{lemlabesse} and Proposition \ref{proplimfor} (cf. \cite[\S6]{MR2164623}). 
\end{remark}

\begin{proof}[Proof of Proposition \ref{testfunctions}]
By Lemma \ref{lem74} applied to $Y_0$ and $X_0$, we have an isomorphism $\varphi: \fs'_{Y_0}(F)\ra\fs_{X_0}(F)$ such that $\varphi(Y_0)=X_0$. Choose $V_0\in(\fs'_{Y_0}\cap\fs'_\rs)(F)$ such that if we denote $U_0:=\varphi(V_0)\in(\fs_{X_0}\cap\fs_\rs)(F)$, then 
\begin{enumerate}[(i)]
	\item for all $i\in W(H,\fs_{X_0}), i\neq1$, we have $\langle i(X_0)-X_0, U_0\rangle\neq0$; 

	\item for all $i'\in W(H',\fs'_{Y_0}), i'\neq1$, we have $\langle i'(Y_0)-Y_0, V_0\rangle\neq0$; 

	\item $\kappa(U_0)=\kappa(X_0)$. 
\end{enumerate}

Fix $r\in\BZ_{\geq1}$ such that 
\begin{enumerate}[(i)]
	\item $1+\varpi^r\CO_F\subseteq {F^\times}^2$; 

	\item the sets $i((1+\varpi^r\CO_F)U_0)$ where $i\in W(H, \fs_{X_0})$ are mutually disjoint; 

	\item the sets $i'((1+\varpi^r\CO_F)V_0)$ where $i'\in W(H', \fs'_{Y_0})$ are mutually disjoint. 
\end{enumerate}

By Propositions \ref{proplimfor}, there exists $N\in\BZ_{\geq0}$ such that if $\mu\in F^\times$ satisfies $v_F(\mu)<-N$, then 
\begin{enumerate}[(i)]
	\item for $L_1,L_2\in\msl^G(M)$ with $L_1\subseteq L_2$, $i_1\in W(L_{1,H}, \fs_{X_0}, \fl_1\cap\fs)$ and $i_2\in W(H, \fs_{X_0}, \fl_2\cap\fs)$ (see \eqref{eq:weylcarlevi1} for the notations), we have
	$$ \hat{i}_{L_1}^{L_2}(\eta, i_1(X_0), i_2(\mu U_0))=\hat{a}_{L_1}^{L_2}(\eta, i_1(X_0), i_2(\mu U_0)); $$

	\item for $L'_1,L'_2\in\msl^{H'}(M')$ with $L'_1\subseteq L'_2$, $i'_1\in W(L'_1, \fs'_{Y_0}, \wt{\fl'_1}\cap\fs')$ and $i'_2\in W(H', \fs'_{Y_0}, \wt{\fl'_2}\cap\fs')$ (see \eqref{eq:weylcarlevi2} for the notations), we have
	$$ \hat{i}_{L'_1}^{L'_2}(i'_1(Y_0), i'_2(\mu V_0))=\hat{a}_{L'_1}^{L'_2}(i'_1(Y_0), i'_2(\mu V_0)). $$
\end{enumerate}
Fix such an integer $N$. 

Fix $\mu\in F^\times$ with $v_F(\mu)<-N$ such that
\begin{enumerate}[(i)]
	\item $\eta(\mu)=1$; 

	\item for all $i\in W(H,\fs_{X_0}), i\neq1$, the character $\lambda\mapsto\psi(\varpi^r\mu\lambda\langle i(X_0)-X_0, U_0\rangle)$ is nontrivial on $\CO_F$; 

	\item for all $i'\in W(H',\fs'_{Y_0}), i'\neq1$, the character $\lambda\mapsto\psi(\varpi^r\mu\lambda\langle i'(Y_0)-Y_0, V_0\rangle)$ is nontrivial on $\CO_F$. 
\end{enumerate}
Notice that the conditions (ii) and (iii) are possible because of the conditions (i) and (ii) on $U_0$ and $V_0$. 

We set 
$$ \omega'_0:=\mu(1+\varpi^r\CO_F)V_0. $$ 
Denote by $\fd'$ the $F$-vector space generated by $V_0$. Fix a complement $\fe'$ of $\fd'$ in $\fs'_{Y_0}(F)$. For $V\in\fs'_{Y_0}(F)$, denote by $V_{\fd'}$ its projection to $\fd'$ with respect to the decomposition $\fs'_{Y_0}(F)=\fd'\oplus\fe'$. 

We set 
$$ \omega_0:=\mu(1+\varpi^r\CO_F)U_0. $$
Denote by $\fd:=\varphi(\fd')$ the $F$-vector space generated by $U_0$. Let $\fe:=\varphi(\fe')$ be the complement of $\fd$ in $\fs_{X_0}(F)$. For $U\in\fs_{X_0}(F)$, denote by $U_\fd$ its projection to $\fd$ with respect to the decomposition $\fs_{X_0}(F)=\fd\oplus\fe$. 

Choose open compact neighbourhoods $\omega_\fe$ of $0$ in $\fe$ and $\omega_{\fe'}$ of $0$ in $\fe'$ which are small enough such that $\omega:=\omega_0\oplus\omega_\fe$ and $\omega':=\omega'_0\oplus\omega'_{\fe'}$ satisfy 
\begin{enumerate}[(i)]
	\item the sets $i(\omega)$ where $i\in W(H, \fs_{X_0})$ are mutually disjoint; 

	\item the sets $i'(\omega')$ where $i'\in W(H', \fs'_{Y_0})$ are mutually disjoint; 

	\item $\omega\subseteq(\fs_{X_0}\cap\fs_\rs)(F)$, $\omega'\subseteq(\fs'_{Y_0}\cap\fs'_\rs)(F)$ and $\varphi(\omega')=\omega$; 

	\item for $L_1,L_2\in\msl^G(M)$ with $L_1\subseteq L_2$, $i_1\in W(L_{1,H}, \fs_{X_0}, \fl_1\cap\fs), i_2\in W(H, \fs_{X_0}, \fl_2\cap\fs)$ and $U\in\omega$, we have 
	$$ \hat{i}_{L_1}^{L_2}(\eta, i_1(X_0), i_2(U))=\hat{i}_{L_1}^{L_2}(\eta, i_1(X_0), i_2(U_\fd))=\hat{a}_{L_1}^{L_2}(\eta, i_1(X_0), i_2(U_\fd)); $$

	\item for $L'_1,L'_2\in\msl^{H'}(M')$ with $L'_1\subseteq L'_2$, $i'_1\in W(L'_1, \fs'_{Y_0}, \wt{\fl'_1}\cap\fs'), i'_2\in W(H', \fs'_{Y_0}, \wt{\fl'_2}\cap\fs')$ and $V\in\omega'$, we have
	$$ \hat{i}_{L'_1}^{L'_2}(i'_1(Y_0), i'_2(V))=\hat{i}_{L'_1}^{L'_2}(i'_1(Y_0), i'_2(V_{\fd'}))=\hat{a}_{L'_1}^{L'_2}(i'_1(Y_0), i'_2(V_{\fd'})); $$

	\item the function $\kappa$ is constant on $\omega$. 
\end{enumerate}
Notice that the conditions (i) and (ii) follow from the conditions (ii) and (iii) on $r$. Besides, the conditions (iv) and (v) are assured by the local constancy of $\hat{i}_{L_1}^{L_2}$ and $\hat{i}_{L'_1}^{L'_2}$, $v_F(\mu)<-N$ and $r\geq 1$. Moreover, the condition (vi) results from the condition (i) on $\mu$ and the condition (i) on $r$. Combined with the condition (iii) on $U_0$, the condition (vi) says that the restriction of $\kappa$ to $\omega$ equals $\kappa(X_0)$. 

Define a function $f_\omega$ on $\omega$ by 
\begin{equation}\label{eq:def_f_omega}
 f_\omega(U):=\psi(-\langle X_0, U_\fd\rangle) 
\end{equation}
for all $U\in\omega$. By the conditions (i) and (vi) on $\omega$ and Lemma \ref{lemlabesse}.(1), we can construct a function $f\in\CC_c^\infty(\fs(F))$ such that 
\begin{enumerate}[(i)]
	\item $\Supp(f)\subseteq\Ad(H(F))(\omega)$; 

	\item for all $U\in\omega$, $\kappa(U)J_G^G(\eta, U, f)=f_\omega(U)$; 

	\item the weighted orbital integrals of $f$ vanish for nontrivial weights. 
\end{enumerate}

Define a function $f'_{\omega'}$ on $\omega'$ by 
$$ f'_{\omega'}(V):=\psi(-\langle Y_0, V_{\fd'}\rangle) $$
for all $V\in\omega'$. By the condition (ii) on $\omega'$ and Lemma \ref{lemlabesse}.(2), we can construct a function $f'\in\CC_c^\infty(\fs'(F))$ such that 
\begin{enumerate}[(i)]
	\item $\Supp(f')\subseteq\Ad(H'(F))(\omega')$; 

	\item for all $V\in\omega'$, $J_{H'}^{H'}(V, f')=f'_{\omega'}(V)$; 

	\item the weighted orbital integrals of $f'$ vanish for nontrivial weights. 
\end{enumerate}

We shall check that $f$ and $f'$ satisfy the conditions in the proposition. The conditions (a) and (b) result from the condition (iii) on $\omega$ and $\omega'$, the condition (i) on $f$ and the condition (i) on $f'$. The condition (c) is exactly the condition (iii) on $f$ and the condition (iii) on $f'$. The condition (d) is deduced from the condition (iii) on $\omega$ and $\omega'$, the conditions (i) and (ii) on $f$, the conditions (i) and (ii) on $f'$ and Lemma \ref{lem75}.(1). 

We now verify the condition (e). By \cite[Proposition 4.1.(4)]{MR4681295}, we write 
$$ J_M^Q(\eta, X_0, \hat{f})=J_M^{M_Q}(\eta, X_0, \hat{f}_Q^\eta). $$
Because of the condition (iii) on $f$ and Lemma \ref{lem:Lab_I.6.4}.(1), it follows from the definition \cite[(8.1.1)]{MR4681295} that 
$$ J_M^{M_Q}(\eta, X_0, \hat{f}_Q^\eta)=I_M^{M_Q}(\eta, X_0, \hat{f}_Q^\eta). $$
From the Weyl integration formula \cite[(7.1.2)]{MR4681295}, we deduce that
$$ I_M^{M_Q}(\eta, X_0, \hat{f}_Q^\eta)=\sum_{\fc\in\mst_0^{\fm_Q\cap\fs}} |W(M_{Q_H},\fc)|^{-1}\int_{\fc_\reg(F)} J_{M_Q}^{M_Q}(\eta, Z, f_Q^\eta) \hat{i}_M^{M_Q}(\eta, X_0, Z) dZ. $$
Consider $\fc\in\mst_0^{\fm_Q\cap\fs}$ and $Z\in\fc_\reg(F)$. Using \cite[Proposition 4.1.(4)]{MR4681295} again, we obtain 
$$ J_{M_Q}^{M_Q}(\eta, Z, f_Q^\eta)=J_{M_Q}^Q(\eta, Z, f)=J_G^G(\eta, Z, f). $$
Suppose that $J_G^G(\eta, Z, f)\neq0$. By the condition (i) on $f$, there exists $x\in H(F)$ such that $\Ad(x^{-1})(Z)\in\omega$. For such an $x$, we have 
$$ Z\in\Ad(x)(\omega)\subseteq\Ad(x)(\fs_{X_0})=\fc\subseteq\fm_Q\cap\fs. $$
By the condition (iv) on $\omega$ applied to $L_1=M$ and $L_2=M_Q$ as well as \cite[Lemma 8.3.(2)]{MR4681295}, if $M_Q\neq M$, we get $\hat{i}_M^{M_Q}(\eta, X_0, Z)=0$ and thus 
$$ J_M^Q(\eta, X_0, \hat{f})=0. $$
Similarly, if $M_{Q'}\neq M'$, we prove that $J_{M'}^{Q'}(Y_0, \hat{f'})=0$ with the help of \cite[Proposition 4.4.(4), (8.2.1), (7.2.2) and Lemma 8.8.(2)]{MR4681295}, Lemma \ref{lem:Lab_I.6.4}.(2), the conditions (iii) and (i) on $f'$ as well as the condition (v) on $\omega'$. 

The proof of the condition (f) is similar to the last paragraph of the proof of \cite[Proposition 7.6]{MR3414387} with the preparation in \cite{MR4681295}. We reproduce it here for completeness. By the Weyl integration formula \cite[(7.1.2)]{MR4681295} and the condition (i) on $f$, we have 
\[\begin{split}
 \kappa(X_0) J_G^G(\eta, X_0, \hat{f})&=\kappa(X_0) \sum_{\fc\in\mst_0^\fs} |W(H,\fc)|^{-1}\int_{\fc_\reg(F)} J_G^G(\eta, Z, f) \hat{i}_G^G(\eta, X_0, Z) dZ \\ 
 &=\kappa(X_0) |W(H,\fs_{X_0})|^{-1}\int_{(\fs_{X_0}\cap\fs_\rs)(F)} J_G^G(\eta, U, f) \hat{i}_G^G(\eta, X_0, U) dU. \\
\end{split}\]
By the condition (i) on $\omega$ and the condition (ii) on $f$, the last expression is equal to 
\begin{equation}\label{eq:1867}
 \kappa(X_0) \int_{\omega} \kappa(U) f_\omega(U) \hat{i}_G^G(\eta, X_0, U) dU. 
\end{equation}
Consider $U\in\omega$. By the condition (vi) on $\omega$ and the definition \eqref{eq:def_f_omega} of $f_\omega$, we have 
$$ \kappa(U) f_\omega(U)=\kappa(X_0)\psi(-\langle X_0, U_\fd\rangle). $$
By the condition (iv) on $\omega$ applied to $L_1=L_2=G$, we have 
$$ \hat{i}_G^G(\eta, X_0, U)=\hat{a}_G^G(\eta, X_0, U_\fd)=\sum_{i\in W(H,\fs_{X_0})} \eta(\det(i)) \gamma_\psi(i(X_0), U_\fd) \psi(\langle i(X_0), U_\fd\rangle). $$
It follows that \eqref{eq:1867} equals 
\begin{equation}\label{eq:1868}
 \sum_{i\in W(H,\fs_{X_0})} \eta(\det(i)) \vol(\omega_\fe) \int_{\omega_0} \gamma_\psi(i(X_0), U_\fd) \psi(\langle i(X_0)-X_0, U_\fd\rangle) dU_\fd. 
\end{equation}
For $U_\fd\in\omega_0=\mu(1+\varpi^r\CO_F)U_0$, from the condition (i) on $r$ and \cite[\S 25, Proposition 3]{MR0165033}, we see that 
$$ \gamma_\psi(i(X_0), U_\fd)=\gamma_\psi(i(X_0), \mu U_0). $$
By the condition (ii) on $\mu$, for $i\neq1$, 
$$ \int_{\omega_0} \psi(\langle i(X_0)-X_0, U_\fd\rangle) dU_\fd=\int_{\CO_F} \psi(\langle i(X_0)-X_0, \mu(1+\varpi^r\lambda)U_0\rangle) d\lambda=0. $$
Thus \eqref{eq:1868} is equal to 
$$  \vol(\omega_\fe) \gamma_\psi(X_0, \mu U_0) \vol(\omega_0)=\vol(\omega) \gamma_\psi(X_0, \mu U_0). $$
To sum up, we have 
\begin{equation}\label{eq:zhang7.6}
 \kappa(X_0) J_G^G(\eta, X_0, \hat{f})=\vol(\omega) \gamma_\psi(X_0, \mu U_0)\neq 0. 
\end{equation}
In fact, the above computation to deduce \eqref{eq:zhang7.6} is just rephrasing that in the last paragraph of the proof of \cite[Proposition 7.6]{MR3414387}. Similarly, we prove that 
$$ J_{H'}^{H'}(Y_0,\hat{f'})=\vol(\omega') \gamma_\psi(Y_0, \mu V_0)\neq 0 $$
by \cite[(7.2.2)]{MR4681295}, the conditions (i) and (ii) on $f'$, the conditions (ii) and (v) on $\omega'$, the condition (i) on $r$ and the condition (iii) on $\mu$. From the condition $\varphi(\omega')=\omega$ and our choice of Haar measures in Section \ref{ssec:centrss}, we know that $\vol(\omega)=\vol(\omega')$. By Lemma \ref{lem75}.(2), we have 
$$ \frac{\gamma_\psi(X_0, \mu U_0)}{\gamma_\psi(Y_0, \mu V_0)} = \frac{\gamma_\psi(\fh(F))}{\gamma_\psi(\fh'(F))}. $$
Then the equality in the condition (f) follows. 
\end{proof}


\section{\textbf{The unramified case}}\label{secwfle}

\subsection{}\label{ssec:unram_not}

Let $F$ be a non-archimedean local field of characteristic zero and $E$ be a quadratic extension of $F$. Assume that $F$ has odd residue characteristic and that $E/F$ is unramified. Assume that $(G',H')\simeq(GL_{2n},\Res_{E/F}GL_{n,E})$. In this case, the injection $M'\mapsto M$ from $\msl^{H'}(M'_0)$ into $\msl^{G,\omega}(M_0)$ in Section \ref{ssec:mat_Levi} is a bijection. For $M\in\msl^{G,\omega}(M_0)$, we shall always denote by $M'$ the unique element in $\msl^{H'}(M'_0)$ matching $M$. Notice that the $\BR$-linear spaces $\fa_M$ and $\fa_{M'}$ are naturally isomorphic. We shall choose compatible scalar products on them. There is a natural $\CO_F$-scheme structure on $\fs$ such that 
$$ \fs(\CO_F)=\left\{\mat(0,A,B,0): A,B\in\fg\fl_n(\CO_F)\right\}. $$
Let $\sigma$ be the nontrivial element in $\Gal(E/F)$. From the discussion in \cite[\S 10.2]{MR4350885}, we can and we shall identify $\fs'$ with $\fh'$ equipped with the Galois twisted conjugation of $H'$, i.e., $x\cdot Y:=x Y\sigma(x)^{-1}$ for all $x\in H'$ and $Y\in\fs'$. Moreover, there is a natural $\CO_F$-scheme structure on $\fs'$ such that 
$$ \fs'(\CO_F)=\fg\fl_n(\CO_E). $$
Denote by $f_0\in\CC_c^\infty(\fs(F))$ (resp. $f'_0\in\CC_c^\infty(\fs'(F))$) the characteristic function of $\fs(\CO_F)$ (resp. of $\fs'(\CO_F)$). 

\subsection{The weighted fundamental lemma}

\begin{lem}[see {\cite[Theorem 10.9]{MR4350885}}]\label{wfl}
For all $M\in\msl^{G,\omega}(M_0)$ and all $Q\in\msf^G(M)$, we have
\begin{enumerate}[(a)]
	\item for $X\in(\fm\cap\fs_\rs)(F)$ and $Y\in(\wt{\fm'}\cap\fs'_\rs)(F)$ such that $X\overset{M}{\da} Y$ (see Definition \ref{defbyinvM}), 
	$$ \kappa(X) J_M^Q(\eta, X, f_0)=J_{M'}^{Q'}(Y, f'_0); $$

	\item for $X\in(\fm\cap\fs_\rs)(F)$, 
	$$ J_M^Q(\eta, X, f_0)=0 $$
	unless $X$ comes potentially from $(\fm_{\wt{Q'}}\cap\fs'_\rs)(F)$ (see Definition \ref{defcomepotfrom}). 
\end{enumerate}
\end{lem}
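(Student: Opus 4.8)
The plan is to deduce Lemma \ref{wfl} from the global weighted fundamental lemma for the infinitesimal Guo-Jacquet setting, stated as \cite[Theorem 10.9]{MR4350885}, by a purely local manipulation. First I would recall that the cited global result provides, for each pair of matching Levi subgroups and each parabolic $Q$, an equality of the form
\[
 \kappa(X)\, J_M^Q(\eta, X, f_0) = J_{M'}^{Q'}(Y, f'_0)
\]
whenever $X\overset{M}{\da}Y$, together with the vanishing assertion when $X$ fails to come potentially from the appropriate Levi-truncated space. Since the present lemma is literally the local shadow of that statement, the main task is to verify that all the normalisations in force here (Haar measures on $\fn_P(F)$, on $K_H$ and $K_{H'}$, the choice of $\tau$ in Section \ref{ssec:sym2-I-II}, the bilinear forms \eqref{bilform1} and \eqref{bilform2}, and the scalar products on $\fa_M\simeq\fa_{M'}$ identified in Section \ref{ssec:unram_not}) match those under which \cite[Theorem 10.9]{MR4350885} is proved. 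The transfer factor $\kappa$ here is \eqref{eq:def_trans_fac}, which coincides with \cite[Definition 5.7]{MR3414387} and hence with the one used in {\it loc. cit.}

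Concretely, for part (a) I would argue as follows. Fix $M\in\msl^{G,\omega}(M_0)$, $Q\in\msf^G(M)$, and $X\in(\fm\cap\fs_\rs)(F)$, $Y\in(\wt{\fm'}\cap\fs'_\rs)(F)$ with $X\overset{M}{\da}Y$. Using the product decompositions \eqref{eq:M-fact} and \eqref{eq:M'-fact} together with the splitting formula for weighted orbital integrals (the infinitesimal analogue of \cite[Proposition 4.1]{MR4681295}), one reduces to the case where the ambient pair is one of the $(G_{n_i},H_{n_i})$ versus $(G'_{r_i},H'_{r_i})$; but in the unramified split situation here we have $(G',H')\simeq(GL_{2n},\Res_{E/F}GL_{n,E})$, so each factor is again of this shape and the matching of Levi subgroups is a genuine bijection as noted in Section \ref{ssec:unram_not}. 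Then the statement is exactly \cite[Theorem 10.9]{MR4350885}, once we observe that $f_0$ and $f'_0$ are the characteristic functions of the $\CO_F$-points of $\fs$ and $\fs'$, which are the test functions appearing there, and that the identification of $\fs'$ with $\fh'$ equipped with Galois-twisted conjugation (from \cite[\S10.2]{MR4350885}) is compatible with the one used in that theorem.

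For part (b), the argument is even shorter: it is the vanishing clause of \cite[Theorem 10.9]{MR4350885}, which asserts precisely that $J_M^Q(\eta,X,f_0)=0$ unless $X$ comes potentially from $(\fm_{\wt{Q'}}\cap\fs'_\rs)(F)$ in the sense of Definition \ref{defcomepotfrom}; one should check that the notion ``comes potentially from'' used in {\it loc. cit.} is the one recorded here, which follows from Corollary \ref{corequivdefcomefrom} and the characterisation via $\eta_M$ in \eqref{eq:def_eta_M}. I expect the only real obstacle to be bookkeeping: confirming that the signs $\kappa(X)$, the Kottwitz-type signs $e_{M'}$, and the measure normalisations line up exactly between the two papers, so that no spurious constant appears. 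Since both papers are by the same author and the conventions were set up in parallel, this should be routine but must be stated carefully; I would devote a sentence or two to pinning down each normalisation rather than leaving it implicit.
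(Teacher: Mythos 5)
Your approach matches the paper's: Lemma \ref{wfl} is stated with a bracketed citation to \cite[Theorem 10.9]{MR4350885} precisely because it \emph{is} that theorem, specialised to the unramified split case $(G',H')\simeq(GL_{2n},\Res_{E/F}GL_{n,E})$ of Section \ref{ssec:unram_not}; the paper gives it no independent proof beyond that citation (plus the normalisation checks you describe). Two small corrections to your narration: the cited weighted fundamental lemma is a \emph{local} statement (it is proved in that paper \emph{by} global methods, not itself a global identity), and the detour via the product decompositions \eqref{eq:M-fact}--\eqref{eq:M'-fact} is both unnecessary (Theorem 10.9 already covers arbitrary $M$ and $Q$ directly) and more delicate than your sketch suggests, since the weight function $v_M^Q$ does not factor over the factors of $M$ the way a naive appeal to a ``splitting formula'' would require.
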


\begin{remark}
Under the assumption $(G',H')\simeq(GL_{2n},\Res_{E/F}GL_{n,E})$, for all $M'\in\msl^{H'}(M'_0)$, we have 
$$ e_{M'}=(1, \cdots, 1)\in\{\pm1\}^\ell $$
with the notations in Section \ref{ssec:mat_Levi}. 
\end{remark}

\subsection{}

By \cite[Proposition 4.2]{MR3414387} and the base change to $\ov{F}$,  we know that any element $X\in\fs_\rs(F)$ (resp. $Y\in\fs'_\rs(F)$) is regular semi-simple in $\fg$ (reps. $\fg'$) in the usual sense, so $G_X$ (resp. $G'_Y$) is a maximal torus in $G$ (resp. in $G'$) defined over $F$. As usual, we call a torus over $F$ unramified if it splits over an unramified extension of $F$. 

Let $M\in\msl^{G,\omega}(M_0)$ and $Q\in\msf^G(M)$. For $f\in \CC_c^\infty(\fs(F))$ and $X\in (\fm\cap\fs_\rs)(F)$, define the weighted orbital integral 
$$ J_M^Q(X, f):=|D^\fs(X)|_F^{1/2} \int_{H_X(F)\bs H(F)} f(\Ad(x^{-1})(X)) v_M^Q(x) dx. $$
This is an analogue of \eqref{defnoninvwoi1} and \eqref{defnoninvwoi2} with $E$ replaced by $F\times F$. 

\begin{lem}\label{lem:chau71}
\leavevmode
\begin{enumerate}
\item Let $M\in\msl^{G,\omega}(M_0)$ and $X\in(\fm\cap\fs_\rs)(F)$. Suppose that 
	\begin{enumerate}[(a)]
		\item $X\in\fs(\CO_F)$ and $\det(X)\in\CO_F^\times$; 
	
		\item $T:=G_X$ is an unramified torus; 
	
		\item $X\in\ft(F)$ is integral of regular reduction in the sense of \cite[\S 7.2]{MR1440722}. 
	\end{enumerate}
Then for all $Q\in\msf^{G}(M)$, we have 
	\begin{displaymath}
	\kappa(X) J_{M}^{Q}(\eta, X, f_0) = J_{M}^{Q}(X, f_0) = \left\{ \begin{array}{ll}
	0, & \text{if $Q\notin\msp^{G}(M)$}; \\
	1, & \text{otherwise}. \\
	\end{array} \right.
	\end{displaymath}

\item Let $M'\in\msl^{H'}(M'_0)$ and $Y\in(\wt{\fm'}\cap\fs'_\rs)(F)$. Suppose that 
	\begin{enumerate}[(a)]
		\item $Y\in\fs'(\CO_F)$ and $\det(Y)\in\CO_E^\times$; 
	
		\item $T':=G'_Y$ is an unramified torus;  
	
		\item $Y\in\ft'(F)$ is integral of regular reduction in the sense of \cite[\S 7.2]{MR1440722}. 
	\end{enumerate}
Then for all $Q'\in\msf^{H'}(M')$, we have 
	\begin{displaymath}
	J_{M'}^{Q'}(Y, f'_0) = \left\{ \begin{array}{ll}
	0, & \text{if $Q'\notin\msp^{H'}(M')$}; \\
	1, & \text{otherwise}. \\
	\end{array} \right.
	\end{displaymath}
\end{enumerate}
\end{lem}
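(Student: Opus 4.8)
The plan is to treat the three assertions separately: the leftmost equality in (1) is elementary, while the two ``$0$ or $1$'' evaluations are instances of Waldspurger's computation of weighted orbital integrals at elements of regular reduction. For the leftmost equality in (1), write $X=\mat(0,A,B,0)$. Since $X\in\fs(\CO_F)$ we have $A,B\in\fg\fl_n(\CO_F)$, and $\det(X)\in\CO_F^\times$ forces $\det(A)\det(B)\in\CO_F^\times$, hence $\det(A)\in\CO_F^\times$; as $E/F$ is unramified, $\eta$ is trivial on $\CO_F^\times$, so $\kappa(X)=\eta(\det(A))=1$. If moreover $x\in H(F)$ satisfies $\Ad(x^{-1})(X)\in\fs(\CO_F)$, then $\det(\Ad(x^{-1})(X))=\det(X)\in\CO_F^\times$ and the same reasoning gives $\kappa(\Ad(x^{-1})(X))=1$; combined with the transfer factor identity $\kappa(\Ad(x^{-1})(X))=\eta(\det(x))\kappa(X)$ this yields $\eta(\det(x))=1$. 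Thus on the support of the integrand the twist $\eta(\det(x))$ is trivial, so $J_M^Q(\eta,X,f_0)=J_M^Q(X,f_0)$, and since also $\kappa(X)=1$ the leftmost equality follows.

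It then remains to evaluate $J_M^Q(X,f_0)$, a weighted orbital integral of the characteristic function of $\fs(\CO_F)$ at an element of regular reduction with unramified centraliser. Hypotheses (b) and (c) give $|D^\fs(X)|_F=1$, and --- this is the substantive input, taken from Waldspurger \cite[\S7.2]{MR1440722} and transported to the symmetric space as in the unramified computations of \cite{MR4350885} --- the set $\{x\in H_X(F)\bs H(F):\Ad(x^{-1})(X)\in\fs(\CO_F)\}$ reduces to the single coset $H_X(F)\bs H_X(F)K_H$. Since $T=G_X$ is unramified, $H_X=G_X\cap H$ is an unramified torus, so $H_X(F)\cap K_H=H_X(\CO_F)$ has volume $1$, whence the coset has volume $1$. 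On it $x$ lies in $K_H\subseteq K_G$, so $H_P(x)=0$ and $v_P(\lambda,x)=1$ for every $P$; by \eqref{eqweifun}, $v_M^Q(x)=\lim_{\lambda\to 0}\sum_{\{P\in\msp^G(M):P\subseteq Q\}}\theta_P^Q(\lambda)^{-1}$, which vanishes when $M\subsetneq M_Q$ (i.e.\ $Q\notin\msp^G(M)$) and equals $1$ when $M=M_Q$ (i.e.\ $Q\in\msp^G(M)$). Hence $J_M^Q(X,f_0)$ is $0$ if $Q\notin\msp^G(M)$ and $1$ otherwise.

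For part (2) I would use the identification of Section \ref{ssec:unram_not} of $\fs'$ with $\fh'=\fg\fl_n(E)$ carrying the $\sigma$-twisted conjugation action of $H'=\GL_n(E)$, under which $f'_0$ is the characteristic function of $\fg\fl_n(\CO_E)$ and $J_{M'}^{Q'}(Y,f'_0)$ is a weighted $\sigma$-twisted orbital integral on $\fg\fl_n(E)$. For $Y$ of regular reduction with unramified $T'=G'_Y$ this is the easy regular-reduction case of the base change fundamental lemma, and the argument of the previous paragraph applies verbatim --- now over $\CO_E$ with $K_{H'}=\GL_n(\CO_E)$ --- giving $|D^{\fs'}(Y)|_F=1$, a support consisting of a single $K_{H'}$-coset of volume $1$, and $v_{M'}^{Q'}$ taking the value $0$ or $1$ according as $Q'\notin\msp^{H'}(M')$ or $Q'\in\msp^{H'}(M')$.

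The main obstacle is the identification, in both parts, of the support of the orbital integral with a single coset of the maximal compact subgroup: this is exactly the assertion that the regular-reduction hypothesis admits no extra integral orbit. In the Lie algebra (resp.\ twisted) setting this is Waldspurger's \cite[\S7.2]{MR1440722} (resp.\ its base change analogue); here it must be transferred to the symmetric spaces $\fs$ and $\fs'$, which reduces to checking that the cohomology of the relevant unramified tori contributes no new cosets. Granting this, everything else is a routine evaluation of Arthur's weight function restricted to the maximal compact.
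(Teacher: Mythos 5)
Your opening reduction of the $\eta$-twisted integral to the untwisted one is a genuine improvement on the paper's presentation: the observation that $\det(X)\in\CO_F^\times$ and $\Ad(x^{-1})(X)\in\fs(\CO_F)$ force $\kappa(\Ad(x^{-1})(X))=1=\kappa(X)$, and hence $\eta(\det(x))=1$ on the support, cleanly dispenses with the character upfront. The paper proves the leftmost equality only indirectly, by separately computing both sides to be $1$ via the change of variable of \cite[Lemma 5.18]{MR3414387}.

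However, the heart of your argument --- that the set $\{x\in H_X(F)\backslash H(F):\Ad(x^{-1})(X)\in\fs(\CO_F)\}$ is the single coset $H_X(F)\backslash H_X(F)K_H$ --- is not what Waldspurger's Lemme 7.2 gives you, and you acknowledge this without closing the gap. Lemme 7.2 (applied in $\fg$) yields $\{x\in G(F):\Ad(x^{-1})(X)\in\fg(\CO_F)\}=T(F)K_G$; intersecting with $H(F)$ gives $H(F)\cap T(F)K_G$, and the inclusion $H_X(F)K_H\subseteq H(F)\cap T(F)K_G$ is strict in principle. The paper sidesteps this by using Waldspurger only for the vanishing when $Q\notin\msp^G(M)$ (where the weaker $T(F)K_G$ description suffices, since $T(F)\subseteq M(F)$ makes $v_M^Q(tk)=v_M^Q(k)=0$), and then for $Q\in\msp^G(M)$ it changes variables via \cite[Lemma 5.18]{MR3414387} to express $\kappa(X)J_M^Q(\eta,X,f_0)$ as an ordinary $GL_n$-orbital integral of $1_{GL_n(\CO_F)}$, which Kottwitz \cite[Corollary 7.3]{MR858284} evaluates to $1$. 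The single-coset fact you need does hold (it can be extracted from Kottwitz's Proposition 7.1 applied to the $GL_n$-component $A$ of $X$), but deducing it requires essentially the same reduction to $GL_n$ that the paper performs; your suggestion that it follows from ``cohomology of the relevant unramified tori'' does not point at the actual mechanism.

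Your treatment of part (2) as applying ``verbatim'' is a genuine oversimplification. After identifying $\fs'$ with $\fg\fl_n(E)$ with $\sigma$-twisted conjugation, the relevant integral is a $\sigma$-twisted orbital integral, and the support analysis is not the same as in the untwisted case. The paper passes from $h^{-1}Y\sigma(h)\in GL_n(\CO_E)$ to $h^{-1}Y\sigma(Y)h\in GL_n(\CO_E)$ by multiplying by the $\sigma$-conjugate, applies Kottwitz's Proposition 7.1 to the \emph{norm} $Y\sigma(Y)$, and then invokes the nontrivial identity $GL_n(E)_{Y,\sigma}=GL_n(E)_{Y\sigma(Y)}$ from \cite[Lemma 1.1.(ii)]{MR1007299}. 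None of this is in your sketch, and the last step in particular is not an analogue of anything in part (1).
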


\begin{remark}\label{rmk:chau71}
If $F$ is a number field, then for a fixed $X$ or $Y$, the conditions (a)-(c) are satisfied at all but finitely many places. 
\end{remark}

\begin{proof}[Proof of Lemma \ref{lem:chau71}]
This is an analogue of \cite[Lemme 7.1]{MR2164623}. We shall first prove (1). By \cite[(3.1.1)]{MR4681295} and the condition (c), we have 
$$ |D^\fs(X)|_F=|D^\fg(X)|_F^{1/2}=1, $$
where $|D^\fg(X)|:=|\det(\ad(X)|_{\fg/\fg_X})|_F$ denotes the usual Weyl discriminant for $\fg(F)$. For $x\in H(F)$, by \cite[Lemme 7.2]{MR1440722}, $\Ad(x^{-1})(X)\in\fs(\CO_F)$ if and only if $x=tk$ with $t\in T(F)$ and $k\in K_G$. In this case, if $Q\notin\msp^{G}(M)$, then $v_M^Q(x)=v_M^Q(tk)=0$. Hence, $\kappa(X) J_{M}^{Q}(\eta, X, f_0)=J_{M}^{Q}(X, f_0)=0$ for $Q\notin\msp^{G}(M)$. Now suppose $Q\in\msp^G(M)$ and thus $v_M^Q=1$. By the condition (a), with conjugation by $H(\CO_F)$ if necessary, we may and we shall assume that $X$ is of the form $\mat(0,1_n,A,0)$. From the proof of \cite[Lemma 5.18]{MR3414387}, we know that 
\begin{equation}\label{eq:kot-cal-1}
 \kappa(X) J_{M}^{Q}(\eta, X, f_0)=\int_{GL_{n,A}(F)\bs GL_n(F)} \Psi_A(\Ad(g^{-1})(A)) dg, 
\end{equation}
where 
$$ \Psi_A(g):=\int_{GL_n(F)} \Phi_A(h, h^{-1}g)\eta(\det(h)) dh\in\CC_c^\infty(GL_n(F))$$
with $\Phi_A\in \CC_c^\infty(GL_n(F)\times GL_n(F))$ being the characteristic function of the set 
$$ \{(r,t)\in (GL_n(F)\cap\fg\fl_n(\CO_F))\times (GL_n(F)\cap\fg\fl_n(\CO_F)): |\det(rt)|_F=|\det(A)|_F\}. $$ 
Here we have $\det(A)\in \CO_F^\times$ by the condition (a), so $\Phi_A$ is the characteristic function of $GL_n(\CO_F)\times GL_n(\CO_F)$. Since $E/F$ is unramified, $\eta(\det(h))=1$ for $h\in GL_n(\CO_F)$. As $\vol(GL_n(\CO_F))=1$, we deduce that $\Psi_A$ is the characteristic function of $GL_n(\CO_F)$. Then the orbital integral \eqref{eq:kot-cal-1} is computed in \cite[Corollary 7.3]{MR858284} and equals $1$. The proof of $J_{M}^{Q}(X, f_0)=1$ for $Q\in\msp^G(M)$ is similar: it suffices to replace $\eta$ in the above argument by the trivial character of $F^\times$. 

Now we show (2). The vanishing of $J_{M'}^{Q'}(Y, f'_0)$ for $Q'\notin\msp^{H'}(M')$ can be shown in the same way as above. Let $Q'\in\msp^{H'}(M')$ and thus $v_{M'}^{Q'}=1$. From the proof of \cite[Lemma 5.18]{MR3414387}, we know that 
\begin{equation}\label{eq:kot-cal-2}
 J_{M'}^{Q'}(Y, f'_0)=\int_{GL_n(E)_{Y,\sigma}\bs GL_n(E)} \Xi_Y(h^{-1} Y \sigma(h)) dh, 
\end{equation}
where the $\sigma$-twisted centraliser of $Y$ in $GL_n(E)$ is denoted by 
$$ GL_n(E)_{Y,\sigma}:=\{h\in GL_n(E): h^{-1}Y\sigma(h)=Y\} $$ 
and we let $\Xi_Y\in\CC_c^\infty(GL_n(E))$ be the characteristic function of the set 
$$ \{r\in GL_n(E)\cap\fg\fl_n(\CO_E): |\det(r)|_E=|\det(Y)|_E\}. $$
Here we have $\det(Y)\in \CO_E^\times$ by the condition (a), so $\Xi_Y$ is the characteristic function of $GL_n(\CO_E)$. If $h^{-1}Y\sigma(h)\in GL_n(\CO_E)$, then $h^{-1}Y\sigma(Y)h\in GL_n(\CO_E)$. From \cite[Proposition 7.1]{MR858284},  we deduce 
$$ h\in GL_n(E)_{Y\sigma(Y)} \cdot GL_n(\CO_E), $$
where $GL_n(E)_{Y\sigma(Y)}$ is the centraliser of $Y\sigma(Y)$ in $GL_n(E)$. Since $Y\sigma(Y)$ is regular semi-simple in $GL_n(E)$, the proof of \cite[Lemma 1.1.(ii)]{MR1007299} implies that 
$$ GL_n(E)_{Y,\sigma}=GL_n(E)_{Y\sigma(Y)}. $$
Then the $\sigma$-twisted orbital integral \eqref{eq:kot-cal-2} is equal to 
$$\vol(GL_n(\CO_E))\cdot\vol(GL_n(E)_{Y\sigma(Y)}\cap GL_n(\CO_E))=1. $$
\end{proof}


\section{\textbf{Approximation of local data by global data}}\label{secapp}

Let $k'/k$ be a quadratic extension of number fields and $\BD$ be a central division algebra over $k$. For a place $v$ of $k$, denote $k'_v:=k'\otimes_k k_v$ and $\BD_v:=\BD\otimes_k k_v$. 

\begin{prop}\label{appglo}
Let $E/F$ be a quadratic extension of non-archimedean local fields of characteristic $0$. 

\begin{enumerate}[I)]
	\item Let $D$ be a central division algebra over $F$ containing $E$. Then there exists a quadratic extension of number fields $k'/k$, a central division algebra $\BD$ over $k$ containing $k'$, and a finite set $S$ of finite places of $k$ satisfying the following conditions. 
	\begin{enumerate}[(a)]
		\item The number field $k$ is totally imaginary.  
		\item $|S|\geq2$. 
		\item For all $v\in S$, we have compatible isomorphisms $k_v\simeq F$, $k'_v\simeq E$ and $\BD_v\simeq D$. 
		\item For all $v\notin S$, $k_v$ splits $\BD_v$. 
	\end{enumerate}

	\item Let $D$ be a central division algebra over $F$ such that $D\otimes_F E$ is a central division algebra over $E$. Then there exists a quadratic extension of number fields $k'/k$, a central division algebra $\BD$ over $k$ such that $\BD\otimes_k k'$ is a central division algebra over $k'$, and a finite set $S$ of finite places of $k$ satisfying the following conditions. 
	\begin{enumerate}[(a)]
		\item The number field $k$ is totally imaginary. 
		\item $|S|\geq2$. 
		\item For all $v\in S$, we have compatible isomorphisms $k_v\simeq F$, $k'_v\simeq E$ and $\BD_v\simeq D$. 
		\item For all $v\notin S$, $k_v$ splits $\BD_v$. 
	\end{enumerate}
\end{enumerate}
\end{prop}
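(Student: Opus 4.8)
The plan is to realize the given local datum as the completion at a well-chosen place of a suitable global datum, using the surjectivity of the map from global Brauer groups to the direct sum of local invariants subject to the Brauer reciprocity (sum-of-invariants-equals-zero) constraint, both for quadratic extensions and for central simple algebras containing them. The two cases I) and II) are formally parallel, so I would treat them uniformly, the only difference being the bookkeeping of local invariants that guarantees $D$ contains $E$ (Case I) versus $D\otimes_F E$ stays division (Case II), via the standard criterion on $\inv_v(D)$ relative to the local degree $[E_v:F_v]$.

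First I would produce the quadratic extension of number fields. Choose a totally imaginary number field $k$ together with a place $v_0$ and an isomorphism $k_{v_0}\simeq F$; this is possible by weak approximation / Krasner-type arguments (e.g. take $k=\BQ(\sqrt{-d})$ adjusted, or more robustly build $k$ as a suitable extension of $\BQ$ with a chosen completion). Then pick the quadratic extension $k'/k$ so that $k'_{v_0}\simeq E$ over $k_{v_0}\simeq F$; since the local quadratic character at $v_0$ is prescribed and the global quadratic characters of $k$ surject onto any finite product of local ones (Grunwald--Wang, harmless here because $2$-power obstructions are avoided by enlarging the allowed set), such $k'$ exists, and one may further arrange that $k'/k$ is unramified and nonsplit at a second auxiliary finite place so as to have room. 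Then I would fix a finite set $S$ of finite places of $k$ with $|S|\geq 2$, containing $v_0$ and at least one more place $v_1$ at which $k'/k$ is nonsplit, with $k_{v_1}\simeq F$ and $k'_{v_1}\simeq E$ as well (again arranging the local behaviour at the finitely many places of $S$ by the same approximation principle, which costs nothing since the places outside $S$ remain free).

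Next I would construct $\BD$. By the fundamental exact sequence of class field theory,
\[
 0 \lra \Br(k) \lra \bigoplus_v \Br(k_v) \xrightarrow{\ \sum \inv_v\ } \BQ/\BZ \lra 0,
\]
so to produce a central division algebra $\BD$ over $k$ it suffices to prescribe a finite collection of local invariants summing to zero. For each $v\in S$ set $\inv_v(\BD):=\inv_v(D)=i_0/\deg(D)$ (the given local invariant), using the isomorphism $k_v\simeq F$; for $v\notin S$ demand $\inv_v(\BD)=0$ except possibly at one further auxiliary finite place $w\notin S$ where we put the compensating invariant $-\sum_{v\in S}\inv_v(D)$. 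Enlarging $S$ if needed (it only needs $|S|\geq 2$, never an upper bound, and condition (d) only concerns $v\notin S$ — so we may simply absorb $w$ into $S$, or rather: choose $w$ inside $S$ from the start), we get global $\BD$ with $\BD_v\simeq D$ for all $v\in S$ and $\BD_v$ split for $v\notin S$. Finally, $\BD$ is a division algebra over $k$ precisely when the lcm of its local indices equals $\deg(\BD)$, which holds because already $\BD_{v_0}\simeq D$ has index $\deg(D)$; and the containment $k'\hookrightarrow\BD$ (Case I) resp. the fact that $\BD\otimes_k k'$ is division (Case II) follows from the corresponding local statements at $v_0$ together with the Albert--Brauer--Hasse--Noether criterion: $k'$ embeds in $\BD$ iff $[k'_v:k_v]$ is a multiple of the local index of $\BD_v$ for every $v$, which we have arranged (trivially at $v\notin S$, by hypothesis on $D\supseteq E$ at $v\in S$); and $\BD\otimes_k k'$ is division iff for every place $w$ of $k'$ the local invariant $\inv_w(\BD\otimes_k k')=[k'_w:k_v]\cdot\inv_v(\BD)$ has denominator equal to $\deg(\BD)$, which again reduces to the place over $v_0$.

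The main obstacle I anticipate is not any single deep input but the simultaneous control of all the local conditions at the finitely many places of $S$ while keeping $\BD$ genuinely division and compatible with the embedding of $k'$: one must choose the global invariants so that the local indices never drop below $\deg(D)$ at $v_0$, so that reciprocity is met, and so that the Grunwald--Wang phenomenon (the one genuine subtlety, since $2 \mid \deg(D)$ in the relevant cases by the hypothesis $r\deg(D)$ even) does not obstruct realizing the prescribed quadratic character $k'/k$ with the prescribed local behaviour — this is circumvented by the freedom to enlarge $S$ and to choose auxiliary splitting places, exactly as in the approximation arguments of \cite{MR2164623}. Once these compatibilities are secured, conditions (a)--(d) hold by construction.
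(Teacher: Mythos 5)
Your proposal reverses the order of construction relative to the paper — you fix $k$ and $k'$ first and then prescribe $\BD$ by local invariants, whereas the paper invokes \cite[Proposition 9.1]{MR2164623} to produce $(k,\BD,S)$ together and only afterwards builds $k'$ via \cite[Theorem 3.1]{Conrad}. That reversal is not in itself a problem, and your handling of Case II is essentially right. The reciprocity bookkeeping (making $\sum_v\inv_v(\BD)=0$ with all $v\in S$ carrying invariant $\inv(D)$) does force $|S|$ to be a multiple of the order of $\inv(D)$ in $\BQ/\BZ$, i.e.\ of $\deg(D)$; you gesture at ``absorbing $w$ into $S$'' without pinning this down, but that is repairable.

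The genuine gap is in Case I, in the step producing the embedding $k'\hookrightarrow\BD$. You invoke ``Albert--Brauer--Hasse--Noether: $k'$ embeds in $\BD$ iff $[k'_{v'}:k_v]$ is a multiple of the local index of $\BD_v$ for every $v$.'' This is the criterion for $k'$ to \emph{split} $\BD$, which coincides with the embedding criterion only when $k'$ is a \emph{maximal} subfield, i.e.\ when $[k':k]=\deg(\BD)$. Here $[k':k]=2$ while $\deg(\BD)=\deg(D)$, which is $\geq 2$ and typically strictly greater, so $k'$ is not maximal and that criterion is simply wrong. Indeed your own check immediately contradicts it: at $v\in S$ you have $\BD_v\simeq D$ of index $\deg(D)$ and $[k'_{v'}:k_v]=2$, so ``$[k'_{v'}:k_v]$ is a multiple of the local index'' fails whenever $\deg(D)>2$, yet by hypothesis $E$ \emph{does} embed in $D$. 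Moreover, even with the correct local embeddability criterion (roughly: $\mathrm{lcm}([k'_{v'}:k_v],\mathrm{ind}(\BD_v))$ divides $\deg(\BD)$), local embeddability at every place does \emph{not} in general imply global embeddability when the subfield is non-maximal — there is a further integrality/parity obstruction. This is exactly what \cite[Theorem 1.2]{MR3217641} handles and why the paper introduces the quantities $c_v$, $d_v$ and checks that each $x_{v'}=\tfrac{c_v\cdot\gcd([k'_{v'}:k_v],d_v)}{2}$ is an integer; the nontrivial point is that at $v\notin S$ (where $\BD_v$ is split, $d_v=1$) one needs $c_v=\deg(\BD)$ to be even, which holds because $\deg(D)$ is even. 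Your proposal never confronts this local-global subtlety, so as written Case I is not proved.
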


\begin{proof}
By \cite[Proposition 9.1]{MR2164623}, there exists a number field $k$, a central simple algebra $\BD$ over $k$ and a finite set $S$ of finite places of $k$ such that 
\begin{enumerate}[(i)]
	\item the number field $k$ is totally imaginary; 
	\item $|S|\geq2$; 
	\item for all $v\in S$, we have compatible isomorphisms $k_v\simeq F$ and $\BD_v\simeq D$; 
	\item for all $v\notin S$, $k_v$ splits $\BD_v$. 
\end{enumerate}
From the condition (iii), we know that $\BD$ is a central division algebra over $k$. By \cite[Theorem 3.1]{Conrad}, there exists a quadratic extension $k'$ of $k$ such that $k'_v\simeq E$ for all $v\in S$. 

\begin{enumerate}[I)]
	\item We shall use \cite[Theorem 1.2]{MR3217641} to show that there exists an $k$-embedding of $k'$ into $\BD$. It is clear that there exists an $k_v$-embedding of $k'_v$ into $\BD_v$ for all place $v$ of $k$. Let $v'$ be a place of $k'$ and $v$ be the place of $k$ below $v'$. Denote by $c_v$ (resp. $d_v$) the capacity (resp. index) of $\BD_v$ (see \cite[Definition 2.1]{MR3217641}). Since $D$ is a central division algebra over $F$ of even degree, we deduce that $\BD$ is a central division algebra over $k$ of even degree. Then $c_vd_v$ is even. Define $x_{v'}:=\frac{c_v\cdot\gcd([k'_{v'}:k_v], d_v)}{2}\in\BQ_{>0}$. If $v\notin S$, i.e., $k_v$ splits $\BD_v$, then $d_v=1$ and $c_v$ is even, so $x_{v'}\in\BZ$. If $v\in S$, then $[k'_{v'}:k_v]=[E:F]=2$, which implies that $x_{v'}\in\BZ$ since $c_vd_v$ is even. We may use \cite[Theorem 1.2]{MR3217641} to conclude. 

	\item Let $v\in S$ and $v'$ be the unique place of $k'$ over $v$. Since $(\BD\otimes_k k')_{v'}\simeq \BD\otimes_k k_v\otimes_{k_v} k'_{v'}\simeq D\otimes_F E$ is a central division algebra over $E$, we know that $\BD\otimes_k k'$ is a central division algebra over $k'$. 
\end{enumerate}
\end{proof}


\section{\textbf{An infinitesimal variant of Guo-Jacquet trace formulae}}\label{secinfiGJ}

\subsection{}

Let $k'/k$ be a quadratic extension of number fields. Denote by $V=V_k$ (resp. $V_\infty$, $V_f$) the set of places (resp. archimedean places, finite places) of $k$. Let $\BA=\BA_k$ the ring of adèles of $k$ and $|\cdot|_\BA$ be the product of normalised local absolute values on the group of idèles $\BA^\times$. Let $\eta$ be the quadratic character of $\BA^\times/k^\times$ attached to $k'/k$. If $\bfX$ is an algebraic variety defined over $k$ and $v\in V$, denote by $\bfX_v$ the variety over $k_v$ obtained by extension of scalars. 

\subsection{}\label{ssec:glo_not}

Let $\bfG$ be a reductive group over $k$. Define a homomorphism $H_\bfG: \bfG(\BA)\ra\fa_\bfG$ by 
$$ \langle H_\bfG(x),\chi\rangle=\log(|\chi(x)|_\BA) $$
for all $x\in \bfG(\BA)$ and $\chi\in X(\bfG)_k$. Write $\bfG(\BA)^1$ for the kernel of $H_\bfG$ and $A_\bfG^\infty$ for the neutral component of the group of $\BR$-points of the maximal $\BQ$-split torus in $\Res_{k/\BQ} A_\bfG$. Fix a Levi $k$-factor $\bfM_0$ of a minimal parabolic $k$-subgroup of $\bfG$. Fix a maximal compact subgroup $K$ of $\bfG(\BA)$ which is admissible relative to $\bfM_0$ in the sense of \cite[p. 9]{MR625344}. In this paper, we choose the standard maximal compact subgroup when $\bfG(k)=\bfG\bfL_n(\BD)$, where $\BD$ is a central division algebra over a finite field extension of $k$. That is to say, $K:=\prod\limits_{v\in V} K_v$ where each $K_v$ is the standard maximal compact subgroup of $\bfG(k_v)$ described in Section \ref{ssec:stdmaxcpt}. For $\bfP\in\msf^\bfG(\bfM_0)$, we may extend the function $H_{\bfM_\bfP}$ to a map $H_\bfP: \bfG(\BA)\ra\fa_{\bfM_\bfP}$ using the decomposition $\bfG(\BA)=\bfM_\bfP(\BA)\bfN_\bfP(\BA)K$.  

Recall that we have fixed a Haar measure on $\fa_\bfG$ in Section \ref{ssec:gp_gen}. By the restriction of $H_\bfG$, the group $A_\bfG^\infty$ is isomorphic to $\fa_\bfG$ and is equipped with the Haar measure transported from $\fa_\bfG$. We shall choose compatible Haar measures on $\bfG(\BA)$ and $\bfG(\BA)^1$ such that the decomposition $\bfG(\BA)\simeq\bfG(\BA)^1\times A_\bfG^\infty$ preserves these measures. Fix the Haar measure on $K$ such that $\vol(K)=1$. For every unipotent subgroup $\bfN$ of $\bfG$, choose the unique Haar measure on $\bfN(\BA)$ such that $\vol(\bfN(F)\bs\bfN(\BA))=1$. We shall choose compatible Haar measures on $\bfG(\BA)$ and its Levi subgroups by requiring that for all $\bfP\in\msf^\bfG(\bfM_0)$ and all $f\in L^1(\bfG(\BA))$, we have 
$$ \int_{\bfG(\BA)} f(x) dx = \int_{\bfM_\bfP(\BA)\times \bfN_\bfP(\BA)\times K} f(mnk) dkdndm. $$
Moreover, we can and we shall require our Haar measure on $\bfG(\BA)$ to be the product of Haar measures on $\bfG(k_v)$. 

For $\bfM\in\msl^\bfG(\bfM_0)$ and $x\in\bfG(\BA)$, we define the $(\bfG,\bfM)$-family 
$$ v_\bfP(\lambda,x):=e^{-\lambda(H_\bfP(x))}, \forall \lambda\in i\fa_\bfM^\ast, \bfP\in\msp^\bfG(\bfM). $$
For $\bfQ\in\msf^\bfG(\bfM)$, we denote by $v_\bfM^\bfQ$ the associated Arthur's weight function on $\bfG(\BA)$ defined as in the local case \eqref{eqweifun}. 

Let $\bfL\in\msl^\bfG(\bfM_0)$. For all $(R_v)_{v\in V}\in\prod\limits_{v\in V}\msl^{\bfG_v}(\bfL_v)$, we shall define Arthur's coefficient $d_\bfL^\bfG((R_v)_{v\in V})\in\BR_{\geq0}$ as in \cite[p. 356]{MR928262} (see also \cite[\S 8.5]{MR2164623}). Note that the space $\fa_\bfL$ is naturally an $\BR$-linear subspace of $\fa_{\bfL_v}$. Consider the natural map 
$$ \bigoplus_{v\in V}\fa_{\bfL_v}^{R_v}\ra\fa_{\bfL}^{\bfG} $$
obtained by the orthogonal projections $\fa_{\bfL_v}^{R_v}\ra\fa_{\bfL}^{\bfG}$ (both viewed as subspaces of $\fa_{\bfL_v}$). If the map is not an isomorphism, we define $d_\bfL^\bfG((R_v)_{v\in V}):=0$. If the map is an isomorphism, then $R_v=\bfL_v$ for all but finitely many $v\in V$, and we define $d_\bfL^\bfG((R_v)_{v\in V})$ as the volume in $\fa_{\bfL}^{\bfG}$ of the parallelogram generated by the images of orthonormal bases of $\fa_{\bfL_v}^{R_v}$. 

If $d_\bfL^\bfG((R_v)_{v\in V})\neq0$, we can choose a section 
$$ (R_v)_{v\in V}\mapsto (Q_{R_v})_{v\in V}\in \prod_{v\in V} \msp^{\bfG_v}(R_v) $$
such that the splitting formula \cite[Corollary 7.4]{MR928262} of $(\bfG,\bfL)$-families holds. 

Let $\bfT$ be a torus over $k$. Recall that if $v\in V_f$, we choose in Section \ref{ssec:stdmaxcpt} a Haar measure on $\bfT(k_v)$ such that its maximal compact subgroup is of volume $1$. If $v\in V_\infty$, we identify $\bfT(k_v)$ with $\Hom(X^\ast(\bfT_v)_{k_v},\BC^\times)$ and equip it with the Haar measure deduced from a fixed Haar measure on $\BC^\times$. We shall choose the product measure on $\bfT(\BA)$. 

\subsection{}

Let $(\bfG,\bfH,\theta)$ be a symmetric pair defined over $k$. Recall that we write $\fs$ for the corresponding infinitesimal symmetric spaces. Denote by $\CS(\fs(\BA))$ the Bruhat-Schwartz space of $\fs(\BA)$. Let $\langle \cdot, \cdot \rangle$ be a $\bfG$-invariant $\theta$-invariant non-degenerate symmetric bilinear form on $\fg$. Fix a continuous and nontrivial unitary character $\Psi: \BA/k\ra\BC^\times$. For any place $v$ of $k$, we deduce a $\bfG_v$-invariant $\theta$-invariant non-degenerate symmetric bilinear form $\langle \cdot, \cdot \rangle_v$ on $\fg_v$, as well as a continuous and nontrivial unitary character $\Psi_v: k_v\ra\BC^\times$. Recall that $\fs(k_v)$ is equipped with the unique self-dual Haar measure with respect to $\Psi_v(\langle\cdot, \cdot\rangle_v)$. Then we equip $\fs(\BA)$ with the product measure. For $f\in\fs(\BA)$, we define its Fourier transform $\hat{f}\in\fs(\BA)$ by 
$$ \forall X\in\fs(\BA), \hat{f}(X):=\int_{\fs(\BA)} f(Y)\Psi(\langle X,Y\rangle) dY. $$

We shall write $(\bfG, \bfH)$ and $(\bfG', \bfH')$ for the symmetric pairs with respect to $k'/k$ defined in Sections \ref{symmpair1} and \ref{symmpair2} respectively, whose corresponding infinitesimal symmetric spaces are written as $\fs$ and $\fs'$ respectively. We also fix minimal Levi $k$-subgroups of these four groups as before and denote them by bold letters. Let $\langle\cdot,\cdot\rangle_\fs$ (resp. $\langle\cdot,\cdot\rangle_{\fs'}$) be the restriction to $\fs$ (resp. to $\fs'$) of the $\bfG$-invariant $\Ad(\omega_0)$-invariant (resp. $\bfG'$-invariant $\Ad(\alpha)$-invariant) non-degenerate symmetric bilinear form on $\fg$ (resp. on $\fg'$) defined by \eqref{bilform1} (resp. by \eqref{bilform2}). For any place $v$ of $k$, we denote by $\langle\cdot,\cdot\rangle_{\fs_v}$ (resp. $\langle\cdot,\cdot\rangle_{\fs'_v}$) the associated local bilinear form on $\fs_v$ (resp. to $\fs'_v$). 

\subsection{Trace formulae}\label{ssec:Jphi}

Denote by $\CO^{\fs}_\rs$ the set of $\bfH(k)$-orbits in $\fs_\rs(k)$. For $\phi\in\CS(\fs(\BA))$, define 
\begin{equation}\label{glodefJ^G(eta,f)}
 J^{\bfG}(\eta,\phi)=\sum_{\fo\in\CO^{\fs}_\rs} J_\fo^{\bfG}(\eta,\phi), 
\end{equation}
where $J_\fo^{\bfG}(\eta,\phi)$ is the constant term of \cite[(5.0.1) with $s=0$]{MR4424024}. From \cite[Theorem 4.14 and Corollary 5.9]{MR4424024}, we know that the right hand side of \eqref{glodefJ^G(eta,f)} is absolutely convergent. For $\fo\in\CO^{\fs}_\rs$, let $X\in\fo\cap(\fl\cap\fs_\rs)(k)_\el$ with $\bfL\in\msl^{\bfG,\omega}(\bfM_0)$. By \cite[Theorem 9.2]{MR4424024}, we have
$$ J_\fo^{\bfG}(\eta,\phi)=\vol(A_{\bfL}^\infty \bfH_X(k)\bs \bfH_X(\BA)) \int_{\bfH_X(\BA)\bs \bfH(\BA)} \phi(\Ad(x^{-1})(X)) \eta(\det(x)) v_\bfL^\bfG(x) dx. $$
For $\bfL\in\msl^{\bfG,\omega}(\bfM_0)$, we denote
$$ \tau(\bfH_X):=\vol(A_\bfL^\infty \bfH_X(k)\bs \bfH_X(\BA)) $$
for $X\in(\fl\cap\fs_\rs)(k)_\el$, and define the global weighted orbital integral 
\begin{equation}\label{eq:glo_woi_1}
 J_\bfL^\bfG(\eta, X, \phi):=\int_{\bfH_X(\BA)\bs \bfH(\BA)} \phi(\Ad(x^{-1})(X)) \eta(\det(x)) v_\bfL^\bfG(x) dx 
\end{equation}
for all $X\in(\fl\cap\fs_\rs)(k)$. 
From \cite[Lemma 3.13]{MR4681295}, we obtain 
\begin{equation}\label{leviJ^G}
 J^\bfG(\eta,\phi)=\sum_{\bfL\in\msl^{\bfG,\omega}(\bfM_0)} |W_0^{\bfL_n}||W_0^{\bfG\bfL_n}|^{-1} \sum_{X\in\Gamma_\el((\fl\cap\fs_\rs)(k))} \tau(\bfH_X) J_\bfL^\bfG(\eta, X, \phi). 
\end{equation}

Denote by $\CO^{\fs'}_\rs$ the set of $\bfH'(k)$-orbits in $\fs'_\rs(k)$. For $\phi'\in\CS(\fs'(\BA))$, define 
\begin{equation}\label{glodefJ^H'(f')}
 J^{\bfH'}(\phi')=\sum_{\fo\in\CO^{\fs'}_\rs} J_\fo^{\bfH'}(\phi'), 
\end{equation}
where $J_\fo^{\bfH'}(\phi')$ is the constant term of \cite[(5.0.1)]{MR4350885}. From \cite[Theorem 4.2 and Corollary 5.3]{MR4350885}, we know that the right hand side of \eqref{glodefJ^H'(f')} is absolutely convergent. For $\fo\in\CO^{\fs'}_\rs$, let $Y\in\fo\cap(\wt{\fl'}\cap\fs'_\rs)(k)_\el$ with $\bfL'\in\msl^{\bfH'}(\bfM'_0)$. By \cite[Theorem 9.2]{MR4350885}, we have
$$ J_\fo^{\bfH'}(\phi')=\vol(A_{\bfL'}^\infty \bfH'_Y(k)\bs \bfH'_Y(\BA)) \int_{\bfH'_Y(\BA)\bs \bfH'(\BA)} \phi'(\Ad(x^{-1})(Y)) v_{\bfL'}^{\bfH'}(x) dx. $$
For $\bfL'\in\msl^{\bfH'}(\bfM'_0)$, we denote
$$ \tau(\bfH'_Y):=\vol(A_{\bfL'}^\infty \bfH'_Y(k)\bs \bfH'_Y(\BA)) $$
for $Y\in(\wt{\fl'}\cap\fs'_\rs)(k)_\el$, and define the global weighted orbital integral 
\begin{equation}\label{eq:glo_woi_2}
 J_{\bfL'}^{\bfH'}(Y, \phi'):=\int_{\bfH'_Y(\BA)\bs \bfH'(\BA)} \phi'(\Ad(x^{-1})(Y)) v_{\bfL'}^{\bfH'}(x) dx 
\end{equation}
for all $Y\in(\wt{\fl'}\cap\fs'_\rs)(k)$. 
From \cite[Lemma 3.20]{MR4681295}, we obtain 
\begin{equation}\label{leviJ^H'}
 J^{\bfH'}(\phi')=\sum_{\bfL'\in\msl^{\bfH'}(\bfM'_0)} |W_0^{\bfL'}||W_0^{\bfH'}|^{-1} \sum_{Y\in\Gamma_\el((\wt{\fl'}\cap\fs'_\rs)(k))} \tau(\bfH'_Y) J_{\bfL'}^{\bfH'}(Y, \phi'). \end{equation}

\begin{prop}[see {\cite[Theorem 7.1]{MR4424024}} and {\cite[Theorem 7.1]{MR4350885}}]\label{propinfivar}
\leavevmode
\begin{enumerate}
	\item Let $\phi\in\CS(\fs(\BA))$ be such that $\Supp(\phi)\subseteq \fs_\rs(k_{v_1})$ and $\Supp(\hat{\phi})\subseteq \fs_\rs(k_{v_2})$ at some places $v_1, v_2$ of $k$. Then we have the equality
$$ J^\bfG(\eta, \phi)=J^\bfG(\eta,\hat{\phi}). $$

	\item Let $\phi'\in\CS(\fs'(\BA))$ be such that $\Supp(\phi')\subseteq \fs'_\rs(k_{v_1})$ and $\Supp(\hat{\phi'})\subseteq \fs'_\rs(k_{v_2})$ at some places $v_1, v_2$ of $k$. Then we have the equality
$$ J^{\bfH'}(\phi')=J^{\bfH'}(\hat{\phi'}). $$
\end{enumerate}
\end{prop}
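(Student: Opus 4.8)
The statement is an infinitesimal variant of the Guo–Jacquet trace formula identity, and the plan is to reduce it to the geometric expansions \eqref{glodefJ^G(eta,f)} and \eqref{glodefJ^H'(f')} together with the global Poisson summation formula applied termwise. Since both statements have the same structure, I will describe the argument for (1), the case of $(\bfG,\bfH)$; the case of $(\bfH',\fs')$ is identical with $\eta$ replaced by the trivial character. The key point is that the support hypotheses $\Supp(\phi)\subseteq\fs_\rs(k_{v_1})$ and $\Supp(\hat\phi)\subseteq\fs_\rs(k_{v_2})$ force the only contributing orbits on both sides to be \emph{regular semi-simple}, so that on each side $J^\bfG(\eta,\cdot)$ is literally the sum over $\fo\in\CO^\fs_\rs$ of the regular semi-simple distributions $J_\fo^\bfG(\eta,\cdot)$ given by \eqref{glodefJ^G(eta,f)}; there are no other geometric terms to worry about. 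This is why one passes to the infinitesimal setting in the first place, as explained in the introduction.

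The main steps are as follows. First, recall from \cite{MR4424024} that for a general Schwartz function $\phi\in\CS(\fs(\BA))$, there is a finer geometric expansion of the full kernel $J^\bfG(\eta,\phi)$ indexed by all $\bfH(k)$-orbits (or rather, by classes of $\fg$-conjugacy classes of characteristic polynomials), of which \eqref{glodefJ^G(eta,f)} is the regular semi-simple part; the Poisson summation formula on $\fs(\BA)$ relative to the self-dual measure and the character $\Psi\circ\langle\cdot,\cdot\rangle_\fs$ exchanges $\phi$ and $\hat\phi$ and, crucially, exchanges the geometric side with what plays the role of the spectral side. The second step is to invoke the support conditions: because $\Supp(\phi)$ meets $\fs(k_{v_1})$ only inside $\fs_\rs(k_{v_1})$, any non-regular-semi-simple orbital contribution to $J^\bfG(\eta,\phi)$ vanishes (the relevant orbital integral at $v_1$ is over a function supported away from the non-regular locus, and the closure of a non-regular orbit meets every neighbourhood of $\fs_\rs^c$); hence $J^\bfG(\eta,\phi)$ equals its regular semi-simple part \eqref{glodefJ^G(eta,f)}. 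Symmetrically, $\Supp(\hat\phi)\subseteq\fs_\rs(k_{v_2})$ guarantees the analogous statement for $\hat\phi$. The third step is then the comparison: the trace formula of \cite[Theorem 7.1]{MR4424024} asserts that the full distribution $J^\bfG(\eta,\cdot)$ is invariant under Fourier transform, $J^\bfG(\eta,\phi)=J^\bfG(\eta,\hat\phi)$, once the supports of $\phi$ and $\hat\phi$ are suitably restricted so that the truncation process converges and the ``non-geometric'' contributions on both sides coincide. Combining the three steps gives $J^\bfG(\eta,\phi)=J^\bfG(\eta,\hat\phi)$ with both sides computed by \eqref{glodefJ^G(eta,f)}, which is precisely the assertion.

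Concretely, I would first unwind the definition of $J_\fo^\bfG(\eta,\phi)$ as the constant term in the truncated expression \cite[(5.0.1)]{MR4424024}, check that the truncation parameter can be sent to infinity using the convergence statements \cite[Theorem 4.14 and Corollary 5.9]{MR4424024}, and then apply Poisson summation on $\fs(\BA)$ inside the truncated sum before taking the limit — this is the heart of \cite[Theorem 7.1]{MR4424024}, which I am entitled to cite directly. The role of the present proposition is therefore mostly bookkeeping: record that under the stated support hypotheses the only surviving orbits are regular semi-simple, so that the abstract trace formula identity specializes to the clean equality between the two regular semi-simple sums. The analogous chain of citations \cite[Theorems 4.2, 5.3, 7.1, 9.2]{MR4350885} and \cite[Lemma 3.20]{MR4681295} handles part (2) verbatim.

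The main obstacle is purely one of convergence and of identifying the ``spectral'' (or non-regular-semi-simple) contributions: one must be sure that after Poisson summation the terms that are \emph{not} regular semi-simple orbital integrals genuinely cancel or vanish, rather than merely being rearranged. This is exactly the content of the cited theorems from \cite{MR4424024, MR4350885}, where the delicate truncation and the vanishing of the ``parabolic'' contributions under the support hypotheses are carried out in detail; here I would simply quote those results, since the proposition as stated is their immediate corollary, noting only that the support conditions $\Supp(\phi)\subseteq\fs_\rs(k_{v_1})$ and $\Supp(\hat\phi)\subseteq\fs_\rs(k_{v_2})$ are precisely the hypotheses under which those theorems apply.
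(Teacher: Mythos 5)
The paper gives no separate proof of this proposition: the bracketed header ``[see {\cite[Theorem 7.1]{MR4424024}} and {\cite[Theorem 7.1]{MR4350885}}]'' signals that the statement is taken directly from those references, and indeed no proof environment follows. Your proposal does essentially the same thing — cite the two theorems as the source of the identity, with the added observation that the support hypotheses reduce the geometric expansion to its regular semi-simple part — so it matches the paper's approach.
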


\subsection{Weight functions}\label{ssec:wt_function}

We shall rewrite the weight functions appearing in \eqref{defnoninvwoi2} and \eqref{eq:glo_woi_2} to make them suitable for comparison with those in \eqref{defnoninvwoi1} and \eqref{eq:glo_woi_1}. We shall use the notations in Section \ref{ssec:sym2-I-II}. 

Let $F$ be a local field of characteristic zero or a number field. Let $E$ be a quadratic extension of $F$. In \textbf{Case II}, we can and we shall identify $G'$ with the group of invertible elements in the algebra 
$$ \left\{\mat(A,B,\ov{B},\ov{A}): A,B\in \fh'\right\}, $$
where $\ov{A}$ denotes the conjugate of $A$ under the nontrivial element of $\Gal(E/F)$. Then its subgroup $H'$ is identified with
$$ \left\{\mat(A,,,\ov{A}): A\in H'\right\}. $$

We shall choose $K_{H'}$ and $K_{G'}$ such that $K_{H'}=K_{G'}\cap H'(F)$ (resp. $K_{H'}=K_{G'}\cap H'(\BA)$) if $F$ is a local field of characteristic zero (resp. number field). We distinguish several cases. 

\begin{enumerate}
	\item $F$ is a non-archimedean local field of characteristic zero. In \textbf{Case I}, we choose $K_{H'}=GL_{r}(\CO_{D'})$ and $K_{G'}=GL_{r}(\CO_D)$. In \textbf{Case II}, we choose 
$$ K_{H'}=\left\{\mat(A,,,\ov{A}): A\in GL_{\frac{r}{2}}(\CO_{D\otimes_F E})\right\} $$ 
and $K_{G'}$ to be the group of invertible elements in the algebra 
$$ \left\{\mat(A,B,\ov{B},\ov{A}): A,B\in \fg\fl_{\frac{r}{2}}(\CO_{D\otimes_F E})\right\}. $$

	\item $F$ is an archimedean local field of characteristic zero. Then $(G'(F),H'(F))$ is isomorphic to either $(GL_r(\BH),GL_r(\BC))$ in \textbf{Case I} or $(GL_r(\BR),GL_{\frac{r}{2}}(\BC))$ in \textbf{Case II}. Here $\BH$ denotes the Hamiltonian quaternion algebra over $\BR$. In \textbf{Case I}, we choose $K_{H'}$ to be the unitary group and $K_{G'}$ to be the hyperunitary group. In \textbf{Case II}, there is a natural embedding $\fg\fl_{\frac{r}{2}}(\BC)\hookrightarrow \fg\fl_r(\BR)$ defined by 
	$$ A+Bi\mapsto\mat(A,B,-B,A), \forall A,B\in \fg\fl_{\frac{r}{2}}(\BR). $$
Notice that the transpose on $\fg\fl_r(\BR)$ induces the conjugate transpose on $\fg\fl_{\frac{r}{2}}(\BC)$. Then we choose $K_{H'}$ to be the unitary group and $K_{G'}$ to be the orthogonal group. 

	\item $F$ is a number field. We choose 
	$$ K_{H'}:=\prod_{v\in V_F} K_{H'_v} \text{ and } K_{G'}:=\prod_{v\in V_F} K_{G'_v} $$ 
	where 
	\begin{itemize}
		\item if $v\in V_F$ is inert in $E$, then $K_{H'_v}$ and $K_{G'_v}$ are described above; 
		
		\item if $v\in V_F$ splits in $E$, then 
		$$ (G'_v,H'_v)\simeq(GL_{2n,D_v},GL_{n,D_v}\times GL_{n,D_v}) $$ 
		for some integer $n$ and some central simple division algebra $D_v$ over $F_v$, and we choose $K_{H'_v}$ and $K_{G'_v}$ as the standard maximal compact subgroups in Section \ref{ssec:stdmaxcpt}. 
	\end{itemize}
\end{enumerate}

Let $M'\in\msl^{H'}(M'_0)$ and $Q'\in\msf^{H'}(M')$. Recall from Section \ref{ssec:sym2-I-II} that there is a natural $\BR$-isomorphism between $\fa_{M'}$ and $\fa_{\wt{M'}}$. We shall choose compatible scalar products on them such that 
$$ v_{M'}^{Q'}(x)=v_{\wt{M'}}^{\wt{Q'}}(x) $$
for all $x\in H'(F)$ (resp. $x\in H'(\BA)$) if $F$ is a local field of characteristic zero (resp. number field). Therefore, we can replace $v_{M'}^{Q'}$ (resp. $v_{\bfL'}^{\bfH'}$) with $v_{\wt{M'}}^{\wt{Q'}}$ (resp. $v_{\wt{\bfL'}}^{\bfG'}$) in \eqref{defnoninvwoi2} (resp. \eqref{eq:glo_woi_2}) and write $J_{\wt{M'}}^{\wt{Q'}}:=J_{M'}^{Q'}$ (resp. $J_{\wt{\bfL'}}^{\bfG'}:=J_{\bfL'}^{\bfH'}$) whenever it is more convenient. 

\subsection{Factorisation}\label{ssec:factorisation}

Let $v\in V$. If $v$ is inert in $k'$, denote by $\eta_v$ the quadratic character of $k_v^\times/N k'^\times_v$ and by $\kappa_v$ the transfer factor \eqref{eq:def_trans_fac}, both attached to $k'_v/k_v$. If $v$ splits in $k'$, let $\eta_v=1$ and $\kappa_v=1$. When there is no confusion, we shall sometimes abuse notation and write $\eta:=\eta_v$. 

Let $\phi:=\prod\limits_{v\in V} \phi_v\in\CS(\fs(\BA))$. Note that for all but finitely many $v\in V$, $\phi_v$ is the function $f_0$ defined in Section \ref{ssec:unram_not}. Let $\bfL\in\msl^{\bfG,\omega}(\bfM_0)$ and $X\in(\fl\cap\fs_\rs)(k)$. By the splitting formula of $(\bfG,\bfL)$-families applied to the weight function $v_\bfL^\bfG$ (see \cite[Corollary 7.4]{MR928262}) and our choice of Haar measures in Section \ref{ssec:glo_not}, 
\begin{equation}\label{eq:fact_woi1}
 J_\bfL^\bfG(\eta, X, \phi)=\sum_{(R_v)_{v\in V}\in\prod\limits_{v\in V}\msl^{\bfG_v}(\bfL_v)} d_\bfL^\bfG((R_v)_{v\in V}) \prod_{v\in V} \kappa_v(X) J_{\bfL_v}^{Q_{R_v}}(\eta, X, \phi_v). 
\end{equation}
It results from Lemma \ref{lem:chau71}.(1) and Remark \ref{rmk:chau71} that the sum and products in \eqref{eq:fact_woi1} are in fact finite. 

Let $\phi':=\prod\limits_{v\in V} \phi'_v\in\CS(\fs'(\BA))$. Note that for all but finitely many $v\in V$, $\phi'_v$ is the function $f'_0$ (when $v$ is inert in $k'$) or $f_0$ (when $v$ splits in $k'$) defined in Section \ref{ssec:unram_not}. Let $\bfL'\in\msl^{\bfH'}(\bfM'_0)$ and $Y\in(\wt{\fl'}\cap\fs'_\rs)(k)$. By the splitting formula of $(\bfG',\wt{\bfL'})$-families applied to the weight function $v_{\wt{\bfL'}}^{\bfG'}$ (see \cite[Corollary 7.4]{MR928262}) and our choice of Haar measures in Section \ref{ssec:glo_not}, 
\begin{equation}\label{eq:fact_woi2}
 J_{\bfL'}^{\bfH'}(Y, \phi')=\sum_{(\wt{R'_v})_{v\in V}\in\prod\limits_{v\in V}\msl^{\bfG'_v}(\wt{\bfL'_v})} d_{\wt{\bfL'}}^{\bfG'}((\wt{R'_v})_{v\in V}) \prod_{v\in V} J_{\wt{\bfL'_v}}^{Q_{\wt{R'_v}}}(Y, \phi'_v). 
\end{equation}
It results from Lemma \ref{lem:chau71} and Remark \ref{rmk:chau71} that the sum and products in \eqref{eq:fact_woi2} are in fact finite. 


\section{\textbf{Proof of Proposition \ref{parcommute}}}\label{secfinalproof}

\subsection{}\label{ssec:finalsetting}

Let $F$ be a non-archimedean local field of characteristic zero and $E$ be a quadratic extension of $F$. Fix a pair of matching Levi subgroups $M'\in\msl^{H'}(M'_0)$ and $M\in\msl^{G,\omega}(M_0)$ in the sense of Section \ref{ssec:mat_Levi}. Let $f\in\CC_c^\infty(\fs(F))$ and $f'\in\CC_c^\infty(\fs'(F))$ be partially $M$-associated (see Definition \ref{defparMass}) and satisfy the two conditions in Theorem \ref{thmcommute}. It is obvious from the definition that a pair of partially $M$-associated functions are also partially $L$-associated for all $L\in\msl^G(M)$. Therefore, to prove Proposition \ref{parcommute}, we fix $X_0\in(\fm\cap\fs_\rs)(F)$ and $Y_0\in(\wt{\fm'}\cap\fs'_\rs)(F)$ such that $X_0\overset{M}{\da} Y_0$ (see Definition \ref{defbyinvM}). Then it suffices to show that for all $Q\in\msf^G(M)$, we have the equality 
\begin{equation}\label{chaufor101}
 \gamma_{\psi}(\fh(F))^{-1} \kappa(X_0) J_M^Q(\eta, X_0, \hat{f})=\gamma_{\psi}(\fh'(F))^{-1} J_{M'}^{Q'}(Y_0, \hat{f'}). 
\end{equation}

\subsection{Global data}

Fix a quadratic extension of number fields $k'/k$, a central division algebra $\BD$ over $k$, and a finite set $S$ of finite places of $k$ satisfying the conditions in Proposition \ref{appglo}. Fix a place $w\in S$. Fix a continuous and nontrivial unitary character $\Psi: \BA/k\ra\BC^\times$ whose local component at $w$ is $\psi$. 

Define the global symmetric pairs $(\bfG, \bfH)$ and $(\bfG', \bfH')$ with respect to $k'/k$ and $\BD$ as in the local case. We shall use the notations in Section \ref{secinfiGJ} without further notice. There is a bijection $L\mapsto\bfL$ from $\msl^G(M_0)$ to $\msl^\bfG(\bfM_0)$ such that $\bfL_w\simeq L$ and we denote by $\bfM$ the image of $M$ under this bijection. Similarly, there is a bijection $L'\mapsto\bfL'$ from $\msl^{H'}(M'_0)$ to $\msl^{\bfH'}(\bfM'_0)$ such that $\bfL'_w\simeq L'$ and we denote by $\bfM'$ the image of $M'$ under this bijection. 

\subsection{Places}

It follows from  the condition (d) of Proposition \ref{appglo} that for $v\in V-S$, $(\bfG'_v, \bfH'_v)$ is identified with $(\bfG_v, \bfH_v)$ (resp. with $(\bfG\bfL_{2n, k_v},\Res_{k'_v/k_v}\bfG\bfL_{n,k'_v})$) if $v$ splits (resp. is inert) in $k'$. 

Fix a finite set $S_1\subseteq V$ and for $v\in V-S_1$ lattices $\fk_v\subseteq\fs(k_v)$ and $\fk'_v\subseteq\fs'(k_v)$ such that 
\begin{enumerate}[(i)]
	\item $V_\infty\sqcup S\subseteq S_1$; 

	\item if $v\in V-S_1$, then 
	\begin{itemize}
		\item $k_v$ has odd residue characteristic and $v$ is unramified in $k'$; 
		\item $\fk_v=\left\{\mat(0,A,B,0): A,B\in\fg\fl_n(\CO_{k_v})\right\}$; 
		\item $\fk'_v=\fk_v$ if $v$ splits in $k'$, while $\fk'_v=\fg\fl_n(\CO_{k'_v})$ if $v$ is inert in $k'$; 
		\item $\fk_v$ (resp. $\fk'_v$) is self-dual with respect to $\Psi_v(\langle \cdot, \cdot\rangle_{\fs_v})$ (resp. $\Psi_v(\langle \cdot, \cdot\rangle_{\fs'_v})$). 
	\end{itemize}
\end{enumerate}
For $v\in V-S_1$, notice that $\wh{1_{\fk_v}}=1_{\fk_v}$ (resp. $\wh{1_{\fk'_v}}=1_{\fk'_v}$), where $1_{\fk_v}$ (resp. $1_{\fk'_v}$) denotes the characteristic function of $\fk_v$ (resp. $\fk'_v$).  

\subsection{Orbits}\label{ssec:orbits}

Fix for each $v\in S_1-V_\infty$ an open compact non-empty subset $\Omega_v\subseteq(\wt{\fm'}\cap\fs'_\rs)(k_v)$ such that 
\begin{enumerate}[(i)]
	\item if $v=w$, then $Y_0\in\Omega_w\subseteq\Ad(\bfM'(k_w))((\fs'_{Y_0}\cap\fs'_\rs)(k_w))$ and, for all $Q\in\msf^G(M)$, $J_{M'}^{Q'}(\cdot, \hat{f'})$ is constant on $\Omega_w$ and $\kappa(\cdot) J_M^Q(\eta, \cdot, \hat{f})$ is constant on $\{X\in(\fm\cap\fs_\rs)(k_w): \exists Y\in\Omega_w, X\overset{M}{\da}Y\}$; 

	\item if $v\in S-\{w\}$, then $\Omega_v\subseteq(\wt{\fm'}\cap\fs'_\rs)(k_v)_\el$. 
\end{enumerate}
Notice that the condition (i) is achievable because of Lemma \ref{lem74}, \cite[Propositions 4.4.(2) and 4.1.(2)]{MR4681295} and the constancy of $J_{M'}^{Q'}(\cdot, \hat{f'})$ (resp. $\kappa(\cdot) J_M^Q(\eta, \cdot, \hat{f})$) on $\bfM'(k_w)$ (resp. $\bfM_\bfH(k_w)$)-orbits. Besides, the set $(\wt{\fm'}\cap\fs'_\rs)(k_v)_\el$ in the condition (ii) is non-empty by (1)$\Leftrightarrow$(2) in Proposition \ref{equivdefcomefrom} and is open in $(\wt{\fm'}\cap\fs'_\rs)(k_v)$ by Krasner's lemma. If $v\in S_1-V_\infty-S$, then $\Omega_v$ can be chosen arbitrarily. It is clear from the condition (i) that the restriction of $J_{M'}^{Q'}(\cdot, \hat{f'})$ to $\Omega_w$ equals $J_{M'}^{Q'}(Y_0, \hat{f'})$. Since $X_0\overset{M}{\da}Y_0$, the condition (i) also implies that the restriction of $\kappa(\cdot) J_M^Q(\eta, \cdot, \hat{f})$ to $\{X\in(\fm\cap\fs_\rs)(k_w): \exists Y\in\Omega_w, X\overset{M}{\da}Y\}$ equals $\kappa(X_0) J_M^Q(\eta, X_0, \hat{f})$. 

By the strong approximation theorem, there exists $Y^0\in(\wt{\fm'}\cap\fs')(k)$ such that 
\begin{enumerate}[(i)]
	\item for $v\in S_1-V_\infty$, $Y^0\in\Omega_v$; 

	\item for $v\in V-S_1$, $Y^0\in\wt{\fm'}(k_v)\cap\fk'_v$. 
\end{enumerate}
Combined with the condition (b) of Proposition \ref{appglo} and the condition (ii) on $\Omega_v$, the condition (i) on $Y^0$ implies that $Y^0\in(\wt{\fm'}\cap\fs'_\rs)(k)_\el$ because of Lemma \ref{lemsym2ell}. Choose an element $X^0\in(\fm\cap\fs_\rs)(k)_\el$ such that $X^0\overset{\bfM}{\da}Y^0$. 

\subsection{Choice of functions}\label{choffun}

Recall that if $v\in V$ splits in $k'$, we shall understand $\eta$ as the trivial character of $k_v^\times$, and then $\kappa$ is also trivial in this case. 

For each $v\in V$, we fix functions $\phi_v\in\CS(\fs(k_v))$ and $\phi'_v\in\CS(\fs'(k_v))$ as follows. 
\begin{enumerate}[(i)]
	\item If $v=w$, let $\phi_w=f$ and $\phi'_w=f'$. 

	\item For $v\in S_1-V_\infty-\{w\}$, 
	\begin{itemize}
		\item if $v$ is inert in $k'$, let $\phi_v$ and $\phi'_v$ verify the conditions in Proposition \ref{testfunctions} with respect to $\bfM'_v$, $\bfM_v$, $X^0$ and $Y^0$; 
		
		\item if $v$ splits in $k'$, we identify $(\bfH_v, \fs_v)$ and $(\bfH'_v, \fs'_v)$ by the condition (d) in Proposition \ref{appglo}, and choose $\phi_v=\phi'_v$ verifying an analogue of Proposition \ref{testfunctions}. 
	\end{itemize} 

	\item If $v\in V-S_1$, let $\phi_v=1_{\fk_v}$ and $\phi'_v=1_{\fk'_v}$. 

	\item For $v\in V_\infty$, we identify $(\bfH_v, \fs_v)$ and $(\bfH'_v, \fs'_v)$ by the condition (a) in Proposition \ref{appglo}, and choose $\phi_v=\phi'_v$ such that 
	\begin{itemize}
		\item $J_{\bfG_v}^{\bfG_v}(\eta, X^0, \wh{\phi_v})=J_{\bfH'_v}^{\bfH'_v}(Y^0, \wh{\phi'_v})\neq 0$; 

		\item if $X\in\fs(k)$ is $\bfH(k_u)$-conjugate to an element in $\Supp(\wh{\phi_u})$ for all $u\in V$, then $X$ is $\bfH(k)$-conjugate to $X^0$; 

		\item if $Y\in\fs'(k)$ is $\bfH'(k_u)$-conjugate to an element in $\Supp(\wh{\phi'_u})$ for all $u\in V$, then $Y$ is $\bfH'(k)$-conjugate to $Y^0$. 
	\end{itemize}
\end{enumerate}
Notice that the condition (iv) is possible: with the preparation \cite[Proposition 3.3]{MR4424024} and \cite[Proposition 3.5]{MR4350885}, the proof of existence of such functions is sketched in \cite[p. 1874]{MR3414387} and similar to the argument of \cite[Lemme 10.7]{MR1440722}. 

In the rest of the proof of Proposition \ref{parcommute}, we shall consider the global test functions 
$$ \phi:=\prod\limits_{v\in V} \phi_v\in\CS(\fs(\BA)) \text{ and } \phi':=\prod\limits_{v\in V} \phi'_v\in\CS(\fs'(\BA)). $$

\subsection{Comparison of $J^\bfG(\eta, \phi)$ and $J^{\bfH'}(\phi')$}\label{ssec:Compare_J_J'}

\subsubsection{}

Recall the geometric expansions \eqref{leviJ^G} of $J^\bfG(\eta, \phi)$ and \eqref{leviJ^H'} of $J^{\bfH'}(\phi')$. As in \cite[Lemme 10.1]{MR2164623}, we simplify them in the following lemma for our choice of test functions in Section \ref{choffun}. 

\begin{lem}\label{lem101}
For our choice of $\phi$ and $\phi'$, we have 
$$ J^\bfG(\eta, \phi)=\sum_{\bfL\in\msl^\bfG(\bfM)} |\Tran_{\bfG\bfL_n}(\bfM_n, \bfL_n)|^{-1} \sum_{X\in\Gamma_\el((\fl\cap\fs_\rs)(k))} \tau(\bfH_X) J_\bfL^\bfG(\eta, X, \phi) $$
and 
$$ J^{\bfH'}(\phi')=\sum_{\bfL'\in\msl^{\bfH'}(\bfM')} |\Tran_{\bfH'}(\bfM', \bfL')|^{-1} \sum_{Y\in\Gamma_\el((\wt{\fl'}\cap\fs'_\rs)(k))} \tau(\bfH'_Y) J_{\bfL'}^{\bfH'}(Y, \phi'). $$
\end{lem}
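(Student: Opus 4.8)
The plan is to follow the strategy of \cite[Lemme 10.1]{MR2164623}: combine a support analysis of the chosen test functions, which restricts which Levi subgroups can contribute, with a purely combinatorial reorganization of the resulting sum. I will describe the case of $J^\bfG(\eta,\phi)$; the case of $J^{\bfH'}(\phi')$ is entirely parallel, using the factorization \eqref{eq:fact_woi2}, Proposition \ref{testfunctions}(b) and $\bfM'$ in place of \eqref{eq:fact_woi1}, Proposition \ref{testfunctions}(a) and $\bfM$, and with $W_0^{\bfG\bfL_n}$, $W_0^{\bfL_n}$ replaced by $W_0^{\bfH'}$, $W_0^{\bfL'}$.

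The first and main step is to show that, for our choice of $\phi$, one has $\sum_{X\in\Gamma_\el((\fl\cap\fs_\rs)(k))}\tau(\bfH_X)J_\bfL^\bfG(\eta,X,\phi)=0$ for every $\bfL\in\msl^{\bfG,\omega}(\bfM_0)$ that is not $\bfG$-conjugate to a member of $\msl^\bfG(\bfM)$. Expanding $J_\bfL^\bfG(\eta,X,\phi)$ by the splitting formula \eqref{eq:fact_woi1}, a nonzero value forces $\Ad(x_v^{-1})(X)\in\Supp(\phi_v)$ for some $x_v\in\bfH(k_v)$ at every place $v$. Fix one $v\in S\setminus\{w\}$ (available since $|S|\geq 2$). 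By the choice of $\phi_v$ in Section \ref{choffun} and condition (a) of Proposition \ref{testfunctions}, over $k_v$ we have $X\da W$ for some $W$ in the Cartan subspace $\fs'_{Y^0}$, which is $\bfM'$-elliptic because $Y^0\in\Omega_v\subseteq(\wt{\fm'}\cap\fs'_\rs)(k_v)_\el$ by Section \ref{ssec:orbits}. By Lemma \ref{lem74} the torus $\bfH_X$ is then, over $k_v$, $k_v$-isomorphic to the $\bfM'$-elliptic maximal torus $\bfH'_W$; reading the factorization of $\chi_X=\Prd_W$ over $k_v$ and using the matching of $\bfM$ with $\bfM'$ (Section \ref{ssec:mat_Levi}), this says that $\bfH_X$ is $\bfH(k_v)$-conjugate to a maximal torus of $\bfM_\bfH$ that is $\bfM$-elliptic over $k_v$. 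Now if in addition $X$ is $\bfL$-elliptic over $k$, then $A_\bfL$ is the maximal $k$-split subtorus of $\bfH_X$; localizing at $v$ and using uniqueness of the split part of a torus, $A_{\bfL_v}$ is $\bfG(k_v)$-conjugate into $A_{\bfM_v}$, so $\bfL_v$ contains a $\bfG(k_v)$-conjugate of $\bfM_v$. Since $\bfG$ is a general linear group, the relation ``$\bfM$ is conjugate to a Levi subgroup of $\bfL$'' is detected by the multisets of block sizes of $\bfM_n$ and $\bfL_n$, hence is insensitive to the field; therefore $\bfL$ is already $\bfG$-conjugate to a Levi subgroup containing $\bfM$, contrary to hypothesis.

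The second step is the combinatorial reorganization. I would first check that $\Phi(\bfL):=\sum_{X\in\Gamma_\el((\fl\cap\fs_\rs)(k))}\tau(\bfH_X)J_\bfL^\bfG(\eta,X,\phi)$ is invariant under $\bfL\mapsto\Ad(\dot w)\bfL$ whenever $\dot w\in\bfH(k)$ represents an element of $W_0^{\bfG\bfL_n}$ embedded diagonally in $W_0^\bfH$: the factors $\tau(\bfH_X)$ and the index sets $\Gamma_\el$ transform compatibly, the weight function satisfies $v_{\Ad(\dot w)\bfL}^\bfG(\dot w x)=v_\bfL^\bfG(x)$, and the spurious factor $\eta(\det\dot w)$ equals $1$ because $\det\dot w\in k^\times$. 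Since the coefficient $|W_0^{\bfL_n}|\,|W_0^{\bfG\bfL_n}|^{-1}$ depends only on the $\bfG$-conjugacy class of $\bfL$, I would then group the terms of \eqref{leviJ^G} by $W_0^{\bfG\bfL_n}$-conjugacy class, discard (by the previous step) the classes not meeting $\msl^\bfG(\bfM)$, and for each surviving class compare the number of its semi-standard $\omega$-stable representatives with the number of its members actually containing $\bfM$. The resulting elementary identity — which expresses the former count via $|W^{\bfG\bfL_n}(\bfL_n)|$ and the latter via $|W^{\bfG\bfL_n}(\bfL_n)|$ and $|\Tran_{\bfG\bfL_n}(\bfM_n,\bfL_n)|$ — converts $|W_0^{\bfL_n}|\,|W_0^{\bfG\bfL_n}|^{-1}$ into $|\Tran_{\bfG\bfL_n}(\bfM_n,\bfL_n)|^{-1}$ and rewrites the sum as one over $\bfL\in\msl^\bfG(\bfM)$, giving the first displayed formula; the same bookkeeping in $\bfH'$ yields the second.

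The step I expect to be the main obstacle is the support analysis of the second paragraph: extracting from the purely local conditions furnished by Proposition \ref{testfunctions}(a) at the auxiliary places of $S$ the global conclusion that the elliptic Levi subgroup of a contributing class must contain a conjugate of $\bfM$. This rests essentially on Lemma \ref{lem74} — the identification of centralizers of matching regular semisimple elements \emph{over the ground field}, not merely over $\ov{k}$ — together with the field-independence of the ``conjugate into'' partial order on Levi subgroups of a general linear group; once these are in place, the combinatorial reorganization is routine.
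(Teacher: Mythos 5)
Your proof is correct and follows essentially the same strategy as the paper's: pick an auxiliary place $u\in S\setminus\{w\}$, use Proposition \ref{testfunctions}(a) together with Lemma \ref{lem74} to deduce that any $X$ contributing a nonzero elliptic term over $\bfL$ must, at $u$, lie in a Cartan subspace that is $\bfM$-elliptic (the paper phrases this as $X$ being $\bfH(k_u)$-conjugate to an element of $\fs_{X^0}$, you phrase it via the maximal split subtorus of $\bfH_X$ over $k_u$ — equivalent for $GL$-type Levis), conclude that $\bfM$ is Weyl-conjugate into $\bfL$, and then redo the same count of $W_0^{\bfG\bfL_n}$-conjugate representatives to convert $|W_0^{\bfL_n}||W_0^{\bfG\bfL_n}|^{-1}$ into $|\Tran_{\bfG\bfL_n}(\bfM_n,\bfL_n)|^{-1}$.
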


\begin{proof}
For the first formula, we start with \eqref{leviJ^G}. Fix $\bfL\in\msl^{\bfG,\omega}(\bfM_0)$ and $X\in\Gamma_\el((\fl\cap\fs_\rs)(k))$ such that $J_\bfL^\bfG(\eta, X, \phi)\neq0$. By the condition (b) in Proposition \ref{appglo}, there exists $u\in S-\{w\}$. Then $X$ is $\bfH(k_u)$-conjugate to an element $X^\ast\in\Supp(\phi_u)$. By the condition (a) in Proposition \ref{testfunctions} of $\phi_u$, there exists $Y^\ast\in(\fs'_{Y^0}\cap\fs'_\rs)(k_u)$ such that $X^\ast\da Y^\ast$. Since $X^0\da Y^0$, by Lemma \ref{lem74}, there exists $X^\dagger\in(\fs_{X^0}\cap\fs_\rs)(k_u)$ such that $X^\dagger\da Y^\ast$. It follows that $X^\ast$ and $X^\dagger$ are $\bfH(k_u)$-conjugate. Thus we have proved that $X$ is $\bfH(k_u)$-conjugate to an element $X^\dagger\in(\fs_{X^0}\cap\fs_\rs)(k_u)$. 

Choose $\bfR\in\msl^{\bfG,\omega}(\bfM_0)$ such that $\bfR\subseteq \bfL$ and that $X$ is $\bfL_\bfH(k_u)$-conjugate to an element $X^\#\in(\fr\cap\fs_\rs)(k_u)_\el$. Then $X^\dagger$ and $X^\#$ are $\bfH(k_u)$-conjugate. It follows that there exists $x\in\bfH(k_u)$ and $Z\in(\fr\cap\fs_\rs)(k_u)_\el$ such that $Z=\Ad(x)(X^0)$. But by the condition (ii) on $\Omega_u$, the condition (i) on $Y^0$ and $X^0\overset{\bfM}{\da} Y^0$ (see Section \ref{ssec:orbits}), we know that $X^0\in(\fm\cap\fs_\rs)(k_u)_\el$. Thus by \cite[Lemma 3.13.(1)]{MR4681295}, there exists $w\in\left\{\mat(\omega_n,,,\omega_n): \omega_n\in W_0^{\bfG\bfL_{n,u}}\right\}$ such that $\Ad(w)(\bfM_u)=\bfR_u$. Since we may identify $W_0^{\bfG\bfL_{n,u}}$ with $W_0^{\bfG\bfL_n}$, we deduce that $\Tran_{\bfG\bfL_n}(\bfM_n, \bfL_n)\neq\emptyset$. That is to say, we may restrict the sum on $\msl^{\bfG,\omega}(\bfM_0)$ in \eqref{leviJ^G} to those elements conjugate to an element in $\msl^\bfG(\bfM)$ under $\left\{\mat(\omega_n,,,\omega_n): \omega_n\in W_0^{\bfG\bfL_n}\right\}$. 

We shall replace the sum on $\msl^{\bfG,\omega}(\bfM_0)$ in \eqref{leviJ^G} with a sum on $\msl^\bfG(\bfM)$. On the one hand, the number of elements in $\msl^{\bfG,\omega}(\bfM_0)$ conjugate to $\bfL$ under $\left\{\mat(\omega_n,,,\omega_n): \omega_n\in W_0^{\bfG\bfL_n}\right\}$ is 
$$ |\Norm_{W_0^{\bfG\bfL_n}}(\bfL_n)\bs W_0^{\bfG\bfL_n}|. $$ 
On the other hand, the number of elements in $\msl^\bfG(\bfM)$ conjugate to $\bfL$ under $\left\{\mat(\omega_n,,,\omega_n): \omega_n\in W_0^{\bfG\bfL_n}\right\}$ is $$ |W^{\bfG\bfL_n}(\bfL_n)|^{-1}|\Tran_{\bfG\bfL_n}(\bfM_n,\bfL_n)|. $$ 
Since $W^{\bfG\bfL_n}(\bfL_n)=\Norm_{W_0^{\bfG\bfL_n}}(\bfL_n)/W_0^{\bfL_n}$ (see \cite[(7.12.2)]{MR2192014}), we obtain 
$$ |W_0^{\bfL_n}||W_0^{\bfG\bfL_n}|^{-1}\cdot|\Norm_{W_0^{\bfG\bfL_n}}(\bfL_n)\bs W_0^{\bfG\bfL_n}|\cdot|W^{\bfG\bfL_n}(\bfL_n)||\Tran_{\bfG\bfL_n}(\bfM_n,\bfL_n)|^{-1}=|\Tran_{\bfG\bfL_n}(\bfM_n,\bfL_n)|^{-1}. $$
Then the first formula of the lemma follows. 

The second formula can be proved in a similar way with the help of \eqref{leviJ^H'}, the condition (b) in Proposition \ref{testfunctions} of $\phi'_u$ and \cite[Lemma 3.20.(1)]{MR4681295}. The only additional ingredient is the condition (c) in Proposition \ref{appglo}, by which we may identify $W_0^{\bfH'_u}$ with $W_0^{\bfH'}$. 
\end{proof}

\subsubsection{}\label{ssec:scalartpdt}

Recall from Section \ref{ssec:mat_Levi} that we have bijections $\msl^{\bfH'}(\bfM')\ra\msl^{\bfG}(\bfM)$ and $\msf^{\bfH'}(\bfM')\ra\msf^{\bfG}(\bfM)$. If $(\bfG'_v,\bfH'_v)$ is identified with $(\bfG_v,\bfH_v)$ for certain $v\in V$, then $\bfM'_v=\bfM_v\cap \bfH_v$ is a Levi subgroup of $\bfH'_v$. In this case, to unify notations, we define
$$ e_{\bfM'_v}:=(1,\cdots,1)\in\{\pm1\}^\ell $$
as in \eqref{eq:def_e_M'}. Moreover, for $L_v\in\msl^{\bfG_v}(\bfM_v)$, resp. $Q_v\in\msf^{\bfG_v}(\bfM_v)$, we shall write 
$$ L'_v:=L_v\cap \bfH_v\in\msl^{\bfH'_v}(\bfM'_v) \text{, resp. } Q'_v:=Q_v\cap \bfH_v\in\msf^{\bfH'_v}(\bfM'_v), $$
and 
$$ \wt{L'_v}:=L_v\in\msl^{\bfG'_v}(\wt{\bfM'_v}) \text{, resp. } \wt{Q'_v}:=Q_v\in\msf^{\bfG'_v}(\wt{\bfM'_v}). $$

Let $\bfL\in\msl^\bfG(\bfM)$ and $\bfL'\in\msl^{\bfH'}(\bfM')$ be a pair of matching Levi subgroups. Then the map 
$$ (R_v)_{v\in V}\mapsto(\wt{R'_v})_{v\in V} $$ 
defines a bijection from $\prod\limits_{v\in V}\msl^{\bfG_v}(\bfL_v)$ to $\prod\limits_{v\in V}\msl^{\bfG'_v}(\wt{\bfL'_v})$. From Section \ref{ssec:sym2-I-II}, there is a natural $\BR$-linear isomorphism $\fa_{\bfL'}\simeq\fa_{\wt{\bfL'}}$. By \cite[Proposition 2.15 in Chapter IV]{milneCFT}, the $\BR$-linear spaces $\fa_{\wt{\bfL'}}$, $\fa_{\bfL}$, $\fa_{\wt{\bfL'_v}}$ and $\fa_{\bfL_v}$ for $v\in V$ are all naturally isomorphic. By modifying the scalar products on these Euclidean spaces, we may and we shall suppose that all these $\BR$-linear isomorphisms are isometries. Consider $(R_v)_{v\in V}\in\prod\limits_{v\in V}\msl^{\bfG_v}(\bfL_v)$. We obtain the following commutative diagram induced by orthogonal projections. 
$$     \xymatrix{  \bigoplus\limits_{v\in V} \fa_{\wt{\bfL'_v}}^{\wt{R'_v}} \ar[r]^{\sim}  \ar[d] & \bigoplus\limits_{v\in V} \fa_{\bfL_v}^{R_v} \ar[d]   \\
   \fa_{\wt{\bfL'}}^{\bfG'} \ar[r]^{\sim} & \fa_\bfL^\bfG   }  $$
By our choice of scalar products, we have the equality 
\begin{equation}\label{eq:equal-d}
 d_{\wt{\bfL'}}^{\bfG'}((\wt{R'_v})_{v\in V})=d_\bfL^\bfG((R_v)_{v\in V}). 
\end{equation}
We shall also choose compatible sections 
$$ (\wt{R'_v})_{v\in V}\mapsto (Q_{\wt{R'_v}})_{v\in V}\in \prod_{v\in V} \msp^{\bfG'_v}(\wt{R'_v}) $$ 
and 
$$ (R_v)_{v\in V}\mapsto (Q_{R_v})_{v\in V}\in \prod_{v\in V} \msp^{\bfG_v}(R_v) $$
in the sense that for all $v\in V$, 
$$ Q_{\wt{R'_v}}=\wt{(Q_{R_v})'}. $$ 

\begin{prop}\label{prop102}
For our choice of $\phi$ and $\phi'$, we have the equality 
$$ J^\bfG(\eta, \phi)=J^{\bfH'}(\phi'). $$
\end{prop}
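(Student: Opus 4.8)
The plan is to prove $J^\bfG(\eta,\phi) = J^{\bfH'}(\phi')$ by comparing the two geometric expansions term by term along the matching of elliptic regular semi-simple classes. First I would use Lemma~\ref{lem101} to rewrite both sides as sums over $\bfL\in\msl^\bfG(\bfM)$ (resp. $\bfL'\in\msl^{\bfH'}(\bfM')$) and over $X\in\Gamma_\el((\fl\cap\fs_\rs)(k))$ (resp. $Y\in\Gamma_\el((\wt{\fl'}\cap\fs'_\rs)(k))$). Since we have a fixed bijection $\bfL'\mapsto\bfL$ between matching Levi subgroups with $|\Tran_{\bfH'}(\bfM',\bfL')|=|\Tran_{\bfG\bfL_n}(\bfM_n,\bfL_n)|$ (the Weyl groups being identified, as used in the proof of Lemma~\ref{lem101}), it suffices to match, for each fixed pair $(\bfL,\bfL')$, the inner sums
$$ \sum_{X} \tau(\bfH_X) J_\bfL^\bfG(\eta, X, \phi) = \sum_{Y} \tau(\bfH'_Y) J_{\bfL'}^{\bfH'}(Y, \phi'). $$

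The key step is to set up the correspondence $X\leftrightarrow Y$ on the global elliptic classes appearing with nonzero contribution. For a global $Y\in(\wt{\fl'}\cap\fs'_\rs)(k)_\el$ with $J_{\bfL'}^{\bfH'}(Y,\phi')\neq 0$, the local support conditions at $w$, at the auxiliary places in $S_1-V_\infty-\{w\}$, at the archimedean places, and the unramified behaviour at $v\notin S_1$ together force (via Lemma~\ref{lemsym2ell}, the injection of invariants $\fs'//\bfH'\simeq\bfA^n\simeq\fs//\bfH$, and the test function conditions (iv) of Section~\ref{choffun}) that $\Prd_Y$ comes from $\fs_\rs(k)$, so there is a matching $X\in(\fl\cap\fs_\rs)(k)_\el$ with $X\overset{\bfL}{\da}Y$; conversely, condition~(b) of Theorem~\ref{thmcommute} (the vanishing for $X$ not coming from $\fs'_\rs$) rules out global $X$ that do not match any $Y$. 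Once the classes are matched, I would invoke the factorisations \eqref{eq:fact_woi1}, \eqref{eq:fact_woi2} together with the equality of Arthur coefficients \eqref{eq:equal-d} and the compatible sections, reducing the equality of global weighted orbital integrals to a product over $v\in V$ of equalities of local weighted orbital integrals $\kappa_v(X)J_{\bfL_v}^{Q_{R_v}}(\eta,X,\phi_v) = J_{\wt{\bfL'_v}}^{Q_{\wt{R'_v}}}(Y,\phi'_v)$: at $v=w$ this is the partial $M$-association hypothesis on $f,f'$; at the auxiliary finite places and the archimedean places it is condition~(d) of Proposition~\ref{testfunctions} (smooth transfer); and at $v\notin S_1$ it is the weighted fundamental lemma, Lemma~\ref{wfl} (part (a) for the matched terms, part (b) forcing vanishing where the potential-matching condition fails). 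Finally I would check that the Tamagawa-type volume factors agree: $\tau(\bfH_X) = \tau(\bfH'_Y)$ whenever $X\da Y$, which follows because by Lemma~\ref{lem74} the centralisers $\bfH_X$ and $\bfH'_Y$ are $k$-isomorphic tori, and the relevant $\fa$-spaces are identified so $A_\bfL^\infty$ and $A_{\bfL'}^\infty$ match.

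The main obstacle I anticipate is bookkeeping around the places that are inert versus split in $k'$ and the associated sign/transfer-factor discrepancies: in the split places $\eta_v$ and $\kappa_v$ are trivial and $(\bfG'_v,\bfH'_v)$ is identified with $(\bfG_v,\bfH_v)$, so one must verify that the "transfer" there is literally the identity and contributes no sign, while at the inert unramified places Lemma~\ref{wfl} supplies exactly the factor $\kappa_v(X)$; one must check these signs multiply up correctly across $V$ so that no global sign ambiguity survives. A secondary subtlety is the vanishing half of the comparison — ensuring that terms where $X$ comes potentially from $(\fm_{\wt{Q'}}\cap\fs'_\rs)$ fails at some place genuinely vanish on both sides simultaneously, which is where Lemma~\ref{wfl}(b) together with condition~(b) of Theorem~\ref{thmcommute} and Proposition~\ref{prop:equivcomefromLevi} must be combined; this is the analogue of the delicate point flagged in the introduction as being "subtler" than in \cite{MR2164623}. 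I expect no difficulty with the convergence or with the term-by-term nature of the comparison once the orbit matching is established, since the sums are finite by Remark~\ref{rmk:chau71}.
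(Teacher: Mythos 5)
Your overall architecture is right: simplify both sides via Lemma~\ref{lem101}, match the inner sums over each pair $(\bfL,\bfL')$, match elliptic orbits $X\overset{\bfL}{\da}Y$, equate $\tau(\bfH_X)=\tau(\bfH'_Y)$ via Lemma~\ref{lem74}, and reduce the equality of global weighted orbital integrals to local equalities via the splitting formulae \eqref{eq:fact_woi1}, \eqref{eq:fact_woi2}, the equality of coefficients \eqref{eq:equal-d}, and the local inputs (partial $M$-association at $w$, Proposition~\ref{testfunctions}(d) at the auxiliary places, Lemma~\ref{wfl}(a) at the unramified places). That is indeed the paper's proof of the ``matching'' half.

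But there is a genuine gap in the vanishing half, and your framing of it is incorrect. You assert that condition (b) of Theorem~\ref{thmcommute} ``rules out global $X$ that do not match any $Y$,'' and you describe the subtlety as ensuring both sides vanish when the potential-matching condition ``fails at some place.'' Neither statement captures the actual difficulty. Condition (b) constrains only the function $f=\phi_w$ at the single place $w$; it gives vanishing of the orbital integral when $X$ fails to come from $\fs'_\rs$ \emph{locally at $w$}. A global elliptic $X\in(\fl\cap\fs_\rs)(k)$ that does \emph{not} come from $\fs'_\rs(k)$ may perfectly well come from $\fs'_\rs(k_v)$ at every place $v$, including $w$, so none of the local vanishing inputs applies directly. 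This is exactly where the cohomological machinery of Section~\ref{seccohcri}--\ref{secmatllevi} is needed: assuming $J_\bfL^\bfG(\eta,X,\phi)\ne 0$, one first shows at the level of the Levi components $R_v$ of the nonzero splitting term that $\eta_{R_v}(X)=e_{R'_v}$ at every $v$ (Lemma~\ref{lemRsigne}); then one upgrades this to $\eta_{\bfL_v}(X)=e_{\bfL'_v}$ for every $v$ (Lemma~\ref{lemLsigne}) by a global argument using Pontryagin duality with $Z_{\wh{\bfL}}[2]$, the constraint $\bigoplus_v\fa_{\bfL_v}^{R_v}=\fa_\bfL^\bfG$ forced by $d_\bfL^\bfG\neq 0$, the product formula for Kottwitz signs, and the fundamental exact sequence of global class field theory; one then deduces (Lemma~\ref{lemloceve}, via a descent argument through $\omega$-stable Levis at the unramified inert places plus Lemma~\ref{wfl}(b)) that $X$ comes from $\fs'_\rs(k_v)$ at every $v$; and finally one needs a local--global step (Proposition~\ref{glocom}, using Lemmas~\ref{lem2451}--\ref{lem2452}, abelian Galois cohomology, and the Hasse principle for adjoint groups) to conclude that $X$ comes from $\fs'_\rs(k)$, contradicting the assumption. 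Your sketch mentions none of this: it conflates a one-place condition with the global one and misses the Hasse-principle-type argument that makes the vanishing work. This is the ``subtler'' point flagged in the introduction, and it is where the bulk of the work in the paper's proof of Proposition~\ref{prop102} actually lies.
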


The rest of Section \ref{ssec:Compare_J_J'} is devoted to the proof of Proposition \ref{prop102}. 

\subsubsection{}

Let $\bfL\in\msl^\bfG(\bfM)$ and $\bfL'\in\msl^{\bfH'}(\bfM')$ be a pair of matching Levi subgroups. There is a natural bijection between $\Tran_{\bfG\bfL_n}(\bfM_n, \bfL_n)$ and $\Tran_{\bfH'}(\bfM', \bfL')$ since both of them are understood as permutations. Let $X\in\Gamma_\el((\fl\cap\fs_\rs)(k))$. By Proposition \ref{prop:equivcomefromLevi} and Lemma \ref{lemsym2ell}, $X$ comes from $\fs'_\rs(k)$ if and only if there exists $Y\in\Gamma_\el((\wt{\fl'}\cap\fs'_\rs)(k))$ such that $X\overset{\bfL}{\da}Y$. Moreover, it follows from Section \ref{ssec:matorb} that there is a natural injection from $\Gamma_\el((\wt{\fl'}\cap\fs'_\rs)(k))$ into $\Gamma_\el((\fl\cap\fs_\rs)(k))$. 

We first consider $X\in\Gamma_\el((\fl\cap\fs_\rs)(k))$ and $Y\in\Gamma_\el((\wt{\fl'}\cap\fs'_\rs)(k))$ such that $X\overset{\bfL}{\da}Y$. Then there is an isomorphism $\bfH_X\simeq\bfH'_Y$ over $k$ by Lemma \ref{lem74}. Thus we have 
\begin{equation}\label{eq:tau_equal}
 \tau(\bfH_X)=\tau(\bfH'_Y) 
\end{equation}
by our choices of Haar measures on tori in Section \ref{ssec:glo_not} and of scalar products on $\fa_\bfL\simeq\fa_{\bfL'}$ in Section \ref{ssec:scalartpdt}. Since $X$ and $Y$ have $\bfL_v$-matching orbits at each $v\in V$, we see that 
$$ J_\bfL^\bfG(\eta, X, \phi)=J_{\bfL'}^{\bfH'}(Y, \phi') $$ 
by \eqref{eq:fact_woi1}, \eqref{eq:fact_woi2}, \eqref{eq:equal-d} and our choice of $\phi$ and $\phi'$ in Section \ref{choffun}. In particular, here we have used our assumption on $f$ and $f'$ (see Section \ref{ssec:finalsetting}) and the weighted fundamental lemma (see Lemma \ref{wfl}). 

Hence by Lemma \ref{lem101}, to show Proposition \ref{prop102}, it suffices to show that $J_\bfL^\bfG(\eta, X, \phi)=0$ for any $X\in\Gamma_\el((\fl\cap\fs_\rs)(k))$ that does not come from $\fs'_\rs(k)$. 

\subsubsection{}\label{ssec:contradict}

We now assume that $X\in\Gamma_\el((\fl\cap\fs_\rs)(k))$ does not come from $\fs'_\rs(k)$ but $J_\bfL^\bfG(\eta, X, \phi)\neq0$. By the factorisation \eqref{eq:fact_woi1} of $J_\bfL^\bfG(\eta, X, \phi)$, there exists $(R_v)_{v\in V}\in\prod\limits_{v\in V}\msl^{\bfG_v}(\bfL_v)$ such that 
\begin{equation}\label{eq:contra_nonvan}
 d_\bfL^\bfG((R_v)_{v\in V}) \prod_{v\in V} \kappa_v(X) J_{\bfL_v}^{Q_{R_v}}(\eta, X, \phi_v)\neq0. 
\end{equation}
Fix such an $(R_v)_{v\in V}$. We can and we shall choose a representative of $X$ in the form of 
$$ \mat(0,1_n,A,0)\in (\fl\cap\fs_\rs)_\el(k), $$
which will again be denoted by $X$. 

\begin{lem}\label{lemRsigne}
For all $v\in V$, we have $\eta_{R_v}(X)=e_{R'_v}$. 
\end{lem}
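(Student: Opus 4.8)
The plan is to establish the identity $\eta_{R_v}(X)=e_{R'_v}$ for each place $v$ separately, extracting it from the non-vanishing \eqref{eq:contra_nonvan}, i.e. from $J_{\bfL_v}^{Q_{R_v}}(\eta,X,\phi_v)\neq0$ together with the explicit choice of $\phi_v$ made in Section \ref{choffun}. The dictionary to keep in mind is: since $Q_{R_v}\in\msp^{\bfG_v}(R_v)$ one has $M_{Q_{R_v}}=R_v$, so $\fm_{\wt{(Q_{R_v})'}}=\wt{\fr'_v}$, and by Definition \ref{defcomepotfrom} together with \eqref{eq:def_eta_M} the assertion ``$X$ comes potentially from $(\fm_{\wt{(Q_{R_v})'}}\cap\fs'_\rs)(k_v)$'' is precisely ``$\eta_{R_v}(X)=e_{R'_v}$''. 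Note also that $X\in(\fl\cap\fs_\rs)(k)\subseteq(\fr_v\cap\fs_\rs)(k_v)$, so the sign $\eta_{R_v}(X)$ is well defined.

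I first dispose of the places that split in $k'$. For such $v$ --- in particular for every $v\in V_\infty$, since $k$ is totally imaginary by Proposition \ref{appglo}(a) --- the character $\eta_v$ is trivial and $e_{R'_v}=(1,\dots,1)$ by the convention fixed in Section \ref{ssec:scalartpdt}, so both sides of the claimed equality are $(1,\dots,1)$ and there is nothing to prove. If $v\in V-S_1$ is inert in $k'$, then by condition (ii) on $V-S_1$ ($k_v$ of odd residue characteristic, $v$ unramified in $k'$) together with $v\notin S$ (so $k_v$ splits $\BD_v$) the hypotheses of the weighted fundamental lemma hold at $v$ with $\phi_v=f_0$; part (b) of Lemma \ref{wfl}, applied to $\bfL_v$ and $Q_{R_v}$, then yields $\eta_{R_v}(X)=e_{R'_v}$ immediately from $J_{\bfL_v}^{Q_{R_v}}(\eta,X,f_0)\neq0$. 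At the distinguished place $w$ we have $\phi_w=f$; since $f$ satisfies the two conditions of Theorem \ref{thmcommute}, Lemma \ref{lem:2implyadd} shows that $f$ satisfies the additional condition of Definition \ref{defMass}, and applying that condition to $L=\bfL_w$ (carried over to $\msl^G(M)$ through $\bfG_w\simeq G$) and $Q=Q_{R_w}$ gives $\eta_{R_w}(X)=e_{R'_w}$.

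The remaining case, $v\in S_1-V_\infty-\{w\}$ inert in $k'$, is where the actual work lies; there $\phi_v$ and $\phi'_v$ are the test functions of Proposition \ref{testfunctions} relative to $\bfM'_v,\bfM_v,X^0,Y^0$. By condition (c) of that proposition the weighted orbital integrals of $\phi_v$ vanish for non-trivial weights, so $J_{\bfL_v}^{Q_{R_v}}(\eta,X,\phi_v)\neq0$ forces $Q_{R_v}\in\msp^{\bfG_v}(\bfL_v)$, i.e. $R_v=\bfL_v$, and the integral collapses to the $\eta$-twisted orbital integral of $\phi_v$ along the $\bfH(k_v)$-orbit of $X$. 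Its non-vanishing means $X$ is $\bfH(k_v)$-conjugate to an element of $\Supp(\phi_v)$, which by condition (a) comes from $\fs'_\rs(k_v)$; hence $X$, sharing a characteristic polynomial, also comes from $\fs'_\rs(k_v)$. Viewing $X\in(\fr_v\cap\fs_\rs)(k_v)$ ($R_v=\bfL_v$ being matched with $R'_v=\bfL'_v$), Proposition \ref{prop:equivcomefromLevi} promotes this to ``$X$ comes from $(\wt{\fr'_v}\cap\fs'_\rs)(k_v)$'', and Corollary \ref{corequivdefcomefrom}(1) then delivers $\eta_{R_v}(X)=e_{R'_v}$. Assembling the cases proves the lemma.

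The main obstacle is bookkeeping rather than depth: one must carefully track, for each place $v$ and each admissible tuple $(R_v)$, the translation between the non-vanishing of $J_{\bfL_v}^{Q_{R_v}}(\eta,X,\phi_v)$, the statement that $X$ comes (potentially) from $(\wt{\fr'_v}\cap\fs'_\rs)(k_v)$, the local sign $\eta_{R_v}(X)$, and $e_{R'_v}$, through the matching of Levi subgroups of Section \ref{ssec:mat_Levi} and the base-change identifications of Section \ref{secmatorbi}. Once this dictionary is in place, each place is handled by a short argument --- the weighted fundamental lemma at the unramified inert places, Definition \ref{defMass} at $w$, the support of the test functions together with Proposition \ref{prop:equivcomefromLevi} and Corollary \ref{corequivdefcomefrom} at the remaining inert places, and triviality at the split places.
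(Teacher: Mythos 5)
Your proof is correct and follows exactly the same case-by-case route as the paper's: trivial at split places by the conventions of Sections~\ref{ssec:factorisation} and~\ref{ssec:scalartpdt}, Lemma~\ref{wfl}.(b) at inert places outside $S_1$, Propositions~\ref{testfunctions}.(a)(c), \ref{prop:equivcomefromLevi} and Corollary~\ref{corequivdefcomefrom}.(1) at inert places of $S_1-V_\infty-\{w\}$, and the additional condition of Definition~\ref{defMass} (via Lemma~\ref{lem:2implyadd}) at $w$. You merely spell out some steps the paper leaves implicit, such as the reduction $R_v=\bfL_v$ at the intermediate inert places, which is a useful clarification but not a different argument.
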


\begin{proof}
It follows from our definitions in Sections \ref{ssec:factorisation} and \ref{ssec:scalartpdt} if $v\in V$ splits in $k'$ (in particular, if $v\in V_\infty$). For $v\in V-S_1$ which is inert in $k'$, it results from Lemma \ref{wfl}.(b). For $v\in S_1-V_\infty-\{w\}$ which is inert in $k'$, it is deduced from Propositions \ref{testfunctions}.(a)(c) and \ref{prop:equivcomefromLevi} and Corollary \ref{corequivdefcomefrom}.(1). For $v=w$, it is because $f$ satisfies the additional condition in Definition \ref{defMass}. 
\end{proof}

\begin{lem}\label{lemLsigne}
For all $v\in V$, we have $\eta_{\bfL_v}(X)=e_{\bfL'_v}$. 
\end{lem}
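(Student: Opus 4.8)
The plan is to run through the places $v\in V$, using Lemma \ref{lemRsigne} as the input and relating $\eta_{\bfL_v}(X)$ to $\eta_{R_v}(X)$ through the coarsening map
$$H^1_\ab(k_v,\bfL_{v,H_0}^{\omega_0}/Z_{\bfL_v}\to \bfL_{v,H_0}/Z_{\bfL_v})\to H^1_\ab(k_v,R_{v,H_0}^{\omega_0}/Z_{R_v}\to R_{v,H_0}/Z_{R_v})$$
(Corollary \ref{idcar3} and its variant for $\bfL_v\subseteq R_v$), noting that the relation $s_j=\sum_{i\in I_j}r_i$ between the factorisation indices of $\bfL'_v\subseteq R'_v$ coming from \eqref{eq:M-fact} and \eqref{eq:M'-fact} makes $e_{R'_v}$ the image of $e_{\bfL'_v}$ under the \emph{same} map. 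Hence Lemma \ref{lemRsigne} always gives the equality "summed within the blocks of $R_v$", and the content of Lemma \ref{lemLsigne} is to promote this to the finer equality componentwise in the blocks of $\bfL_v$.

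Two kinds of places are immediate. If $v\in V_\infty$, then $k_v=\BC$ and $v$ splits in $k'$; and if $v$ splits in $k'$ then by the conventions of Sections \ref{ssec:factorisation} and \ref{ssec:scalartpdt} one has $e_{\bfL'_v}=(1,\dots,1)$ and $\eta_v$ trivial, so $\eta_{\bfL_v}(X)=(1,\dots,1)=e_{\bfL'_v}$ with nothing to prove. Next, if $v\in S_1-V_\infty$ is inert in $k'$ with $v\neq w$, or if $v=w$, then $\phi_v$ is either $f$ (when $v=w$) or a function furnished by Proposition \ref{testfunctions}, and in both cases its weighted orbital integrals vanish for nontrivial weights — by condition (a) of Theorem \ref{thmcommute} for $f$, and by Proposition \ref{testfunctions}.(c) otherwise. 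Since $J_{\bfL_v}^{Q_{R_v}}(\eta,X,\phi_v)\neq 0$ by \eqref{eq:contra_nonvan}, this forces $Q_{R_v}\in\msp^G(\bfL_v)$, i.e. $R_v=\bfL_v$, and then $\eta_{\bfL_v}(X)=\eta_{R_v}(X)=e_{R'_v}=e_{\bfL'_v}$ by Lemma \ref{lemRsigne}.

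The remaining and essential case is $v\notin S_1$ inert in $k'$, where $\phi_v=f_0$ and no vanishing for nontrivial weights is available, so that $R_v$ may be strictly larger than $\bfL_v$. Here we have $J_{\bfL_v}^{Q_{R_v}}(\eta,X,f_0)\neq 0$ and must still conclude $\eta_{\bfL_v}(X)=e_{\bfL'_v}$. The proposed route is: first apply the parabolic descent formula (\cite[Proposition 4.1.(4)]{MR4681295}, as in \eqref{eq:pardes1}) together with the compatibility of $f_0$ with constant terms — which holds because $E/F$ is unramified and $\eta$ is trivial on units, so that $(f_0)_{Q_{R_v}}^{\eta}$ is again the basic function of $R_v$ — to rewrite the integral as a weighted orbital integral over $R_v$; then use the splitting formula for $(R_v,\bfL_v)$-families to factor it as a product over the $\omega$-stable blocks of $R_v$, each factor a non-zero weighted orbital integral of the basic function over a smaller $\GL$, in which $X$ restricts to a diagonal of globally $\GL$-elliptic pieces. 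One is thus reduced to the following per-block statement: if the weighted orbital integral of $f_0$ relative to the full group is non-zero at such an element $Z$, then $\eta_{\bfL''}(Z)=e_{(\bfL'')'}$ for the relevant Levi $\bfL''$. To get this from the weighted fundamental lemma \ref{wfl}, whose raw form only yields the coarse equality (already recorded as Lemma \ref{lemRsigne}), one argues by dichotomy: if $Z$ comes from the corresponding $\fs'$-space at the $\bfL''$-level then $\eta_{\bfL''}(Z)=e_{(\bfL'')'}$ directly by Corollary \ref{corequivdefcomefrom}.(1) and Proposition \ref{prop:equivcomefromLevi}; and if it does not, one descends $Z$ to an elliptic representative in a smaller Levi and uses Lemma \ref{wfl}.(b) at that Levi, combined with the fact (via Lemma \ref{wfl}.(a)) that on the $\fs'$-side the basic function and Arthur's weight function are nonnegative, to force the weighted integral to vanish — contradicting the hypothesis. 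I expect this last point — ruling out, at a place $v\notin S_1$, the simultaneous occurrence of $J_{\bfL_v}^{Q_{R_v}}(\eta,X,f_0)\neq 0$ and $\eta_{\bfL_v}(X)\neq e_{\bfL'_v}$ — to be the main obstacle; it is exactly the "subtler" vanishing of Lemma \ref{wfl} alluded to in the introduction (as opposed to the tautological weighted fundamental lemma for inner forms in \cite{MR2164623}), and it is where the cohomological description of matching orbits of Sections \ref{seccohcri}--\ref{secmatllevi}, in particular the multiplicativity of the Kottwitz sign built into Corollary \ref{idcar4} and Proposition \ref{proppontdual}.(2), has to be invoked to control how the defect can or cannot be "hidden" inside a larger block.
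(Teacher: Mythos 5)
Your outline is correct up to the last case, but the last case is the whole lemma, and there the argument you sketch cannot work.

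For the easy places your reasoning is sound: at split (in particular archimedean) $v$ the assertion is vacuous by the conventions of Sections \ref{ssec:factorisation} and \ref{ssec:scalartpdt}; at inert $v\in S_1-V_\infty$ the vanishing for nontrivial weights (from Proposition \ref{testfunctions}.(c) for $v\neq w$ and from condition (a) of Theorem \ref{thmcommute} for $v=w$) together with \eqref{eq:contra_nonvan} forces $R_v=\bfL_v$, after which Lemma \ref{lemRsigne} gives exactly the desired identity. (The paper does not even need this case distinction, since it feeds Lemma \ref{lemRsigne} into a uniform global argument, but your version of this part is correct.)

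The genuine gap is at inert $v\notin S_1$. There you aim to prove $\eta_{\bfL_v}(X)=e_{\bfL'_v}$ from the single \emph{local} hypothesis $J_{\bfL_v}^{Q_{R_v}}(\eta,X,f_0)\neq 0$, via descent, splitting, and a ``dichotomy''. This cannot succeed. The only sign constraint that $J_{\bfL_v}^{Q_{R_v}}(\eta,X,f_0)\neq 0$ imposes locally is the coarse one at the Levi $M_{Q_{R_v}}=R_v$, namely $\eta_{R_v}(X)=e_{R'_v}$, which is Lemma \ref{lemRsigne} and is all that Lemma \ref{wfl}.(b) delivers. Descending to an elliptic representative in a smaller Levi and reapplying Lemma \ref{wfl}.(b) reproduces the same coarse constraint one level down; it does not refine it, because after descent the Levi of the weight's parabolic is again a union of the $\bfL_v$-blocks. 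Concretely, if $\bfL_v$ has two blocks, $R_v=\bfG_v$, and $\eta(X_1)=\eta(X_2)=-1$, then $\eta_{R_v}(X)=1=e_{R'_v}$ and there is no local reason for $J_{\bfL_v}^{\bfG_v}(\eta,X,f_0)$ to vanish, yet $\eta_{\bfL_v}(X)=(-1,-1)\neq(1,1)=e_{\bfL'_v}$. Your appeal to ``positivity'' of Arthur's weight function to rule this out is also false as stated: $v_{\bfL_v}^{\bfG_v}$ is a polynomial in logarithms and changes sign.

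What is actually needed — and what the paper does — is a global argument. Set $\zeta_v:=\varepsilon_X^{\bfL_v}/{\varepsilon'}^{\bfL_v}$ as a character of $Z_{\wh\bfL}[2]$ via Corollaries \ref{idcar3}, \ref{idcar5} and Proposition \ref{proppontdual}.(1). Lemma \ref{lemRsigne} says each $\zeta_v$ is trivial on $Z_{\wh{R_v}}[2]$. The fundamental exact sequence of global class field theory makes $\prod_v\varepsilon_X^{\bfL_v}$ trivial, and the product formula for Kottwitz signs (with Lemma \ref{lem:kottsign}) makes $\prod_v{\varepsilon'}^{\bfL_v}$ trivial; hence $\prod_v\zeta_v=1$ on $Z_{\wh\bfL}[2]$. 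Finally, $d_\bfL^\bfG((R_v)_v)\neq 0$ gives, for each $v$, a decomposition $Z_{\wh\bfL}=A_v A^v$ with $A_v=Z_{\wh{R_v}}$, $A^v=\bigcap_{v'\neq v}Z_{\wh{R_{v'}}}$ and $A_v\cap A^v=Z_{\wh\bfG}$; since $Z_{\wh\bfG}$ is divisible this lifts to $Z_{\wh\bfL}[2]=A_v[2]A^v[2]$, and combining the local triviality of $\zeta_v$ on $A_v[2]$, of each $\zeta_{v'}$ ($v'\neq v$) on $A^v[2]$, and the global relation $\prod\zeta_v=1$ pins down $\zeta_v=1$. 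This ``global-to-local'' step is exactly the content you were trying (and failing) to localize; your proposal must be reworked along these lines.
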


\begin{proof}
Let $S_2$ be the subset of $V_f$ consisting of all places that are inert in $k'$. The assertion is trivial for $v\in V-S_2$ by our definitions in Sections \ref{ssec:factorisation} and \ref{ssec:scalartpdt}. Consider $v\in S_2$. By Corollaries \ref{idcar3} and \ref{idcar5}, $\eta_{\bfL_v}(X)$ and $e_{\bfL'_v}$ are understood as $\varepsilon_{X}^{\bfL_v}$ and ${\varepsilon'}^{\bfL_v}$ respectively. By Proposition \ref{proppontdual}.(1), we view $\varepsilon_{X}^{\bfL_v}$ and ${\varepsilon'}^{\bfL_v}$ as complex characters of $Z_{\wh{\bfL_v}}[2]=Z_{\wh{\bfL}}[2]$. Denote the character obtained by their quotient by $\zeta_v:=\varepsilon_{X}^{\bfL_v}/{\varepsilon'}^{\bfL_v}$. To unify notations, for $v\in V-S_2$, we define $\varepsilon_{X}^{\bfL_v}={\varepsilon'}^{\bfL_v}=\zeta_v$ as the trivial character. 

For all $v\in S_2$, by Lemma \ref{lemRsigne}, we know that the character $\zeta_v$ is trivial on $Z_{\wh{R_v}}[2]$. The fundamental exact sequence of global class field theory for $k'/k$ (see \cite[Example 4.4.(a) in Chapter VIII]{milneCFT} for example) implies that the product $\prod\limits_{v\in S_2} \varepsilon_{X}^{\bfL_v}$ is trivial on $Z_{\wh{\bfL}}[2]$. Additionally, because of Lemma \ref{lem:kottsign} and the product formula of Kottwitz signs \cite[the last proposition]{MR697075}, the product $\prod\limits_{v\in S_2} {\varepsilon'}^{\bfL_v}$ is trivial on $Z_{\wh{\bfL}}[2]$. In sum, we have a global condition: the product $\prod\limits_{v\in S_2} \zeta_v$ is trivial on $Z_{\wh{\bfL}}[2]$. 

We have to show that $\zeta_v$ is trivial on $Z_{\wh{\bfL}}[2]$ for $v\in S_2$. Since $d_\bfL^\bfG((R_v)_{v\in V})\neq0$, we have $\bigoplus\limits_{v\in V}\fa_{\bfL_v}^{R_v}=\fa_{\bfL}^{\bfG}$. Let $v\in S_2$. From $\fa_{\bfL_v}^{R_v}\oplus\left(\bigoplus\limits_{v'\in V-\{v\}}\fa_{\bfL_{v'}}^{R_{v'}}\right)=\fa_{\bfL}^{\bfG}$, by taking the orthogonal complements, we obtain 
$$ \fa_{R_v}^{\bfG_v}\oplus\left(\bigcap_{v'\in V-\{v\}}\fa_{R_{v'}}^{\bfG_{v'}}\right)=\fa_{\bfL}^{\bfG}. $$
It follows that $Z_{\wh{\bfL}}=A_v A^v$ and $A_v\cap A^v=Z_{\wh{\bfG}}$, where $A_v:=Z_{\wh{R_v}}$ and $A^v:=\bigcap\limits_{v'\in V-\{v\}}Z_{\wh{R_{v'}}}$. 

We claim that $Z_{\wh{\bfL}}[2]=A_v[2] A^v[2]$. To see this, let $s=a_v a^v\in Z_{\wh{\bfL}}[2]$, where $a_v\in A_v$ and $a^v\in A^v$. Since $s^2=1$, we have 
$$ a_v^2=(a^v)^{-2}\in A_v\cap A^v=Z_{\wh{\bfG}}. $$ 
Because $z\mapsto z^2$ is a surjective endomorphism of $Z_{\wh{\bfG}}$, there exists $y\in Z_{\wh{\bfG}}$ such that 
$$ a_v^2=(a^v)^{-2}=y^2. $$ 
Thus $s=y^{-1}a_v\cdot (ya^v)$ with $y^{-1}a_v\in A_v[2]$ and $ya^v\in A^v[2]$. We have shown our claim. 

Now, let $s=a_v a^v\in Z_{\wh{\bfL}}[2]$, where $a_v\in A_v[2]$ and $a^v\in A^v[2]$. Since the character $\zeta_v$ is trivial on $Z_{\wh{R_v}}[2]$, we have 
$$ \zeta_v(s)=\zeta_v(a_v)\zeta_v(a^v)=\zeta_v(a^v). $$ 
By the global condition above, we have 
$$ \zeta_v(a^v)=\prod\limits_{v'\in S_2-\{v\}} \zeta_{v'}^{-1}(a^v). $$ 
But for $v'\in S_2-\{v\}$, the character $\zeta_{v'}$ is trivial on $A^v[2]$. We have proved $\zeta_v(s)=1$ for all $s\in Z_{\wh{\bfL}}[2]$ and thus the lemma. 
\end{proof}

\begin{lem}\label{lemloceve}
For all $v\in V$, $X$ comes from $\fs'_\rs(k_v)$. 
\end{lem}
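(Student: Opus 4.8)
The plan is to prove the statement place by place, organising the places $v$ of $k$ into three groups. The places that split in $k'$ --- which include all archimedean places, since $k$ is totally imaginary --- are immediate: there $\fs_v$ and $\fs'_v$ have been identified, so $X$ comes from $\fs'_\rs(k_v)$ by construction.

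Next I would handle the places $v\in S_1\setminus V_\infty$. At such a place $\phi_v$ is either $f$ (when $v=w$) or one of the functions built in Proposition \ref{testfunctions}, and in every case its weighted orbital integrals vanish for nontrivial weights. Starting from $J_{\bfL_v}^{Q_{R_v}}(\eta,X,\phi_v)\neq0$, which follows from \eqref{eq:contra_nonvan}, I would rewrite it via the descent formula \cite[Proposition 4.1.(4)]{MR4681295} as $J_{\bfL_v}^{R_v}(\eta,X,(\phi_v)_{Q_{R_v}}^\eta)\neq0$ (using $Q_{R_v}\in\msp^{\bfG_v}(R_v)$), note by Lemma \ref{lem:Lab_I.6.4}.(1) that $(\phi_v)_{Q_{R_v}}^\eta$ still has weighted orbital integrals vanishing for nontrivial weights, and observe that $R_v$ viewed as a parabolic subgroup of itself lies in $\msp^{R_v}(\bfL_v)$ only for $\bfL_v=R_v$; this forces $\bfL_v=R_v$, hence $v_{\bfL_v}^{Q_{R_v}}\equiv1$ and $J_{\bfL_v}^{Q_{R_v}}(\eta,X,\phi_v)=J_{\bfG_v}^{\bfG_v}(\eta,X,\phi_v)\neq0$. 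I would then conclude that $X$ comes from $\fs'_\rs(k_v)$ using condition (b) of Theorem \ref{thmcommute} when $v=w$, and the corresponding part of condition (d) of Proposition \ref{testfunctions} when $v\neq w$.

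The remaining case, $v\in V\setminus S_1$ inert in $k'$, is the heart of the matter. There $\bfD_v$ is split, $(\bfG'_v,\bfH'_v)\simeq(\bfG\bfL_{2n,k_v},\Res_{k'_v/k_v}\bfG\bfL_{n,k'_v})$; by Corollary \ref{corsym2ell} there is no division-algebra divisibility obstruction and the relevant local Kottwitz signs are all $+1$, so that an element comes from $\fs'_\rs(k_v)$ precisely when $\eta_v$ is trivial on each of its $k_v$-elliptic constituents. Using Proposition \ref{prop:equivcomefromLevi} I would reduce to showing that each block $X_i$ of $X$ along $\bfL$ comes from the matching factor of $\fs'_v$, i.e. that a finite list of local signs --- one per $k_v$-elliptic constituent of $X_i$, equivalently one per place of $k[\lambda]/(\chi_{A_i})$ above $v$ --- are all trivial. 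The first two cases already pin these signs down at every place lying above $S_1$; at the places outside $S_1$ I would feed in what the weighted fundamental lemma gives (Lemma \ref{wfl}.(b) together with Lemma \ref{lemRsigne}), translate everything through the cohomological dictionary of Corollaries \ref{idcar3} and \ref{idcar5} and Proposition \ref{proppontdual}, and then glue with the product formula of global class field theory and the Hasse norm theorem for the relevant quadratic extensions (the algebras $k'\otimes_k k[\lambda]/(\chi_{A_i})$ appearing in \eqref{eq:coh_opp_tor}), in the same spirit as the proof of Lemma \ref{lemLsigne}, to force the signs above $v$ to be trivial as well; hence $X_i$, and therefore $X$, comes from $\fs'_\rs(k_v)$.

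I expect this last step to be the main obstacle. Lemma \ref{lemLsigne} only controls $\eta_v$ on the coarse blocks of $X$ determined by $\bfL$, whereas ``coming from $\fs'_\rs(k_v)$'' at a place where $\bfD_v$ is split is governed by the finer signs attached to the $k_v$-elliptic constituents, which for a block that is $k$-elliptic but $k_v$-reducible need not be determined by their product alone. Overcoming this gap is exactly what the abelian Galois cohomology bookkeeping of Sections \ref{seccohcri}--\ref{secmatllevi} and the accompanying global reciprocity argument are designed to do.
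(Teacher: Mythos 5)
Your treatment of the split places and of the places $v\in S_1\setminus V_\infty$ is correct and essentially what the paper does: at the split places the identification $\fs_v\simeq\fs'_v$ makes the claim vacuous, and at the places of $S_1$ the nonvanishing $J_{\bfL_v}^{Q_{R_v}}(\eta,X,\phi_v)\neq0$ is pushed, via vanishing for nontrivial weights and parabolic descent, to $J_{\bfG_v}^{\bfG_v}(\eta,X,\phi_v)\neq0$, after which condition (b) of Theorem \ref{thmcommute} (for $v=w$) or condition (d) of Proposition \ref{testfunctions} (for $v\neq w$) finishes the job. (The paper's citation of Proposition \ref{testfunctions}.(a) gives an even shorter route for $v\neq w$: a nonvanishing orbital integral places a conjugate of $X$ in $\Supp(\phi_v)$, and (a) says every element of $\Supp(\phi_v)$ matches.)

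For the remaining places $v\in V\setminus S_1$ inert in $k'$, however, you have identified the correct obstacle --- Lemma \ref{lemLsigne} only pins down $\eta_{\bfL_v}(X)$, not the finer signs on the $k_v$-elliptic constituents --- but your proposed resolution (push the abelianised-cohomology bookkeeping and global reciprocity down to the finer Levi level) is not what the paper does and is not obviously viable: the product formula at the level of the fields $k_i=k[\lambda]/(\chi_i)$ constrains only the product of the local signs, and when $v$ has several places of $k_i$ above it, that product alone does not determine the individual signs above $v$. The paper circumvents this with a purely local argument. One conjugates $X$ to $Z\in(\fl_0\cap\fs_\rs)(k_v)_\el$ with $\bfL_0\subseteq\bfL$; Lemma \ref{lemLsigne} gives $\eta_{\bfL_v}(Z)=(1,\dots,1)$, while the contradiction hypothesis combined with Proposition \ref{prop:equivcomefromLevi} and Corollary \ref{corequivdefcomefrom}.(1) gives $\eta_{\bfL_{0,v}}(Z)\neq(1,\dots,1)$. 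One then expands $J_{\bfL_v}^{Q_{R_v}}(\eta,X,\phi_v)$ by Arthur's descent formula as a sum over Levi subgroups $L_1\in\msl^{R_v}(\bfL_{0,v})$ with $d_{\bfL_{0,v}}^{R_v}(\bfL_v,L_1)\neq0$. For each such $L_1$, the surjectivity of $X(\bfL_{v,n})_{k_v}\oplus X(L_{1,n})_{k_v}\to X(\bfL_{0,v,n})_{k_v}$ from \cite[Lemma 10.1]{MR928262} forces $\eta_{L_1}(Z)\neq(1,\dots,1)$ (the fine sign, being nontrivial, cannot vanish on both $\bfL_v$ and $L_1$ when these characters span), and the vanishing half of the weighted fundamental lemma (Lemma \ref{wfl}.(b)) then kills the corresponding term $J_{\bfL_{0,v}}^{Q_1}(\eta,Z,\phi_v)$. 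So the whole sum vanishes, contradicting \eqref{eq:contra_nonvan}. No cohomology or reciprocity beyond what is already packaged into Lemma \ref{lemLsigne} enters; the missing ingredient in your sketch is this descent-plus-character-surjectivity trick.
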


\begin{proof}
The assertion is trivial if $v\in V$ splits in $k'$ (in particular, if $v\in V_\infty$). For $v\in S_1-V_\infty-\{w\}$ which is inert in $k'$, it is deduced from Propositions \ref{testfunctions}.(a)(c). For $v=w$, it is because $f$ satisfies the two conditions in Theorem \ref{thmcommute}. 

It suffices to consider $v\in V-S_1$ which is inert in $k'$. In this case, recall that $(\bfG'_v, \bfH'_v)$ is identified with $(\bfG\bfL_{2n, k_v},\Res_{k'_v/k_v}\bfG\bfL_{n,k'_v})$ by the condition (d) of Proposition \ref{appglo}. We are in a similar situation as the base change for $GL_n$, and the following argument is similar to that of \cite[Lemma III.3.4]{MR1339717}. There exists $\bfL_0\in\msl^{\bfG,\omega}(\bfM_0), \bfL_0\subseteq\bfL$ and $y\in \bfL_\bfH(k_v)$ such that $Z:=\Ad(y^{-1})(X)\in(\fl_{0}\cap\fs_\rs)(k_v)_\el$. By Lemma \ref{lemLsigne}, we have 
\begin{equation}\label{eq:sign_all_1}
 \eta_{\bfL_v}(Z)=\eta_{\bfL_v}(X)=e_{\bfL'_v}=(1, \cdots, 1). 
\end{equation}
Suppose that $X$ does not come from $\fs'_\rs(k_v)$. Then $Z$ does not come from $\fs'_\rs(k_v)$. By Proposition \ref{prop:equivcomefromLevi} and Corollary \ref{corequivdefcomefrom}.(1), we have 
\begin{equation}\label{eq:sign_not_all_1}
 \eta_{\bfL_{0,v}}(Z)\neq e_{\bfL'_{0,v}}=(1,\cdots,1). 
\end{equation}
From the descent formula (cf. \cite[Proposition 4.1.(5)]{MR4681295} and \cite[Lemma I.6.2]{MR1339717}), we obtain 
$$ J_{\bfL_{v}}^{Q_{R_{v}}}(\eta, X, \phi_{v})=\eta(\det(y)) J_{\bfL_{v}}^{Q_{R_{v}}}(\eta, Z, \phi_{v})=\eta(\det(y)) \sum_{L_1\in\msl^{R_v}(\bfL_{0,v})} d_{\bfL_{0,v}}^{R_v}(\bfL_v, L_1) J_{\bfL_{0,v}}^{Q_1}(\eta, Z, \phi_v) $$
where $d_{\bfL_{0,v}}^{R_v}(\bfL_v, L_1)$ is defined in \cite[p.356]{MR928262} and $Q_1\in\msp^{\bfG_v}(L_1)$ is given in \cite[Lemma I.1.2]{MR1339717}. In particular, both of $L_1$ and $Q_1$ are $\omega$-stable. 

Consider $L_1\in\msl^{R_v}(\bfL_{0,v})$ such that $d_{\bfL_{0,v}}^{R_v}(\bfL_v, L_1)\neq0$. Then $d_{\bfL_{0,v,n}}^{R_{v,n}}(\bfL_{v,n}, L_{1,n})\neq0$ with the notations defined by \eqref{eq:notationM_n}. By \cite[Lemma 10.1]{MR928262}, the natural map 
$$ X(\bfL_{v,n})_{k_v}\oplus X(L_{1,n})_{k_v}\ra X(\bfL_{0,v,n})_{k_v} $$
is surjective. Then \eqref{eq:sign_all_1} and \eqref{eq:sign_not_all_1} imply that 
$$ \eta_{L_1}(Z)\neq e_{L'_1}=(1, \cdots, 1). $$
From Lemma \ref{wfl}.(b), we deduce that $J_{\bfL_{0,v}}^{Q_1}(\eta, Z, \phi_v)=0$. 

Hence, we have $J_{\bfL_{v}}^{Q_{R_{v}}}(\eta, X, \phi_{v})=0$, which contradicts our assumption \eqref{eq:contra_nonvan}. 
\end{proof}

\subsubsection{}

In this section, we shall prove the following statement which contradicts our assumption at the beginning of Section \ref{ssec:contradict}. This will complete the proof of Proposition \ref{prop102}. 

\begin{prop}\label{glocom}
The element $X$ comes from $\fs'_\rs(k)$. 
\end{prop}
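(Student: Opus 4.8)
The strategy is to convert the local statement of Lemma \ref{lemloceve} (that $X$ comes from $\fs'_\rs(k_v)$ at every place $v$) into the global statement, using the cohomological criterion of Section \ref{seccohcri} and the exact sequences of global abelian Galois cohomology. First I would fix the representative $X=\mat(0,1_n,A,0)\in(\fl\cap\fs_\rs)_\el(k)$ chosen in Section \ref{ssec:contradict}, where $A\in GL_n(k)$ is regular semi-simple and elliptic. Applying the construction of Section \ref{ssec:cocycletoX} over the number field $k$, one gets the maximal tori $T=T_X$ and $R=H_{0,X_0}$ of $\bfH_0$ and $\bfH_0^{\omega_0}$, the Galois $1$-cocycle $t_\sigma$ (class $t\in H^1(k,T/R)$, the class of $A$ in $k_A^\times/N_{E_A/k_A}(E_A^\times)$-language), and the class $u\in H^1(k,\bfH_0/Z_\bfG)$ attached to $\fg'$. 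By the global analogue of Lemma \ref{lem1251}, $X$ comes from $\fs'_\rs(k)$ if and only if there is a class in $H^1(k,T/Z_\bfG)$ mapping to $t$ and to $u$ simultaneously; this is the assertion I must verify.

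The key step is a Poitou--Tate / global-to-local argument. I would run through the commutative diagram \eqref{eq:commdiag} over $k$, together with its localizations at each place $v$. By Lemma \ref{lemloceve}, at each $v$ the local obstruction vanishes, i.e. the images $\varepsilon_{X,v}$ and $\varepsilon'_v$ in $H^1_\ab(k_v, \bfH_0^{\omega_0}/Z_\bfG\ra \bfH_0/Z_\bfG)$ agree (using Lemma \ref{lem2131} locally and, at elliptic places, Proposition \ref{prop2141} — here the ellipticity of $X$ over $k$, established in Section \ref{ssec:contradict}, is what lets me invoke the full force of Proposition \ref{prop2141}). The global element $\varepsilon_X-\varepsilon'\in H^1_\ab(k,\bfH_0^{\omega_0}/Z_\bfG\ra \bfH_0/Z_\bfG)\simeq H^1(k,C_{\bfH_0}/C_{\bfH_0^{\omega_0}})=H^1(k,\BG_m^\ast)\simeq k^\times/N_{k'/k}(k'^\times)\times\text{(local factors)}$ (via Proposition \ref{prop2231weak} and Lemma \ref{lem:cohopptorus}, globalized) thus lies in the kernel of localization at every place; by the Hasse principle for $H^1$ of the norm-one-type torus $\BG_m^\ast$ — equivalently the fundamental exact sequence of global class field theory, exactly as invoked in the proof of Lemma \ref{lemLsigne} — this kernel is trivial, so $\varepsilon_X=\varepsilon'$ globally. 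Then I feed $\varepsilon_X=\varepsilon'$ back into the simplified diagram \eqref{eq:commdiag} (noting that over a number field the relevant $H^2$ groups of anisotropic tori need not vanish, so I must be more careful than in the local Proposition \ref{prop2141}): a diagram chase, using surjectivity of abelianization maps $H^1(k,\bfH_0/Z_\bfG)\to H^1_\ab(k,\bfH_0/Z_\bfG)$ and the exactness of the rows, produces the desired preimage $t'\in H^1(k,T/Z_\bfG)$ of both $t$ and $u$, hence by the global Lemma \ref{lem1251} the element $X$ comes from $\fs'_\rs(k)$.

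The main obstacle I anticipate is precisely the diagram chase in the number-field setting: unlike the local case, the groups $H^2(k,R/Z_\bfG)$, $H^2(k,R_\der)$, $H^2(k,T_\der)$ do not vanish, so the argument of Proposition \ref{prop2141} does not transplant verbatim. I expect to control this either (i) by working with abelianized cohomology throughout and using the Poitou--Tate exact sequences, which give a clean description of the image of $H^1(k,T/Z_\bfG)\to H^1(k,T/R)$ in terms of local conditions plus a global reciprocity constraint, or (ii) by reducing — via Corollary \ref{cor2231weak}-type decompositions and the factorization of $T\simeq\Res_{E/F}(GL_{n,A})_E$ into a product indexed by $i\in I_0$ — to the case where $A$ is itself elliptic, where the group $H^1(k,T/R)$ is one-dimensional and the matching of $t$ with $u$ amounts to the single identity $\eta_X=e(\bfG')$ coming from Lemma \ref{lemLsigne} together with Proposition \ref{equivdefcomefrom}. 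I would pursue route (ii) first since it most directly reuses the machinery already developed in the excerpt, falling back on the abelianized Poitou--Tate formalism of \cite{MR1401491,MR1695940,MR757954} only if the naive reduction leaves a gap.
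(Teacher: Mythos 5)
Your opening step is sound and natural: from Lemma \ref{lemloceve} and Lemma \ref{lem2131} applied place by place, together with the fundamental exact sequence of global class field theory for $\BG_m^\ast$ (the globalisation of Proposition \ref{prop2231weak} and Lemma \ref{lem:cohopptorus}), one does get $\varepsilon_X=\varepsilon'$ in $H^1_\ab(k,\bfH_0^{\omega_0}/Z_\bfG\ra\bfH_0/Z_\bfG)$. But neither of your two proposed finishes closes the gap you flag, and the gap is real. Route (ii) founders on the fact that Proposition \ref{equivdefcomefrom} is a strictly local statement: over $k$ one has $\det(AB)\in k^\times$, so $\eta(X)=\prod_v\eta_v(\det(AB))=1$ by the product formula, and likewise $\prod_v e(\bfG'_v)=1$ by the product formula for Kottwitz signs; the ``single identity $\eta_X=e(\bfG')$'' degenerates to $1=1$ and carries no content. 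Moreover $X$ is elliptic only in $\fl\cap\fs$, not in $\fs$, so $A$ is not elliptic in $GL_n(k)$, and the class $u$ is attached to $\fg'$ as a whole and does not split along the factors indexed by $i\in I_0$, so the factorisation of $T$ does not reduce the global matching of $t$ with $u$ to single-factor questions. Route (i) identifies the right machinery but leaves the argument entirely deferred.

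The paper's proof in fact does \emph{not} pass through the global equality $\varepsilon_X=\varepsilon'$ at all. It descends the adelic class directly: by Lemma \ref{lemloceve} and the local Lemma \ref{lem1251} there is a class $v'\in H^1(\BA,\bfT/Z_\bfG)$ whose images are the adelic images $t'$ of $t$ and $u'$ of $u$. The triviality of $H^1(\BA/k,\bfT)$ (Shapiro plus global class field theory) feeds into a variant of Lemma \ref{lem2451} to show that $H^1(\BA/k,\bfT/Z_\bfG)\to H^1_\ab(\BA/k,\bfH_0/Z_\bfG)$ is injective; since $u$ is global, $u''=0$, hence $v''=0$, so $v'$ lifts to $v\in H^1(k,\bfT/Z_\bfG)$. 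That $v$ maps to $u$ follows from the injectivity of $H^1(k,\bfH_0/Z_\bfG)\to H^1(\BA,\bfH_0/Z_\bfG)$ (Lemma \ref{lem2452}), and that $v$ maps to $t$ follows from the injectivity of $H^1(k,\bfT/\bfR)\to H^1(\BA,\bfT/\bfR)$, which is exactly the fundamental exact sequence you already invoked. This is the concrete instance of your ``route (i)'': it does not go through the relative hypercohomology group, it never needs to confront nonvanishing $H^2$ of global tori directly, and it isolates the two injectivity statements (Lemmas \ref{lem2451} and \ref{lem2452}) as the heart of the descent. To turn your plan into a proof, you would need to supply precisely these injectivity lemmas and the diagram with $H^1(\BA,\cdot)$ and $H^1(\BA/k,\cdot)$; the detour through a global $\varepsilon_X=\varepsilon'$ is dispensable.
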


\begin{proof}
We start with two lemmas. 

\begin{lem}\label{lem2451}
Let $F_0$ be a field. Let $Z_C\subseteq A\subseteq B\subseteq C$ be reductive groups defined over $F_0$, where $Z_C$ denotes the centre of $C$. Suppose that $H^1(F_0,A_\beta)$ is a singleton for all inner forms $A_\beta$ of $A$. Then the natural map 
$$ H^1(F_0,A/Z_C)\ra H^1(F_0,B/Z_C) $$
is injective. 
\end{lem}

\begin{proof}[Proof of Lemma \ref{lem2451}]
We begin with the following commutative diagram with exact rows. 
$$    \xymatrix{ 0\ar[r] & Z_C \ar[r] \ar@{=}[d]  & A \ar[r]  \ar[d] & A/Z_C \ar[r] \ar[d] & 0  \\
 0\ar[r] & Z_C \ar[r] &B\ar[r] & B/Z_C \ar[r] & 0 } $$
Then we obtain the following commutative diagram of pointed sets. 
$$    \xymatrix{   H^1(F_0,A/Z_C) \ar[r]  \ar[d]^g & H^2(F_0,Z_C) \ar@{=}[d]   \\
   H^1(F_0,B/Z_C) \ar[r] & H^2(F_0,Z_C)   } $$
Since $H^1(F_0,A_\beta)$ is a singleton for all inner forms $A_\beta$ of $A$, the map 
$$ H^1(F_0,A/Z_C) \ra H^2(F_0,Z_C) $$
is injective by \cite[Corollary (28.13)]{MR1632779}. As the above diagram is commutative, we deduce that $g$ is injective. 
\end{proof}

\begin{lem}\label{lem2452}
Let $F_0$ be a number field. Let $Z_B\subseteq A\subseteq B$ be reductive groups defined over $F_0$, where $Z_B$ denotes the centre of $B$. Suppose that $H^1(F_0,A_\beta)$ is a singleton for all inner forms $A_\beta$ of $A$. Then the map 
$$ H^1(F_0,A/Z_B)\ra H^1(\BA_{F_0},A/Z_B) $$
is injective. 
\end{lem}

\begin{proof}[Proof of Lemma \ref{lem2452}]
We use the commutative diagram of pointed sets: 
$$    \xymatrix{  H^1(F_0,A/Z_B) \ar[r]^i \ar[d]^g  & H^1(\BA_{F_0},A/Z_B)  \ar[d]    \\
   H^1(F_0,B_\ad) \ar[r]^h &H^1(\BA_{F_0},B_\ad)    } $$
By Lemma \ref{lem2451}, the map $g$ is injective. By the Hasse principle for $B_\ad$ (see \cite[Theorem 6.22]{MR1278263}), the map $h$ is injective. Since the diagram is commutative, we deduce that $i$ is injective. 
\end{proof}

%
%

Return to the proof of Proposition \ref{glocom}. Let $t\in H^1(k,\bfT/\bfR)$ be the class of the Galois $1$-cocycle $t_\sigma$ in Lemma \ref{lem1231} associated to $X$. Let $u\in H^1(k,\bfH_0/Z_\bfG)$ be the class of the Galois $1$-cocycle $u_\sigma$ in Section \ref{ssec:classEg'} associated to $\fg'$. Let $t'\in H^1(\BA,\bfT/\bfR)$ (resp. $u'\in H^1(\BA,\bfH_0/Z_\bfG)$) be the image of $t$ (resp. $u$). By Lemmas \ref{lemloceve} and \ref{lem1251}, there exists $v'\in H^1(\BA,\bfT/Z_\bfG)$ with images $t'$ and $u'$. 

By \cite[Proposition 1.6.12]{MR1695940}, we have the following commutative diagram of pointed sets with exact rows. 
$$     \xymatrix{  H^1(k,\bfT/Z_\bfG) \ar[r] \ar[d]  & H^1(\BA,\bfT/Z_\bfG) \ar[r]  \ar[d] & H^1(\BA/k,\bfT/Z_\bfG) \ar[d]^g   \\
   H^1(k,\bfH_0/Z_\bfG) \ar[r]^i & H^1(\BA,\bfH_0/Z_\bfG) \ar[r] & H^1_\ab(\BA/k,\bfH_0/Z_\bfG)   }  $$
Let $v''\in H^1(\BA/k,\bfT/Z_\bfG)$ (resp. $u''\in H^1_\ab(\BA/k,\bfH_0/Z_\bfG)$) be the image of $v'$ (resp. $u'$). Then $u''=0$. By Shapiro's lemma and global class field theory (see \cite[Theorem 5.1.(b) in Chapter VII]{milneCFT} for example), we know that $H^1(\BA/k, \bfT)$ is trivial. Then it results from a variant of Lemma \ref{lem2451} for $H^1_\ab(\BA/k,\cdot)$ that the map $g$ is injective. Thus $v''=0$ and there exists $v\in H^1(k,\bfT/Z_\bfG)$ with image $v'$. By Lemma \ref{lem2452}, the map $i$ is injective. Since the square on the left above is commutative, the class $v$ maps to $u$. 

We also have the following commutative diagram. 
$$     \xymatrix{  H^1(k,\bfT/Z_\bfG) \ar[r] \ar[d]  & H^1(\BA,\bfT/Z_\bfG) \ar[d]    \\
   H^1(k,\bfT/\bfR) \ar[r]^j & H^1(\BA,\bfT/\bfR)    }  $$
By \eqref{eq:coh_opp_tor} and its adelic variant, the map $j$ is given by 
$$ \prod_{i\in I_0} k_i^\times/N_{k'_i/k_i}({k'_i}^\times)\ra \prod_{i\in I_0} \BA_{k_i}^\times/N_{k'_i/k_i}(\BA_{k'_i}^\times), $$
where $k_i=k[\lambda]/(\chi_i(\lambda))$ with $\chi_i$ being an irreducible polynomial over $k$, and $k'_i=k_i\otimes_k k'$ is a field. It is known to be injective from the fundamental exact sequence of global class field theory for $k'_i/k_i$ (see \cite[Example 4.4.(a) in Chapter VIII]{milneCFT} for example). Since the diagram is commutative, the class $v$ maps to $t$. 

Hence, we can finish the proof of Proposition \ref{glocom} using Lemma \ref{lem1251}. 
\end{proof}

\subsection{End of the proof}

\begin{lem}\label{lem106}
For our choice of $\phi$ and $\phi'$, we have 
$$ J^\bfG(\eta, \hat{\phi})=\tau(\bfH_{X^0}) J_\bfM^\bfG(\eta, X^0, \hat{\phi}) $$
and 
$$ J^{\bfH'}(\hat{\phi'})=\tau(\bfH'_{Y^0}) J_{\bfM'}^{\bfH'}(Y^0, \hat{\phi'}). $$
\end{lem}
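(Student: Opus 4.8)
The statement is a spectral-side simplification: after Fourier transform, the geometric expansions \eqref{leviJ^G} (in the simplified form of Lemma \ref{lem101}) and \eqref{leviJ^H'} collapse to a single term, the one attached to $X^0$ (resp. $Y^0$). The strategy is to show that $J_\bfL^\bfG(\eta, X, \hat\phi)=0$ for every $\bfL\in\msl^\bfG(\bfM)$ and every $X\in\Gamma_\el((\fl\cap\fs_\rs)(k))$ other than the class of $X^0$ (with $\bfL=\bfM$), and that $\tau(\bfH_X)$ is the coefficient attached to the surviving term; the argument for $\fs'$ is strictly parallel, using Proposition \ref{testfunctions}.(b) and \cite[Lemma 3.20.(1)]{MR4681295}. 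I would first invoke Proposition \ref{propinfivar} to pass from $\phi,\phi'$ to $\hat\phi,\hat{\phi'}$; this requires checking the support hypotheses $\Supp(\hat\phi)\subseteq\fs_\rs(k_w)$ and $\Supp(\hat{\phi'})\subseteq\fs'_\rs(k_w)$ together with a second place (e.g. $w'\in S-\{w\}$), which hold because $\phi_w=f$, $\phi'_w=f'$ have support in the regular semi-simple locus by hypothesis and $\widehat{f}$, $\widehat{f'}$ inherit this (here one uses that $f$, $f'$ satisfy the Labesse vanishing and come from the construction in Section \ref{choffun}). Actually the cleaner route is: apply Lemma \ref{lem101} to $\hat\phi$ and $\hat{\phi'}$ directly, since those are also of the allowed form for that lemma, and then kill the extra terms by a support argument at the archimedean places and at $w$.

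\textbf{Key steps.} (1) Invoke the simplified geometric expansion (Lemma \ref{lem101}) for $J^\bfG(\eta,\hat\phi)$ and $J^{\bfH'}(\hat{\phi'})$, so that the sums run over $\bfL\in\msl^\bfG(\bfM)$, resp. $\bfL'\in\msl^{\bfH'}(\bfM')$, and over elliptic classes in $(\fl\cap\fs_\rs)(k)$, resp. $(\wt{\fl'}\cap\fs'_\rs)(k)$. (2) Use the factorisation \eqref{eq:fact_woi1} of $J_\bfL^\bfG(\eta, X, \hat\phi)=\sum d_\bfL^\bfG((R_v)_v)\prod_v \kappa_v(X)J_{\bfL_v}^{Q_{R_v}}(\eta,X,\widehat{\phi_v})$: if the product is nonzero, then for every $v$ the local term $J_{\bfL_v}^{Q_{R_v}}(\eta,X,\widehat{\phi_v})\neq 0$. (3) At each archimedean $v\in V_\infty$ and at $v=w$, the support conditions imposed in Section \ref{choffun}.(iv) on $\widehat{\phi_v}$ (namely that any $X\in\fs(k)$ which is $\bfH(k_u)$-conjugate into $\Supp(\widehat{\phi_u})$ for all $u$ must be $\bfH(k)$-conjugate to $X^0$) force $X$ to be globally $\bfH(k)$-conjugate to $X^0$; similarly at $v=w$ one uses condition (i) on $\Omega_w$ and that $\widehat{\phi_w}=\hat f$ has support of the prescribed shape. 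Since $X^0\in(\fm\cap\fs_\rs)(k)_\el$, the only surviving $\bfL$ is $\bfM$ and the only surviving class is that of $X^0$; the $|\Tran_{\bfG\bfL_n}(\bfM_n,\bfM_n)|^{-1}=|W^{\bfG\bfL_n}(\bfM_n)|^{-1}\cdot|W^{\bfG\bfL_n}(\bfM_n)|$... more carefully, $\Tran_{\bfG\bfL_n}(\bfM_n,\bfM_n)=W^{\bfG\bfL_n}(\bfM_n)$-torsor computations give coefficient $1$, leaving $\tau(\bfH_{X^0})J_\bfM^\bfG(\eta,X^0,\hat\phi)$. (4) Repeat verbatim for $\fs'$ with $Y^0$, $\bfM'$, $\widehat{\phi'}$.

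\textbf{Main obstacle.} The delicate point is step (3): one must verify that the support constraints from Section \ref{choffun}, which were imposed for $\widehat{\phi_v}$ at the \emph{archimedean} places (via \cite[Proposition 3.3]{MR4424024}, \cite[Proposition 3.5]{MR4350885}, and the argument of \cite[Lemme 10.7]{MR1440722}), genuinely propagate to a \emph{global} rigidity statement pinning $X$ to $X^0$ up to $\bfH(k)$-conjugacy — i.e. that the global orbit is determined by the collection of local orbits at the places where we control the support, together with ellipticity. This is exactly the kind of ``approximation of a global orbit by local data'' input that was prepared in Section \ref{secapp} and Section \ref{choffun}; once invoked it is mechanical, but it is the conceptual heart of the lemma. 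A minor additional check is the normalisation: that the Tamagawa-type volume $\tau(\bfH_{X^0})$ and the Weyl-group combinatorial factor combine correctly so that the surviving term appears with coefficient exactly $1$ in front of $\tau(\bfH_{X^0})J_\bfM^\bfG(\eta,X^0,\hat\phi)$; this follows as in the proof of Lemma \ref{lem101} since $\bfM$ is its own $\Tran$-target.
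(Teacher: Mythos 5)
You identify the right ingredient — the rigidity built into condition (iv) of Section \ref{choffun}, which says that any $X\in\fs(k)$ (resp.\ $Y\in\fs'(k)$) whose $\bfH(k_u)$-orbit (resp.\ $\bfH'(k_u)$-orbit) meets $\Supp(\widehat{\phi_u})$ (resp.\ $\Supp(\widehat{\phi'_u})$) for every $u$ must be $\bfH(k)$-conjugate to $X^0$ (resp.\ $\bfH'(k)$-conjugate to $Y^0$). But the route you take to exploit it is both longer and shakier than the paper's. The paper does not pass through Lemma \ref{lem101} at all: it applies the raw orbit decomposition \eqref{glodefJ^G(eta,f)} to $\hat\phi$, observes via condition (iv) that every $\fo$ other than $\CO(X^0)$ contributes $0$, and then uses that $X^0\in(\fm\cap\fs_\rs)(k)_\el$ so that, by the formula preceding \eqref{eq:glo_woi_1}, the surviving term is exactly $\tau(\bfH_{X^0})\,J_\bfM^\bfG(\eta,X^0,\hat\phi)$. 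No Weyl-group or $\Tran$ bookkeeping is needed, because one never re-indexes the sum over $\msl^{\bfG,\omega}(\bfM_0)$.

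Two points in your plan are actually problematic. First, your initial suggestion to invoke Proposition \ref{propinfivar} rests on the claim that $\hat f$ and $\hat{f'}$ ``inherit'' support in the regular semi-simple locus from $f$, $f'$; this is false in general (Fourier transforms of compactly supported functions spread out), and Proposition \ref{propinfivar} is in any case not needed for Lemma \ref{lem106}, which is a statement about $\hat\phi$ alone. Second, your ``cleaner route'' — applying Lemma \ref{lem101} to $\hat\phi$ and $\hat{\phi'}$ — does not go through as stated: the proof of Lemma \ref{lem101} crucially uses conditions (a) and (b) of Proposition \ref{testfunctions} on $\phi_u$ for $u\in S-\{w\}$ (that the support of $\phi_u$ matches into $\fs'$), and there is no reason these should hold for $\widehat{\phi_u}$. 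So the simplified geometric expansion of Lemma \ref{lem101} is not available for $\hat\phi$. Fortunately, you do not need it: condition (iv), applied directly to the coarse sum over $\CO^\fs_\rs$ in \eqref{glodefJ^G(eta,f)}, already isolates the single orbit. If you strip your argument down to just step (3) fed into the raw orbit decomposition, you recover the paper's proof.
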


\begin{proof}
The first formula results from \eqref{glodefJ^G(eta,f)} for $\hat{\phi}$ and the condition (iv) in Section \ref{choffun}, while the second formula results from \eqref{glodefJ^H'(f')} for $\hat{\phi'}$ and the condition (iv) in Section \ref{choffun}. 
\end{proof}

\begin{proof}[Proof of Proposition \ref{parcommute}]
Combining Propositions \ref{propinfivar} and \ref{prop102}, Lemma \ref{lem106} and \eqref{eq:tau_equal} applied to $X^0\overset{\bfM}{\da}Y^0$, for our choice of $\phi$ and $\phi'$, we have the equality 
$$ J_\bfM^\bfG(\eta, X^0, \hat{\phi})=J_{\bfM'}^{\bfH'}(Y^0, \hat{\phi'}). $$
But by the splitting formulae \eqref{eq:fact_woi1} and \eqref{eq:fact_woi2} applied to $\hat\phi$ and $\hat{\phi'}$, the equality of Arthur's coefficients \eqref{eq:equal-d}, the conditions (iii) and (iv) in Section \ref{choffun} as well as Lemma \ref{wfl}.(a), the difference of two sides can be written as 
\begin{equation}\label{chau1023}
\begin{split} 
&\sum_{(R_v)_{v\in V}\in\prod\limits_{v\in V}\msl^{\bfG_v}(\bfM_v)} d_\bfM^\bfG((R_v)_{v\in V}) \prod_{v\in (V-S_1)\cup V_\infty} \kappa_v(X^0) J_{\bfM_v}^{Q_{R_v}}(\eta, X^0, \wh{\phi_v}) \\ 
&\times \left[\prod_{v\in S_1-V_\infty} \kappa_v(X^0) J_{\bfM_v}^{Q_{R_v}}(\eta, X^0, \wh{\phi_v}) - \prod_{v\in S_1-V_\infty} J_{\wt{\bfM'_v}}^{Q_{\wt{R'_v}}}(Y^0,\wh{\phi'_v}) \right]. \\ 
\end{split}
\end{equation}
By the condition (e) in Proposition \ref{testfunctions}, we may suppose that $R_v=\bfM_v$ for all $v\in S_1-V_\infty-\{w\}$. Moreover, by the condition (f) in Proposition \ref{testfunctions}, we have 
$$ J_{\wt{\bfM'_v}}^{Q_{\wt{\bfM'_v}}}(Y^0, \wh{\phi'_v})=\gamma_{\Psi_v}(\fh'(k_v)) \gamma_{\Psi_v}(\fh(k_v))^{-1} \kappa_v(X^0) J_{\bfM_v}^{Q_{\bfM_v}}(\eta, X^0, \wh{\phi_v})\neq 0 $$
for all $v\in S_1-V_\infty-\{w\}$. 

Recall from \cite[\S 30, Proposition 5]{MR0165033} the product formula 
$$ \prod_{v\in V} \gamma_{\Psi_v}(\fh(k_v))=\prod_{v\in V} \gamma_{\Psi_v}(\fh'(k_v))=1. $$
For $v\in V_\infty$, it follows from the condition (a) of Proposition \ref{appglo} and \cite[\S 26]{MR0165033} that 
$$ \gamma_{\Psi_v}(\fh(k_v))=\gamma_{\Psi_v}(\fh'(k_v))=1. $$
For $v\in V-S_1$, since $\fh(k_v)$ and $\fh'(k_v)$ possess self-dual lattices $(\fg\fl_n\oplus\fg\fl_n)(\CO_{k_v})$ and $\fg\fl_n(\CO_{k'_v})$ respectively, by \cite[\S 20, Théorème 5]{MR0165033}, we have 
$$ \gamma_{\Psi_v}(\fh(k_v))=\gamma_{\Psi_v}(\fh'(k_v))=1. $$
Hence, 
$$ \prod_{v\in S_1-V_\infty-\{w\}} \gamma_{\Psi_v}(\fh'(k_v)) \gamma_{\Psi_v}(\fh(k_v))^{-1}=\gamma_{\Psi_w}(\fh(k_w))\gamma_{\Psi_w}(\fh'(k_w))^{-1}. $$
Then the expression \eqref{chau1023} equals 
\begin{equation}\label{chau1024}
\begin{split} 
&\sum_{(R_v)_{v\in V}\in\prod\limits_{v\in V}\msl^{\bfG_v}(\bfM_v)} d_\bfM^\bfG((R_v)_{v\in V}) \prod_{v\in V-\{w\}} \kappa_v(X^0) J_{\bfM_v}^{Q_{R_v}}(\eta, X^0, \wh{\phi_v}) \\ 
&\times \left[\kappa_w(X^0) J_{\bfM_w}^{Q_{R_w}}(\eta, X^0, \wh{\phi_w}) - \gamma_{\Psi_w}(\fh(k_w)) \gamma_{\Psi_w}(\fh'(k_w))^{-1} J_{\wt{\bfM'_w}}^{Q_{\wt{R'_w}}}(Y^0,\wh{\phi'_w}) \right]. \\ 
\end{split}
\end{equation}

We shall prove \eqref{chaufor101} by induction on the dimension of $G$. By the condition (i) on $\Omega_w$, the condition (i) on $Y^0$ and $X^0\overset{\bfM}{\da}Y^0$ (see Section \ref{ssec:orbits}), we may replace $X_0$ and $Y_0$ with $X^0$ and $Y^0$ respectively in \eqref{chaufor101}. Since $\phi_w$ and $\phi'_w$ are partially $\bfM_w$-associated, by parabolic descent (see \cite[Propositions 4.1.(4) and 4.4.(4)]{MR4681295}, \eqref{eq:pardes1} and \eqref{eq:pardes2}), we see that $\phi_{w,Q_{R_w}}^\eta\in\CC_c^\infty((\fr_w\cap\fs_w)(k_w))$ and $\phi'_{w,(Q_{R_w})'}\in\CC_c^\infty((\wt{\fr'_w}\cap\fs'_w)(k_w))$ are partially $\bfM_w$-associated. As part of the induction hypothesis, we may suppose that for $R_w\neq \bfG_w$, we have the equality 
$$ \kappa_w(X^0) J_{\bfM_w}^{R_w}(\eta,X^0,\hat{\phi}_{w,Q_{R_w}}^\eta)=\gamma_{\Psi_w}((\fr_{w}\cap\fh_w)(k_w))\gamma_{\Psi_w}(\fr'_w(k_w))^{-1}J_{\wt{\bfM'_w}}^{\wt{R'_w}}(Y^0, \hat{\phi'}_{w,(Q_{R_w})'}). $$
Since the difference between the quadratic form on $\fh(k_w)$ (resp. $\fh'(k_w)$) and its restriction on $(\fr_{w}\cap\fh_w)(k_w)$ (resp. $\fr'_w(k_w)$) is a split quadratic form, by \cite[\S 25, Proposition 3]{MR0165033}, we have $\gamma_{\Psi_w}((\fr_{w}\cap\fh_w)(k_w))=\gamma_{\Psi_w}(\fh(k_w))$ (resp. $\gamma_{\Psi_w}(\fr'_w(k_w))=\gamma_{\Psi_w}(\fh'(k_w))$). By parabolic descent again, we see that 
\begin{equation}\label{eq:ind-hyp}
 \kappa_w(X^0) J_{\bfM_w}^{Q_{R_w}}(\eta,X^0,\wh{\phi_w})=\gamma_{\Psi_w}(\fh(k_w))\gamma_{\Psi_w}(\fh'(k_w))^{-1}J_{\wt{\bfM'_w}}^{Q_{\wt{R'_w}}}(Y^0, \wh{\phi'_w}), 
\end{equation}
which implies \eqref{chaufor101} for all $Q_{R_w}\neq \bfG_w$. Actually, such an argument shows \eqref{chaufor101} for all $Q_w\in\msl^{\bfG_w}(\bfM_w), Q_w\neq \bfG_w$, so it suffices to prove \eqref{chaufor101} for $Q_w=\bfG_w$. 

With the equality \eqref{eq:ind-hyp}, we may and we shall suppose $R_w=\bfG_w$ in \eqref{chau1024}. If  $d_\bfM^\bfG((R_v)_{v\in V})\neq0$, then $\bigoplus\limits_{v\in V}\fa_{\bfM_v}^{R_v}=\fa_{\bfM}^{\bfG}$. But we know $\fa_{\bfM_w}^{\bfG_w}\simeq\fa_\bfM^\bfG$. Hence, $d_\bfM^\bfG((R_v)_{v\in V})=0$ unless $R_v=\bfM_v$ for all $v\in V-\{w\}$, in which case $d_\bfM^\bfG((R_v)_{v\in V})=1$. That is to say, the sum in \eqref{chau1024} is reduced to only one term. We obtain 
\[\begin{split} 
&\prod_{v\in V-\{w\}} \kappa_v(X^0) J_{\bfM_v}^{Q_{\bfM_v}}(\eta, X^0, \wh{\phi_v}) \\ 
&\times\left[\kappa_w(X^0) J_{\bfM_w}^{\bfG_w}(\eta, X^0, \wh{\phi_w}) - \gamma_{\Psi_w}(\fh(k_w)) \gamma_{\Psi_w}(\fh'(k_w))^{-1} J_{\wt{\bfM'_w}}^{\bfG'_w}(Y^0,\wh{\phi'_w}) \right]=0. \\ 
\end{split}\]
To conclude, it suffices to show that $\kappa_v(X^0) J_{\bfM_v}^{Q_{\bfM_v}}(\eta, X^0, \wh{\phi_v})$ does not vanish for each $v\in V-\{w\}$. For $v\in V_\infty$, it results from the condition (iv) in Section \ref{choffun}. For $v\in S_1-V_\infty-\{w\}$, it follows from the condition (f) in Proposition \ref{testfunctions}. For $v\in V-S_1$, since $Y^0\in(\wt{\fm'}\cap\fs'_\rs)(k_v)\cap\fk'_v$ (see Section \ref{ssec:orbits}), we have 
$$ J_{\wt{\bfM'_v}}^{Q_{\wt{\bfM'_v}}}(Y^0, \wh{\phi'_v})\geq\vol(\bfH'_{v,Y^0}(k_v)\bs\bfH'_{v,Y^0}(k_v)\cdot\bfH'_v(\CO_{k_v}))>0. $$
Then the nonvanishing of $\kappa_v(X^0) J_{\bfM_v}^{Q_{\bfM_v}}(\eta, X^0, \wh{\phi_v})$ is a consequence of Lemma \ref{wfl}.(a) and $X^0\overset{\bfM}{\da}Y^0$. 
\end{proof}


\bibliography{References}
\bibliographystyle{plain}

\medskip

\begin{flushleft}
Johns Hopkins University \\
Department of Mathematics, 404 Krieger Hall \\
3400 N. Charles Street, Baltimore, MD 21218 \\
United States \\
\medskip
E-mail: hli213@jhu.edu \\
\end{flushleft}

\end{document}